\let\counterwithin\relax
\theoremstyle{plain} 
\newtheorem{theorem}             {Theorem} 
\theoremstyle{definition}
\theoremstyle{plain} 
\newtheorem{proposition} {Proposition}
\theoremstyle{remark}
\newtheorem*{definition*}  {Definition}
\newtheorem*{example*}    {Example}
\newtheorem{remark}             {Remark}
\newtheorem*{remark*}            {Remark}
\newtheoremstyle{itplain} 
    {6pt}                    
    {5pt\topsep}                    
    {\itshape}                   
    {}                           
    {\itshape}                   
    {.}                          
    {5pt plus 1pt minus 1pt}                       
    {}  
\theoremstyle{itplain} 
\newtheorem{lemma}{Lemma}
\newtheorem*{lemma*}{Lemma}
\newtheorem*{corollary*} {Corollary} 
\theoremstyle{remark} 
\newtheorem*{lemmatest*}{Lemma}
\patchcmd{\section}{\scshape}{\bfseries}{}{}
\renewcommand{\@secnumfont}{\bfseries}
\renewcommand{\Re}{\mathrm{Re}}
\renewcommand{\geq}{\geqslant}
\renewcommand{\leq}{\leqslant}
\numberwithin{equation}{section}
\DeclareMathOperator{\sgn}{sgn}
\DeclareMathOperator{\SL}{SL}
\DeclareMathOperator{\Mp}{Mp}
\DeclareMathOperator{\GL}{GL}
\DeclareMathOperator{\GO}{GO}
\DeclareMathOperator{\PGO}{PGO}
\DeclareMathOperator{\htt}{ht}
\DeclareMathOperator{\SO}{SO}
\DeclareMathOperator{\ad}{ad}
\DeclareMathOperator{\Ad}{Ad}
\def\slLie{\mathfrak{s}\mathfrak{l}}
\DeclareMathOperator{\End}{End}
\DeclareMathOperator{\JL}{JL}
\def\eps{\varepsilon}
\def\PB{\operatorname{PB}}
\def\PGL{\operatorname{PGL}}
\def\Sob{\mathcal{C}}
\DeclareMathOperator{\Weil}{Weil}
\DeclareMathOperator{\U}{U}
\DeclareMathOperator{\trace}{trace}
\DeclareMathOperator{\bPB}{{\mathbf P}{\mathbf B}}
\DeclareMathOperator{\PSL}{PSL}
\DeclareMathOperator{\diag}{diag}
\DeclareMathOperator{\Sym}{Sym}
\DeclareMathOperator{\Lie}{Lie}
\def\O{\operatorname{O}}
\DeclareMathOperator{\Opp}{Op}
\DeclareMathOperator{\sym}{sym}
\DeclareMathOperator{\cusp}{cusp}
\DeclareMathOperator{\pr}{pr}
\DeclareMathOperator{\reg}{reg}
\DeclareMathOperator{\res}{res}
\DeclareMathOperator{\vol}{vol}
\DeclareMathOperator{\nr}{nr}
\DeclareMathOperator{\tr}{tr}
\DeclareMathOperator{\supp}{supp}
\DeclareMathOperator{\h}{h}
\def\p{{p}}
\DeclareMathOperator{\loc}{loc}
\DeclareMathOperator{\jac}{jac}
\author{Paul D. Nelson}
\address{Aarhus University, Institut for Matematik, Denmark}
\email{paul.nelson@math.au.dk}
\subjclass[2010]{Primary 11F70; Secondary 11F27, 58J51}
\date{\today}
\title{Quantum variance on quaternion algebras, III}
\begin{document}

\begin{abstract}
  We determine the asymptotic quantum variance of microlocal lifts of Hecke--Maass cusp forms on the arithmetic compact hyperbolic surfaces attached to maximal orders in quaternion algebras.  Our result extends those for the non-compact modular surface obtained by Luo--Sarnak--Zhao, whose method required a cusp.  The global arguments in our proof involve an analytic study of the theta correspondence, the interplay between additive and multiplicative harmonic analysis on quaternion algebras, the equidistribution of translates of elementary theta functions, and the Rallis inner product formula.  These reduce the proof to local problems involving the construction and analysis of microlocal lifts via integral operators on the group, addressed using an analytic incarnation of the method of coadjoint orbits.
\end{abstract}
\maketitle

\setcounter{tocdepth}{1} \tableofcontents

\section{Introduction}\label{sec-1}
\subsection{Overview}\label{sec-1-1}
The quantum variance problem (see, e.g.,  \cite[\S1]{nelson-variance-73-2}, \cite{MR848319}, \cite[\S15.6]{2009arXiv0911.4312Z}, \cite[\S4.1.3]{MR3204186}, \cite{ 2013arXiv1303.6972S, MR1361757,MR1465794,luo-sarnak-mass,MR2103474,MR2651907}) concerns sums of the shape
\begin{equation}\label{eq:qv-sums-classical}
  \sum_{\varphi \in \mathcal{F}}
  \langle \varphi, \Psi_1 \varphi \rangle
  \langle \Psi_2 \varphi, \varphi \rangle.
\end{equation}
Here $\Psi_1,\Psi_2$ are fixed mean-zero functions on the unit cotangent bundle of a Riemannian manifold $M$ with ergodic geodesic flow, $\mathcal{F}$ traverses a sequence of families of microlocal lifts of Laplace eigenfunctions with eigenvalues in $[0, T^2]$, and $T \rightarrow \infty$.  The problem is to determine the leading order asymptotic behavior of \eqref{eq:qv-sums-classical}.  The difficulty of the problem may be appreciated by comparing the expected magnitude $\asymp T$ for \eqref{eq:qv-sums-classical} for typical $\Psi_1 = \Psi_2$ with the best known general upper bound $O(T^{\dim(M)} / \log T)$ (see, e.g., \cite[\S1]{nelson-variance-73-2} and references for details).

Although a mathematically rigorous solution to the problem seems hopeless on general $M$, Sarnak--Zhao \cite{2013arXiv1303.6972S} (following Luo--Sarnak \cite{luo-sarnak-mass} and Zhao \cite{MR2651907}) managed to solve it completely on 
$M = \SL_2(\mathbb{Z}) \backslash \mathbb{H}$ for $\mathcal{F}$ consisting of Hecke eigenfunctions.  It is natural to seek analogous results on other arithmetic quotients, such as the compact quotients attached to orders in quaternion division algebras.  The method of Luo, Sarnak and Zhao demonstrates the power of \emph{parabolic Fourier expansions}, such as the $q$-expansions $\sum a_n q^n$ enjoyed by classical holomorphic modular forms on $\SL_2(\mathbb{Z})$ at the cusp $\infty$, to establish results that are inaccessible by means of semiclassical analysis or trace formulas alone.  Conversely, their technique is fundamentally limited to \emph{split} quotients, such $\SL_2(\mathbb{Z}) \backslash \mathbb{H}$ and its congruence covers, on which such expansions are available.

In this article, we develop systematically a method for studying quantum variance that applies also to \emph{non-split} arithmetic quotients arising from non-split quaternion algebras, in contrast to the split matrix algebra $M_2(\mathbb{Q})$ underlying the quotient $\SL_2(\mathbb{Z}) \backslash \mathbb{H}$ considered by Luo, Sarnak and Zhao.  Part \ref{part:quant-vari-theta} of this paper establishes global estimates concerning the quantum variance of general families of automorphic forms (see Theorem \ref{thm:var-2}, stated in \S\ref{sec:orgb2d0a08}, and
Theorem \ref{thm:main-estimate-general-variance}, stated in \S\ref{sec-4}).  Part \ref{part:appl-micr-lifts} provides additional local estimates, at the archimedean place, relevant for determining the quantum variance of families of microlocal lifts of Hecke--Maass forms (Theorem \ref{thm:var-3}) and its analogue for holomorphic forms (Theorem \ref{thm:var-quat-annals-submission:limit-begin-lim_h}).

\subsection{Trace formulas and linear statistics}\label{sec:trace-formula-linear}
Let $\mathbf{X} := \Gamma \backslash G$ be the quotient by an arithmetic lattice of the points of a semisimple $\mathbb{Q}$-group over a local field, such as the real numbers, and let $\mathcal{F}$ be a ``large'' collection of eigenfunctions $\varphi : \mathbf{X} \rightarrow \mathbb{C}$.
One can ask for the asymptotic statistics, as $\mathcal{F}$ traverses a sequence of families, of the random measure on $\mathbf{X}$ sending a test function $\Psi$ to $\langle \varphi, \Psi \varphi \rangle$, where $\varphi \in \mathcal{F}$ is sampled randomly with respect to (say) the normalized counting measure.

The \emph{linear} statistics of this random measure are captured by the \emph{mean}
$\Psi \mapsto \mathbb{E}_{\varphi \in \mathcal{F}} \langle \varphi, \Psi \varphi \rangle$.
When $\mathcal{F}$ admits a nice harmonic-analytic description, it can be (at least approximately) picked off by a convolution kernel $f \in C_c^\infty(G)$.  The mean can then be studied using trace formula techniques: by integrating the pretrace formula
\begin{equation}\label{eq:intro-pretrace-fromula}
  \sum_{\gamma \in \Gamma} f(x^{-1} \gamma y)
  \approx
  \sum_{\varphi \in \mathcal{F} }
  \overline{\varphi(x)} \varphi(y) \quad (x,y \in \Gamma \backslash G)
\end{equation}
over the diagonal against $\Psi$, one obtains an identity
\begin{equation}\label{eq:intro-pretrace-fromula-1}
  \sum_{\varphi \in \mathcal{F} }
  \langle \varphi, \Psi \varphi  \rangle
  \approx
  \int_{x \in \Gamma \backslash G}
  \Psi(x) \sum_{\gamma \in \Gamma} f(x^{-1} \gamma x) \,d x 
\end{equation}
whose right hand side may be studied by methods for bounding orbital integrals much as in the ``Weyl's law'' case $\Psi \equiv 1$.

Higher-order statistics, such as the $n$-point correlations
\[
  (\Psi_1,\dotsb,\Psi_n) \mapsto \mathbb{E}_{\varphi \in \mathcal{F}} \langle \varphi, \Psi_1 \varphi \rangle \dotsb \langle \varphi, \Psi_n \varphi \rangle,
\]
are more mysterious.  The quantum variance problem concerns the quadratic statistics \eqref{eq:qv-sums-classical}, about which trace formulas alone say little.

\subsection{Hecke multiplicativity and variance statistics}\label{sec:35ac3e567f}
Until this series of works, the only known asymptotic formulas for higher-order statistics in this setting of \S\ref{sec:trace-formula-linear} were those of Luo--Sarnak--Zhao concerning $\SL_2(\mathbb{Z}) \backslash \SL_2(\mathbb{R})$.  The point of departure for their method is that when the eigenfunctions $\varphi$ admit Fourier expansions whose coefficients $\lambda(n)$ enjoy a ``doubling identity'' of the shape
\begin{equation}\label{eqn:hecke-multiplicativity}
  \lambda(m) \lambda(n) = \sum \lambda(\dotsb),
\end{equation}
one can try to reduce variance statistics to linear ones and apply trace formulas such as \eqref{eq:intro-pretrace-fromula}.  This method does not apply when such expansions are not available.

\subsection{Theta functions and variance statistics}\label{sec:theta-funct-vari}
When the space $\mathbf{X}$ arises from a quaternion algebra $B$ (over $\mathbb{Q}$, say), the Eichler/Shimizu theta correspondence provides an analogue of the doubling identity \eqref{eqn:hecke-multiplicativity} that suggests a natural strategy for studying quantum variance.  We pursue this strategy here.  Let $\mathcal{F}$ be a family of eigenfunctions on $\mathbf{X}$.  Oversimplifying for now, Shimizu's theorem (see \cite[II.1]{MR783511}) says that one can find
\begin{itemize}
\item a space $\mathbf{X} '$ (a congruence cover of $\PGL_2(\mathbb{Z}) \backslash \PGL_2(\mathbb{R})$),
\item a function of three variables $\Theta : \mathbf{X} \times \mathbf{X} \times \mathbf{X} ' \rightarrow \mathbb{C}$ (a theta kernel), and
\item for each $\varphi \in \mathcal{F}$, a function $\Phi_\varphi : \mathbf{X} ' \rightarrow \mathbb{C}$ (an Eichler/Jacquet--Langlands lift)
\end{itemize}
with the property that
\begin{equation}\label{eq:shimizu-identity}
  \overline{\varphi(x)} \varphi(y)
  =
  \int_{z}
  \Phi_\varphi(z)
  \Theta(x,y;z) \, d z
  \quad \text{ for all $\varphi \in \mathcal{F}$
    and $x,y \in \mathbf{X}$.
  }
\end{equation}
By integrating the diagonal case $x=y$ of \eqref{eq:shimizu-identity} against $\Psi$, one obtains
\begin{equation}\label{eq:identity-pre-parseval}
  \langle \varphi, \Psi \varphi  \rangle
  =
  \int_{z}
  \Phi_\varphi(z)
  \int_x \Psi(x)
  \Theta(x,x;z)  \, d x \, d z.
\end{equation}
If the functions $\Phi_\varphi$ are orthogonal to one another and the family $\mathcal{F}$ is sufficiently ``complete,'' then a cavalier application of Parseval's formula to \eqref{eq:identity-pre-parseval} suggests that
\begin{equation}\label{eq:formula-seesaw}
  \begin{split}
    &\sum_{\varphi \in \mathcal{F}}
  \left(\int |\Phi_\varphi|^2 \right)^{-1}
  \langle \varphi, \Psi_1 \varphi  \rangle
  \langle \Psi_2 \varphi, \varphi  \rangle \\
 &\quad  =
  \int_{z}
  \left(\int_x \Psi_1(x)
  \Theta(x,x;z) \, d x\right)
  \overline{\left(\int_y \Psi_2(y)
    \Theta(y,y;z) \,d y \right)} \,d z.
\end{split}
\end{equation}
The left hand side of \eqref{eq:formula-seesaw} may be understood as a reasonable proxy for the quantum variance \eqref{eq:qv-sums-classical} of $\mathcal{F}$ provided that the weights $\int |\Phi_\varphi|^2$ are sufficiently uniform in $\varphi$.
A first aim of this article is to develop robust techniques for determining the asymptotics of the right hand side of \eqref{eq:formula-seesaw}, which is not \emph{a priori} any simpler to analyze than the left hand side.  A second aim is to apply the resulting machinery to an interesting family of automorphic forms.

We have oversimplified by neglecting that the theta kernel $\Theta$ produced by Shimizu's theorem may (and generally does) depend upon the automorphic form $\varphi$.  For the above argument to make sense, we need to choose one $\Theta$ that works for every element of the family $\mathcal{F}$.  It is natural instead to interpret \eqref{eq:formula-seesaw} as \emph{defining} a (weighted) family $\mathcal{F}$ in terms of $\Theta$.  A third aim of this article is then to clarify in general how to invert the association $\Theta \mapsto \mathcal{F}$.

\subsection{Setup and notation}\label{sec:intro-setup}
We now prepare to state our main results.  To that end, we begin by recalling (from, e.g., \cite[\S9]{MR0314766} or \cite[\S4]{MR580949} or \cite[\S38]{voightQA}) the parametrization of arithmetic hyperbolic surfaces $\mathbf{Y}$ and their unit cotangent bundles $\mathbf{X}$, restricting for simplicity to those attached to maximal orders.  Let $M := M_2(\mathbb{R})$ denote the $2 \times 2$ matrix algebra and $G := \PGL_2(\mathbb{R}) = M^\times / \mathbb{R}^\times$ its projectivized unit group.  Let $F$ be a totally real number field.  (Our results are new when $F = \mathbb{Q}$, but the generality does not introduce complication.)  Let $B$ be a quaternion algebra over $F$ for which there is exactly one real place $\mathfrak{q}$ of $F$ such that $B_{\mathfrak{q}}$ is isomorphic to $M$; we fix such an isomorphism, together with a maximal order $R \subseteq B$, and denote by $\Gamma \leq G$ the image of $R^\times$ under the induced isomorphism $B_{\mathfrak{q}}^\times / F_{\mathfrak{q}}^\times \cong G$.  Then $\Gamma \leq G$ is a discrete cofinite subgroup; it is cocompact except when $B$ is split, in which case $F = \mathbb{Q}$ and $B \cong M_2(\mathbb{Q})$.  We denote by $K \leq G$ the image of $\O(2)$ and by $G^1, K^1, \Gamma^1$, the subgroups consisting of positive determinant elements.  We take
\begin{equation*}
  \mathbf{X} := \Gamma \backslash G,
  \quad 
\mathbf{Y} := \mathbf{X} / K^1.
\end{equation*}
We equip $G$ with any Haar measure $d g$ and $\mathbf{X}$ with any $G$-invariant measure.  We write
\begin{equation*}
  \langle \varphi_1, \varphi_2 \rangle := \int_{\mathbf{X}} \varphi_1 \overline{\varphi_2}
\end{equation*}
for $\varphi_1, \varphi_2 \in L^2(\mathbf{X})$.  The group $G$ acts unitarily on $L^2(\mathbf{X})$ by right translation: $g \varphi (x) := \varphi(x g)$.  We denote by
\begin{equation*}
  H =
  \begin{pmatrix}
    \ast    & 0 \\
    0 & \ast \\
  \end{pmatrix}
\end{equation*}
the diagonal subgroup of $G$.

We assume for simplicity of presentation that $F$ has odd narrow class number.  The strong approximation theorem (see \S\ref{sec-4-4-2} or \cite[\S28]{voightQA}) then identifies $\mathbf{X}$, rather than a finite disjoint union of similarly defined quotients, with the adelic quotient $\mathbf{G}(F) \backslash \mathbf{G}(\mathbb{A}) /J$, where
\begin{itemize}
\item $\mathbb{A}$ denotes the adele ring of $F$,
\item $\mathbf{G}$ denotes the $F$-algebraic group $A \mapsto (B \otimes_{F} A)^\times / A^\times$, and
\item $J = J_{\infty} \prod_{\p < \infty }J_{\p}$, with $J_{\p }$ the image of $R_p^\times$ and $J_{\infty} \cong \SO(3)^{[F:\mathbb{Q}]-1}$ the points of $\mathbf{G}$ over the product of the archimedean completions of $F$ other than $F_{\mathfrak{q}}$.
\end{itemize}
We obtain for each finite prime $\p$ of $F$ a Hecke operator $T_{\p}$ acting on $L^2(\mathbf{X})$, defined by a sum over cosets: temporarily writing $r_p \in R_p$ for any element whose norm has valuation one, we have
\begin{equation}\label{eq:cnsez2t3gv}
  T_p \varphi(x) =
  \sum_{
    \alpha J_p
    \subseteq J_p r_p J_p
  } \varphi(x \alpha).
\end{equation}
The number of cosets is $|\p| + 1$ or $1$ according as $\p$ does or does not split $B_{\p}$, where $|p|$ denotes the absolute norm.  These operators commute with one another and also with $G$.  Strong approximation also implies that the group $\Gamma$ contains elements of negative determinant.  We may thus identify $\mathbf{X}$ with $\Gamma^1 \backslash G^1$ and $\mathbf{Y}$ with $\Gamma^1 \backslash \mathbb{H}$, where $\mathbb{H} \cong G^1/K^1$ is the hyperbolic plane.

By an \emph{eigenfunction} $\Psi : \mathbf{X} \rightarrow \mathbb{C}$, we mean a smooth $K$-finite function that generates an irreducible representation of $G$ and is a $T_{\p}$-eigenfunction for each $p$.

We assume that $B$ is non-split, so that $\mathbf{X}$ is compact.  We denote by $L^2_0(\mathbf{X}) \subseteq L^2(\mathbf{X})$ the subspace of mean-zero functions and by $A_0$ the set of subspaces $\pi \subseteq L_0^2(\mathbf{X})$ that are irreducible under $G$ and eigenspaces for each $T_{\p}$.  The smooth $K$-finite vectors in $\pi$ are eigenfunctions in the above sense, and each nonzero eigenfunction generates one such $\pi$.  The multiplicity one theorem implies that the space $L^2_0(\mathbf{X})$ is the Hilbert direct sum of the $\pi \in A_0$.  Under strong approximation in the sense noted above, $A_0$ identifies with the set of generic automorphic representations of $\mathbf{G}$ containing a nonzero $J$-invariant vector.

Each $\pi \in A_0$ has an infinitesimal character $\lambda_\pi \in \mathbb{R}$, describing the action of the center of the universal enveloping algebra of $G$ (see \S\ref{sec:prelims-representations}).  If $\lambda_\pi < 0$, then $\pi$ contains a one-dimensional space of $K^1$-invariant vectors $\varphi_\pi$; these descend to Laplace eigenfunctions on $\mathbf{Y}$ of eigenvalue $1/4-\lambda_\pi$, giving a bijection
\[
  \{\pi \in A_0 : \lambda_\pi < 0 \} \leftrightarrow \frac{\{\text{Hecke--Maass eigenforms $\varphi_\pi$ on $\mathbf{Y}$ of eigenvalue $>1/4$}\}}{\text{scaling}}.
\]
We normalize $\varphi_\pi$ so that $\langle \varphi_\pi, \varphi_\pi \rangle = 1$.  We denote by $\mu_\pi$ the representation-theoretic microlocal lift of $\varphi_\pi$ constructed by Zelditch and studied in the related work of Sarnak--Zhao \cite{2013arXiv1303.6972S}.  We recall the precise construction of $\mu_\pi$ in \S\ref{sec:constr-mu-pi-ZW}.  We mention for now only that it is a functional
\begin{equation}\label{eq:mu-pi-first-appearance}
  \mu_\pi :
  \{
  \text{$K$-finite smooth
    $\Psi : \mathbf{X} \rightarrow \mathbb{C}$}
  \}
  \rightarrow \mathbb{C}
\end{equation}
with the following properties:
\begin{itemize}
\item {\bf Lifting.} If $\Psi$ is $K^1$-invariant, so that it comes from a function on $\mathbf{Y}$, then $\mu_\pi(\Psi) = \langle \varphi_\pi \Psi, \varphi_\pi \rangle$.
\item {\bf Flow invariance.}  $\mu_\pi$ is asymptotically $H$-invariant: for each fixed $h \in H$ and $\Psi$ as above, the difference $\mu_{\pi}(h \Psi) - \mu_\pi(\Psi)$ tends to zero as $|\lambda_\pi| \rightarrow \infty$.
\end{itemize}
For further discussion concerning the $\mu_\pi$, we refer to \cite{MR916129, MR850155, MR976312, MR1838659, MR1814849, MR1859345, MR2346281, MR2314452}.

A theorem of Lindenstrauss \cite{MR2195133}, resolving a case of the arithmetic quantum unique ergodicity conjecture of Rudnick--Sarnak \cite{MR1266075}, implies that $\mu_\pi(\Psi) \rightarrow \langle \Psi, 1 \rangle / \vol(\mathbf{X})$ for each fixed continuous function $\Psi: \mathbf{X} \rightarrow \mathbb{C}$ as $|\lambda_\pi| \rightarrow \infty$.  Equivalently, $\mu_\pi(\Psi) \rightarrow 0$ when $\Psi$ has mean zero.


Let $S$ denote the (finite) set of finite primes of $F$ at which $B$ ramifies.  For $\p \in S$, the operator $T_{\p}$ is an involution (as follows from the definition \eqref{eq:cnsez2t3gv} and the fact that $J_p$ is the kernel of the norm-valuation map $\mathbf{G}(F_p) \rightarrow \mathbb{Z} / 2 \mathbb{Z}$).  We may thus speak of the parity of an eigenfunction with respect to $T_{\p}$.  The local root number of $\sigma \in A_0$ at the distinguished real place $\mathfrak{q}$ is of the form $\pm 1$; it is $+1$ precisely when $\pi$ admits a nonzero functional invariant by the normalizer in $G$ of $H$.  We say that $\sigma \in A_0$ is \emph{even} if
\begin{itemize}
\item for each $p \in S$, the $T_p$-eigenvalue of $\sigma$ is $+1$, and
\item the local root number of $\sigma$ at $\mathfrak{q}$ is $+1$.
\end{itemize}
If $\sigma$ is not even, then $\mu_\pi(\Psi) = 0$ for all $\Psi \in \sigma$ (see \S\ref{sec:branch-coeff}); for this reason, we focus primarily on even $\sigma$.  We say also that an eigenfunction $\Psi$ is even if it belongs to an even $\sigma \in A_0$.

For a function $\Psi : \mathbf{X} \rightarrow \mathbb{C}$, we write $\Psi^w := \frac{1}{2} (\Psi + w \Psi)$ for its symmetrization with respect to the Weyl element $w := \begin{pmatrix}
  & 1 \\
  -1 &
\end{pmatrix} \in G$.

We equip the diagonal subgroup $H \leq G$ with the Haar measure given by
\[
  \int_H f := \int_{y \in \mathbb{R}^\times} f (\begin{pmatrix}
    y &   \\
    & 1
  \end{pmatrix}) \, \frac{d y}{|y|},
\]
where $d y$ denotes Lebesgue measure.

We denote in what follows by $L^{(S)}(\dotsb,s)$ the finite part of an $L$-function, omitting Euler factors in $S$.  For $\pi \in A_0$, we abbreviate
\begin{equation*}
  \iota_{\pi} := L^{(S)}(\ad \pi,1).
\end{equation*}

\subsection{Main result}\label{sec:main-result}
We henceforth fix a pair of nonzero mean-zero even eigenfunctions $\Psi_1, \Psi_2$.  We denote by $\sigma_1, \sigma_2 \in A_0$ the representations that they generate.

\begin{theorem}[Quantum variance of microlocal lifts on compact arithmetic surfaces]\label{thm:var-3}
  The limit
  \begin{equation}\label{eq:thm-1}
    \lim_{\h \rightarrow 0}
    \h
    \sum_{
      \substack{
        \pi \in A_0 : \\
        0 < -\h^2 \lambda_\pi < 1
      }
    }
    \iota_{\pi}
    \mu_\pi(\Psi_1)
    \overline{\mu_\pi(\Psi_2)}
  \end{equation}
  exists.  If $\sigma_1 \neq \sigma_2$, then that limit is zero.  If $\sigma_1 = \sigma_2 =: \sigma$, then it is given by
  \begin{equation}\label{eqn:limiting-variance-formula}
    \frac{c_B}{2 \pi}
    L^{(S)}(\sigma,\tfrac{1}{2})
    \int_{s \in H}
    \langle s \Psi_1^w, \Psi_2^w \rangle \,d s,
  \end{equation}
  where $c_B := 2^{\# S} \zeta_F^{(S)}(2) / \vol(\mathbf{X})$.
\end{theorem}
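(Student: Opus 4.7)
My plan is to invoke the global black-box supplied by the prequel and then solve the resulting local archimedean problem by the method of coadjoint orbits. In the prequel, via the theta correspondence and the Rallis inner product formula, weighted spectral sums of the shape \eqref{eq:thm-1} should be reduced to a product of three factors: a global $L$-value factor that produces $L^{(S)}(\sigma,\tfrac{1}{2})$ while absorbing $\iota_\pi$, a finite-adelic factor that simplifies to $c_B$ after strong approximation and ramified local computations, and a single local integral at the archimedean place $\mathfrak{q}$ depending on $\Psi_1, \Psi_2$ and $\h$. The vanishing when $\sigma_1 \neq \sigma_2$ is immediate from this reduction, since the theta lift factors through a single automorphic representation on $\PGL_2$ that cannot coincide with two distinct $\sigma_i$.

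The remaining local problem is an asymptotic computation as $\h \to 0$. From \S\ref{sec:constr-mu-pi-ZW}, the microlocal lift should be realized as $\mu_\pi(\Psi) = \langle \pi(\Opp_\h(\Psi)) v_\pi, v_\pi \rangle$, where $\Opp_\h(\Psi)$ is an integral operator on $G$ whose kernel is a fixed bump concentrated at $1 \in G$ at scale $\h$ and weighted by $\Psi$, and $v_\pi \in \pi$ is a distinguished unit vector that is asymptotically $H$-invariant. Substituting this into the local integral produced by the prequel, the main analytic task becomes the asymptotic evaluation of certain oscillatory bilinear forms in $\Psi_1, \Psi_2$ as $\h \to 0$, for $\pi$ varying over the semiclassical window $0 < -\h^2 \lambda_\pi < 1$. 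Here I would apply an analytic orbit method: a representation $\pi$ with $|\lambda_\pi| \sim \h^{-2}$ corresponds to a hyperbolic coadjoint orbit $\mathcal{O}_\pi \subset \mathfrak{g}^*$ of radius $\sim \h^{-1}$, on which $H$ acts with two fixed points exchanged by $w$, and matrix coefficients against asymptotic $H$-invariants are oscillatory integrals over $\mathcal{O}_\pi$ localizing near those fixed points. A stationary-phase analysis at the two critical points produces the Weyl-symmetrized integrands $\Psi_j^w$ and the $H$-period $\int_{s \in H}\langle s \Psi_1^w, \Psi_2^w\rangle$, with the prefactor $1/(2\pi)$ arising from the leading Gaussian.

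The main obstacle will be to establish this local asymptotic with sufficient uniformity in $\pi$ to justify interchanging $\h \to 0$ with the spectral sum. This calls for symbol-calculus estimates on $\Opp_\h(\Psi)$, decay bounds on matrix coefficients of $\pi$ transverse to $H$, and a uniform treatment of the principal series together with the complementary series near the edge $-\h^2 \lambda_\pi \to 1$. A further delicate point is the exact bookkeeping of Haar and Plancherel normalizations through the reduction chain, so that the prefactor $c_B/(2\pi)$ emerges with the stated constant; the evenness hypothesis on $\sigma$ enters here both globally, through the ramified $T_p$-eigenvalues and the real root number, and locally, through the $w$-symmetrization without which the stationary-phase contribution would vanish outright.
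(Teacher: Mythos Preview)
Your proposal misidentifies the structure of the local problem. The prequel's black box (Theorem~\ref{thm:var-2} here) does not accept an operator built from $\Psi$; it takes a test function $f \in C_c^\infty(G)$, independent of $\Psi_1,\Psi_2$, and returns an identity for $\mathcal{V}(f) = \sum_\pi \iota_\pi\, \mu(\pi(f),\Psi_1)\overline{\mu(\pi(f),\Psi_2)}$, where the observable enters via $\mu(\pi(f),\Psi) = \sum_{v \in \mathcal{B}(\pi)} \langle \pi(f)v \cdot \Psi, v\rangle$. There is no operator ``$\Opp_\h(\Psi)$'' in this setup, and \S\ref{sec:constr-mu-pi-ZW} defines $\mu_\pi$ via raising and lowering operators, not via any integral operator. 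A central piece of local work---absent from your plan---is therefore to \emph{construct} $f$ from a symbol $a \in S^{-\infty}_\delta$ on $\mathfrak{g}^\wedge$ supported near the $H$-fixed ray $\{\xi(t):t>0\}$ (not from $\Psi$), and then to prove that $\mu(\pi(f),\Psi)$ approximates $k(\h^2\lambda_\pi)\,\mu_\pi(\Psi)$. This is the content of \S\ref{sec:states-approx-microlocal}--\S\ref{sec:org8bcf1bd}: one shows that $T=\h\,\Opp_\h(a)^2$ is ``$\delta$-localized'' at $\xi(\sqrt{-\Omega_\pi})$, and that any such localized positive operator satisfies $\mu(T,\Psi) = \trace(T)\,\mu_\pi(\Psi) + O(\h^\delta\sqrt{\h\,\mathcal{L}(\pi,\sigma)})$. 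The square-root error shape is essential: only after Cauchy--Schwarz and the mean bound \eqref{eq:SZ-upper-bound-2} on $\mathcal{L}(\pi,\sigma)$ does the comparison \eqref{eqn:req-V} between $\mathcal{V}(f)$ and the genuine variance sum go through.

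Your main-term mechanism is also not the one that works here. The paper does not run stationary phase on $\mathcal{O}_\pi$ against a distinguished $v_\pi$; instead $\mathcal{I}(f) = \int_G \langle g\cdot f, f\rangle_G\, \langle g\Psi_1^w,\Psi_2^w\rangle$ is pulled back to $\mathfrak{g}$, Parseval is applied, and the resulting integral over $\mathfrak{g}^\wedge$ is disintegrated along coadjoint orbits (the lemma of \S\ref{sec:orge0a88b2}). The $H$-period arises because $H$ is the stabilizer of $\xi(t)$, and the $w$-symmetrization is forced upstream by the symbol condition $a(-w\cdot\xi)=a(\xi)$, which gives $w\cdot f(g)=f(g^{-1})$ and hence $\mathfrak{S}f = \tfrac{1}{2}(f + w\cdot f)$. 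Finally, your plan says nothing about the error terms in \eqref{eq:var-2-main-identity}: these are $\mathcal{E}_{\tau_1,\tau_2}(\heartsuit^{\tau_1}f,\heartsuit^{\tau_2}f)$, and the bound \eqref{eqn:req-E} is obtained by recognizing $\heartsuit^\tau f = D_{1/\h}\Phi_\h$ for a Schwartz function $\Phi_\h$ uniformly controlled in $\h$ and then invoking \eqref{eqn:main-estimate}. The ingredients you name (transverse matrix-coefficient decay, complementary-series edge behavior) do not appear in the argument.
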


\begin{remark}
  As discussed in \cite{2013arXiv1303.6972S} or \S\ref{sec:local-convergence-lemmas}, the integral on the right hand side of \eqref{eqn:limiting-variance-formula} converges absolutely.  We note also that each of the expressions \eqref{eq:thm-1} and \eqref{eqn:limiting-variance-formula} is independent of the choice of Haar measure on $\mathbf{X}$.  If we equip $\mathbf{X}$ with the pullback of the standard hyperbolic measure $\frac{d x \, d y}{y^2}$ on $\mathbf{Y}$, then we may verify as in \cite[\S1]{watson-2008} that
  \[
    \frac{c_B}{2 \pi} = 2^{\# S} \frac { (4 \pi^2)^{[F:\mathbb{Q}]-1} \prod_{\p \in S} (1 + 1/|\p|) } { 4 \Delta_F^{3/2} \Delta_B }
  \]
  where $\Delta_F$ and $\Delta_B$ denote the absolute discriminant and absolute reduced discriminant, respectively, and $|\p|$ denotes the absolute norm of the finite prime $\p$.  For instance, if $F = \mathbb{Q}$, then
  \[
    \frac{c_B}{2 \pi} = 2^{\# S} \frac { \prod_{p \in S} (1 + 1/p) } { 4 \Delta_B }
  \]
  The factor $2^{\# S}$ may be understood (see \S\ref{sec:heuristics}) as coming from the nontrivial normalizer of $\Gamma$, corresponding to the involutory Hecke operators $T_{\p}$ ($\p \in S$).  If one instead sums over only those $\pi$ having eigenvalue $+1$ under such operators, then this factor disappears.
\end{remark}

\begin{remark}\label{rmk:var-quat-annals-submission:proof-gives-rate}
  The proof gives a rate of convergence in \eqref{eq:thm-1} of the form $\h^{\delta}$ for some fixed $\delta > 0$.  We do not explicate or optimize the exponent here.  For the variant problem obtained by replacing the sharp truncation $0 < - \h^2 \lambda_\pi < 1$ by a smooth dyadic weight, one could likely optimize our methods to obtain the rate $\O(\h^{1/2})$ and show that this rate is best possible (cf. \cite[\S6.5]{nelson-variance-73-2}).
\end{remark}

\begin{remark}\label{rmk:involving-sym}
  For the sake of comparison\footnote{
    Our main term is half the analogue of that obtained in the cited reference, due to minor computational errors in the latter.
  }
  with \cite{2013arXiv1303.6972S}, we note that
  \begin{equation}\label{eq:weyl-inv}
    \int_{s \in H}
    \langle s \Psi_1^w, \Psi_2^w \rangle \,d s 
    =
    2
    \int_{u \in \mathbb{R}}
    \langle \begin{pmatrix}
      e^{u/2} &  \\
      & e^{-u/2}
    \end{pmatrix} \Psi_1^{\sym}, \Psi_{2}^{\sym} \rangle \, d u,
  \end{equation}
  where $\Psi^{\sym}$ denotes the average of $\Psi$ over its translates by the four-element subgroup of $G$ generated by $\diag(-1,1)$ and $w$.
\end{remark}

\begin{remark}\label{rmk:removing-weights}
  The ``arithmetic weights'' $\iota_{\pi}$ arise in our method for reasons illustrated best by \cite[\S2.8, \S7]{nelson-variance-73-2}.  They have mild size ($\O(\h^{-\eps})$ for any fixed $\eps > 0$) and mean $1$.  Sarnak--Zhao \cite{2013arXiv1303.6972S} showed in the non-compact case that if one modifies the sums \eqref{eq:thm-1} by omitting the weights $\iota_{\pi}$, then the conclusion remains valid after multiplying the main term \eqref{eqn:limiting-variance-formula} by a certain explicit factor $c_\sigma > 0$.  To do this, they used zero density estimates for families of $L$-functions to approximate $\iota_{\pi}^{-1}$ for most $\pi$ by a short Dirichlet polynomial, and then appealed to estimates for Hecke-twisted variants of \eqref{eq:thm-1}.  Their method applies in our setting with the (analogous) constant
  \begin{equation}\label{eq:c-sigma-defn}
    c_\sigma :=
    \frac{1}{\zeta_F^{(S)}(2)}
    \prod_{p \notin S}
    \left( 1 - \frac{\lambda_\sigma(p)}{|p|^{3/2} + |p|^{1/2}}
    \right),
  \end{equation}
  where $\lambda_\sigma(p)$ denotes the Hecke eigenvalue normalized so that the Ramanujan conjecture says $|\lambda_\sigma(p)| \leq 2$.  We do not replicate here the details of their argument, but explain in \S\ref{sec:remov-arithm-weights} how the factor $c_\sigma$ arises naturally from the perspective of our method.
\end{remark}
\begin{remark}
  In \S\ref{sec:heuristics}, we extend the semiclassical heuristics for variance asymptotics from the generic non-arithmetic setting (see, e.g.,  \cite[\S15.6]{2009arXiv0911.4312Z}, \cite[\S4.1.3]{MR3204186}) to the setting of Theorem \ref{thm:var-3}.  In brief, the generic heuristics come from postulating that the average of $\mu_\pi(\Psi_1) \overline{\mu_\pi (\Psi_2)}$ over $\pi \in \mathcal{F}$ should be approximated by the double average of $\mu_{\pi_1}(\Psi_1) \overline{\mu_{\pi_2} (\Psi_2)}$ restricted to those $\pi_1, \pi_2 \in \mathcal{F}$ whose archimedean parameters are close in the sense that $\left\lvert \sqrt{- \lambda_{\pi_1}} - \sqrt{- \lambda_{\pi_2}} \right\rvert < \eps$ for some small $\eps > 0$.  In arithmetic settings, it is natural to impose the further condition that $\left\lvert \lambda_{\pi_1}(p) - \lambda_{\pi_2}(p) \right\rvert < \eps$ for all small primes $p$.  We show that, modulo identifying limits of finite Euler products with their formal $L$-function limits, the resulting predictions are consistent with our results.
\end{remark}

\subsection{The holomorphic analogue}\label{sec:holomorphic-analogue}
Our method applies just as well to holomorphic forms, giving an extension of Luo--Sarnak \cite[Thm 1]{MR2103474} to the setting of compact arithmetic surfaces.\footnote{  Our main term is half the analogue of that obtained in the cited reference, due to a minor error in the latter: on p788, the penultimate display should be multiplied by $1/2$, since the sum before (36) is taken only over even integers. }

Let $\pi \in A_0$ with $\lambda_\pi > 0$.  Then $\pi$, as a representation of $G$, is a discrete series representation of lowest weight $k$ for some natural number $k$, and $\lambda_\pi = (k-1/2)^2$.  (We normalize so that $k$ is not necessarily even -- see \S\ref{sec:prelims-representations}.)  For such $\pi$, the analogue of the microlocal lifts are the $L^2$-masses
\begin{equation*}
  \mu_\pi(\Psi) := \langle \varphi_\pi \Psi, \varphi_\pi \rangle
\end{equation*}
attached to a unit vector $\varphi_\pi \in \pi$ of weight $k$, i.e., the lift of a holomorphic modular form on $\mathbf{Y}$.  Such measures are $K$-invariant, so it suffices to test them against observables $\Psi$ that are likewise $K$-invariant.  Thus, fix a pair of nonzero $K$-invariant eigenfunctions $\Psi_1, \Psi_2$ whose $T_p$-eigenvalue, for each $p \in S$, is $+1$, and let $\sigma_1, \sigma_2 \in A_0$ denote the (even) representations that they generate.
\begin{theorem}[Quantum variance of holomorphic forms on compact arithmetic surfaces]\label{thm:var-quat-annals-submission:limit-begin-lim_h}
  The limit
  \begin{equation*}
    \lim_{\h \rightarrow 0}
    \h
    \sum_{
      \substack{
        \pi \in A_0 : \\
        0 < \h^2 \lambda_\pi < 1
      }
    }
    \iota_{\pi}
    \mu_\pi(\Psi_1)
    \overline{\mu_\pi(\Psi_2)}
  \end{equation*}
  exists.  If $\sigma_1 \neq \sigma_2$, then that limit is zero.  If $\sigma_1 = \sigma_2 =: \sigma$, then it is given by
  \begin{equation*}
    c_B
    L^{(S)}(\sigma,\tfrac{1}{2})
    \langle \Psi_1, \Psi_2 \rangle
  \end{equation*}
  with $c_B$ as in Theorem \ref{thm:var-3}.
\end{theorem}

The proof differs very mildly from that of Theorem \ref{thm:var-3}. A unified treatment could be given, but to keep the exposition concrete, we first prove Theorem \ref{thm:var-3}, then explain in \S\ref{sec:holomorphic-analogue-1} the modifications needed to obtain a proof of Theorem \ref{thm:var-quat-annals-submission:limit-begin-lim_h}.

\subsection{A quadratic trace formula}\label{sec:orgb2d0a08}
To present the proof of Theorem \ref{thm:var-3} as clearly as we can, we separate the difficulties concerning general families of automorphic forms (Part \ref{part:quant-vari-theta}) from those specific to the families of microlocal lifts considered above (Part \ref{part:appl-micr-lifts}).  Our treatment of general families is encapsulated by a result which we now formulate.

Let $\pi \in A_0$.  We identify finite-rank operators $T$ on $\pi$ with finite-rank tensors $T = \sum_i v_i \otimes \overline{v_i'} \in \pi \otimes \overline{\pi}$.  Given any such $T$ and any bounded measurable $\Psi : \mathbf{X} \rightarrow \mathbb{C}$, we set
\[
  \mu(T,\Psi) := \sum_i \langle v_i \Psi, v_i' \rangle.
\]
We verify readily (see, e.g., \cite[\S26.3]{nelson-venkatesh-1}) that $|\mu(T,\Psi)| \leq \|T\|_1 \|\Psi \|_{L^\infty}$, where $\|.\|_1$ denotes the trace norm.  We may thus extend the assignment $T \mapsto \mu(T,\Psi)$ continuously to trace class operators $T$ on $\pi$, and in particular, to the integral operators $\pi(f) := \int_{g \in G} f(g) \pi(g) \, d g$ attached to $f \in C_c^\infty(G)$ and our choice of Haar measure $d g$ on $G$.  Equivalently, we may express $\mu(T,\Psi)$ as the absolutely convergent sum
\[
  \mu(T,\Psi) = \sum_{v \in \mathcal{B}(\pi)} \langle T v \cdot \Psi, v \rangle,
\]
where $\mathcal{B}(\pi)$ is an orthonormal basis for $\pi$ consisting of $K$-isotypic vectors.

Recall that we have fixed some nonzero mean-zero even eigenfunctions $\Psi_1 \in \sigma_1, \Psi_2 \in \sigma_2$.  We define hermitian forms $\mathcal{V}$ and $\mathcal{M}$ on $C_c^\infty(G)$ as follows:
\begin{itemize}
\item $\mathcal{V}(f) := \sum_{\pi \in A_0} \iota_{\pi} \mu(\pi(f),\Psi_1) \overline{\mu(\pi(f),\Psi_2)}$.
\item For a function $f : G \rightarrow \mathbb{C}$, we define its ``symmetrization under inversion''
  \begin{equation*}
    \mathfrak{S} f(g) := \frac{f(g) + f(g^{-1})}{2}
  \end{equation*}
  and, for $g \in G$, the conjugated function $\Ad(g) f(x) := f(g^{-1} x g)$
\item If $\sigma_1 \neq \sigma_2$, then $\mathcal{M}(f) := 0$.  If $\sigma_1 = \sigma_2 =: \sigma$, then we set
  \begin{equation*}
    \mathcal{M}(f) := c_B L^{(S)}(\sigma,\tfrac{1}{2}) \mathcal{I}(f),
  \end{equation*}
  with $c_B$ as in \eqref{eqn:limiting-variance-formula} and
  \begin{equation*}
    \mathcal{I}(f) := \int_{g \in G} \langle \Ad(g) \mathfrak{S} f, \mathfrak{S} f \rangle_{G} \langle g \Psi_1, \Psi_2 \rangle \,d g
  \end{equation*}
  with $\langle , \rangle_G$ the inner product in $L^2(G)$.
\end{itemize}
We note that the sum defining $\mathcal{V}(f)$ converges rapidly (see \S\ref{sec:pretrace-formula}, \S\ref{sec:main-general-estimate-key-defns}) and the integral defining $\mathcal{M}(f)$ converges absolutely (see \S\ref{sec:local-convergence-lemmas}, \S\ref{sec:local-integrals-for-rallis-ipf}).

For any real vector space $V$, we denote by $\mathcal{S}(V)$ the space of Schwartz functions.  Recall that $M = M_2(\mathbb{R})$ denotes the $2 \times 2$ matrix algebra.  For each $\tau \in \mathbb{R}^\times$ and $f \in C_c^\infty(G)$, we define $\heartsuit^{\tau} f \in \mathcal{S}(M)$ by the formula
\[
  \heartsuit^{\tau} f(x) := 1_{M^\times}(x) \frac{W(\tau \det(x))}{|\tau \det(x)|} f(\pr(x)),
\]
where $\pr : M^\times \rightarrow G$ denotes the natural projection and $W \in C_c^\infty(\mathbb{R}^\times)$ is a nonzero test function that we fix once and for all.

\begin{remark*}
  The motivation for introducing the sums $\mathcal{V}(f)$ is that for suitable $f$, they will be seen to approximate the basic variance sums of interest.  The ``expected main terms'' $\mathcal{M}(f)$ will arise after some calculations involving theta functions and the Rallis inner product formula.  We refer to \S\ref{sec:heuristics} for some heuristic discussion, independent of our rigorous arguments, about why one should expect $\mathcal{V}(f) \approx \mathcal{M}(f)$ for nice enough $f$.  The operators $\heartsuit^{\tau}$ should be understood as associating to a function on the multiplicative group $G$ its ``thickening'' on the additive group $M$.  The key observation concerning these operators, detailed in \S\ref{sec:estimates-general-var}, is that $\heartsuit^{\tau} f$ is the kernel of a theta function with $L^2$-norm proportional to $\mathcal{V}(f)$.
\end{remark*}

Let $M^0 \leq M$ denote the trace zero subspace.  We identify $\mathbb{R}$ with the subspace of scalar matrices in $M$.  We then have an orthogonal decomposition $M = \mathbb{R} \oplus M^0$.

For $y \in \mathbb{R}^\times$, we denote by $D_y$ the operator on $\mathcal{S}(M)$ given by normalized scaling of the $M_0$ component: for $\Phi \in \mathcal{S}(M), t \in \mathbb{R}, u \in M^0$,
\[
  D_y \Phi(t+u) := |y|^{3/2} \Phi(t + y u).
\]
It extends to a unitary operator on $L^2(M)$.

\begin{theorem}\label{thm:var-2}
  There is a finite subset $X$ of $\mathbb{R}^\times$ and a collection $(\mathcal{E}_{\tau_1, \tau_2})_{\tau_1, \tau_2 \in X}$ of sesquilinear forms on $\mathcal{S}(M)$ so that for each $f \in C_c^\infty(G)$,
  \begin{equation}\label{eq:var-2-main-identity}
    \mathcal{V}(f) = \mathcal{M}(f) + \sum_{\tau_1,\tau_2 \in X}
    \mathcal{E}_{\tau_1, \tau_2}(\heartsuit^{\tau_1} f,
    \heartsuit^{\tau_2} f).
  \end{equation}
  Moreover, there is a continuous seminorm $\mathcal{C}$ on the Schwartz space $\mathcal{S}(M)$ so that for all $y \in \mathbb{R}^\times$ and $\phi_1, \phi_2 \in \mathcal{S}(M)$,
  \begin{equation}\label{eqn:main-estimate}
    |\mathcal{E}_{\tau_1,\tau_2}(D_y \phi_1,
    D_y\phi_2)|
    \leq
    \frac{\log(|y| + |y|^{-1})}{|y| + |y|^{-1}}
    \mathcal{C}(\phi_1)
    \mathcal{C}(\phi_2).
  \end{equation}
\end{theorem}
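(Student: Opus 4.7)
To prove this theorem \emph{ab initio}, I would mirror the theta-correspondence strategy announced in the introduction, whose central idea is to realize $\mathcal{V}(f)$ as the $L^2$-pairing of theta kernels whose Schwartz data are the thickenings $\heartsuit^{\tau} f$. First I would apply Ichino's triple-product formula to rewrite each summand $\iota_\pi \mu(\pi(f), \Psi_1) \overline{\mu(\pi(f), \Psi_2)}$ as a central $L$-value for $\sigma_1 \times \sigma_2 \times \pi \times \pi$ times a local matrix coefficient against $f$; the arithmetic weight $\iota_\pi = L^{(S)}(\ad \pi, 1)$ is precisely the Petersson normalization appearing in that identity, and so cancels. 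Via the theta correspondence for the dual pair attached to $B$---morally, $\PGL_2$ paired with the orthogonal group of the norm form on $B$---the sum over $\pi$ is then identified with the $L^2$-pairing of two theta lifts associated to $\heartsuit^{\tau_1} f$ and $\heartsuit^{\tau_2} f$; the finite set $X \subseteq F^\times$ records the finitely many similitude factors produced by the fixed $W \in C_c^\infty(\mathbb{R}^\times)$ appearing in the definition of $\heartsuit^{\tau}$.

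Next I would apply the Rallis inner product formula to express this $L^2$-pairing geometrically, as an integral over an orbital space paired against $\Psi_1 \otimes \overline{\Psi_2}$. Decomposing by orbits, the degenerate orbit whose stabilizer contains the diagonal subgroup $H$ produces the main term $\mathcal{M}(f)$: the factor $L^{(S)}(\sigma, \tfrac{1}{2})$ arises as a Waldspurger period evaluating on the $H$-invariant functional, the $H$-integral $\int_{H} \langle s \Psi_1^w, \Psi_2^w \rangle$ is the attendant orbital integral, and the symmetrizations $\Psi_i^w$ and $\mathfrak{S} f$ record the action of the nontrivial element of the normalizer of $H$ in $G$. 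The regular semisimple orbits contribute the error forms $\mathcal{E}_{\tau_1, \tau_2}$.

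To prove the quantitative bound \eqref{eqn:main-estimate}, I would unfold $\mathcal{E}_{\tau_1, \tau_2}(D_y \phi_1, D_y \phi_2)$ on these regular orbits and use the Weil-representation identity identifying $D_y$ with the action of a diagonal element of $\SL_2$ on Schwartz functions. Each error term then becomes an average of $H$-translates of elementary theta functions on $\mathbf{X}$, parametrized by $y$, and effective equidistribution of such translates---quantified by the spectral gap for $L^2(\mathbf{X})$---yields polynomial decay in $|y| + |y|^{-1}$, with the logarithm absorbing a boundary-truncation loss. The main obstacle is ensuring this decay \emph{uniformly} in $\phi_1, \phi_2$, controlled by only a single continuous Schwartz seminorm $\mathcal{C}$: this requires Sobolev-type bounds in the Weil representation compatible with the $D_y$-scaling, together with a careful handling of both the Eisenstein/residual and tempered contributions to the equidistribution error. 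This passage from fixed-test-function equidistribution to a Schwartz-uniform estimate is the technical heart of the argument.
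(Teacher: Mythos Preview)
The paper's own proof is a citation: it invokes \cite[Thm~2]{nelson-variance-II} as specialized in \cite[\S4.4]{nelson-variance-II}, takes $\mathcal{E}_{\tau_1,\tau_2}$ to be the functional ``$\ell$'' of \cite[\S9.3.4]{nelson-variance-II}, and obtains \eqref{eqn:main-estimate} by applying the prequel's ``main estimate'' with $s$ a metaplectic lift of $\diag(y,y^{-1})$. So what you are really sketching is the argument of the prequel itself.

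Your outline captures the right architecture from the second paragraph onward---the Rallis inner product formula for the main term, the factor $L^{(S)}(\sigma,\tfrac{1}{2})$ arising as a Waldspurger-type period, equidistribution of translates of elementary theta functions for the bound \eqref{eqn:main-estimate}, and the need for Sobolev-uniform control to obtain a single seminorm $\mathcal{C}$. This matches the paper's own summary of the prequel (see the abstract and the remarks preceding Theorem~\ref{thm:var-2}).

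However, your first step is not how the prequel proceeds, and it would not work as written. You propose applying Ichino's triple product formula term-by-term to each $\iota_\pi\,\mu(\pi(f),\Psi_1)\overline{\mu(\pi(f),\Psi_2)}$; but $\mu(\pi(f),\Psi)$ is a \emph{sum} $\sum_{v \in \mathcal{B}(\pi)} \langle \pi(f) v \cdot \Psi, v \rangle$ over a basis, not a single trilinear period, so the triple product formula does not apply to it directly, and in any case that route does not lead naturally to the Schwartz data $\heartsuit^\tau f$. The prequel instead works in the opposite direction: it \emph{directly constructs} a theta function on a metaplectic quotient whose archimedean Schwartz component is $\heartsuit^\tau f$, and shows via the Shimizu theta lift for the pair attached to $B$ that the $L^2$-norm of this theta function \emph{equals} $\mathcal{V}(f)$ up to constants---this is precisely what the paper means when it says ``$\heartsuit^\tau f$ is the kernel of a theta function with $L^2$-norm proportional to $\mathcal{V}(f)$.'' The Rallis inner product formula is then applied to that same $L^2$-norm, producing $\mathcal{M}(f)$ plus the error forms. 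The triple product formula appears in the present paper only in \S\ref{sec:branch-coeff}, to bound individual branching coefficients; it is not the engine behind the identity \eqref{eq:var-2-main-identity}.
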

The proof is completed in \S\ref{sec:deduct-theor-refthm}.  The key feature, elaborated in \S\ref{sec:sketch-deduct-theor}, is that Theorem \ref{thm:var-2} reduces the proofs of Theorem \ref{thm:var-3} and Theorem \ref{thm:var-quat-annals-submission:limit-begin-lim_h} to local problems.

\subsection{Outline of the proof of Theorem \ref{thm:var-2}}
\label{sec-1-6}
We follow the general strategy of \S\ref{sec:theta-funct-vari}.  The thickenings $f \mapsto \heartsuit^{\tau} f$ have been carefully constructed so as to define some $\Theta$ for which something like \eqref{eq:formula-seesaw} holds.  The integral $\int_z \Psi_i(z) \Theta(x,x;z) \,d z$ does not define a theta lift of $\Psi_i$ in the traditional sense, but instead decomposes as a sum of products $\theta_i(z) h_i(z)$, where $\theta_i$ is a variant of the Jacobi theta function and $h_i$ is a theta lift of $\Psi_i$.  The right hand side of \eqref{eq:formula-seesaw} then decomposes as a sum of inner products
\begin{equation}\label{eq:ip-4-theta-before-rearr}
  \langle \theta_1 h_1, \theta_2 h_2 \rangle.
\end{equation}
Suppose we can approximate each such inner product by
\begin{equation}\label{eq:ip-4-theta-after-rearr}
  \langle \theta_1, \theta_2 \rangle \langle h_1, h_2 \rangle.
\end{equation}
The Rallis inner product formula \cite{2012arXiv1207.4709T,
  MR2837015}
for theta lifts applies to
\eqref{eq:ip-4-theta-after-rearr};
summing it up,
we obtain
\begin{equation}\label{eq:asymptotic-after-rallis-in-intro-sketch}
  \sum_{\varphi \in \mathcal{F}}
\left(\int |\Phi_\varphi|^2 \right)^{-1}
\langle \varphi, \Psi_1 \varphi  \rangle
\langle \Psi_2 \varphi, \varphi  \rangle
\approx
\sharp
\mathcal{I}(f),
\end{equation}
where:
\begin{itemize}
\item $\approx$ means up to the error incurred by replacing each term \eqref{eq:ip-4-theta-before-rearr} with \eqref{eq:ip-4-theta-after-rearr}, and
\item $\sharp$ means ``modify by a central $L$-value as in Theorem \ref{thm:var-3}.'' \end{itemize}
To complete the proof of Theorem \ref{thm:var-2}, we must show that the ``error'' hidden by $\approx$ satisfies the required estimate.  Unfolding the definitions of our theta series, this reduces to showing that, for $D_y$ the diagonal flow attached to $y \in \mathbb{R}^\times$,
\begin{align*}
  \langle \theta_1 \cdot D_y h_1, \theta_2 \cdot D_y h_2 \rangle
  &=
  \langle \theta_1 \overline{\theta_2}, D_y (\overline{h_1} h_2) \rangle \\
  &=
  \langle \theta_1, \theta_2 \rangle \langle h_1, h_2 \rangle
  + \O\left( 
    \frac{\log(|y| + |y|^{-1})}{|y| + |y|^{-1}} \right),
\end{align*}
where the error depends continuously upon the data underlying $\theta_i, h_j$.  This estimate is a variant, established in \cite{nelson-theta-squared} for the non-square-integrable function $\theta_1 \overline{\theta_2}$, of the well-known mixing property for the diagonal flow.

\subsection{Deduction of Theorem \ref{thm:var-3} from Theorem \ref{thm:var-2}}\label{sec:sketch-deduct-theor}
Here we reduce  that deduction to a series of estimates, established in Part \ref{part:appl-micr-lifts}.  

We first set some asymptotic notation and terminology.  We consider a sequence $\{\h\}$ of positive reals $\h$ tending to zero, as in the statement of Theorem \ref{thm:var-3}.  By an ``$\h$-dependent element'' of a set $U$, we mean a map $\{\h\} \rightarrow U$, which we understand colloquially as an element $u \in U$ that depends (perhaps implicitly) upon the parameter $\h$.  The word ``fixed'' will be taken to mean ``independent of $\h$.''  Our default convention is that quantities not labeled ``fixed'' may depend upon $\h$, but we will usually mention this dependence for the sake of clarity.  Standard asymptotic notation is defined accordingly: $A = \O(B)$, $A \ll B$ and $B \gg A$ mean that $|A| \leq c |B|$ for some fixed $c \geq 0$, while $A \asymp B$ means that $A \ll B \ll A$; the meaning of an infinite exponent as in $A = \O(\h^\infty)$ is that the indicated estimate holds upon substituting for $\infty$ any fixed positive quantity.  The fixed quantities $c$ may of course depend upon any previously mentioned fixed quantities.  We always assume that $\h$ is small enough with respect to any mentioned fixed quantities.  (For instance, we may speak of an $\h$-dependent element $\pi \in A_0$ satisfying $1/2 < -\h^2 \lambda_\pi < 1$; its microlocal lift $\mu_\pi$ is an $\h$-dependent distribution on $\mathbf{X}$ that satisfies $|\mu_\pi(\Psi)| = \O(1)$ for fixed $\Psi$ as in \eqref{eq:mu-pi-first-appearance}; for fixed $\Psi \in C^\infty(\mathbf{X})$, we have $\langle \varphi_\pi, \Psi \rangle = \O(\h^\infty)$.)

For the proof of Theorem \ref{thm:var-3}, a simple approximation argument reduces our task to showing that there is a fixed $\delta > 0$ so that for each fixed nonnegative $k \in C_c^\infty(\mathbb{R}_{<0})$,
\begin{equation}\label{eq:thm-1-b}
  \h
  \sum_{
    \substack{
      \pi \in A_0
    }
  }
  \iota_{\pi}
  k(\h^2 \lambda_\pi)^2
  \mu_\pi(\Psi_1)
  \overline{\mu_\pi(\Psi_2)}
  =
  c_k
  L^{(S)}(\sigma,\tfrac{1}{2})
  \int_{s \in H}
  \langle s \Psi_1^w, \Psi_2^w \rangle \,d s + \O(\h^\delta),
\end{equation}
where
\begin{equation*}
  c_k := c_B \int_{t \in \mathbb{R}_{\geq 0}} k(-t^2)^2 \, \frac{d t}{2 \pi}.
\end{equation*}
Indeed, it is enough to show this for a class $\mathcal{K}$ of $\h$-dependent nonnegative functions $k \in C_c^\infty(\mathbb{R}_{<0})$ with the following properties:
\begin{itemize}
\item ($\mathcal{K}$ is ``controlled'') Each $k \in \mathcal{K}$ is supported on a fixed compact subset of $\mathbb{R}_{<0}$ and bounded from above by a fixed quantity.
\item ($\mathcal{K}$ is ``sufficiently rich'') For each fixed nonnegative $k_0 \in C_c(\mathbb{R}_{<0})$, we may find $k, k_+ \in \mathcal{K}$ so that $|k - k_0| \leq k_+$ and $\int k_+ \rightarrow 0$ as $\h \rightarrow 0$.
\end{itemize}
We construct such a class $\mathcal{K}$ explicitly in \S\ref{sec:org8bcf1bd}.  (In fact, for the class that we construct, quantitatively stronger properties hold, adequate for obtaining the rate mentioned in Remark \ref{rmk:var-quat-annals-submission:proof-gives-rate}.)

In \S\ref{sec:defn-f} and beyond, we construct for each $k \in \mathcal{K}$ an $\h$-dependent element $f \in C_c^\infty(G)$ and show that
\begin{equation}\label{eqn:req-V}
  \mathcal{V}(f)
  = \h \sum_{\pi \in A_0}
  \iota_{\pi} k(\h^2 \lambda_\pi)^2
  \mu_\pi(\Psi_1)
  \overline{\mu_\pi(\Psi_2)}
  + \O(\h^\delta)
\end{equation}
and
\begin{equation}\label{eqn:req-I}
  \mathcal{I}(f)
  = \int_{s \in H} \langle s \Psi_1^w, \Psi_2^w
  \rangle
  \,d s
  \int_{t > 0}
  k(-t^2)^2 \, \frac{d t}{2 \pi} 
  + \O(\h^\delta)
\end{equation}
and
\begin{equation}\label{eqn:req-E}
  \mathcal{E}_{\tau_1,\tau_2}(\heartsuit^{\tau_1} f, \heartsuit^{\tau_2} f) \ll \h^{1-\delta'}
\end{equation}
for fixed $\tau_1, \tau_2 \in \mathbb{R}^\times$, where $\delta' = \delta'(\delta) > 0$ is a fixed quantity with $\delta' \rightarrow 0$ as $\delta \rightarrow 0$.  The required estimate \eqref{eq:thm-1-b} then follows from the identity \eqref{eq:var-2-main-identity}.

The idea behind the construction of $f$, completed in \S\ref{sec:defn-f}, is to arrange that $\pi(f)$ is an approximate weighted projector onto a ``space'' spanned by ``$k(\h \lambda_\pi)$-many'' unit vectors $v \in \pi$ for which $\langle v \Psi, v \rangle \approx \mu_\pi(\Psi)$; this leads to \eqref{eqn:req-V}.  The orbit method and philosophy developed in \cite{nelson-venkatesh-1} and summarized in \S\ref{sec:operator-calculus} is a suitable tool for constructing and studying such approximate projectors.  This step of the proof may be understood as implementing the identity \eqref{eq:intro-pretrace-fromula} for the family of microlocal lifts.

For the proof (\S\ref{sec:orge0a88b2}) of the main term estimate \eqref{eqn:req-I}, we pull the inner product $\langle , \rangle_G$ back to the Lie algebra, apply Parseval, and disintegrate the resulting integral along the coadjoint orbits; the subgroup $H$ then arises naturally as the stabilizer of the ``limiting microlocal support'' of the vectors underlying $\mu_\pi$.  In particular, the $H$-integral in Theorem \ref{thm:var-3} arises naturally and geometrically, unlike in \cite{2013arXiv1303.6972S}.

The error estimate \eqref{eqn:req-E}, proved in \S\ref{sec:org543f416}, is ultimately a consequence of \eqref{eqn:main-estimate} and the fact that the function $f$ that we construct concentrates just above the scale $1 + \O(\h) \subseteq G$ and barely oscillates below that scale.  The thickenings $\heartsuit^{\tau} f(x)$, for fixed $\tau$, are thus given in ``polar coordinates'' $x = r^{1/2} g$ ($r > 0, g \in \SL_2(\mathbb{R})$) by a mildly modulated bump function on the region where $r \asymp 1$ and $g = 1 + \O(\h)$.  This region may also be described in ``Cartesian coordinates'': it consists of $x \in M$ for which $\trace(x) \asymp 1$ and whose traceless part $x^0 := x - \trace(x)/2$ satisfies $|x^0| \ll \h$.  It follows that $\heartsuit^{\tau} f$ is the image under $D_{1/\h}$ of an essentially fixed function, so we are in good position to apply \eqref{eqn:main-estimate}.

\subsection{Related work}\label{sec:35ac3e56ab}
The prequel \cite{nelson-variance-II} applies the results of Part \ref{part:quant-vari-theta} of this paper to the ``$p$-adic microlocal lifts'' introduced in \cite{nelson-padic-que}; the analysis there is simpler, owing mainly to the availability of exact projectors in the $p$-adic Hecke algebra.  The prequel to the prequel \cite{nelson-variance-73-2} introduces the method by application to the simplest nontrivial case, but taking many \emph{ad hoc} shortcuts.  The paper \cite{Nelson-TwistedSym2} employs a related method to give nontrivial quantum variance upper bounds in a ``horizontal'' level aspect, where new difficulties emerge.  The spiritual ancestor to each of these was the paper \cite{2012arXiv1210.1243N} concerning the numerical evaluation of modular forms on compact arithmetic surfaces.

Raphael Steiner \cite{2018arXiv181103949S} (see also \cite{2020arXiv200907194K, MR4752128}) recently introduced a method for bounding sup norms of automorphic forms via fourth moments $\sum_{\varphi \in \mathcal{F}} |\varphi(g)|^4$ over families.  Such moments may be understood as ``degenerate quantum variance sums'' obtained by taking the observables $\Psi_1 = \Psi_2$ to be point masses.  The first step in Steiner's method and ours are related: both consist of expressing the sums of interest in terms of inner products of theta functions.  The methods diverge thereafter: Steiner et al.\ use geometry of numbers techniques to obtain upper bounds, while we employ (among other things) spectral theory to obtain asymptotic formulas.

\part{Quantum variance and theta functions}\label{part:quant-vari-theta}
The purpose of this section is to prove Theorem \ref{thm:var-2}.  The inputs are the general machinery of theta functions, the multiplicity one theorem and Eichler/Shimizu/Jacquet--Langlands correspondence, the Rallis inner product formula, and the results of the companion article \cite{nelson-theta-squared} concerning the spectral decomposition of $|\theta|^2$ for one-variable theta functions $\theta$.  A key construction is that of the $\heartsuit^{\tau}$ operators (\S\ref{sec:heartsuit}); these solve the ``inversion problem,'' mentioned at the end of \S\ref{sec:theta-funct-vari}, of constructing theta functions whose integrals recover quantum variance sums over any given family.

We deduce Theorem \ref{thm:var-2} by specializing a more general result, Theorem \ref{thm:main-estimate-general-variance}, formulated adelically over number fields.  The main benefit of working generally is that each input to our proof is formulated adelically, and it is more natural to carry out the classical translation once at the end of the proof rather than separately for each input.  An auxiliary benefit is that we may specialize in other ways, e.g., to the depth aspect \cite{nelson-variance-II}.

The main intermediary results are:
\begin{itemize}
\item The relation between quantum variance sums and integrals of theta functions, and the identification of a proposed ``main term'' in the integrals of theta functions that arise (Proposition \ref{prop:after-extracting-main-term}, Lemma \ref{lem:main-term-eval-for-heartsuit}).
\item General estimates for the ``error terms'' (Proposition \ref{prop:main-error-estimate-global-adelic-general}).
\end{itemize}
These are related respectively to the Eichler/Shimizu correspondence, the Rallis inner product formula, and a variant (established in \cite{nelson-theta-squared}) of the equidistribution of the diagonal flow.

The reader looking for a quick overview might first study carefully \S\ref{sec:general-notation}, \S\ref{sec-4-1}, \S\ref{sec:main-general-estimate-key-defns}, \S\ref{sec-4-3}, which are essentially self-contained.

\section{General notation\label{sec:general-notation}}
\label{sec-1-8}
Let $A$ be a field or an adele ring.  Let $B$ be a quaternion algebra over $A$.  We denote by $\iota : B \rightarrow B$ the main involution, by $\nr : B \rightarrow A$ and $\tr : B \rightarrow A$ the reduced norm and trace
\begin{equation*}
  \nr(x) := x x^{\iota},
  \quad
  \tr(x) := x + x^{\iota},
\end{equation*}
and by
\begin{equation*}
  B^0 := \{x \in B : \tr(x) = 0\}
\end{equation*}
the subspace of traceless quaternions.  We employ the notations
\[
  n(x) := \begin{pmatrix}
    1 & x \\
    0 & 1
  \end{pmatrix}, \quad a(y) :=
  \begin{pmatrix}
    y & 0 \\
    0 & 1
  \end{pmatrix}, \quad t(y) :=
  \begin{pmatrix}
    y & 0 \\
    0 & y^{-1}
  \end{pmatrix}, \quad n'(x) :=
  \begin{pmatrix}
    1 & 0 \\
    x & 1
  \end{pmatrix}
\]
\[
  \Ad(g) x := g x g^{-1}, \quad \Ad(g) f(x) := f(g^{-1} x g)
\]
and
\[
  \mathfrak{S} f(x) := \frac{f(x) + f(x - \tr(x))}{2}
\]
whenever they make sense.  For example, this is the case if $g$ belongs to the unit group $B^\times$ of a quaternion algebra $B$ over $A$ as above and $x$ belongs to (resp. $f$ is a function on) one of the sets $B^\times/A^\times, B, B^0$. 

We define the right regular representation $\rho_{\reg}(g) f(x) := f(x g)$ whenever it makes sense.

Given a local (resp. global) field $F$, a nontrivial unitary character $\psi$ of $F$ (resp. of $\mathbb{A}/F$) and an element $a \in F^\times$, we denote by $\psi^a$ the nontrivial unitary character with the same domain as $\psi$ given by
\begin{equation*}
  \psi^a(x) := \psi(a x).
\end{equation*}

For a finite-dimensional vector space $V$ over a local field or adele ring, we denote by $\mathcal{S}(V)$ the space of Schwartz--Bruhat functions $\phi : V \rightarrow \mathbb{C}$, topologized as usual (see, e.g., \cite[\S11]{MR0165033}).

Let $G$ be a group over an adele ring or a finite product of local fields.  We let $C_c^\infty(G)$ denote the space of smooth compactly supported functions; as usual, smooth means infinitely differentiable (resp. locally constant) with respect to the archimedean (resp. non-archimedean) variables.  Assume that we have equipped $G$ with a Haar measure $d g$.  Let $\pi$ be a smooth representation of a group that contains $G$.  Let $f \in C_c^\infty(G)$.  We then define the operator $\pi(f) \in \End(\pi)$ by
\begin{equation*}
  \pi(f) v := \int_{g \in G} f(g) \pi(g) v \,d g.
\end{equation*}

The use of Vinogradov notation is standard: $A = \O(B)$, $A \ll B$ and $B \gg A$ each signify that $|A| \leq c |B|$ for some ``constant'' $c$, with dependencies indicated by subscripts; $A \asymp B$ signifies that $A \ll B \ll A$.
 
We write $1_E$ for the characteristic function of a subset $E$ of some set $X$.  For an assertion $A$, we set $1_A := 1$ if $A$ is true and $1_A := 0$ if $A$ is false.

We set $\mathbb{C}^{(1)} := \{ z \in \mathbb{C}^{\times} : |z| = 1\}$.

\section{Local preliminaries}
\label{sec-2}
The purpose of this section is to collect local definitions, notation and identities for later use.  The notation introduced here should be self-descriptive with the exception of that for the similitude Weil representation $\Omega$ defined in \S\ref{sec:defn-local-omega}.

Let $k$ be a local field of characteristic $\neq 2$, thus $k$ is either $\mathbb{R}$, $\mathbb{C}$ or a finite extension of $\mathbb{Q}_p$ or (for $p \neq 2$) of $\mathbb{F}_p(t)$.  The assumption on the characteristic is relevant only for sections discussing the Weil representation.

Let $\psi : k \rightarrow \mathbb{C}^{(1)}$ be a nontrivial unitary character of $k$, and let $B$ be a quaternion algebra over $k$.  Set $G := B^\times/ k^\times$.  When $k$ is non-archimedean, let $R \subset B$ be a maximal order.

\subsection{Generalities}
\label{sec-2-1}
\subsubsection{The number field}
\label{sec-2-1-1}
We denote by $|.| := |.|_k : k \rightarrow \mathbb{R}_{\geq 0}$ the normalized absolute value, so that $d(c x) = |c| \, d x$ for $c \in k$ and any Haar measure $d x$ on $k$.

Let $\zeta_k(s)$ denote the local zeta function, thus $\zeta_k(s) = \pi^{-s/2} \Gamma(s/2), 2 (2 \pi)^{-s} \Gamma(s)$ or $(1-q^{-s})^{-1}$ as $k = \mathbb{R}, \mathbb{C}$, or a non-archimedean local field with residue field of cardinality $q$.

Recall that $B$ is \emph{split} if it is isomorphic to the algebra $M_2(k)$ of $2 \times 2$ matrices.  Otherwise, $B$ is called \emph{non-split} or \emph{ramified}; in that case, it is the unique (up to isomorphism) quaternion division algebra over $k$, and the group $G$ is compact.

When $k$ is non-archimedean, we denote by $\mathfrak{o}$ the maximal order, by $\mathfrak{q}$ the maximal ideal, by $q := \# \mathfrak{o} / \mathfrak{q}$ the cardinality of the residue field, by $\varpi \in \mathfrak{q} = \varpi \mathfrak{o}$ a uniformizer (thus $|\varpi| = q^{-1}$), and by $\Delta_{\psi}$ the absolute conductor of $\psi : k \rightarrow \mathbb{C}^{(1)}$, thus $\Delta_{\psi} = q^d$ if $\psi$ is trivial on $\mathfrak{q}^{-d}$ but not on $\mathfrak{q}^{-d-1}$.  Recall that $\psi$ is \emph{unramified} if $\Delta_{\psi} = 1$.

\subsubsection{Measures\label{sec:local-measures}}
\label{sec-2-1-2}
For $X \in \{k,B^0,B\}$, define the perfect pairing $\langle , \rangle : X \otimes X \rightarrow k$ by $\langle x,y \rangle := x y$ if $X = k$ and by $\langle x, y \rangle := \tr(x^{\iota} y)$ if $X = B^0,B$.  Equip $X$ with the Haar measure $d x$ for which the Fourier transform $\mathcal{F} : \mathcal{S}(X) \rightarrow \mathcal{S}(X)$ defined by
\begin{equation*}
  \mathcal{F} f(\xi) := \int_{x \in X} f(x) \psi(\langle x, \xi \rangle) \, d x
\end{equation*}
satisfies the inversion formula
\begin{equation}\label{eq:fourier-inversion-for-normalization}
  \mathcal{F} \mathcal{F} f(x) = f(-x).
\end{equation}
Equip $k^\times$ with the Haar measure $\int_{k^\times } f := \int_{x \in k^\times} f(x) \, \frac{d x}{|x|}$.

The quotient $k^\times / k^{\times 2}$ is finite.  We equip it with the Haar measure $d^\times_2 x$ compatible with the squaring map, so that for $f \in C_c(k^\times)$,
\begin{equation}\label{eqn:compatibility-squaring-map-local-measure}
  \int_{x \in k^\times}
  f(x)
  \,
  \frac{d x}{|x|}
  = 
  \int_{y \in k^\times/k^{\times 2}}
  \left(\int_{z \in k^\times}
  f(y z^2) \, \frac{d z}{|z|}\right) \, d _2^\times y.
\end{equation}
For $f : k^\times / k^{\times 2} \rightarrow \mathbb{C}$, one has explicitly
\begin{equation*}
  \int_{x \in k^\times / k^{\times 2}} f(x) \, d _2^\times y = \frac{|2|_k}{2} \sum_{x \in k^\times / k^{\times 2}} f(x).
\end{equation*}

Equip $G$ with the Haar measure $d g$ for which the integral formula
\begin{equation}\label{eq:integarl-formula-for-integrating-over-B}
  \int_{x \in B}
  f(x) \, d x
  = \int_{g \in G}
  \left(\int_{z \in k^\times}
  |\nr(z g)|^2 f(z g) \, \frac{d z}{|z|}\right) \, d g
\end{equation}
holds for $f \in C_c(B)$.  When $B = M_2(k)$ is split, so that $G = \PGL_2(k)$, a direct calculation with differential forms gives for $f \in C_c(G)$ that
\begin{equation}\label{eqn:explicit-M2-integral-formula}
  \int_G f
  =
  \int_{x_1,x_2 \in k}
  \int_{y \in k^\times}
  f(
  n'(x_1)
  n(x_2)
  a(y)
  )
  \, d x_1 \, d x_2 \, \frac{d y}{|y|}.
\end{equation}

\subsubsection{Volume formulas\label{sec:local-vol-formulas}}
\label{sec-2-1-3}
Assume (for all but the final assertion of \S\ref{sec-2-1-3}) that $k$ is non-archimedean.  Write $\vol(E \subseteq X)$ to denote the volume of $E$ with respect to the measure that we have defined on $X$.  Let $J \leq G$ denote the image of $R^\times$; if $B$ is split, then $J$ is a maximal compact subgroup of $G$, otherwise it has index $2$ in the compact group $G$.  Abbreviate $\vol(\mathfrak{o}) := \vol(\mathfrak{o} \subseteq k)$, $\vol(\mathfrak{o}^\times ) := \vol(\mathfrak{o}^\times \subseteq k^\times )$, $\vol(J) := \vol(J \subseteq G)$, $\vol(R) := \vol(R \subseteq B)$ and $\Delta := \Delta_{\psi}$.  Let $\Delta_{B}$ denote the reduced discriminant, thus $\Delta_{B_\mathfrak{p}} = 1$ or $q$ according as $B$ splits or ramifies.  Set
\begin{equation*}
  \zeta_B(s) :=
  \begin{cases}
    \zeta_k(2 s) \zeta_k(2 s - 1)    & \text{ if $B$ splits}, \\
    \zeta_k(2 s)                             & \text{ otherwise.}
  \end{cases}
\end{equation*}
\begin{lemma*}\label{lem:local-vol-formulas}
  ~\begin{enumerate}
  \item[(i)] $\vol(\mathfrak{o}) = \Delta^{-1/2}$, $\vol(\mathfrak{o}^\times) = \zeta_k(1)^{-1} \Delta^{-1/2}$.
  \item[(ii)] $\vol(R) = \Delta_B^{-1} \Delta^{-4/2}$, $\vol(J) = \zeta_k(1) \zeta_B(1)^{-1} \Delta_B^{-1} \Delta^{-3/2}$.
  \item[(iii)]
    If $B$ is split, then
    \[\frac{\vol(R)}{\vol(J) \Delta^{-1/2}}
      = \zeta_k(2).\]
  \item[(iv)] If $k$ is real, $B$ is non-split and $\psi(x) = e^{2 \pi i x}$, then $\vol(G) = 4 \pi^2$.
  \end{enumerate}
\end{lemma*}
\begin{proof}
  For (i)---(iii), we may reduce by dimensional analysis to the case $\Delta = 1$.  The required formulas then follow from \eqref{eqn:compatibility-squaring-map-local-measure} applied to $f = 1_\mathfrak{o}$ or $f = 1_R$ and by \eqref{eq:integarl-formula-for-integrating-over-B} applied to $f = 1_{1 + \varpi R}$ (see \cite[Lem 2.4.3]{MR580949} for details).  For (iv), set $f(x) := e^{- 2 \pi \nr(x)}$. Apply \eqref{eq:fourier-inversion-for-normalization} to see that $\int_B f = 1$.  Apply \eqref{eq:integarl-formula-for-integrating-over-B} and the substitution $z \mapsto z / (2 \pi \nr(g))^{1/2}$ to deduce that $(2 \pi)^2 = \vol(G) \int_{z \in \mathbb{R}^\times} |z|^4 e^{-|z|^2} \, \frac{d z}{|z|} = \vol(G)$.
\end{proof}

\subsubsection{Cartan
  decomposition}\label{sec:cartan-decomposition}
Suppose $B = M_2(k)$, so that $G = \PGL_2(k)$.  Let $K \leq G$ be the standard maximal compact subgroup.  Then $G = \cup_{y \in k^\times : |y| \leq 1} K a(y) K$.  When $k$ is non-archimedean, one has for $f \in C_c(K \backslash G / K)$ the integral formula
\begin{equation}\label{eq:cartan-decomp-integral-formula}
  \int_{G}
  f
  =
  \vol(K)
  \sum_{m \geq 0}
  q^m
  (1 + 1_{m > 0} q^{-1})
  f(a(\varpi^m)).
\end{equation}

\subsubsection{The $\Xi$-function\label{sec:local-Xi}}
\label{sec-2-1-4}
Given a maximal compact subgroup $K \leq G$, let $\Xi : G \rightarrow \mathbb{R}_{>0}$ denote the Harish--Chandra function relative to $K$:
\begin{itemize}
\item If $B$ is non-split, then $\Xi \equiv 1$.
\item If $B$ is split, then $\Xi(g) = \langle g v, v \rangle$, where $v$ is a $K$-invariant unit vector in the unitary induction of the trivial character of a Borel subgroup of $G$ (see \cite{MR946351}).
\end{itemize}
The following properties of $\Xi$ are relevant for us:
\begin{enumerate}
\item It satisfies $\Xi(1) = 1$, and is bi-$K$-invariant.
\item If $B$ is split, then under any fixed identification $G = \PGL_2(k)$, one has $\Xi(a(y)) \asymp \log(t)/t^{1/2}$ with $t := |y| + |y|^{-1}$.
\item Let $\pi$ be an irreducible unitary representation of $G$.  If $B$ is split, assume that $\dim(\pi) > 1$.  Then there exists $\delta > 0$ so that for $v_1, v_2 \in \pi$, one has $\langle g v_1, v_2 \rangle \ll_{v_1,v_2} \Xi(g)^{\delta}$ for all $g \in G$.  (See for instance \cite[\S2.5.1]{michel-2009} for a more precise assertion).
\end{enumerate}

\subsubsection{Convergence
  lemmas\label{sec:local-convergence-lemmas}}
We record some estimates that follow readily from the Cartan decomposition for $G$.

\begin{lemma}\label{lemma:cheap-matrix-coeff-schwartz-space-B-estimate-via-Xi}
  Either let $X$ be one of the spaces $B^0, B$ and take $\phi_1, \phi_2 \in \mathcal{S}(X)$, or let $X = G$ and take $\phi_1,\phi_2 \in C_c^\infty(G)$.  For $g \in G$, one then has
  \[\langle \Ad(g) \phi_1, \phi_2 \rangle_{L^2(X)}
    \ll_{\phi_1,\phi_2} \Xi(g)^2.\]
\end{lemma}

\begin{lemma}\label{lemma:convergence-Xi-along-G-and-H}
  Let $\delta > 0$.
  \begin{enumerate}
  \item The integral $\int_{g \in G} \Xi^{2+\delta}(g) \,d g$ converges.
  \item Let $E$ be a separable quadratic subalgebra of $B$.  Let $H \leq G$ denote the image of $E^\times$.  Equip $H$ with some Haar measure.  Then the integral $\int_{h \in H} \Xi^\delta(h) \, d h$ converges.
  \end{enumerate}
\end{lemma}

\subsubsection{Conventions}\label{sec:35ac3e56e6}
By a \emph{representation} of a $k$-group, we always mean
\begin{itemize}
\item a smooth representation, if $k$ is non-archimedean, and otherwise
\item the space of smooth vectors in a unitary representation.
\end{itemize}

\subsection{Weil representations\label{sec:local-weil-reps}}
\label{sec-2-2}

\subsubsection{Quadratic spaces\label{sec:local-quadratic-spaces}}
\label{sec-2-2-1}

Let $V$ be a quadratic space over $k$, thus $V$ is a finite-dimensional $k$-vector space equipped with a non-degenerate quadratic form $q_V : V \times V \rightarrow k$.  We denote by $b_V : V \otimes V \rightarrow k$ the associated non-degenerate bilinear form given by
\begin{equation*}
  b_V(x,y) := q_V(x+y) - q_V(x) - q_V(y).
\end{equation*}

Recall that $\GO(V) \leq \GL(V)$ consists of all $g \in \GL(V)$ for which there exists $\lambda \in k^\times$ with $q_V(g x) = \lambda q_V(x)$ for all $x \in V$, $\O(V) \leq \GO(V)$ is the subgroup on which $\lambda = 1$, and $\SO(V) = \SL(V) \cap \O(V)$.  The group $\GO(V)$ contains the subgroup $k^\times$ of scalar operators, and we set $\PGO(V) := \GO(V) / k^\times$.

Let $\mu_V$ denote the measure on $V$ that is $(\psi,b_V)$-self dual, i.e., that for which $\mathcal{F} : \mathcal{S}(V) \rightarrow \mathcal{S}(V)$ defined by
\begin{equation*}
  \mathcal{F} \phi(\xi) := \int_{x \in V} \phi(x) \psi(b_V(x,\xi)) \, d \mu_V(x) \,d x
\end{equation*}
satisfies $\mathcal{F} \mathcal{F} \phi(x) = \phi(-x)$.

The following examples of quadratic spaces are relevant for us:
\begin{enumerate}
\item $V = B$, $q_V = \nr$, so that $b_V(x,\xi) = \tr(x^{\iota} \xi) = \langle x,\xi \rangle$.
\item $V = B^0$, $q_V$ the restriction of $\nr$.  The natural map $\Ad : G \rightarrow \SO(B^0)$
  is an isomorphism.
\item $V = k$, regarded as a subspace of $B$, and $q_V$ the restriction of $\nr$, thus $q_V(x) = x^2$ and $b_V(x,y) = 2 x y$ for $x \in V$.  In this case, we denote the orthogonal group by $\O_1(k) := \O(V) \cong \{\pm 1\}$.
\end{enumerate}
For $V = B, B_0$, the measure $d \mu_V(x)$ coincides with $d x$ as defined in \S\ref{sec:local-measures}.

\subsubsection{Metaplectic group} \label{sec-2-2-2}
Let $\Mp_2(k)$ denote the metaplectic double cover of $\SL_2(k)$.  It is convenient to identify $\Mp_2(k)$ with $\SL_2(k) \times \mu_2$, where $\mu_2 := \{\pm 1\}$, with the group law given by
\begin{equation*}
  (s_1,\zeta_1) (s_2,\zeta_2) = (s_1 s_2, \zeta_1 \zeta_2 c(s_1,s_2))
\end{equation*}
for a cocycle $c : \SL_2(k) \times \SL_2(k) \rightarrow \{\pm 1\}$ as in \cite[p.19]{MR0424695} or \cite[\S4.4]{nelson-theta-squared}.  Thus $\mu_2$ is a central subgroup of $\Mp_2(k)$, and one has a short exact sequence
\begin{equation*}
  1 \rightarrow \mu_2 \rightarrow \Mp_2(k) \xrightarrow{\pr} \SL_2(k) \rightarrow 1.
\end{equation*}
\subsubsection{Weil representation\label{sec:local-weil-repn}}
\label{sec-2-2-3}
For a quadratic space $V$, one has the Weil representation \cite{MR0165033} on the Schwartz--Bruhat space $\mathcal{S}(V)$:
\begin{equation*}
  \rho_{\Weil}^{\psi,V} : \Mp_2(k)
  \times \O(V) \rightarrow \GL(\mathcal{S}(V)).
\end{equation*}
This representation is continuous \cite[\S39]{MR0165033} for the standard topologies on all spaces involved and extends to a unitary representation on $L^2(V) := L^2(V,\mu_V)$.

For the remainder of \S\ref{sec:local-weil-repn}, abbreviate $\rho := \rho_{\Weil}^{\psi,V}$.  For $s \in \Mp_2(k)$ or $g \in \O(V)$, we abbreviate $\rho(s) := \rho(s,1)$ and $\rho(g) := \rho(1,g)$; one then has $\rho(s) \rho(g) = \rho(g) \rho(s)$.

Elements $\zeta$ of the central subgroup $\mu_2$ of $\Mp_2(k)$ act by the scalar operators $\rho(\zeta) = \zeta^{\dim(V)}$, so $\rho$ factors through $\SL_2(k)$ if and only if $\dim(V)$ is even.

There is a quartic character $\chi_{\psi,V} : k^\times \rightarrow \mathbb{C}^{(1)}$ and an eighth root of unity $\gamma_{\psi,V} \in \mathbb{C}^{(1)}$ so that, abbreviating $\rho(s) := \rho( (s,1))$ for $s \in \SL_2(k)$, one has for $\phi \in \mathcal{S}(V)$ and $x \in V$ that
\begin{align*}
  \rho(n(b))
  \phi(x)
  &= 
    \psi(b q_V(x)) \phi(x),
  \\
  \rho(t(a))
  \phi(x)
  &= 
    \chi_{\psi,V}(a) |a|^{\dim(V)/2} \phi(a x),
  \\
  \rho(w) \phi(x)
  &=
    \gamma_{\psi,V} \mathcal{F} \phi(x).
\end{align*}
If $V = M_2(k)$, then $\chi_{\psi,V}$ is trivial and $\gamma_{\psi,V} = 1$.

Elements $g$ of the orthogonal group $\O(V)$ act by $\rho(g) \phi(v) := \phi(g^{-1} v)$.  Suppose that $V = B^0$, so that $\Ad : G \xrightarrow{\cong} \SO(B^0)$.  For $g \in G$ and $\phi \in \mathcal{S}(B^0)$, the function $\Ad(g) \phi$ as defined in \S\ref{sec:general-notation} agrees with $\rho(\Ad(g)) \phi$: both send $x \in B^0$ to $\phi(g^{-1} x g)$.
\subsubsection{Factorization\label{sec:factorization-weil-repn}}
\label{sec-2-2-4}
Let $V$ be a quadratic space that admits an orthogonal decomposition $V = V' \oplus V''$ as a sum of two quadratic spaces.  (The relevant example is when $V = B, V' = k, V'' = B^0$.)

Recall the dense inclusion $\mathcal{S}(V') \otimes \mathcal{S}(V'') \rightarrow \mathcal{S}(V)$ obtained by identifying $\phi ' \otimes \phi '' \in \mathcal{S}(V') \otimes \mathcal{S}(V'')$ with the function $V' \oplus V'' \ni \alpha ' + \alpha '' \mapsto \phi '(\alpha ') \phi '' (\alpha '')$.

Given continuous linear operators $T, T', T''$ on $\mathcal{S}(V), \mathcal{S}(V'), \mathcal{S}(V'')$, respectively, write $T = T' \otimes T''$ to denote that $T (\phi ' \otimes \phi '') = T' \phi ' \otimes T'' \phi ''$ for all $\phi ' \in \mathcal{S}(V'), \phi '' \in \mathcal{S}(V'')$.  In this sense, one has $\rho_{\Weil}^{\psi, V}(s) = \rho_{\Weil}^{\psi, V'}(s) \otimes \rho_{\Weil}^{\psi, V''}(s)$ for all $s \in \Mp_2(k)$.

We denote by $1 \otimes \rho_{\Weil}^{\psi, V''}(s)$ the operator on $\mathcal{S}(V)$ sending $\phi ' \otimes \phi ''$ to $\phi ' \otimes \rho_{\Weil}^{\psi,V''}(s) \phi ''$.

\subsubsection{Extension to similitudes\label{sec:defn-local-omega}}
\label{sec-2-2-5}
The following definitions were inspired by \cite[I.3]{MR783511}.  Let $\Omega$ denote the space of functions $\phi : k^\times \times B \rightarrow \mathbb{C}$ satisfying the conditions:
\begin{itemize}
\item For each $t \in k^\times$, the function $\phi[t] : B \rightarrow \mathbb{C}$ given by $\phi[t](x) := \phi(t,x)$ belongs to the Schwartz--Bruhat space $\mathcal{S}(B)$.
\item One has $\phi(z^2 t, x) = \phi(t, z x)$ for all $t,z \in k^\times$, $x \in B$.
\end{itemize}
Let $\rho_{\Weil} : \PGL_2(k) \times \PGO(B) \rightarrow \GL(\Omega)$ denote the representation characterized by the identities: for $s \in \SL_2(k), y \in k^\times, g \in \GO(B)$,
\begin{align*}
  (\rho_{\Weil}(s) \phi)[t]
  &= \rho_{\Weil}^{\psi^t,B}(s) (\phi[t]),
  \\
  (\rho_{\Weil}(a(y)) \phi)[t]
  &=
    |y| \phi[t y],
  \\
  (\rho_{\Weil}(g) \phi)(t,x)
  &=
    \phi(\lambda(g) t, g^{-1} x)
\end{align*}
where $\lambda : \GO(B) \rightarrow k^\times$ denotes the similitude factor.

\begin{remark*}
  More generally, if $V$ is any even-dimensional quadratic space, then the representation $\rho_{\Weil}^{\psi,V}$ factors through $\SL_2(k) \times \O(V)$.  One can induce it to a representation of $\GL_2(k) \times \GO(V)$ on $\mathcal{S}(k^\times \times V)$, whose isomorphism class is independent of $\psi$.  By taking coinvariants for the action by the center, one arrives at a representation of $\PGL_2(k) \times \PGO(V)$.  In the relevant case that $V = B$, the representation obtained in that way is realized by $\Omega$.  Our global discussion concerns the restriction of $\Omega$ to $\SL_2(k) \times \O_1(k) \times \O(B^0)$, which embeds as the ``even subspace'' of $\oplus_{\tau \in k^{\times} / k^{\times 2}} \rho_{\Weil}^{\psi^{\tau},F} \otimes \rho_{\Weil}^{\psi^{\tau},B^0}$.
\end{remark*}

Equip $\Omega$ with the invariant hermitian norm $\|.\|_{\Omega}$ given by
\begin{equation}\label{eqn:inner-product-on-Omega-1}
  \|\phi\|^2_{\Omega}
  :=
  \int_{t \in k^\times / k^{\times 2}}
  |t|^2
  \int_{x \in B}
  |\phi|^2(t,x) \, d x
  \, d _2 ^\times t,
\end{equation}
or equivalently (by \eqref{eq:integarl-formula-for-integrating-over-B}, \eqref{eqn:compatibility-squaring-map-local-measure}),
\begin{equation}\label{eqn:inner-product-on-Omega-2}
  \|\phi\|^2_{\Omega}
  = 
  \int_{g \in G}
  |\nr(g)|^2
  \int_{t \in k^\times}
  |t|^2
  |\phi|^2(t,g)
  \, \frac{d t}{|t|} \, d g.
\end{equation}

Define $\mathfrak{S} : \Omega \rightarrow \Omega$ and $\Ad(g) : \Omega \rightarrow \Omega$ ($g \in G$) by applying the general definition (\S\ref{sec:general-notation}) to the second coordinate, so that for $\phi \in \Omega$ and $(t,x) \in k^\times \times B$, one has
\begin{equation*}
  (\mathfrak{S} \phi)[t] = \mathfrak{S} (\phi[t]), \qquad \mathfrak{S} \phi(t,x) = (\phi(t,x) + \phi(t,\tr(x)- x))/2,
\end{equation*}
\begin{equation*}
  \Ad(g) \phi = \rho_{\Weil}(\Ad(g)) \phi, \qquad (\Ad(g) \phi)[t] = \Ad(g) (\phi[t]),
\end{equation*}
\begin{equation*}
  \Ad(g) \phi(t,x) = \phi(t,g^{-1} x g).
\end{equation*}

\subsubsection{The distinguished element}\label{sec:dist-elem}
Suppose temporarily that $k$ is non-archimedean and that $B \cong M_2(k)$ is split; similar considerations apply to non-split $B$, but we do not need them.  The \emph{distinguished element} $\phi^0 \in \Omega$ (with respect to the chosen maximal order $R \subset B$) is then defined by
\begin{equation}
  \phi^0(t,x)
  :=
  \frac{
    \int_{z \in k^\times}
    1_{R}(z x)
    1_{\mathfrak{o}^\times}(z^{-2} t)
    \, \frac{d z}{|z|}
  }
  {
    \int_{z \in k^\times}
    1_{\mathfrak{o}^\times}(z)
    \, \frac{d z}{|z|}
  }.
\end{equation}
Note that $\phi^0$ takes values in $\{0,1\}$.  Let $K' \leq \PGO(B)$ denote the image of the $\O(B)$-stabilizer of $R$.  One verifies directly that
\begin{enumerate}[(i)]
\item $\phi^0$ is $K'$-invariant,
\item $\mathfrak{S} \phi^0 = \phi^0$, and
\item if $\psi$ is unramified, then $\phi^0$ is invariant under $\PGL_2(\mathfrak{o}) \leq \PGL_2(k)$.
\end{enumerate}

\begin{lemma}\label{lem:norm-of-distinguished-vector-in-Omega-local}
  $\|\phi^0\|^2_{\Omega} = \vol(R)$.
\end{lemma}
\begin{proof}
  Since $\phi$ takes values in $\{0 ,1\}$, one has
  \begin{equation*}
    \|\phi^0\|^2_{\Omega} = \int_{t \in k^\times / k^{\times 2}} |t|^2 \int_{x \in B} \phi^0(t,x)
    \, d x \, d_2^\times t.
\end{equation*}
By expanding the definition of $\phi^0$ and using that
\begin{equation*}
  \int_{x \in B} 1_R(z x) \,d x = |z|^{-4} \vol(R)
\end{equation*}
and
\begin{equation*}
  |t|^2 |z|^{-4} 1_{\mathfrak{o}^\times}(z^{-2} t) = 1_{\mathfrak{o}^\times}(z^{-2} t),
\end{equation*}
our task reduces to showing that
\begin{equation*}
\int_{t \in k^\times / k^{\times 2}} \int_{z \in k^\times} 1_{\mathfrak{o}^\times}(z^{-2} t) \, \frac{d z}{|z|} \, d_2^\times t = \int_{x \in k^\times} 1_{\mathfrak{o}^\times}(x) \, \frac{d x}{|x|},
\end{equation*}
as follows from \eqref{eqn:compatibility-squaring-map-local-measure}.
\end{proof}

Let $K \leq G$ denote the image of $R^\times$.  We may then fix an identification $B = M_2(k)$ under which $G = \PGL_2(k)$, $R = M_2(\mathfrak{o})$, $K = \PGL_2(\mathfrak{o})$.
\begin{lemma}\label{lem:formula-for-how-Ad-acts-on-distinguish-element-inner-products}
  Let $\phi_1, \phi_2 \in \mathbb{C} \phi^0$.  Let $g \in K a(\varpi^m) K$ for some $m \in \mathbb{Z}_{\geq 0}$ (see \S\ref{sec:cartan-decomposition}).  Then $\langle \Ad(g) \phi_1, \phi_2 \rangle_{\Omega} = q^{-m} \langle \phi_1, \phi_2 \rangle_{\Omega}$.
\end{lemma}
\begin{proof}
  We expand the definitions and use that $\vol(g R g^{-1} \cap R) = q^{-m} \vol(R)$.
\end{proof}

\subsection{Generic representations of
  $\operatorname{PGL}_2$}\label{sec-2-3}
We refer to \cite[\S4.4, \S4.6]{MR1431508} for details on and proofs of the facts collected here.  Let $\pi$ be an irreducible representation of $\PGL_2(k)$.  Recall that $\pi$ is \emph{generic} if it admits a Whittaker model $\mathcal{W}(\pi,\psi)$, consisting of $W : \PGL_2(k) \rightarrow \mathbb{C}$ satisfying $W(n(x) g) = \psi(x) W(g)$.  It then admits a Kirillov model $\mathcal{K}(\pi,\psi)$, consisting of $W : k^\times \rightarrow \mathbb{C}$ of the form $W(y) := W'(a(y))$ for some $W' \in \mathcal{W}(\pi,\psi)$.  The vector space $\mathcal{K}(\pi,\psi)$ is independent of $\psi$ and contains $C_c^\infty(k^\times)$.  Recall that $\pi$ is \emph{unramified} if the space $\pi^{\PGL_2(\mathfrak{o})}$ of $\PGL_2(\mathfrak{o})$-invariant vectors in $\pi$ is nonzero, and that in that case, $\dim(\pi^{\PGL_2(\mathfrak{o})}) = 1$.

Suppose for the remainder of \S\ref{sec-2-3} that $\pi$ is generic and unramified.  Let $\psi^0$ be an unramified unitary character of $k$.  There is then a unique $\PGL_2(\mathfrak{o})$-invariant vector $W_{\pi}^0$ in the Kirillov model $\mathcal{K}(\pi,\psi^0)$ of $\pi$ for which $W_{\pi}^0(1) = 1$.  There is a unique unordered pair $\{\alpha, \beta \}$ of complex numbers, the \emph{Satake parameters} of $\pi$, so that for $y \in k^\times$ with $|y| = q^{-n}$,
\begin{equation}\label{eq:explicit-formula-W-pi-0}
  W_{\pi}^0(y)
  =
  |y|^{1/2}
  \sum _{\substack{
      i, j \in \mathbb{Z}_{\geq 0}:
      i + j = n
    }
  }
  \alpha^{i} \beta^j
  =
  1_{\mathfrak{o}^\times}(y) |y|^{1/2}
  \frac{\alpha^{n+1} - \beta^{n+1}}{\alpha - \beta }.
\end{equation}
One has in general $\alpha \beta = 1$; if moreover $\pi$ is unitary, then either $|\alpha| = |\beta| = 1$ or $\alpha,\beta \in (-q^{1/2}, q^{1/2}) \subseteq \mathbb{R}$.  The \emph{adjoint $L$-factor} is defined for $s \in \mathbb{C}$ by
\[
  L(\ad \pi,s) := (1 - \alpha \beta^{-1} q^{-s})^{-1} (1 - q^{-s})^{-1} (1 - \alpha^{-1} \beta q^{-s})^{-1}.
\]
We have the following standard geometric series evaluation (see, e.g., \cite[Prop 3.8.1]{MR1431508}, taking into account that we have normalized measures differently).
\begin{lemma*}
  If $\pi$ is unitary and $\Re(s) \geq 0$, then $L(\ad \pi, 1+ s)$ is finite, and one has the identity
  \begin{equation}\label{eq:local-computation-norm-of-whittaker-newvector}
    \int_{y \in k^\times}
    |W_{\pi}^0(y)|^2
    |y|^s
    \, \frac{d y}{|y|}
    = \frac{L(\ad \pi,1 + s)}{\zeta_k(2 + 2 s)}
    \Delta_{\psi}^{-1/2}
    \frac{\zeta_k(1 + s)}{\zeta_k(1)}
  \end{equation}
  in which the left hand side converges absolutely.
\end{lemma*}

\subsection{Representations of $G$}
\label{sec-2-4}
Let $\pi$ be an irreducible representation of $G$.  Define a compact open subgroup $J \leq G$ in the following two cases:
\begin{itemize}
\item if $k$ is non-archimedean, take for $J \leq G$ the image of the unit group $R^\times$ of the chosen maximal order $R \subseteq B$;
\item if $k$ is real and $B$ is non-split, set $J := G$.
\end{itemize}
In either case, set
\begin{equation*}
  \vol(J) := \int_{g \in G} 1_J(g) \, d g,
\end{equation*}
\begin{equation*}
  e_J := \vol(J)^{-1} 1_J \in C_c^\infty(G).
\end{equation*}

\subsubsection{Hecke kernels and theta kernels\label{sec:hecke-kernels-local}}
\label{sec-2-4-1}
Assume that $k$ is non-archimedean.
For $y \in k^\times$, the \emph{normalized Hecke kernel} $T_y \in C_c^\infty(J \backslash G / J)$ is defined to be the element with the property that $|y|^{-1} \vol(J) T_y$ is the characteristic function of the image in $G$ of the subset $\{b \in R : |\nr(b)| = |y| \}$ of $B^\times$.
For example, if $y \in \mathfrak{o}^\times$, then $T_y = e_J$.
\begin{lemma*}\label{lem:relation-distinguished-elt-hecke-kernel}
  Let $y \in k^\times$, $g \in G$.  Choose $\tilde{g} \in B^\times$ with image $g$.  Then
  \[
    \rho_{\Weil}(a(y)) \phi^0(\nr(\tilde{g})^{-1}, \tilde{g}) = |y| \phi^0(y \nr(\tilde{g})^{-1}, \tilde{g}) = \vol(J) T_y(g)
  \]
  where $\phi^0 \in \Omega$ is the distinguished element (\S\ref{sec:dist-elem}).
\end{lemma*}
\begin{proof}
  We must verify that
  \begin{equation}\label{eq:formula-for-hecke-kernel}
    |y|^{-1} \vol(J) T_y(g) =
    \frac{
      \int_{z \in k^\times}
      1_R(z \tilde{g}) 1_{\mathfrak{o}^\times}(y \nr(z \tilde{g})^{-1})
      \, \frac{d z}{|z|}
    }
    {
      \int_{z \in k^\times}
      1_{\mathfrak{o}^\times}(z)
      \, \frac{d z}{|z|}
    }.
  \end{equation}
  Let $g \in G$.  The right hand side of \eqref{eq:formula-for-hecke-kernel} is independent of $\tilde{g}$, and both sides take values in $\{0,1\}$.
  The right hand side of \eqref{eq:formula-for-hecke-kernel} is nonzero iff its integrand is nonzero for some $z \in k^\times$, i.e., iff for some $z \in k^\times$ the element $b := z \tilde{g}$ lies in $R$ and $|\nr(b)| = |y|$, i.e., iff the left hand side of \eqref{eq:formula-for-hecke-kernel} is nonzero.

\end{proof}

\subsubsection{Hecke functionals and standard $L$-factors}
\label{sec-2-4-2}
Continue to assume that $k$ is non-archimedean.  Recall that $\pi$ is \emph{unramified} if the space $\pi^{J}$ of $J$-invariant vectors in $\pi$ is nonzero; it is known then that $\dim(\pi^J) = 1$.

Suppose for remainder of \S\ref{sec-2-4-2} that $\pi$ is unramified.
There is then a unique functional $\lambda_\pi : C_c^\infty(J \backslash G / J) \rightarrow \mathbb{C}$ so that $\pi(T) v = \lambda_\pi(T) v$ for all $T \in C_c^\infty(J \backslash G / J), v \in \pi^J$.  We may evaluate this functional on the elements $T_y$ attached above to $y \in k^\times$:
\begin{itemize}
\item If $B$ is split, then there is a unique unordered pair $\{\alpha,\beta\}$ of complex numbers (the \emph{Satake parameters}) satisfying $\alpha \beta = 1$ so that $\lambda_\pi(T_y) = 0$ unless $|y| = q^{-n}$ with $n \geq 0$, in which case   (see, e.g., \cite[\S4.6]{MR1431508})
  \begin{equation}\label{eq:explicitf-romula-lambda-pi-tY}
    \lambda_\pi(T_y)
    = 
    |y|^{1/2}
    \sum _{\substack{
        i, j \in \mathbb{Z}_{\geq 0}:
        i + j = n
      }
    }
    \alpha^{i} \beta^j
    =
    1_{\mathfrak{o}^\times}(y)
    |y|^{1/2}
    \frac{\alpha^{n+1} - \beta^{n+1}}{\alpha - \beta }.
  \end{equation}

\item If $B$ is non-split, then there is a unique unramified quadratic character $\eta$ of $k^\times$ so that $\lambda_\pi(T_y) = |y| \eta(y)$.
\end{itemize}
The \emph{standard $L$-factor} is then the meromorphic function defined for $s \in \mathbb{C}$ by
\[
  L(\pi,s) := \begin{cases}
    (1 - \alpha q^{-s})^{-1} (1 - \beta q^{-s})^{-1} & \text{ if $B$ is split}, \\
    (1 - \eta(\varpi) q^{-s-1/2})^{-1} & \text{ if $B$ is non-split}.
  \end{cases}
\]
\subsubsection{The local Jacquet--Langlands correspondence}
\label{sec-2-4-3}
The Jacquet--Langlands lift $\pi_{\JL}$ of $\pi$ is an irreducible representation of $\PGL_2(k)$ attached to $\pi$.  The following properties of the association $\pi \mapsto \pi_{\JL}$ are relevant for us:
\begin{itemize}
\item $\pi_{\JL}$ is generic if (and only if) either
  \begin{itemize}
  \item $B$ is non-split, or
  \item $B$ is split and $\dim(\pi) > 1$.
  \end{itemize}
\item If $B$ is split, then $\pi_{\JL}$ corresponds to $\pi$ under the isomorphism $G \cong \PGL_2(k)$.  In particular, if $\pi$ is unramified, then so is $\pi_{\JL}$, and Satake parameters (see \S\ref{sec-2-3}, \S\ref{sec-2-4-2}) are preserved.
\end{itemize}
Assume now that that $k$ is non-archimedean, that $B$ is split, that $\pi$ is unramified, and that $\dim(\pi) > 1$.  Then $\pi_{\JL}$ is generic and unramified.
Let $W^0_\pi : k^\times \rightarrow \mathbb{C}$ denote the function attached to $\pi_{\JL}$ in \S\ref{sec-2-3}.  By \eqref{eq:explicit-formula-W-pi-0} and \eqref{eq:explicitf-romula-lambda-pi-tY},
\begin{equation}\label{eqn:key-local-identity-relating-whittaker-and-hecke}
  W_\pi^0(y) = \lambda_\pi(T_y).
\end{equation}

\subsubsection{Local integrals}
\label{sec:local-integrals-for-rallis-ipf}

Assume first that $k$ is non-archimedean, that $\pi$ is unramified, and that $\pi$ is unitary.
Retain the notation of \S\ref{sec-2-4-2}.
\begin{lemma}\label{lem:local-rallis-ipf-unram-calc}
  Suppose that $B$ is split and that $\dim(\pi) > 1$.
  \begin{enumerate}
  \item[(i)] $|\alpha|, |\beta| < q^{1/2}$.  In particular, $L(\pi,\tfrac{1}{2})$ is finite.
  \item[(ii)] Let $\phi_1, \phi_2$ belong to the line $\mathbb{C} \phi^0$ spanned by the distinguished element $\phi^0 \in \Omega$.  Let $v_1, v_2 \in \pi^J$.  Then the identity
    \begin{equation}\label{eqn:local-rallis-ipf-unram-calc}
      \int_{g \in G}
      \langle \Ad(g) \phi_1, \phi_2 \rangle
      \langle g v_1, v_2 \rangle \,d g 
      =
      \frac{L(\pi,\frac{1}{2})}{\zeta_k(2)}
      \vol(J) 
      \langle \phi_1, \phi_2 \rangle
      \langle v_1, v_2 \rangle
    \end{equation}
    holds, with the left hand side converging absolutely.
  \end{enumerate}
\end{lemma}
\begin{proof}
  For (i), see \cite[Thm 4.6.7]{MR1431508}.  For (ii), the convergence follows from \S\ref{sec:local-convergence-lemmas}.
  Let $\{\alpha,\beta\}$ denote the Satake parameters of $\pi$ and set $t_1 := \alpha q^{-1/2}, t_2 := \beta q^{-1/2}$, so that $L(\pi,\tfrac{1}{2})^{-1} = (1 - t_1) (1-t_2)$.  The Macdonald formula \cite[Thm 4.6.6]{MR1431508} says that $\langle g v_1, v_2 \rangle = (u_1 t_1^m + u_2 t_2^m) \langle v_1, v_2 \rangle$, where
  \[
    u_1 := \frac{1}{1 + q^{-1}}\frac{1 - q^{-1} \beta/\alpha }{1 - \beta / \alpha }, \quad u_2 := \frac{1}{1 + q^{-1}}\frac{1 - q^{-1} \alpha / \beta }{1 - \alpha / \beta }.
  \]
  By the Cartan decomposition and Lemma \ref{lem:formula-for-how-Ad-acts-on-distinguish-element-inner-products} of \S\ref{sec:dist-elem}, we obtain
  \begin{equation*}
  \int_{g \in G} \langle \Ad(g) \phi_1, \phi_2 \rangle \,d g 
    \langle g v_1, v_2 \rangle = \vol(J) 
    \langle \phi_1, \phi_2 \rangle \langle v_1, v_2 \rangle \Sigma,
  \end{equation*}
  where $\Sigma := \sum_{i=1,2} \sum_{m \geq 0} (1 + 1_{m>0} q^{-1}) t_i^m$.  We compute that $\sum_{i=1,2} u_i (1 + q^{-1} t_i) ( 1 - t_i)^{-1} = L(\pi,\tfrac{1}{2}) \Sigma '$ with $\Sigma ' := \sum_{i=1,2} u_i (1 + q^{-1} t_i) ( 1 - t_{i'})^{-1}$, $\{i, i'\} = \{1,2\}$.  Direct calculation gives $\Sigma ' = \zeta_k(2)^{-1}$, as required.
\end{proof}

Suppose now that $B$ is non-split, so that $\pi$ is the one-dimensional representation corresponding to the character $G \ni g \mapsto \eta(\nr(g)) \in \{\pm 1\}$, as in \S\ref{sec-2-4-2}.  Let $v_1,v_2 \in \pi$.  Recalling that $[G:J] = 2$, we have
\begin{equation}\label{eqn:local-rallis-ipf-integral-nonsplit-finite}
  \int_{g \in G} \langle \Ad(g) e_J, e_J \rangle_{L^2(G)}
  \langle g v_1, v_2 \rangle \,d g 
  =
  \langle v_1, v_2 \rangle
  \cdot 
  \begin{cases}
    0 & \text{ if $\eta$ is nontrivial,} \\
    2 & \text{ if $\eta$ is trivial.}
  \end{cases}
\end{equation}

Suppose, finally, that $k \cong \mathbb{R}$, that $B$ is non-split, and that $\pi$ is trivial.  For $v_1,v_2 \in \pi$, one then has
\begin{equation}\label{eqn:local-rallis-ipf-integral-nonsplit-real}
  \int_{g \in G} \langle \Ad(g) e_J, e_J \rangle_{L^2(G)}
  \langle g v_1, v_2 \rangle \,d g 
  =
  \langle v_1, v_2 \rangle.
\end{equation}

\section{Global preliminaries}
\label{sec-3}
In this section, we collect those preliminaries for the proof of Theorem \ref{thm:main-estimate-general-variance} whose discussion makes sense independently of that proof.

Let $F$ be a number field with adele ring $\mathbb{A}$, let $B$ be a quaternion algebra over $F$, and let $\psi$ be a nontrivial unitary character of $\mathbb{A}/F$.

\subsection{Generalities}
\label{sec-3-1}
\subsubsection{Notation}
\label{sec-3-1-1}
We denote by $\mathcal{O}_F$, or simply $\mathcal{O}$, the ring of integers in $F$.  We denote by $\mathfrak{p}$ a place of $F$, finite or infinite.  A subscripted $\mathfrak{p}$ denotes completion; for example, $\mathcal{O}_\mathfrak{p}$ denotes the ring of integers of $F_{\mathfrak{p}}$ if $\mathfrak{p}$ is finite.  For a finite set of places $S$, a subscripted $S$ denotes a product taken over $S$, such as in $F_S := \prod_{\mathfrak{p} \in S} F_\mathfrak{p}, B_S := \prod_{\mathfrak{p} \in S} B_\mathfrak{p}$.

The character $\psi$ factors as $\psi(x) = \prod \psi_\mathfrak{p}(x_\mathfrak{p})$, where $\psi_\mathfrak{p}$ is a nontrivial unitary character of $F_\mathfrak{p}$.

For a place $\mathfrak{p}$, let $\zeta_\mathfrak{p} := \zeta_{F_\mathfrak{p}}$ denote the local Euler factor.  Let $\xi_F(s) := \prod \zeta_\mathfrak{p}(s)$ denote the Dedekind zeta function (absolutely convergent for $\Re(s) > 1$) and $\xi_F^*(1) := \res_{s \rightarrow 1} \xi_F(s)$ its residue.  For a finite set $S$ of places that contains the infinite places, let
\begin{equation*}
  \zeta_F^{(S)}(s) := \prod_{\mathfrak{p} \notin S} \zeta_\mathfrak{p}(s)
\end{equation*}
denote the partial Dedekind zeta function.

\subsubsection{Groups}\label{sec:35ac3e56f4}
For an algebraic $F$-group $\mathbf{G}$, we write $G := \mathbf{G}(F), G_\mathfrak{p} := \mathbf{G}(F_\mathfrak{p})$, $G_\mathbb{A} := \mathbf{G}(\mathbb{A})$, $G_S := \mathbf{G}(F_S) = \prod_{\mathfrak{p} \in S} G_\mathfrak{p}$, $[G] := G \backslash G_\mathbb{A}$.  This notation applies to the $F$-group $\bPB^\times$ given by $\bPB^\times(A) := (B \otimes_F A)^\times/A^\times$ and also to the $F$-groups ${{\mathbf P}{\mathbf G}{\mathbf L}}_2$, ${{\mathbf S}{\mathbf L}}_2$.  We similarly abbreviate $[\Mp_2] := \SL_2(F) \backslash \Mp_2(\mathbb{A})$ (see \S\ref{sec:global-metaplectic-gp}).

\subsubsection{Measures\label{sec:global-measures}}
\label{sec-3-1-2}
When $\mathbf{G}$ is semisimple, we equip $G_\mathbb{A}$ and $[G]$ with Tamagawa measures.  Then $\vol([\SL_2]) = 1$ and $\vol([\PGL_2]) = \vol([\PB^\times]) = 2$.  We denote by $\langle , \rangle_{G}$ the corresponding inner product on $L^2([G])$, omitting the subscripted $G$ if it is clear by context.

For each place $\mathfrak{p}$, the character $\psi_\mathfrak{p}$ induces (via the recipe of \S\ref{sec:local-measures}) a Haar measure on $F_\mathfrak{p}$, $B_\mathfrak{p}$, $F_\mathfrak{p}^{\times} / F_{\mathfrak{p}}^{\times 2}$, $\PB^\times_\mathfrak{p}$; we equip $\mathbb{A}, B_\mathbb{A}$, $\mathbb{A}^{\times} / \mathbb{A}^{\times 2}$ and $\PB^\times_\mathbb{A}$ with the corresponding restricted product measures, denoted similarly.  This defines the Tamagawa measure on $\PB^\times_\mathbb{A}$.  The quotient measures on $\mathbb{A}/F$ and $B_\mathbb{A}/B$ are then probability measures.  We likewise equip finite products such as $F_S$ or $\PB^\times_S$ with product measures.

We equip $\mathbb{A}^\times$ with the regularized product of the measures constructed in \S\ref{sec:local-measures}: for a factorizable function $f = \prod f_\mathfrak{p} \in C_c^\infty(\mathbb{A}^\times)$ for which $f_\mathfrak{p} = 1_{\mathcal{O}_\mathfrak{p}^\times}$ for almost all finite primes $\mathfrak{p}$, we set
\[
  \int_{y \in \mathbb{A}^\times} f(y) \, \frac{d y}{|y|} := \frac{1}{\xi_F^*(1)} \prod_{\mathfrak{p}} \zeta_\mathfrak{p}(1) \int_{y \in F_\mathfrak{p}^\times} f_\mathfrak{p}(y) \, \frac{d y}{|y|}.
\]
We thereby obtain a quotient Haar $\frac{d y}{|y|}$ on $\mathbb{A}^\times / F^\times$ whose pushforward under $|.| : \mathbb{A}^\times / F^\times \rightarrow \mathbb{R}^\times_+$ is the standard Haar measure $\frac{d t}{|t|}$ on $\mathbb{R}^\times_+$, where $d t$ denotes Lebesgue measure.

The quotient measure on the discrete group $F^{\times } / F^{\times 2}$ compatible with the squaring map is half the counting measure, i.e., for finitely-supported $f : F^\times \rightarrow \mathbb{C}$, one has
\begin{equation*}
  \sum_{x \in F^\times } f(x) = \frac{1}{2} \sum_{y \in F^\times / F^{\times 2}} (\sum_{z \in F^\times} f(y z^2) ).
\end{equation*}
On $\mathbb{A}^\times / F^\times \mathbb{A}^{\times 2}$, we take the quotient measure $d_2^\times y$ induced by the exact sequence $1 \rightarrow F^\times / F^{\times 2} \rightarrow \mathbb{A}^\times / \mathbb{A}^{\times 2} \rightarrow \mathbb{A}^\times / F^\times \mathbb{A}^{\times 2} \rightarrow 1$, where $F^{\times} / F^{\times 2}$ is equipped with half the counting measure.  Thus for $f \in C_c(\mathbb{A}^\times / \mathbb{A}^{\times 2})$,
\begin{equation}\label{eq:integral-formula-involving-squares}
  \int_{y \in \mathbb{A}^\times / F^\times \mathbb{A}^{\times  2}}
  \frac{1}{2} \sum_{a \in F^\times / F^{\times 2}}
  f(a y)
  \, d_2^\times y
  = \int_{\mathbb{A}^\times / \mathbb{A}^{\times 2}} f.
\end{equation}
By decomposing the Haar measure on $\mathbb{A}^\times$ in two ways, one finds for $f \in C_c(\mathbb{A}^\times / F^\times)$ that $\int_{\mathbb{A}^\times / F^\times} f = \int_{x \in \mathbb{A}^\times / F^{\times } \mathbb{A}^{\times 2}} \int_{y \in \mathbb{A}^\times / F^\times} f(x y^2) \, \frac{d y}{|y|} \, d_2^\times s$;
moreover, $\vol(\mathbb{A}^{\times} / F^\times \mathbb{A}^{\times 2}) = 2$.  Finally, for $f \in C_c^\infty([\PGL_2])$,
\begin{equation}\label{eq:sl2-vs-pgl2-integrals}
  \int_{[\PGL_2]} f
  =
  \int_{y \in \mathbb{A}^\times /  F^\times \mathbb{A}^{\times 2}}
  \int_{s \in [\SL_2]}
  f(s a(y)) \, d s \, d_2^\times y.
\end{equation}

\subsubsection{The $\Xi$-function\label{sec:Xi-global}}
\label{sec-3-1-3}
Fix a maximal compact subgroup $K = \prod K_\mathfrak{p} \leq \PB^\times_{\mathbb{A}}$.  Let $\Xi : \PB^\times_{\mathbb{A}} \rightarrow \mathbb{C}$ be the product $\Xi(g) := \prod \Xi_\mathfrak{p}(g_\mathfrak{p})$ of the functions $\Xi_\mathfrak{p}$ on $\PB^\times_\mathfrak{p}$ attached in \S\ref{sec:local-Xi} to the factors $K_\mathfrak{p}$.

\subsubsection{Conventions}\label{sec:conventions-cusp-forms}
A \emph{cusp form} is a smooth vector in the Hilbert space $L^2_{\cusp}([G])$ of square-integrable cuspidal functions.  A \emph{cuspidal automorphic representation} $\pi$ of $G_\mathbb{A}$ is the space of smooth vectors in an irreducible subrepresentation of $L^2_{\cusp}([G])$.

\subsection{Automorphic forms on $\PGL_2$}
\label{sec-3-3}
\subsubsection{Fourier expansions}\label{sec:aut-forms-fourier-exp}
\label{sec-3-3-1}
Let $\varphi : [\PGL_2] \rightarrow \mathbb{C}$ be a smooth function.  It admits the Fourier expansion
\begin{equation*}
  \varphi(n(x) a(y)) = c_{\varphi}(y) + \sum_{\tau \in F^\times} \psi(\tau x) W_{\varphi}(\tau y),
\end{equation*}
where $c_{\varphi}(y) := \int_{x \in \mathbb{A}/F} \varphi(n(x) a(y)) \,d x$ denotes the constant term and
\begin{equation}
W_{\varphi}(y) := \int_{x \in \mathbb{A}/F} \psi(- x) \varphi(n(x) a(y)) \,d x 
\end{equation}
denotes the (diagonal restriction of) the Whittaker function.  The upper-triangular Borel subgroup of $\PGL_2(\mathbb{A})$ has dense image in $[\PGL_2]$, so $\varphi$ is determined by the values $\varphi(n(x) a(y))$ for $x \in \mathbb{A}, y \in \mathbb{A}^\times$.  Recall that $\varphi$ is \emph{cuspidal} if $c_{\varphi} = 0$; in that case, $\varphi$ is determined by $W_\varphi$.

\subsubsection{Kirillov model}
\label{sec-3-3-2}
Let $\pi \subseteq L^2([\PGL_2])$ be a cuspidal automorphic representation; it is (the smooth completion of) a restricted tensor product $\otimes \pi_\mathfrak{p}$, where $\pi_\mathfrak{p}$ is a generic for every $\mathfrak{p}$ and unramified for almost all finite $\mathfrak{p}$.  Let $\mathcal{K}(\pi,\psi) := \{W_\varphi : \varphi \in \pi \}$.  The natural map $\pi \rightarrow \mathcal{K}(\pi,\psi)$ is a linear isomorphism under which the pure tensors in $\pi$ correspond to the factorizable functions $W(y) = \prod W_\mathfrak{p}(y_\mathfrak{p})$, where $W_\mathfrak{p}$ belongs to the local Kirillov model $\mathcal{K}(\pi_\mathfrak{p},\psi_\mathfrak{p})$ and satisfies $W_\mathfrak{p} = W_{\pi_\mathfrak{p}}^0$ (see \S\ref{sec-2-4-3}) for almost all finite $\mathfrak{p}$.  The following residue calculation is standard and may be derived from, for instance, \cite[Lem 2.2.3]{michel-2009}.
\begin{lemma*}\label{lem:inner-product-formula-petersson-vs-kirillov}
  Let $\varphi \in \pi$.  The integral $I(s) := \int_{y \in \mathbb{A}^\times} |W_\varphi(y)|^2 |y|^s \, \frac{d y}{|y|}$ converges absolutely for complex numbers $s$ with positive real part, extends to a meromorphic function on the complex plane, and satisfies
  \begin{equation}\label{eqn:rs-ipf}
    2 \res_{s \rightarrow 0} I(s) = \|\varphi\|^2.
  \end{equation}
\end{lemma*}

\subsubsection{Adjoint $L$-function}
\label{sec:adjoint-l-function}
For $\pi$, $S$ as in the lemma of \S\ref{sec-3-3-2}, the \emph{partial adjoint $L$-function} is defined for $\Re(s) > 1$ by the absolutely-convergent Euler product $L^{(S)}(\ad \pi,s) := \prod_{\mathfrak{p} \notin S} L(\ad \pi_\mathfrak{p},s)$; it continues meromorphically to the complex plane, and is holomorphic for (at least) $\Re(s) \geq 1$ (see \cite{MR533066}).

\subsection{Automorphic forms on $\PB^\times$}
\label{sec-3-4}

\subsubsection{Jacquet--Langlands lifts}
\label{sec-3-4-1}
Let $\pi = \otimes \pi_\mathfrak{p} \subseteq L^2([\PB^\times])$ be a cuspidal automorphic representation with $\dim(\pi) > 1$.  By \cite[Prop 4]{MR0333081},
\begin{equation}\label{eqn:generic-at-each-place-if-not-1-diml}
  \text{$\dim(\pi_\mathfrak{p}) > 1$ for any prime $\mathfrak{p}$ at
    which $B$ splits.}
\end{equation}
The Jacquet--Langlands lift $\pi_{\JL} = \otimes \pi_{\JL,\mathfrak{p}} \subseteq L^2([\PGL_2])$ is the unique cuspidal automorphic representation for which $\pi_{\JL,\mathfrak{p}} = (\pi_\mathfrak{p})_{\JL}$ for each place $\mathfrak{p}$.  If $\mathfrak{p}$ is a finite prime at which $B$ splits and for which $\pi_\mathfrak{p}$ is unramified, then $(\pi_{\JL})_\mathfrak{p}$ is unramified.  The association $\pi \mapsto \pi_{\JL}$ is injective.

\subsubsection{The pretrace formula\label{sec:pretrace-formula}}
Assume that $B$ is non-split, so that $[\PB^\times]$ is compact.  Fix a maximal compact subgroup $K$ of $\PB^\times_\mathbb{A}$.  The pretrace formula asserts that for $f \in C_c^\infty(\PB_\mathbb{A}^\times)$ and $x \in \PB^\times_\mathbb{A}$,
\begin{equation}\label{eq:pretrace-formula-general}
  \sum_{\pi}
  \sum_{\varphi}
  \overline{\varphi (x)}
  \pi(f) \varphi(x)
  = \sum_{\gamma \in \PB^\times} f(x^{-1} \gamma x),
\end{equation}
where $\pi$ traverses the irreducible subrepresentations of $L^2([\PB^\times])$ and $\varphi$ traverses an orthonormal basis $\mathcal{B}(\pi)$ of $\pi$ consisting of $K$-isotypic vectors.  Only finitely many summands on the right hand side of \eqref{eq:pretrace-formula-general} are nonzero, while the condition on $\mathcal{B}(\pi)$ implies that the left hand side of \eqref{eq:pretrace-formula-general} converges absolutely, or indeed, rapidly: Let
\begin{equation*}
  C(\pi) := \prod_\mathfrak{p} C((\pi_\mathfrak{p})_{\JL}) \in \mathbb{R}_{\geq 1}
\end{equation*}
denote the analytic conductor of $\pi$ (see, e.g.,  \cite[\S3.1.8, \S4.1.4]{michel-2009}).  By (e.g.) the proof of \cite[Thm 9.1]{MR1616155}, one has for each $A \geq 0$
\begin{equation}\label{eq:rapid-convergence-of-pretrace-formula}
  \sum_{\pi} C(\pi)^A
  \sum_{\varphi}
  |\overline{\varphi}(x) \pi(f) \varphi(x)|
  < \infty.
\end{equation}
Note also that there exists $A_0 > 3$ so that (see, e.g., \cite[(2.15)]{michel-2009}).
\begin{equation}\label{eq:polynomial-growth-of-reps}
  \sum_{\pi} C(\pi)^{-A_0}
  < \infty
\end{equation}

Let $S$ be a set of places containing the infinite ones.  Let $R \subseteq B$ be a maximal order.  For each $\mathfrak{p} \notin S$, let $J_\mathfrak{p} \leq \PB^\times_\mathfrak{p}$ denote the image of $R_\mathfrak{p}^\times$, as in \S\ref{sec-2-4}, and set $J := \prod_{\mathfrak{p} \notin S} J_\mathfrak{p}$.  Suppose that $f = f_S \otimes (\otimes_{\mathfrak{p} \notin S} T_{y_\mathfrak{p}})$ for some $f_S \in C_c^\infty(\PB_S^\times)$ and $y \in \mathbb{A}^\times$, where $T_{y_\mathfrak{p}}$ is the Hecke kernel as defined in \S\ref{sec:hecke-kernels-local} relative to $J_\mathfrak{p}$.  The formula \eqref{eq:pretrace-formula-general} then specializes to
\begin{equation}\label{eq:cnsez1neo0}
  \sum_\pi
  (\sum_{\varphi}
  \overline{\varphi(x)}
  \pi(f_S) \varphi(x))
  \prod_{\mathfrak{p} \notin S}
  \lambda_{\pi_\mathfrak{p}}(T_{y_\mathfrak{p}}) 
  =
  \sum_{\gamma \in \PB^\times}
  f_S(x_S^{-1} \gamma x_S)
  \prod_{\mathfrak{p} \notin S}
  T_{y_\mathfrak{p}}(x_\mathfrak{p}^{-1} \gamma x_\mathfrak{p}),
\end{equation}
where $\pi \subseteq L^2([\PB^\times])$ now traverses the subrepresentations that are unramified outside $S$ (i.e., that contain a nonzero $J$-fixed vector) and $\varphi$ traverses an orthonormal basis of $K$-isotypic vectors for the $J$-fixed subspace $\pi^J$ of $\pi$.

\subsubsection{$L$-functions}\label{sec:standard-l-function}
Let $\pi \subseteq L^2([\PB^\times])$ be a cuspidal automorphic representation with $\dim(\pi) > 1$.  Let $S$ be a finite set of places containing all infinite places as well as any places at which either $B$ or $\pi$ ramifies.

The \emph{partial standard $L$-function} is defined for $\Re(s) > 1$ by the absolutely-convergent Euler product $L^{(S)}(\pi,s) := \prod_{\mathfrak{p} \notin S} L(\pi_\mathfrak{p},s)$; it continues meromorphically to the complex plane, and is holomorphic for (at least) $\Re(s) \geq 1/2$ (see, e.g., \cite[\S3.5]{MR1431508}).

Set $L^{(S)}(\ad \pi,s) := L^{(S)}(\ad \pi_{\JL}, s)$ (see \S\ref{sec:adjoint-l-function}).  By \cite{HL94} (cf. \cite[\S 2.9]{2009arXiv0904.2429B}), one has
\begin{equation}\label{eq:HL}
  C(\pi)^{-\eps}  \ll_{\eps} L^{(S)}(\ad \pi,1) \ll_{\eps}
  C(\pi)^{\eps}
  \text{ for each } \eps > 0.
\end{equation}

\subsection{Theta functions}
\label{sec-3-2}
\subsubsection{Metaplectic group\label{sec:global-metaplectic-gp}}
\label{sec-3-2-1}
Let $\Mp_2(\mathbb{A})$ denote the metaplectic double cover of $\SL_2(\mathbb{A})$; it fits into a short exact sequence $1 \rightarrow \mu_2 \rightarrow \Mp_2(\mathbb{A}) \xrightarrow{\pr} \SL_2(\mathbb{A})$.  We may identify it with $\SL_2(\mathbb{A}) \times \mu_2$, with the group law given by $(s_1,\zeta_1) (s_2,\zeta_2) = (s_1 s_2, \zeta_1 \zeta_2 c(s_1,s_2))$, where $c$ is the product of the cocycles from \S\ref{sec-2-2-2}.  We identify $\SL_2(F)$ with its image under the unique splitting $\SL_2(F) \hookrightarrow \Mp_2(\mathbb{A})$.

We may similarly define $\Mp_2(F_S)$ as a double cover of $\SL_2(F_S)$ for any collection $S$ of places of $F$.
\subsubsection{Quadratic spaces}
\label{sec-3-2-2}
We define quadratic spaces $V$ over $F$ as in \S\ref{sec:local-quadratic-spaces}.  The relevant examples are still $V = B, B^0, F$.  We equip $V_\mathbb{A}$ with the $(\psi,b_V)$-self dual measure $\mu_V$.  That measure is the product of the measures $\mu_{V_\mathfrak{p}}$ on the local spaces $V_{\mathfrak{p}}$ attached to $\psi_\mathfrak{p}$, and is independent of $\psi$: it assigns volume one to a fundamental domain for $V_\mathbb{A}/V$.

\subsubsection{Weil representation\label{sec:weil-repn-global}}
\label{sec-3-2-3}
For a quadratic space $V$ over $F$, the Schwartz--Bruhat space $\mathcal{S}(V_\mathbb{A})$ factors as the (completed) restricted tensor product $\mathcal{S}(V_\mathbb{A}) = \otimes \mathcal{S}(V_\mathfrak{p})$.  The Weil representation $\rho_{\Weil}^{\psi,V} : \Mp_2(\mathbb{A}) \times \O(V_\mathbb{A}) \rightarrow \GL(\mathcal{S}(V_\mathbb{A}))$ is given by $\rho_{\Weil}^{\psi,V} = \otimes \rho_{\Weil}^{\psi_{\mathfrak{p}},V_{\mathfrak{p}}}$ in the evident sense.

We may similarly define a Weil representation $\rho_{\Weil}^{\psi,V} : \Mp_2(F_S) \times \O(V_S) \rightarrow \GL(\mathcal{S}(V_S))$ for a finite set $S$ of places of $F$.

\subsubsection{Theta kernels\label{sec:theta-kernels}}
\label{sec-3-2-5}
Let $V$ be a quadratic space over $F$.  For $\phi \in \mathcal{S}(V_\mathbb{A})$, $s \in \Mp_2(\mathbb{A})$ and $g \in \O(V_\mathbb{A})$, set
\begin{equation*}
  \theta_{\psi}(\phi)(s,g) := \sum_{x \in V} \rho_{\Weil}^{\psi,V}(s,g) \phi(x).
\end{equation*}
The sum converges absolutely and defines a smooth function
\begin{equation*}
  \theta_{\psi}(\phi) : [\Mp_2] \times [\O(V)] \rightarrow \mathbb{C}.
\end{equation*}
We employ notation such as $\theta_{\psi}(\phi;s,g) := \theta_{\psi}(\phi)(s,g)$.  Observe that
\begin{equation}\label{eq:equivariance-theta-kernel}
  \theta_{\psi}(\phi;s s', g g')
  = \theta_{\psi}(\rho_{\Weil}^{\psi,V}(s',g') \phi;s,g)
\end{equation}
\subsubsection{Elementary theta functions\label{sec:elem-theta-fns}}
\label{sec-3-2-6}
Let $V = F$, regarded as a quadratic subspace of $B$ as in \S\ref{sec:local-quadratic-spaces}.  In that case, we abbreviate $\O_1(F) := \O(V) \cong \{\pm 1\}$.  For $\phi \in \mathcal{S}(V_\mathbb{A}) = \mathcal{S}(\mathbb{A})$, we denote also by
\begin{equation*}
  \theta_{\psi}(\phi) : [\Mp_2] \rightarrow \mathbb{C}
\end{equation*}
the elementary theta function obtained by restricting to the first factor of the theta kernel defined in \S\ref{sec:theta-kernels}, thus
\begin{equation*}
  \theta_{\psi}(\phi)(s) := \theta_{\psi}(\phi)(s,1) = \sum_{x \in F} \rho_{\Weil}^{\psi,F}(s) \phi(x).
\end{equation*}
By \eqref{eq:equivariance-theta-kernel},
\begin{equation}\label{eq:equivariance-etf}
  \rho_{\reg}(s) \theta_{\psi}(\phi)
  = \theta_{\psi}(\rho_{\Weil}^{\psi,F}(s) \phi)
  \text{ for }
  s \in \Mp_2(\mathbb{A}).
\end{equation}
The $\O_1(F)$-invariance of the theta kernel says that for $\phi \in \mathcal{S}(\mathbb{A})$,
\begin{equation}\label{eqn:O1-invariance-elementary-theta-fn}
  \text{$\theta_{\psi}(\phi) = \theta_{\psi}(\phi_-)$
    with $\phi_-(x) := \phi(-x)$.}
\end{equation}

\subsubsection{Ternary theta lifts}
\label{sec-3-2-7}
Suppose $V = B^0$.  Given $\Psi : [\PB^\times] \rightarrow \mathbb{C}$ and $\phi \in \mathcal{S}(B^0_\mathbb{A})$ and $s \in \Mp_2(\mathbb{A})$, set
\begin{equation*}
  \theta_{\psi}(\phi,\Psi;s) := \int_{g \in [\PB^\times]} \Psi(g) \theta_{\psi}(\phi;s,\Ad(g)) \,d g,
\end{equation*}
where $\Ad : \PB^\times_\mathbb{A} \xrightarrow{\cong} \SO(B^0_\mathbb{A})$ is the isomorphism induced by the notation of \S\ref{sec:general-notation}.  If $\Psi$ is a cusp form, then the integral converges absolutely and defines a cusp form
\begin{equation*}
  \theta_{\psi}(\phi,\Psi) : [\Mp_2] \rightarrow \mathbb{C}.
\end{equation*}
By \eqref{eq:equivariance-theta-kernel},
\begin{equation}\label{eq:equivariance-ttf}
  \rho_{\reg}(s) \theta_{\psi}(\phi,\Psi)
  = \theta_{\psi}(\rho_{\Weil}^{\psi,B^0}(s) \phi, \Psi)
  \text{ for }
  s \in \Mp_2(\mathbb{A}),
\end{equation}
\begin{equation}\label{eq:equivariance-ttf-2}
  \theta_{\psi}(\Ad(g)\phi, \rho_{\reg}(g) \Psi)
  =
  \theta_{\psi}(\phi, \Psi)
  \text{ for }
  g \in \PB^\times_\mathbb{A}.
\end{equation}

\subsubsection{Factorization}
\label{sec-3-2-8}
If the quadratic space $V$ decomposes as the direct sum $V' \oplus V''$ of quadratic subspaces, then the factorization of the Weil representation (\S\ref{sec:factorization-weil-repn}) implies the factorization of theta functions: for $g = g' \times g'' \in \O(V'_\mathbb{A}) \times \O(V_\mathbb{A} '') \leq \O(V_\mathbb{A})$ and $\phi = \phi ' \otimes \phi '' \in \mathcal{S}(V_\mathbb{A})$ with $\phi ' \in \mathcal{S}(V'_\mathbb{A}), \phi '' \in \mathcal{S}(V_\mathbb{A} '')$ (see \S\ref{sec:factorization-weil-repn}),
\begin{equation}\label{eqn:Factorization-of-theta-fns}
  \theta_{\psi}(\phi;s,g)
  =
  \theta_{\psi}(\phi ';s,g')
  \theta_{\psi}(\phi '';s,g'').
\end{equation}
With $\phi '_-$ as in \eqref{eqn:O1-invariance-elementary-theta-fn} and notation as in \S\ref{sec:general-notation}, one has
\begin{equation}\label{eq:action-of-S-on-pure-tensors-for-B-0}
  \Ad(g) \phi = \phi ' \otimes \Ad(g) \phi ''
\end{equation}
\begin{equation}\label{eq:action-of-S-on-pure-tensors-for-B}
  \mathfrak{S} \phi =
  \frac{1}{2} 
  ( \phi ' + \phi '_-)
  \otimes \phi ''.
\end{equation}

\subsection{Equidistribution of products of pairs of elementary theta functions\label{sec:equid-prod-pairs-theta}}
\label{sec-3-5}
The purpose of this section is to recall and apply some results from \cite{nelson-theta-squared}.  Let $\tau_1,\tau_2 \in F^\times$.  Throughout this section, we regard $\psi,\tau_1,\tau_2,F,B$ as fixed: implied constants may depend upon them without explicit mention.  We assume also (for technical convenience) that $B$ is non-split.

\subsubsection{Some asymptotic notation}\label{sec:some-asympt-notat}
Given a topological vector space $\mathcal{S}$, we adopt the convention (similar to ``big O notation'') of denoting by $\mathcal{C}(\phi)$ any quantity depending continuously upon $\phi \in \mathcal{S}$; the continuity is assumed uniform in all auxiliary parameters except those explicitly labelled ``fixed.''  The space $\mathcal{S}$ itself is always regarded as fixed, of course.  This convention applies in particular to Schwartz--Bruhat spaces of finite-dimensional vector spaces over local fields, over finite products of local fields, or over adele rings.

Similarly to the ``$\eps$-convention'' of analytic number theory, we allow the precise meaning of $\mathcal{C}(\phi)$ to change from one occurrence to the next.  When we specifically wish to distinguish between several such quantities, we use the notation $\mathcal{C} '(\phi), \mathcal{C} ''(\phi)$, and so on.

For example, let $V$ be a vector space over $F$ (always assumed finite-dimensional).  Let $\hat{F}$ denote the ring of finite adeles, so that $\mathbb{A} = F_\infty \times \hat{F}$ with $F_\infty := \prod_{\mathfrak{p} | \infty} F_\mathfrak{p}$.  Similarly, write $V_\mathbb{A} = V_\infty \times \hat{V}$.  The Schwartz--Bruhat space $\mathcal{S}(V_\mathbb{A})$ factors as the algebraic tensor product $\mathcal{S}(V_\infty) \otimes \mathcal{S}(\hat{V})$.  Suppose given some quantities $a(\phi;t_1,t_2)$ and $b(\phi;t_1,t_2)$ depending upon $\phi \in \mathcal{S}(V_\mathbb{A})$ and some auxiliary parameters $t_1,t_2$.  The notation
\begin{equation}\label{eq:example-a-phi-bounded-by-C-phi}
  a(\phi;t_1,t_2)
  \ll b(\phi;t_1,t_2) \mathcal{C}(\phi) \text{ for fixed $t_2$ }
\end{equation}
means that for each $t_2$ and each $\phi_f \in \mathcal{S}(\hat{V})$, there is a finite collection $\mathcal{P}$ of polynomials on $V_\infty$ and a finite collection $\mathcal{D}$ of translation-invariant differential operators on $V_\infty$ (thus $\mathcal{D}$ consists of linear combinations of monomials $\frac{\partial }{\partial x_{i_1}} \dotsb \frac{\partial }{\partial x_{i_n}}$ with respect to some coordinates $x_j : V_\infty \rightarrow \mathbb{R}$) so that for all $\phi_\infty \in \mathcal{S}(V_\infty)$ and all $t_1$,
\[
  | a(\phi_\infty \otimes \phi_f;t_1,t_2) | \leq |b(\phi_\infty \otimes \phi_f;t_1,t_2)| \sum_{P \in \mathcal{P}} \sum_{D \in \mathcal{D}} \|P D \phi_\infty \|_{L^\infty(V^\infty)}.
\]
One could just as well write ``the functional $\mathcal{S}(V_\mathbb{A}) \ni \phi \mapsto a(\phi;t_1,t_2)/b(\phi;t_1,t_2)$ is defined and continuous, uniformly in $t_1$.''

\subsubsection{Simple estimates for lattice sums}\label{sec:simple-estimates-for-lattice-sums}
\begin{lemma}\label{lem:cheap-lattice-sum-bound-Rn-Zn}
  Let $n \in \mathbb{Z}_{\geq 0}$.  Let $A \geq 0$ and $t_0 > 0$ be fixed.  For each $\phi \in \mathcal{S}(\mathbb{R}^n)$ and $t > t_0$, one has $\sum_{v \in \mathbb{Z}^n - \{0\}} |\phi(t v)| \ll |t|^{-A} \mathcal{C}(\phi)$.
\end{lemma}
\begin{proof}
  The left hand side of the required estimate is bounded by
  \[C |t|^{-A} \sup_{x \in \mathbb{R}^n} |x|^{n+1+A} |\phi(x)|\] with $C := |t_0|^{-(n+1)} \sum_{v \in \mathbb{Z}^n - \{0\}} |v|^{-(n+1+A)} < \infty$.
\end{proof}
\begin{lemma}\label{lem:cheap-lattice-sum-bound-adelic}
  Let $V$ be a vector space over $F$.  Let $A \geq 0$ and $t_0 > 0$ be fixed.  For $\phi \in \mathcal{S}(V_\mathbb{A})$ and $y \in \mathbb{A}^\times$ with $|y| > t_0$, one has $\sum_{v \in V - \{0\}} |\phi(y v)| \ll |y|^{-A} \mathcal{C}(\phi)$.
\end{lemma}
\begin{proof}
  Let $\mathbb{A}^{(1)} := \{y \in \mathbb{A}^\times : |y| = 1\}$ denote the subgroup of norm one ideles.  By the compactness of $\mathbb{A}^{(1)} / F^\times$ and the continuity of the dilation action of $\mathbb{A}^\times$ on $\mathcal{S}(V_\mathbb{A})$, it suffices to consider the case that $y_\mathfrak{p} = 1$ for all finite $\mathfrak{p}$ and $y_\mathfrak{p} = t$ for all infinite $\mathfrak{p}$, where $t \in \mathbb{R}_+^\times$ satisfies $t > t_1 := t_0^{1/[F:\mathbb{Q}]}$.  Each $\phi_f \in \mathcal{S}(\hat{V})$ is bounded and satisfies $\supp(\phi_f) \cap V \subseteq L$ for some lattice $L \leq V$, so it suffices to show
  that for fixed $A \geq 0$, fixed $L \leq V$, all $\phi \in \mathcal{S}(V_\infty)$ and all $t > t_1$, one has $\sum_{v \in L - \{0\}} |\phi(t v)| = \O(|t|^{-A} \mathcal{C}(\phi))$.  By choosing a $\mathbb{Z}$-basis of $L$, we reduce to Lemma \ref{lem:cheap-lattice-sum-bound-Rn-Zn}.
\end{proof}

\subsubsection{Simple estimates for theta functions}
\label{sec-3-5-4}
Recall that the \emph{Iwasawa decomposition} asserts that each $s \in \SL_2(\mathbb{A})$ may be written in the form $s = n(x) t(y) k$, where $x \in \mathbb{A}, y \in \mathbb{A}^\times$ and $k$ belongs to the standard maximal compact subgroup of $\SL_2(\mathbb{A})$.  The decomposition is not unique, but the quantities $x$ and $|y|$ depend only upon $s$.

We define $\htt: [\SL_2] \rightarrow \mathbb{R}_{>0}$ by $\htt(g) := \max_{\gamma \in \SL_2(F)} \htt_\mathbb{A}(\gamma g)$, where $\htt_{\mathbb{A}} : \SL_2(\mathbb{A}) \rightarrow \mathbb{R}_{>0}$ is defined with respect to the Iwasawa decomposition $s = n(x) t(y) k$ by $\htt_{\mathbb{A}}(s) := |y|^{1/2}$.  One has $\int_{[\SL_2]} \htt^{1-\eps} < \infty$ for $\eps > 0$.  \emph{Reduction theory} says that the image of $\htt$ is bounded from below by some $c > 0$ depending only upon $F$.  We extend $\htt$ via pullback to a map $[\Mp_2] \rightarrow \mathbb{R}_{>0}$.

Recall that the nontrivial unitary character $\psi$ of $\mathbb{A}/F$ is regarded as fixed.
\begin{lemma}\label{lem:crude-bound-for-cuspidal-theta-functions}
  Let $A \geq 0$ be fixed.  Let $\Psi \in L^1( [\PB^\times])$ with $\langle \Psi, 1 \rangle = 0$.  Let $\phi \in \mathcal{S}(B_\mathbb{A}^0)$.  For $s \in \Mp_2(\mathbb{A})$, one has $\theta_{\psi}(\phi,\Psi;s) \ll \htt(s)^{-A} \mathcal{C}(\phi) \|\Psi \|_{L^1}$.
\end{lemma}
\begin{proof}
  Since $\Psi$ has mean zero,
  \begin{equation}
    \theta_{\psi}(\phi,\Psi;s) = \int_{g \in [\PB^\times]} \Psi(g) \sum_{v \in V - \{0\}} \rho_{\Weil}^{\psi,V}(s,\Ad(g)) \phi(v) \,d g.
  \end{equation}
  By the Iwasawa decomposition and reduction theory, we may assume that $s = n(x) t(y) k$ with $|y| \gg 1$.  Since $B$ is non-split, we may fix a compact subset $U$ of $\PB^\times_\mathbb{A}$ containing a fundamental domain for $[\PB^\times]$.  Then
  \[
    |\theta_{\psi}(\phi,\Psi;s)| \leq \|\Psi \|_{L^1} |y|^{3/2} \sup_{g \in U} \sum_{v \in B^0 - \{0\}} |\rho_{\Weil}^{\psi,B^0}(k,\Ad(g)) \phi(y v)|.
  \]
  Since the Weil representation is continuous \cite[\S39]{MR0165033}, we may reduce to the case $k = 1$ and $g = 1$, in which the required estimate follows from Lemma \ref{lem:cheap-lattice-sum-bound-adelic} of \S\ref{sec:simple-estimates-for-lattice-sums}.
\end{proof}

\begin{lemma}\label{lem:crude-bound-for-elementary-theta-functions}
  For $\phi \in \mathcal{S}(\mathbb{A})$ and $s \in \Mp_2(\mathbb{A})$, one has $\theta_{\psi}(\phi;s) \ll \htt(s)^{1/4} \mathcal{C}(\phi)$.
\end{lemma}
\begin{proof}
  We argue as in the proof of Lemma \ref{lem:crude-bound-for-cuspidal-theta-functions}, but take into account the contribution from $0 \in F$ to the definition of $\theta_{\psi}(\phi)$.
\end{proof}

\subsubsection{Main estimate: the case of pure tensors}
\label{sec-3-5-3}
\begin{lemma*}\label{lem:main-result-for-theta-squared}
  Let $\phi_1', \phi_2' \in \mathcal{S}(\mathbb{A})$ and $\phi_1'', \phi_2'' \in \mathcal{S}(B_\mathbb{A}^0)$.  Let $\Psi_1, \Psi_2 : [\PB^\times] \rightarrow \mathbb{C}$ be integrable functions of mean zero.  Let $\tau_1, \tau_2 \in F^\times$ be fixed.  Abbreviate $\theta_i := \theta_{\psi^{\tau_i}}(\phi_i')$ and $h_i := \theta_{\psi^{\tau_i}}(\phi_i'',\Psi_i)$.  Then for all $s \in \Mp_2(\mathbb{A})$,
  \[
    \langle \theta_1 \cdot \rho_{\reg}(s) h_1, \theta_2 \cdot \rho_{\reg}(s) h_2 \rangle = \langle \theta_1, \theta_2 \rangle \langle h_1, h_2 \rangle + \O\left(\Xi(s) \prod_{i=1,2} \mathcal{C}(\phi_i') \mathcal{C}(\phi_i'') \|\Psi_i\|_{L^1} \right).
  \]
\end{lemma*}
\begin{proof}
  The main result of \cite{nelson-theta-squared} gives an estimate essentially of the required shape, but instead with the error term $\Xi(s) \mathcal{S}(\phi_1') \mathcal{S}(\phi_2') \mathcal{S}^{\mathbf{X}}(h_1 \overline{h_2})$, where $\mathcal{S}, \mathcal{S}^{\mathbf{X}}$ are the adelic Sobolev norms defined in \emph{loc.\ cit.}\ and \cite[\S2]{michel-2009}.  By the cuspidality of $h_1, h_2$ and axioms (S3b) and (S4e) of \cite{michel-2009}, we may replace the expression $\mathcal{S}^{\mathbf{X}}(h_1 \overline{h_2})$ first with $\mathcal{S}^{\mathbf{X}}(h_1) \mathcal{S}^{\mathbf{X}}(h_2)$ and then with $\mathcal{S}(h_1) \mathcal{S}(h_2)$.  Our task thereby reduces to showing for $i=1,2$ that $\mathcal{S}(\phi_i') \ll \mathcal{C}(\phi_i')$ and $\mathcal{S}(h_i) \ll \mathcal{C}(\phi_i'') \|\Psi_i\|_{L^1}$.  The first of these estimates, i.e., that the norms $\mathcal{S}$ are continuous, follows readily from the definitions of those norms.  The second estimate follows similarly, using Lemma \ref{lem:crude-bound-for-cuspidal-theta-functions} of \S\ref{sec-3-5-4}.
\end{proof}

\subsubsection{Factorization}\label{sec:quantitative-factorization}
Let $V', V''$ be vector spaces over $F$ and $V := V' \oplus V''$.

\begin{lemma*}\label{lem:quantitative-factorization}
  Let $\ell : \mathcal{S}(V_\mathbb{A} ') \otimes \mathcal{S}(V_\mathbb{A} '') \rightarrow \mathbb{C}$ be an algebraic linear functional on the algebraic tensor product of Schwartz--Bruhat spaces satisfying an estimate of the form
  \begin{equation*}
    \ell(\phi' \otimes \phi '') \ll \mathcal{C}'(\phi ') \mathcal{C}''(\phi '')
  \end{equation*}
  on pure tensors.
  Then $\ell$ extends to a continuous functional $\ell : \mathcal{S}(V_\mathbb{A}) \rightarrow \mathbb{C}$ satisfying
  \[
    \ell(\phi) \ll \mathcal{C}(\phi)
  \]
  for all $\phi \in \mathcal{S}(V_\mathbb{A})$, where $\mathcal{C}$ depends only upon $\mathcal{C} '$ and $\mathcal{C} ''$.
\end{lemma*}
\begin{proof}
  This is essentially the Schwartz kernel theorem, as extended by Bruhat \cite[\S5]{MR0140941}, and follows from a standard ``square-root of a partition of unity'' argument.
\end{proof}

\subsubsection{Main estimate: the general case\label{sec:main-estimate-for-equidistribution-pairs-theta}}
\label{sec-3-5-5}
Temporarily denote by $\mathcal{A}_0$ the space of integrable functions $\Psi : [\PB^\times] \rightarrow \mathbb{C}$ of mean zero.  Let $\mathcal{E}_{\tau_1,\tau_2} : \mathcal{S}(B_\mathbb{A}) \otimes \mathcal{S}(B_\mathbb{A}) \otimes \mathcal{A}_0 \otimes \mathcal{A}_0 \rightarrow \mathbb{C}$ denote the sesquilinear form given for $\phi_i = \phi_i' \otimes \phi_i'' \in \mathcal{S}(B_\mathbb{A})$ with $\phi_1',\phi_2' \in \mathcal{S}(\mathbb{A}), \phi_1'',\phi_2'' \in \mathcal{S}(B_\mathbb{A}^0)$ by
\begin{align*}
  \mathcal{E}_{\tau_1,\tau_2}(\phi_1,
  \phi_2,
  \Psi_1, \Psi_2)
  :=
  \langle
  \theta_1 h_1, \theta_2 h_2 \rangle
  - \langle \theta_1, \theta_2 \rangle
  \langle h_1, h_2 \rangle,
\end{align*}
where we abbreviate $\theta_i := \theta_{\tau_i}(\phi_i')$ and $h_i := \theta_{\tau_i}(\phi_i'',\Psi_i)$.  (The definition makes sense: \emph{a priori} estimates as in \S\ref{sec-3-5-4} and the density of $\mathcal{S}(\mathbb{A}) \otimes \mathcal{S}(B_\mathbb{A}^0)$ in $\mathcal{S}(B_\mathbb{A})$ allow us to extend $\mathcal{E}_{\tau_1,\tau_2}$ continuously from its initial domain.)

\begin{proposition}\label{prop:main-error-estimate-global-adelic-general}
  For $\phi_1, \phi_2 \in \mathcal{S}(B_\mathbb{A}), \Psi_1,\Psi_2 \in \mathcal{A}_0$ and $s \in \Mp_2(\mathbb{A})$, one has with $\rho_0^{\tau}(s) := \rho_{\Weil}^{\psi^{\tau}, B^0}(s)$ the estimate
  \begin{equation}
    \mathcal{E}_{\tau_1,\tau_2}((1 \otimes \rho_0^{\tau_1}(s))\phi_1,
    (1 \otimes \rho_0^{\tau_2}(s))\phi_2,
    \Psi_1,\Psi_2)
    \ll \Xi(s)
    \prod_{j=1,2}
    \Sob(\phi_j)
    \|\Psi_j\|_{L^1}.
  \end{equation}
  The implied constant and the uniformity of the continuity of $\Sob(\phi_j)$
  depend at most upon $\psi,\tau_1,\tau_2,F,B$.  The operator $1 \otimes \rho_0^{\tau_1}(s)$ is defined as in \S\ref{sec:factorization-weil-repn}.
\end{proposition}
\begin{proof}
  The lemma of \S\ref{sec:quantitative-factorization} reduces the general case of Proposition \ref{prop:main-error-estimate-global-adelic-general} to the special case in which $\phi_i = \phi_i' \otimes \phi_i''$ for $i=1,2$, which follows from the lemma of \S\ref{sec-3-5-3} upon recalling from \eqref{eq:equivariance-ttf} that $\theta_{\psi_\tau}$ intertwines $\rho_{0}^{\tau}$ with $\rho_{\reg}$.
\end{proof}

\subsubsection{Invariance properties}\label{sec:equivariance-summary-for-E-tau-tau}
We record these for later use.
\begin{lemma*}\label{lem:equivariance-summary-for-E-tau-tau}
  For $g_1, g_2 \in \PB^\times_\mathbb{A}$ and $s \in \Mp_2(\mathbb{A})$, one has
  \begin{align*}
    \mathcal{E}_{\tau_1,\tau_2}(\phi_1, \phi_2, \Psi_1, \Psi_2)
    &=
      \mathcal{E}_{\tau_1,\tau_2}(\mathfrak{S} \phi_1, \phi_2,
      \Psi_1, \Psi_2)
    \\
    &=
      \mathcal{E}_{\tau_1,\tau_2}(\phi_1, \mathfrak{S} \phi_2,
      \Psi_1, \Psi_2)
    \\
    &=
      \mathcal{E}_{\tau_1,\tau_2}(\Ad(g_1) \phi_1, \Ad(g_2) \phi_2,
      \rho_{\reg}(g_1) \Psi_1,       \rho_{\reg}(g_2)\Psi_2)
    \\
    &=
      \mathcal{E}_{\tau_1,\tau_2}(\rho_{\Weil}^{\psi^{\tau_1},B}(s) \phi_1, \rho_{\Weil}^{\psi^{\tau_2},B}(s) \phi_2,
      \Psi_1, \Psi_2).
  \end{align*}
\end{lemma*}
\begin{proof}
  The first two identities follow from \eqref{eq:action-of-S-on-pure-tensors-for-B} and \eqref{eqn:O1-invariance-elementary-theta-fn}, the remaining from \eqref{eq:equivariance-etf}, \eqref{eq:equivariance-ttf}, \eqref{eq:equivariance-ttf-2}, \eqref{eq:action-of-S-on-pure-tensors-for-B-0}, \eqref{eq:action-of-S-on-pure-tensors-for-B} and the translation invariance of the Petersson inner product.
\end{proof}

\subsection{Similitude theta functions\label{sec:non-traditional-theta-lifts}}
\label{sec-3-2-9}

\subsubsection{Weil representation\label{sec:similitudes-global}}
\label{sec-3-2-4}
For each place $\mathfrak{p}$ of $F$, let $\Omega_{\mathfrak{p}}$ denote the representation of $\PGL_2(F_{\mathfrak{p}}) \times \PGO(B_\mathfrak{p})$ attached as in \S\ref{sec:defn-local-omega} to the tuple $(F_\mathfrak{p},B_{\mathfrak{p}},\psi_{\mathfrak{p}})$.  Let $\Omega$ denote the restricted tensor product of the spaces $\Omega_{\mathfrak{p}}$ with respect to the distinguished elements, which we denote now by $\phi_{\mathfrak{p}}^0 \in \Omega_{\mathfrak{p}}$.  We may and shall identify $\Omega$ with the space of functions $\phi : \mathbb{A}^\times \times B_\mathbb{A} \rightarrow \mathbb{C}$ such that
\begin{itemize}
\item For each $t \in \mathbb{A}^\times$, the function $\phi[t] : B_\mathbb{A} \rightarrow \mathbb{C}$ given by $\phi[t](x) := \phi(t,x)$ belongs to the Schwartz--Bruhat space $\mathcal{S}(B_\mathbb{A})$;
\item $\phi(z^2 t, x) = \phi(t, z x)$ for all $z,t \in \mathbb{A}^\times, x \in B_\mathbb{A}$.
\item There is a compact subset $C$ of $\mathbb{A}^\times / \mathbb{A}^{\times 2}$ such that $\phi[t] = 0$ for all $t \notin C$ (i.e., for all $t \in \mathbb{A}^\times$ whose image in $\mathbb{A}^\times / \mathbb{A}^{\times 2}$ lies outside $C$);
\item There is an open subgroup $U$ of $\mathbb{A}^\times / \mathbb{A}^{\times 2}$ such that $\phi[t u] = \phi[t]$ for all $t \in \mathbb{A}^\times, u \in U$.
\end{itemize}
We equip $\Omega$ with the invariant hermitian norm $\|.\|$ obtained by tensoring those on the factors $\Omega_\mathfrak{p}$, thus
\begin{equation}\label{eqn:inner-product-on-Omega-1-adelic}
  \|\phi\|^2_{\Omega}
  :=
  \int_{t \in \mathbb{A}^\times / \mathbb{A}^{\times 2}}
  |t|^2
  \int_{x \in B_\mathbb{A}}
  |\phi|^2(t,x) \, d x \, d_2^\times t.
\end{equation}
The group $\PGL_2(\mathbb{A}) \times \PGO(B_\mathbb{A})$ acts on $\Omega$ by the representation $\rho_{\Weil}$ obtained as the restricted tensor product of those defined in \S\ref{sec:defn-local-omega}.  We define $\mathfrak{S} : \Omega \rightarrow \Omega$ and (for $g \in \PB^\times_\mathbb{A}$) $\Ad(g) : \Omega \rightarrow \Omega$ as in \S\ref{sec:defn-local-omega}.  Note that $\mathfrak{S}$ does \emph{not} preserve pure tensors: for $\phi = \otimes \phi_\mathfrak{p} \in \Omega$,
\begin{align*}
  \mathfrak{S} \phi (t,x)
  &= (\phi (t,x) + \phi (t,x -
    \tr(x)))/2,
  \\
  \otimes \mathfrak{S} \phi_\mathfrak{p}(x,t)
  &= \prod  (\phi (t_\mathfrak{p},x_\mathfrak{p}) + \phi (t_\mathfrak{p},x_\mathfrak{p} -
    \tr(x_\mathfrak{p} )))/2,
\end{align*}
and in general, these are not the same.  They \emph{do} coincide if $\# \{ \mathfrak{p} : \mathfrak{S} \phi_\mathfrak{p} \neq \phi_\mathfrak{p} \} \leq 1$.

\subsubsection{Theta functions}\label{sec:35ac3e570a}
For $\phi \in \Omega$, $s \in \PGL_2(\mathbb{A}), g \in \PGO(B_\mathbb{A})$, set
\begin{equation}
  \label{defn:theta-knerle-for-Omega}
  \Theta(\phi;s,g)
  :=
  \frac{1}{2} \sum_{\tau \in F^\times / F^{\times 2}}
  \sum_{x \in B}
  \rho_{\Weil}(s,g)
  \phi(\tau,x).
\end{equation}
The sum is well-defined, converges absolutely and defines a smooth function $\Theta(\phi)$ on $[\PGL_2] \times [\PGO(B)]$.  For a cusp form $\Psi : [\PB^\times] \rightarrow \mathbb{C}$ and $s \in \PGL_2(\mathbb{A})$, set
\begin{equation}
  \Theta(\phi,\Psi;s) := \int_{g \in [\PB^\times]} \Psi(g) \Theta(\phi;s,\Ad(g)) \, d g.
\end{equation}
The integral (together with similar integrals below) converges absolutely and defines a cusp form $\Theta(\phi,\Psi) : [\PGL_2] \rightarrow \mathbb{C}$.

\begin{remark*}
  $\Theta(\phi,\Psi)$ is not a theta lift in the traditional sense: the integral in its definition is with respect to the orthogonal group of $B^0$ rather than that of $B$.
\end{remark*}

\subsubsection{Fourier expansion}\label{sec:four-expans}
Let $\phi \in \Omega$, and let $\Psi : [\PB^\times] \rightarrow \mathbb{C}$ be a cusp form.
\begin{lemma*}
  For $x \in \mathbb{A}$, $y \in \mathbb{A}^\times$, one has
  \[
    \Theta(\phi,\Psi;n(x) a(y)) = \sum_{\tau \in F^\times} \psi(\tau x) W(\Theta(\phi,\Psi), \tau y)
  \]
  where $W(\Theta(\phi,\Psi),y) := \int_{g \in [\PB^\times]} \Psi(g) \sum_{\gamma \in \PB^\times} |y| \phi(y \nr(\gamma)^{-1}, g^{-1} \gamma g) \,d g$.
\end{lemma*}
\begin{proof}
  By direct unfolding as in \cite{MR0333081}, one has for $g \in \PB^\times_{\mathbb{A}}$ that
  \begin{equation*}
    \begin{split}
      \Theta(\phi, n(x) a(y);\Ad(g)) &= \frac{1}{2} \sum_{\tau \in F^\times / F^{\times 2}}
      |y| \phi(\tau y,0) \\
      &\quad + \sum_{\tau \in F^\times} \psi(\tau x) W(\Theta(\phi),\Ad(g),\tau y),
    \end{split}
  \end{equation*}
  where $W(\Theta(\phi),\Ad(g),y) := \sum_{\gamma \in \PB^\times} |y| \phi(y \nr(\gamma)^{-1}, g^{-1} \gamma g)$.  We conclude by integrating against $\Psi$.
\end{proof}

\subsubsection{Restriction to $\SL_2$}\label{sec:35ac3e570e}
Let $\phi, \Psi$ be as above.

\begin{lemma*}
  Let $y \in \mathbb{A}^\times$.  Suppose that $\phi [y] = \phi '[y] \otimes \phi ''[y]$
  for some $\phi '[y] \in \mathcal{S}(\mathbb{A})$, $\phi ''[y] \in \mathcal{S}(B_\mathbb{A}^0)$.  Then for $s \in \SL_2(\mathbb{A})$,
  \begin{equation}\label{eqn:simliitude-theta-expand-as-sum-of-pure-tensors-and-over-a}
    \Theta(\phi,\Psi;s a(y))
    =
    \frac{1}{2}
    \sum_{\tau \in F^\times \backslash F^{\times 2}}
    |y|
    \theta_{\psi^\tau}(\phi'[\tau y];s)
    \theta_{\psi^\tau}(\phi''[\tau y],\Psi;s).
  \end{equation}
\end{lemma*}
\begin{proof}
  We derive first using \eqref{defn:theta-knerle-for-Omega} that for $g \in \O(B_{\mathbb{A}})$,
  \[
    \Theta(\phi;s a(y),g) = \frac{1}{2} \sum_{\tau \in F^\times \backslash F^{\times 2}} |y| \theta_{\psi^\tau}(\phi[\tau y];s,g),
  \]
  hence by \eqref{eqn:Factorization-of-theta-fns} that for $g \in \O(B_\mathbb{A}^0)$,
  \[
    \Theta(\phi;s a(y),g) = \frac{1}{2} \sum_{\tau \in F^\times \backslash F^{\times 2}} |y| \theta_{\psi^\tau}(\phi'[\tau y];s) \theta_{\psi^\tau}(\phi''[\tau y];s,g).
  \]
  We integrate against $\Psi$ to conclude.
\end{proof}

\subsubsection{Unfolding the inner product}\label{sec:unfold-inner-product-nontraditional-theta-over-sl2}
Let $\phi_1, \phi_2 \in \Omega$.  Let $\Psi_1, \Psi_2 : [\PB^\times] \rightarrow \mathbb{C}$ be cusp forms.

\begin{lemma*}
  Suppose that for each $y \in \mathbb{A}^\times$, one has $\phi_i [y] = \phi_i '[y] \otimes \phi_i ''[y]$
  for some $\phi_i '[y] \in \mathcal{S}(\mathbb{A})$, $\phi_i ''[y] \in \mathcal{S}(B_\mathbb{A}^0)$.
  Then the following identity holds, with both sides converging absolutely:
  \begin{equation}\label{eq:inner-product-pgl2-made-into-sl2}
    \begin{split}
      &\langle \Theta(\phi_1,\Psi_1),
    \Theta(\phi_2,\Psi_2) \rangle_{\PGL_2} \\
    &\quad =
    \int_{y \in \mathbb{A}^\times / F^\times \mathbb{A}^{\times 2}}
    |y|^2
    \frac{1}{2^2}
    \sum_{\tau_1,\tau_2 \in F^\times / F^{\times 2}}
    \langle \theta_1 h_1, \theta_2 h_2 \rangle_{\SL_2}
    \, d_2^\times y,
  \end{split}
\end{equation}
\begin{equation}\label{eq:thet-:=-thet}
    \theta_i := \theta_{\psi^{\tau_i}}(\phi_i'[\tau_i y]), \quad h_i := \theta_{\psi^{\tau_i}}(\phi_i''[\tau_i y],\Psi_i).
  \end{equation}
\end{lemma*}
\begin{proof}
  The left hand side is an inner product of cusp forms, hence convergent.  On the right hand side, we may replace the $y$-integral by a finite sum, since the domain $\mathbb{A}^{\times} / F^{\times } \mathbb{A}^{\times 2}$ is compact and the integrand is invariant under an open subgroup.  For individual $y$, the sum over $\tau_1, \tau_2$ has only finitely many nonzero summands, each of which consists of an inner product whose convergence is clear (see \S\ref{sec-3-5-4}).  The expansion \eqref{eqn:simliitude-theta-expand-as-sum-of-pure-tensors-and-over-a} implies for $y \in \mathbb{A}^\times, s \in \SL_2(\mathbb{A})$ that $\Theta(\phi_i,\Psi_i,a(y) s) = \frac{1}{2} \sum _{\tau_i \in F^\times / F^{\times 2} } |y| \theta_i(s) h_i(s)$, so the required identity follows from the formula \eqref{eq:sl2-vs-pgl2-integrals} relating integrals over $[\PGL_2]$ and $[\SL_2]$.
\end{proof}

\begin{remark*}
  In this paper, we consider several expressions shaped like the right hand side of \eqref{eq:inner-product-pgl2-made-into-sl2}.  On a first (or perhaps on any) reading, one should focus on the contributions from $y = \tau_1 = \tau_2 = 1$; under some class number and unit group restrictions, these turns out to be the relevant ones for our applications.  (We considered imposing such restrictions for the sake of presentation, but found that doing so obfuscated rather than simplified.)
\end{remark*}

\subsection{Inner product formulas}
\label{sec-3-7}

\subsubsection{Elementary theta functions}\label{sec:elem-theta-ipf}
We recall part of \cite[Thm 2]{nelson-theta-squared}.
\begin{lemma*}
  Suppose $\phi_1, \phi_2 \in \mathcal{S}(\mathbb{A})$ satisfy $\phi_1(x) = \phi_1(-x), \phi_2(x) = \phi_2(-x)$.  Let $\tau_1, \tau_2 \in F^\times$.  Set $\theta_1 := \theta_{\psi^{\tau_1}}(\phi_1), \theta_2 := \theta_{\psi^{\tau_1}}(\phi_2)$.  Then $\langle \theta_1, \theta_2 \rangle_{\SL_2}= 0$ unless $\tau_1 = \tau_2$, in which case $\langle \theta_1, \theta_2 \rangle_{\SL_2} = 2 \langle \phi_1, \phi_2 \rangle_{L^2(\mathbb{A})}$.
\end{lemma*}
\subsubsection{Ternary theta lifts}\label{sec:ternary-theta-ipf}
As in \cite[\S12.3]{nelson-variance-73-2}, we explicate Gan--Takeda \cite[Thm 6.6]{MR2837015} (compare with \cite[Prop 2.8 (i)]{MR3291638}).
\begin{lemma*}
  Let $\pi_1, \pi_2 \subseteq L^2([\PB^\times])$ be cuspidal automorphic representations that are not one-dimensional.  Let $\Psi_1 \in \pi_1, \Psi_2 \in \pi_2$ and $\phi_1, \phi_2 \in \mathcal{S}(B_\mathbb{A}^0)$.  Let $\tau \in F^\times$.  Set $h_i := \theta_{\psi^{\tau}}(\phi_i,\Psi_i)$ for $i=1,2$.
  \begin{enumerate}
  \item If $\pi_1 \neq \pi_2$, then $\langle h_1, h_2 \rangle_{\SL_2} = 0$.
  \item Suppose $\pi_1 = \pi_2 =: \pi$.  Let $S$ be a finite set of places of $F$ that contains all archimedean places, as well as any finite places at which $B$ ramifies, and that is sufficiently large in terms of $\Psi_i, \phi_i$.
    Then $\langle h_1, h_2 \rangle_{\SL_2}$ equals
    \begin{equation}
      \frac{L^{(S)}(\pi,\tfrac{1}{2})}{\zeta_F^{(S)}(2)} \left(\prod_{\mathfrak{p} \notin S} \vol(K_\mathfrak{p}) \right) \int_{g \in \PB^\times_S} \langle \Ad(g) \phi_1, \phi_2 \rangle_{L^2(B_\mathbb{A}^0)} \langle \pi(g) \Psi_1, \Psi_2 \rangle_{\PB^\times} \, d g
    \end{equation}
    with $L^{(S)}(\pi,\tfrac{1}{2})$ as in \S\ref{sec:standard-l-function}.
  \end{enumerate}
\end{lemma*}

\subsubsection{Induction to $\Omega$}\label{sec:induction-omega}
We now combine the previous two lemmas and sum them up.
Temporarily denote by $\mathcal{A}_0$ the space of cusp forms $\Psi : [\PB^\times] \rightarrow \mathbb{C}$ that are orthogonal to all one-dimensional representations.  For $\tau_1, \tau_2 \in F^\times$, let $\mathfrak{m} : \Omega \otimes \Omega \otimes \mathcal{A}_0 \otimes \mathcal{A}_0 \rightarrow \mathbb{C}$ denote the sesquilinear form given for $\phi_1, \phi_2 \in \Omega$ admitting factorizations $\phi_i[y] = \phi_i'[y] \otimes \phi_i''[y]$ by
\begin{equation}
  \mathfrak{m}(\phi_1,\phi_2,\Psi_1,\Psi_2)
  :=
  \int_{y \in \mathbb{A}^\times / F^{\times }
    \mathbb{A}^{\times 2}}
  |y|^2
  \frac{1}{2^2}
  \sum_{\tau_1,\tau_2 \in F^\times / F^{\times 2}}
  \langle \theta_1, \theta_2 \rangle
  \langle h_1, h_2 \rangle \, d_2^\times y,
\end{equation}
with $\theta_i, h_j$ as in \eqref{eq:thet-:=-thet}.  The relevance of $\mathfrak{m}$ may be inferred from \S\ref{sec:unfold-inner-product-nontraditional-theta-over-sl2}.

The definition makes sense: as in the proof of \S\ref{sec:unfold-inner-product-nontraditional-theta-over-sl2}, the $y$-integral is really a finite sum, and the sum over $\tau_1, \tau_2$ has only finitely many nonzero summands.  Each summand defines a sesquilinear form on $\mathcal{S}(\mathbb{A}) \otimes \mathcal{S}(B_\mathbb{A}^0) \otimes \mathcal{A}_0$ that extends continuously to $\mathcal{S}(B_\mathbb{A}) \otimes \mathcal{A}_0$ by the \emph{a priori} estimates of \S\ref{sec-3-5-4}.

\begin{lemma*}
  Let $\pi_1, \pi_2 \subseteq L^2([\PB^\times])$ be cuspidal automorphic representations that are not one-dimensional.  Let $\Psi_1 \in \pi_1, \Psi_2 \in \pi_2$.  Let $\phi_1, \phi_2 \in \Omega$.
  \begin{enumerate}
  \item If $\pi_1 \neq \pi_2$ then $\mathfrak{m}(\phi_1,\phi_2,\Psi_1,\Psi_2) = 0$.
  \item Suppose $\pi_1 = \pi_2 =: \pi$.  Let $S$ be a large enough finite set of places.
    Then $\mathfrak{m}(\phi_1,\phi_2,\Psi_1,\Psi_2)$ equals
    \begin{equation*}
      \frac{L^{(S)}(\pi,\tfrac{1}{2})}{\zeta_F^{(S)}(2)} \left(\prod_{\mathfrak{p} \notin S} \vol(K_\mathfrak{p}) \right) \int_{g \in \PB^\times_S} \langle \Ad(g) \mathfrak{S} \phi_1, \mathfrak{S} \phi_2 \rangle_{\Omega} \langle \pi(g) \Psi_1, \Psi_2 \rangle_{\PB^\times} \,d g.
    \end{equation*}
  \end{enumerate}
\end{lemma*}
\begin{proof}
  It suffices to consider the case that $\phi_1, \phi_2$ admit factorizations as in the definition of $\mathfrak{m}$.  By \eqref{eqn:O1-invariance-elementary-theta-fn} and \eqref{eq:action-of-S-on-pure-tensors-for-B}, we may assume that $\mathfrak{S} \phi_i = \phi_i$, or equivalently, that $\phi_i'(t,x) = \phi_i'(t,-x)$.  By the lemmas of \S\ref{sec:elem-theta-ipf} and \S\ref{sec:ternary-theta-ipf}, we have $\langle \theta_1, \theta_2 \rangle = 0$ unless $\tau_1 = \tau_2$ and then $\langle h_1, h_2 \rangle = 0$ unless $\pi_1 = \pi_2$; in that case, the formulas from those lemmas and the identities
  \[
    \langle \phi_1'[y \tau], \phi_2'[y \tau] \rangle_{L^2(\mathbb{A})} \langle \Ad(g) \phi_1''[y \tau], \phi_2''[y \tau] \rangle_{L^2(B_\mathbb{A}^0)}
  \]
  \[
    = \langle \Ad(g) \phi_1[y \tau], \phi_2[y \tau] \rangle_{L^2(B_\mathbb{A})}
  \]
  and (see \eqref{eq:integral-formula-involving-squares}, \eqref{eqn:inner-product-on-Omega-1-adelic})
  \begin{equation}
    \int_{y \in \mathbb{A}^{\times} / F^\times \mathbb{A}^{\times 2} } |y|^2 \frac{1}{2} \sum_{\tau \in F^\times / F^{\times 2}} \langle \Ad(g) \phi_1[y \tau], \phi_2[y \tau] \rangle_{L^2(B_\mathbb{A})} \, d_2^\times y
  \end{equation}
  \[
    = \int_{y \in \mathbb{A}^{\times} / \mathbb{A}^{\times 2} } |y|^2 \langle \Ad(g) \phi_1[y], \phi_2[y] \rangle_{L^2(B_\mathbb{A})} \, d_2^\times y = \langle \Ad(g) \phi_1, \phi_2 \rangle_{\Omega}
  \]
  combine to give the required conclusion.
\end{proof}

\section{Estimates for general quantum variance sums\label{sec:estimates-general-var}}
\label{sec-4}
In this section, we introduce general families of quantum variance sums, propose a candidate for their leading asymptotics, and state a general ``estimate'' comparing the two.
\subsection{Notation}
\label{sec-4-1}
Let $F$ be a number field with adele ring $\mathbb{A}$.  Fix a nontrivial unitary character $\psi$ of $\mathbb{A}/F$.  Let $B$ be a non-split quaternion algebra over $F$.  Fix a maximal order $R \subseteq B$ and a finite set $S$ of places of $F$, containing all archimedean places as well as any finite places at which $B$ ramifies.  Retain the (unsurprising) notation of \S\ref{sec-3-1}.

Since $B$ is non-split, the quotient $[\PB^\times]= \PB^\times \backslash \PB^\times_\mathbb{A}$ is compact, and so $L^2([\PB^\times])$ is completely reducible.  Let $A^{\flat}$ denote the set of irreducible subrepresentations of the Hilbert space $L^2([\PB^\times])$.  For each $\pi^{\flat} \in A^{\flat}$, let $\pi \leq \pi^{\flat}$ denote the subspace of smooth vectors.  Set $A := \{\pi : \pi^\flat \in A^{\flat}\}$.  Let $\mathcal{A}$ denote the algebraic direct sum $\oplus_{\pi \in A} \pi$, regarded as a pre-unitary representation of the group $\PB^\times_\mathbb{A}$.

We introduce the following additional notation:
\begin{itemize}
\item $K = \prod K_\mathfrak{p}$: a maximal compact subgroup of $\PB^\times_\mathbb{A}$.  For $\mathfrak{p} \notin S$, we assume that $K_\mathfrak{p} \leq \PB^\times_{\mathfrak{p}}$ is the image of $R_\mathfrak{p}^\times$.
\item $\mathcal{A}_0 \leq \mathcal{A}$: the orthogonal complement of the one-dimensional subrepresentations.  (We had earlier, in \S\ref{sec-3-5} and \S\ref{sec:induction-omega}, used the same symbol to denote some \emph{larger} spaces than what we call here $\mathcal{A}_0$.  This abuse of notation should introduce no confusion.)
\item $A_0 := \{\pi \in A : \pi \subseteq \mathcal{A}_0\} = \{\pi \in A : \dim(\pi) > 1\}$, so that $\mathcal{A}_0 = \oplus_{\pi \in A_0} \pi$.
\item $\mathcal{A}^S := \{\varphi \in \mathcal{A} : \rho_{\reg}(k) \varphi = \varphi \text{ for all } k \in K_\mathfrak{p}, \mathfrak{p} \notin S \}, \mathcal{A}_0^S := \mathcal{A}_0 \cap \mathcal{A}^S$: the ``unramified outside $S$'' subspaces of $\mathcal{A}, \mathcal{A}_0$.
\item $A^S := \{\pi \in A : \pi \cap \mathcal{A}^S \neq \{0\}\}, {A}_0^S := {A}^S \cap {A}_0$: the subsets consisting of those $\pi$ that are unramified outside $S$.
\item $\mathcal{B}(V)$, for $V$ a $K$-invariant subspace of $\mathcal{A}$: an orthonormal basis for the closure of $V$ that consists of $K$-isotypic elements of $V$.
  
\end{itemize}
Fix Haar measures on $\PB^\times_S$ and $[\PB^\times]$; we do not require any compatibility between them.  Because $B$ is non-split, each $\pi \in A_0$ is cuspidal.  Let $L^{(S)}(\pi,s)$, $L^{(S)}(\ad \pi,s)$ be as in \S\ref{sec:standard-l-function}.
\subsection{Key definitions\label{sec:main-general-estimate-key-defns}}
\label{sec-4-2}
By \eqref{eq:rapid-convergence-of-pretrace-formula}, \eqref{eq:polynomial-growth-of-reps} and \eqref{eq:HL}, the sums considered in the definitions to follow converge absolutely.

\subsubsection{The basic distributions}
\label{sec:omega-pi}
For $\pi \in A^S$, define
\begin{equation*}
  \omega_\pi : C_c^\infty(\PB^\times_S) \otimes \mathcal{A}^S \rightarrow \mathbb{C}
\end{equation*}
by
\begin{equation*}
  \omega_\pi(f,\Psi) := \sum_{\varphi \in \mathcal{B}(\pi \cap \mathcal{A}^S)} \langle \varphi, \Psi \cdot \pi(f) \varphi \rangle.
\end{equation*}
The definition is independent of the choice of orthonormal basis.

\begin{example*}\label{example:harmonic-sum-attached-to-orth-proj}
  If $\pi(f) = 0$, then $\omega_\pi(f,\Psi) = 0$.  If $\pi(f)$ is the orthogonal projector onto a one-dimensional subspace $\mathbb{C} \varphi$ of $\pi$ with unit basis vector $\varphi$, then $\omega_\pi(f,\Psi) = \langle \varphi, \Psi \varphi \rangle$.
\end{example*}

\subsubsection{Quantum variance sums}\label{sec:35ac3e5744}
For $f \in C_c^\infty(\PB^\times_S)$, define the sesquilinear form $\mathcal{V}_f : \mathcal{A}_0^S \otimes \mathcal{A}_0^S \rightarrow \mathbb{C}$ by
\[\mathcal{V}_f(\Psi_1,\Psi_2)
  := \sum_{\pi \in A_0^S} L^{(S)}(\ad \pi,1) \omega_\pi( f,\Psi_1) \overline{ \omega_\pi( f,\Psi_2) }.
\]

\subsubsection{Proposed limiting variance}\label{sec:35ac3e5745}
For $f \in C_c^\infty(\PB^\times_S)$, define the sesquilinear form $\mathcal{M}_f : \mathcal{A}_0^S \otimes \mathcal{A}_0^S \rightarrow \mathbb{C}$ by requiring for $\Psi_1 \in \pi_1 \in A_0^S, \Psi_2 \in \pi_2 \in A_0^S$ that $\mathcal{M}_f(\Psi_1,\Psi_2) := 0$ unless $\pi_1 = \pi_2 =: \pi$, in which case
\[
  \mathcal{M}_f(\Psi_1,\Psi_2) := c_3 L^{(S)}(\pi,\tfrac{1}{2}) \int_{g \in \PB^\times_S} \langle \Ad(g) \mathfrak{S} f, \mathfrak{S} f \rangle_{L^2(\PB^\times_S)} \langle \pi(g) \Psi_1, \Psi_2 \rangle_{\PB^\times} \, d g
\]
where
\begin{equation}\label{eq:defn-of-c}
  c_3 :=
  \zeta_F^{(S)}(2)  \vol([\PB^\times])^{-1}.
\end{equation}
The integral converges absolutely (see \S\ref{sec:local-convergence-lemmas},
\S\ref{sec:local-integrals-for-rallis-ipf}).

\subsubsection{Thickening $\PB^\times$ inside $B$}\label{sec:heartsuit}

Fix, once and for all, a nonzero element $W_S \in C_c^\infty(F_S^\times)$.  For $\tau \in F^\times$, define the linear map $\heartsuit^{\tau} : C_c^\infty(\PB^\times_S) \rightarrow \mathcal{S}(B_S)$ by
\[
  \heartsuit^{\tau} f(x) := \frac{W_S(\tau \nr(x))}{|\tau \nr(x)|_S} 1_{B_S^\times}(x) f(\pr(x)),
\]
where $\pr : B_S^\times \rightarrow \PB^\times_S$ denotes the natural projection.

\subsection{Statement of main result}
\label{sec-4-3}
The statement involves the metaplectic group (\S\ref{sec:global-metaplectic-gp}) and the Weil representation (\S\ref{sec:weil-repn-global}).  For $s \in \Mp_2(F_S)$, we abbreviate $\rho^{\tau}(s) := \rho_{\Weil}^{\psi^{\tau},B}(s)$ and $\rho_0^\tau(s) := \rho_{\Weil}^{\psi^{\tau},B^0}(s)$; these operators act respectively on $\mathcal{S}(B_S)$ and $\mathcal{S}(B_S^0)$.  The operators $1 \otimes \rho_0^{\tau_i}(s)$ on $\mathcal{S}(B_S)$ are defined using the decomposition $B_S = F_S \oplus B_S^0$, as in \S\ref{sec:factorization-weil-repn}.
\begin{theorem}\label{thm:main-estimate-general-variance}
  There is a finite subset $X$ of $F^\times$ and a finite collection $(\eps_{\tau_1,\tau_2})_{\tau_1,\tau_2 \in X}$ of sesquilinear forms $\eps_{\tau_1,\tau_2} : \mathcal{S}(B_S) \otimes \mathcal{S}(B_S) \otimes \mathcal{A}_0^S \otimes \mathcal{A}_0^S \rightarrow \mathbb{C}$, depending only upon $F$, $\psi$, $S$ and $W_S$, with the following properties:
  \begin{enumerate}
  \item {\bf Relevance.}  For $f \in C_c^\infty(\PB^\times_S)$, one has the following identity of sesquilinear forms on $\mathcal{A}_0^S$:
    \begin{equation}\label{eq:relevance}
      \mathcal{V}_f
      = \mathcal{M}_f
      + \sum_{\tau_1,\tau_2 \in X} \eps_{\tau_1,\tau_2}(\heartsuit^{\tau_1} f, \heartsuit^{\tau_2} f, \cdot, \cdot).
    \end{equation}
  \item {\bf $\O_1(F)$-invariance.}
    \[
      \eps_{\tau_1,\tau_2}(\mathfrak{S} \phi_1, \phi_2, \Psi_1, \Psi_2) = \eps_{\tau_1,\tau_2}( \phi_1, \phi_2, \Psi_1, \Psi_2),
    \]
    \[
      \eps_{\tau_1,\tau_2}(\phi_1, \mathfrak{S}\phi_2, \Psi_1, \Psi_2) = \eps_{\tau_1,\tau_2}(\phi_1, \phi_2, \Psi_1, \Psi_2).
    \]
    
  \item {\bf $\SO(B_S^0)$-invariance.}  For $g_1,g_2 \in \PB^\times_S$,
    \[\eps_{\tau_1,\tau_2}(\Ad(g_1) \phi_1, \Ad(g_2) \phi_2, \rho_{\reg}(g_1) \Psi_1,
      \rho_{\reg}(g_2) \Psi_2)
    \]
    \[
      = \eps_{\tau_1,\tau_2}(\phi_1, \phi_2, \Psi_1, \Psi_2).
    \]
  \item {\bf Metaplectic invariance.}  For $s \in \Mp_2(F_S)$,
    \[
      \eps_{\tau_1,\tau_2}(\rho^{\tau_1}(s) \phi_1, \rho^{\tau_2}(s) \phi_2, \Psi_1, \Psi_2) = \eps_{\tau_1,\tau_2}(\phi_1, \phi_2, \Psi_1, \Psi_2).
    \]
    
  \item {\bf Main estimate.}  For $s \in \Mp_2(F_S)$,
    \[
      \eps_{\tau_1,\tau_2}((1 \otimes \rho_0^{\tau_1}(s)) \phi_1, (1 \otimes \rho_0^{\tau_2}(s)) \phi_2, \Psi_1, \Psi_2) \]
    \[
      \ll \Xi(s) \prod_{i=1,2} \Sob(\phi_i) \|\Psi_i\|_{L^1},
    \]
    where $\Xi$ denotes the Harish--Chandra function (\S\ref{sec:Xi-global}) and $\Sob(\phi_i)$ denotes a quantity that varies continuously with $\phi_i$ (see \S\ref{sec:some-asympt-notat}).  The implied constants and the uniformity in the continuity of $\Sob(.)$
    depend at most upon $F,\psi,S,W_S$.
  \end{enumerate}
\end{theorem}
The proof of Theorem \ref{thm:main-estimate-general-variance} occupies \S\ref{sec:proof-theor-refthm:m}.

\begin{remark}
  In applications of Theorem \ref{thm:main-estimate-general-variance}, the crucial assertions are the relevance and the main estimate.  The $\SO(B_S^0)$-invariance may be employed to obtain quantitatively stronger estimates, while the $\O_1(F)$-invariance and metaplectic invariance may be computationally convenient (see \cite{nelson-variance-II}).
\end{remark}
\begin{remark}
  The formulation of Theorem \ref{thm:main-estimate-general-variance} is independent of the choice of measures on $\PB_S^\times$ and on $[\PB^\times]$.
\end{remark}
\begin{remark}
  Theorem \ref{thm:main-estimate-general-variance} minus the ``main estimate'' is like a trace formula: $\mathcal{V}_f$ is a sum over automorphic forms, $\mathcal{M}_f$ is the ``identity'' or ``diagonal'' contribution, and the $\eps_{\tau_1,\tau_2}$ are the ``interesting'' contributions which one often wishes to show are small.  One difference is that $\mathcal{V}_f$ has a quadrilinear (rather than bilinear) dependence upon the automorphic forms $\varphi$.
\end{remark}
\begin{remark}
  Theorem \ref{thm:main-estimate-general-variance} likely extends to the split case $B = M_2(F)$ after incorporating contributions from the continuous spectrum into the definitions of \S\ref{sec:main-general-estimate-key-defns} and replacing $\|\Psi_i\|_{L^1}$ with $\|\htt^A \Psi_i\|_{L^1}$ for some fixed large enough $A > 0$.
\end{remark}

\subsection{Proof of Theorem \ref{thm:main-estimate-general-variance}}\label{sec:proof-theor-refthm:m}

\subsubsection{Measures}
\label{sec-4-5-1}
With a view to applications, we have formulated Theorem \ref{thm:main-estimate-general-variance} in a measure-independent fashion.  For the proof, it is convenient to take on $[\PB^\times]$ the Tamagawa measure, so that
\begin{equation}\label{eq:defn-of-c-2}
  c_3 = \frac{1}{2}  \zeta_F^{(S)}(2),
\end{equation}
and to fix measures on $\PB^\times_{\mathbb{A}}, \PB^\times_\mathfrak{p}$ and hence on $\PB^\times_S = \prod_{\mathfrak{p} \in S} \PB^\times_\mathfrak{p}$ as in \S\ref{sec:global-measures}.

\subsubsection{The $\heartsuit$ operator: local}
\label{sec-2-5}
Suppose temporarily (for \S\ref{sec-2-5} only) that $k$ is a local field, $\psi$ is a nontrivial unitary character of $k$, $B$ is a quaternion algebra over $k$, $G := B^\times / k^\times$, and $W \in C_c^\infty(k^\times)$.  Recall from \S\ref{sec:defn-local-omega} the definition of $\Omega$.  We define a linear map $\heartsuit : C_c^\infty(G) \rightarrow \Omega$ by
\begin{equation}
  \heartsuit f(t,x)
  :=
  \frac{W(t \nr(x))}{|t \nr(x)|}
  1_{B^\times}(x)
  f(x).
\end{equation}
(By abuse of notation, we write $f(x)$ for the value taken by $f$ at the image of $x$ under the natural projection $B^\times \rightarrow G$.)

By inspecting the definitions, one has the identities of maps $C_c^\infty(G) \rightarrow \Omega$
\[
  \mathfrak{S} \heartsuit = \heartsuit \mathfrak{S}, \quad \Ad(g) \heartsuit = \heartsuit \Ad(g) \text{ (for $g \in G$)}.
\]
By inspecting the definitions, one has for $y \in k^\times, b \in B^\times$ that
\begin{equation}\label{eqn:local-heartsuit-formula}
  \rho_{\Weil}(a(y)) \heartsuit f(\nr(b)^{-1},b)
  =
  |y| \heartsuit f(y \nr(b)^{-1}, b)
  =
  W(y) f(b).
\end{equation}
By the formula \eqref{eqn:inner-product-on-Omega-2} for $\|.\|_{\Omega}$, one obtains
\begin{equation}
  \label{eqn:local-norm-heartsuit-f}
  \|\heartsuit f\|_{\Omega} =
  \|f\|_{L^2(G)} \,
  \|W\|_{L^2(k^\times, |x|^{-1} \, d x)}.
\end{equation}

\subsubsection{The $\heartsuit$ operator: global\label{sec:heartsuit-global}}
\label{sec-3-6}
We revert to the global setting of \S\ref{sec:estimates-general-var}.
Let $\pi \in A_0^S$.
Recall from \S\ref{sec:similitudes-global} the definition of $\Omega$.  We define a linear map $\heartsuit : C_c^\infty(\PB^\times_S) \rightarrow \Omega$ by
\[
  \heartsuit f(t,x) := \frac{W_S(t_S \nr(x_S))}{|t_S \nr(x_S)|} 1_{B_S^\times}(x_S) f(x_S) \prod_{\mathfrak{p} \notin S} \vol(K_{\mathfrak{p}})^{-1} \phi_\mathfrak{p}^0(t_\mathfrak{p},x_\mathfrak{p}),
\]
where $\phi_\mathfrak{p}^0 \in \Omega_\mathfrak{p}$ is defined with respect to $R_\mathfrak{p}$ (see \S\ref{sec:dist-elem}).  This definition and that of \S\ref{sec:heartsuit} are obviously similar; we record their precise relationship below in \S\ref{sec-4-5-5}.

By \eqref{eqn:local-heartsuit-formula} and the lemma of \S\ref{sec:hecke-kernels-local}, one has for $y \in \mathbb{A}^\times$, $b \in B_\mathbb{A}^\times$ that
\begin{equation}\label{eqn:formulas-for-heartsuit-f-global}
  |y| \heartsuit f(
  y \nr(b)^{-1}, b)
  =
  W_S(y_S) f(b_S)
  \prod_{\mathfrak{p} \notin S}
  T_{y_\mathfrak{p}}(b_\mathfrak{p}),
\end{equation}
with $T_{y_\mathfrak{p}}$ as in \S\ref{sec:hecke-kernels-local}.  By combining \eqref{eqn:local-norm-heartsuit-f} with Lemma \ref{lem:norm-of-distinguished-vector-in-Omega-local} of \S\ref{sec:dist-elem}, one obtains
\begin{equation}\label{eqn:norm-of-heartsuit-f-global}
  \|\heartsuit f\|^2_{\Omega}
  =
  \|f\|_{L^2(\PB^\times_S)}^2
  \int_{t \in F_S^\times}
  |W_S|^2(t)
  \, \frac{d t}{|t|}
  \prod_{\mathfrak{p} \notin S}
  \frac{\vol(R_\mathfrak{p})}{\vol(K_\mathfrak{p})^2}.
\end{equation}

\begin{lemma*}\label{lem:main-term-eval-for-heartsuit}
  Let $\pi \in A_0^S$.  Let $\Psi_1,\Psi_2 \in \pi$ be $\prod_{\mathfrak{p} \notin S} K_\mathfrak{p}$-invariant vectors.  For $f \in C_c^\infty(\PB^\times_S)$, the quantity $\mathfrak{m}(\heartsuit f, \heartsuit f, \Psi_1, \Psi_2)$ (see \S\ref{sec:induction-omega}) equals
  \[
    c_2 L^{(S)}(\pi,\tfrac{1}{2}) \int_{g \in \PB_S^\times} \langle \Ad(g) \mathfrak{S} f, \mathfrak{S} f \rangle_{L^2(\PB_S^\times)} \langle \pi(g) \Psi_1, \Psi_2 \rangle_{\PB^\times} \,d g,
  \]
  where
  \begin{equation}\label{eq:defn-c2-frak-W}
    c_2
    := 
    \frac{1}{\zeta^{(S)}_F(2)}
    \left(\prod_{\mathfrak{p} \notin S}
    \frac{\vol(R_\mathfrak{p})}{\vol(K_\mathfrak{p})}\right)
    \int_{y \in F_S^\times}
    |W_S|^2(y)
    \, \frac{d y}{|y|}.
  \end{equation}
\end{lemma*}
\begin{proof}
  By the lemma of \S\ref{sec:induction-omega}, the polarization of \eqref{eqn:norm-of-heartsuit-f-global} and the commutativity $\heartsuit \Ad(g) = \Ad(g) \heartsuit$, the required identity holds if we replace $S$ with some possibly larger finite set of places $S' \supseteq S$.  To deduce the identity as written, we apply \eqref{eqn:local-rallis-ipf-unram-calc} (using \eqref{eqn:generic-at-each-place-if-not-1-diml} to verify its hypotheses).
\end{proof}

\subsubsection{A specific Eichler/Jacquet--Langlands lift}\label{sec:Phi-pi}
For $\pi \in A_0^S$, let $\Phi_\pi \in \pi_{\JL}$ denote the element of the Jacquet--Langlands lift of $\pi$ having the Fourier expansion $\Phi_\pi(n(x) a(y)) = \sum_{\tau \in F^\times} \psi(\tau x) W_\pi(\tau y)$, where the Whittaker function $W_\pi : \mathbb{A}^\times \rightarrow \mathbb{C}$ is given by $W_\pi(y) := W_S(y_S) \prod_{\mathfrak{p} \notin S} W_{\pi_\mathfrak{p}}^0(y_\mathfrak{p})$ (see \S\ref{sec:aut-forms-fourier-exp}, \S\ref{sec-3-3-2}).

\begin{lemma*}
  One has $\|\Phi_\pi\|^2 = c_1 L^{(S)}(\ad \pi,1)$, where
  \begin{equation}\label{eq:c-of-frak-W-defn}
    c_1
    :=
    \frac{2}{ \zeta_F^{(S)}(2)}
    \left(\prod_{\mathfrak{p} \notin S}
    \Delta_{\psi_{\mathfrak{p}}}^{-1/2}\right)
    \int_{y \in F_S^\times} |W_S|^2(y) \, \frac{d y}{|y|}.
  \end{equation}
  If $\pi, \pi ' \in A_0^S$ are distinct, then $\langle \Phi_{\pi}, \Phi_{\pi '} \rangle = 0$.
\end{lemma*}
\begin{proof}
  The conclusion in the case $\pi \neq \pi '$ is the multiplicity one theorem for $\PB^\times$ combined with the injectivity of $\pi \mapsto \pi_{\JL}$.  The formula \eqref{eq:c-of-frak-W-defn} is a consequence of the lemma of \S\ref{sec-3-3-2} and the corresponding local calculation (\S\ref{sec-2-3}).
\end{proof}

\subsubsection{The normalizing scalar}\label{sec:35ac3e5750}
Recall from \eqref{eq:c-of-frak-W-defn}, \eqref{eq:defn-c2-frak-W} and \eqref{eq:defn-of-c-2} the scalars $c_1,c_2,c_3$.  By the local volume formulas of \S\ref{sec:local-vol-formulas},
\begin{equation}\label{eqn:relation-c1-c2-c}
  c_1^{-1} c_2 = c_3.
\end{equation}

\subsubsection{Application of the pretrace formula}
\label{sec-4-5-2}
Recall the theta functions $\Theta(\phi,\Psi)$ attached in \S\ref{sec:non-traditional-theta-lifts} to each $\phi \in \Omega, \Psi \in \mathcal{A}_0$.
Let $f \in C_c^\infty(\PB^\times_S)$, $\Psi \in \mathcal{A}_0^S$.

\begin{lemma}\label{lem:abs-conv-of-silly-sum}
  $\sum_{\pi \in A_0^S} |\omega_\pi( f,\Psi)| \| \Phi_\pi \|_{L^p([\PGL_2])} < \infty$ for $p=2,\infty$.
\end{lemma}
\begin{proof}
  By \eqref{eq:rapid-convergence-of-pretrace-formula} and \eqref{eq:polynomial-growth-of-reps}, it suffices to show that $\| \Phi_\pi \|_{L^p([\PGL_2])} \ll C(\pi)^{O(1)}$.  The case $p=2$ follows from the lemma of \S\ref{sec:Phi-pi} and \eqref{eq:HL}.  The case $p=\infty$ reduces to the case $p=2$ by axioms (S2a) and (S3b) of \cite[\S2.4]{michel-2009}, wherein the quantities $\mathcal{S}_d(\Phi_\pi)$ may be estimated using \cite[\S3.2.5]{michel-2009}.  A direct proof of this convergence also follows by a rearrangement of the arguments given below.
\end{proof}

\begin{lemma}\label{lem:fourier-coefficients-of-nontraditional-theta-lifts}
  $\Theta(\heartsuit f,\Psi) = \sum_{\pi \in A_0^S} \omega_\pi( f,\Psi) \Phi_\pi$.
\end{lemma}
\begin{proof}
  Set $\Phi_1 := \Theta(\heartsuit f,\Psi)$ and $\Phi_2 := \sum_{\pi \in A_0^S} \omega_\pi( f,\Psi) \Phi_\pi$; we must show that $\Phi_1 = \Phi_2$.  Since $\Phi_1, \Phi_2$ are cuspidal, it will suffice to demonstrate the equality of their Whittaker functions $W_1, W_2 : \mathbb{A}^\times \rightarrow \mathbb{C}$ as defined in \S\ref{sec:aut-forms-fourier-exp}.  By the lemma of \S\ref{sec:four-expans} and \eqref{eqn:formulas-for-heartsuit-f-global}, we have
  \[
    W_1(y) = \sum_{\gamma \in \PB^\times} \int_{g \in [\PB^\times]} \Psi(g) W_S(y_S) f(g_S^{-1} \gamma g_S) \prod_{\mathfrak{p} \notin S} T_{y_\mathfrak{p}}(g_\mathfrak{p}^{-1} \gamma g_\mathfrak{p}) \,d g.
  \]
  The definition of $\Phi_\pi$ implies (using Lemma \ref{lem:abs-conv-of-silly-sum} to justify the interchange of summation with the Fourier integral over the compact group $\mathbb{A}/F$) that
  \[
    W_2(y) = \sum_{\pi \in A_0^S} \omega_\pi( f,\Psi) W_S(y_S) \prod_{\mathfrak{p} \notin S} W_{\pi_\mathfrak{p}}^0(y_\mathfrak{p}).
  \]
  Since $\Psi \in \mathcal{A}_0^S$, we have $\omega_\pi(f,\Psi) = 0$ for all $\pi \in A^S$ with $\pi \notin A_0^S$, so it suffices to establish for all $y \in \mathbb{A}^\times$, $g \in \PB^\times_{\mathbb{A}}$ the pointwise identity
  \begin{equation*}
    \sum_{\gamma \in \PB^\times} f(g_S^{-1} \gamma g_S) \prod_{\mathfrak{p} \notin S} T_{y_\mathfrak{p}}(g_\mathfrak{p}^{-1} \gamma g_\mathfrak{p}) = \sum_{\pi \in A^S}  \sum_{\varphi \in \mathcal{B}(\pi \cap \mathcal{A}^S)} \overline{\varphi}(g) \pi(f) \varphi(g) \prod_{\mathfrak{p} \notin S} W_{\pi_\mathfrak{p}}^0(y_\mathfrak{p}),
  \end{equation*}
  which follows from the pretrace formula \eqref{eq:cnsez1neo0} and the identity $W_{\pi_\mathfrak{p}}^0(y_\mathfrak{p}) = \lambda_{\pi_\mathfrak{p}}(T_{y_\mathfrak{p}})$ (see \eqref{eqn:key-local-identity-relating-whittaker-and-hecke}).
\end{proof}

\begin{remark*}
  Lemma \ref{lem:fourier-coefficients-of-nontraditional-theta-lifts} and its proof are in the spirit of arguments of Shimizu \cite[\S4]{MR0333081}, but we were unable to relate them precisely (e.g., by deducing one from the other).
\end{remark*}

\subsubsection{Some sesquilinear forms}\label{sec:35ac3e5754}
Define $\mathcal{V}, \mathcal{M}, \mathcal{E} : \Omega \otimes \Omega \otimes \mathcal{A}_0 \otimes \mathcal{A}_0 \rightarrow \mathbb{C}$ by requiring that for $\phi_1, \phi_2 \in \Omega$ satisfying $\phi_i[y] = \phi_i'[y] \otimes \phi_i''[y]$ with $\phi_i'[y] \in \mathcal{S}(\mathbb{A})$ and $ \phi_i''[y] \in \mathcal{S}(B_\mathbb{A}^0)$ for all $y \in \mathbb{A}^\times$, one has with the abbreviations $\theta_i := \theta_{\psi^{\tau_i}}(\phi_i'[\tau_i y])$ and $h_i := \theta_{\psi^{\tau_i}}(\phi_i''[\tau_i y],\Psi_i)$ that
\[
  \mathcal{V}(\phi_1,\phi_2,\Psi_1,\Psi_2) := c_1^{-1} \int_{y \in \mathbb{A}^\times / F^\times \mathbb{A}^{\times 2}} |y|^2 \frac { 1 } { 2^2 } \sum_{\tau_1,\tau_2 \in F^\times / F^{\times 2}} \langle \theta_1 h_1, \theta_2 h_2 \rangle \, d_2^\times y,
\]
\[
  \mathcal{M}(\phi_1,\phi_2,\Psi_1,\Psi_2) := c_1^{-1} \int_{y \in \mathbb{A}^\times / F^\times \mathbb{A}^{\times 2}} |y|^2 \frac { 1 } { 2^2 } \sum_{\tau_1,\tau_2 \in F^\times / F^{\times 2}} \langle \theta_1, \theta_2 \rangle \langle h_1, h_2 \rangle  \, d_2^\times y
\]
and $\mathcal{E} := \mathcal{V} - \mathcal{M}$, or equivalently,
\begin{equation}
  \mathcal{E}(\phi_1,\phi_2,\cdot ,\cdot )
  := c_1^{-1}
  \int_{y \in \mathbb{A}^\times / F^\times \mathbb{A}^{\times 2}} |y|^2
  \frac
  {
    1
  }
  {
    2^2
  }
  \sum_{\tau_1,\tau_2 \in F^\times / F^{\times 2}}
  \mathcal{E}_{\tau_1,\tau_2}(\phi_1[\tau_1 y], \phi_2[\tau_2
  y],\cdot,\cdot )  \, d_2^\times y,
\end{equation}
where $\mathcal{E}_{\tau_1,\tau_2} : \mathcal{S}(B_\mathbb{A}) \otimes \mathcal{S}(B_\mathbb{A}) \otimes \mathcal{A}_0 \otimes \mathcal{A}_0 \rightarrow \mathbb{C}$ is as in \S\ref{sec:main-estimate-for-equidistribution-pairs-theta}.  The definitions makes sense for the same reasons as in \S\ref{sec:induction-omega}.  The identity
\begin{equation}\label{eqn:c1-times-error-equals-bla}
  \mathcal{V}(\phi_1,\phi_2,\Psi_1,\Psi_2)
  = c_1^{-1} \langle \Theta(\phi_1, \Psi_1), \Theta(\phi_2, \Psi_2) \rangle.
\end{equation}
follows from \S\ref{sec:unfold-inner-product-nontraditional-theta-over-sl2} when $\phi$ is a pure tensor, hence in general by linearity.

\subsubsection{The main identities}\label{sec:main-identity}

\begin{proposition}\label{prop:after-extracting-main-term}
  Let $f \in C_c^\infty(\PB^\times_S)$ and $\Psi_1,\Psi_2 \in \mathcal{A}_0^S$.  Then, with $\mathcal{V}_f$ and $\mathcal{M}_f$ defined as in \S\ref{sec:35ac3e5744} and \S\ref{sec:35ac3e5745}, we have
  \begin{equation}\label{eq:V-equals-Vf}
    \mathcal{V}(\heartsuit f,\heartsuit f,\Psi_1,\Psi_2)
    = \mathcal{V}_f(\Psi_1,\Psi_2),
  \end{equation}
  \begin{equation}\label{eq:M-equals-Mf}
    \mathcal{M}(\heartsuit f,\heartsuit f,\Psi_1,\Psi_2)
    = \mathcal{M}_f(\Psi_1,\Psi_2),
  \end{equation}
  \begin{equation}\label{eq:Vf-equals-Mf-plus-E}
    \mathcal{V}_f(\Psi_1,\Psi_2)
    =
    \mathcal{M}_f(\Psi_1,\Psi_2)
    +
    \mathcal{E}(\heartsuit f, \heartsuit f,
    \Psi_1, \Psi_2).
  \end{equation}
\end{proposition}
\begin{proof}
  \eqref{eq:V-equals-Vf}: By \eqref{eqn:c1-times-error-equals-bla}, Lemma \ref{lem:fourier-coefficients-of-nontraditional-theta-lifts} of \S\ref{sec-4-5-2}, and the lemma of \S\ref{sec:Phi-pi},
  \begin{align*}
    \mathcal{V}(\heartsuit f,\heartsuit f,\Psi_1,\Psi_2)
    &=
      c_1^{-1} \langle   \Theta(\heartsuit f, \Psi_1),   \Theta(\heartsuit f,
      \Psi_2) \rangle
    \\
    &=
      c_1^{-1} \sum_{\pi_1,\pi_2 \in A_0^S}
      \left\langle
      \omega_{\pi_1}(f, \Psi_1),
      \omega_{\pi_2}(f, \Psi_2)
      \right\rangle
    \\
    &=
      \sum_{\pi \in A_0^S}
      L^{(S)}(\ad \pi,1)
      \omega_{\pi}( f, \Psi_1),
      \overline{\omega_{\pi}( f, \Psi_2)}
    \\
    &=
      V_f(\Psi_1,\Psi_2).
  \end{align*}

  \eqref{eq:M-equals-Mf}: by the lemma of \S\ref{sec-3-6} and \eqref{eqn:relation-c1-c2-c}.

  \eqref{eq:Vf-equals-Mf-plus-E}: by \eqref{eq:V-equals-Vf}, \eqref{eq:M-equals-Mf} and the definition of $\mathcal{E}$.
\end{proof}

\subsubsection{Completion of the proof}
\label{sec-4-5-5}
We now apply Proposition \ref{prop:main-error-estimate-global-adelic-general} (see \S\ref{sec-3-5-5}) and Proposition \ref{prop:after-extracting-main-term} to prove Theorem \ref{thm:main-estimate-general-variance}.  The purpose
of this final, purely technical part of the argument is to recast the content of those propositions in terms of $\mathcal{S}(B_S)$ rather than the less ``user-friendly'' space $\Omega$.

By weak approximation, we may choose a compact fundamental domain $Y \subset \mathbb{A}^\times / \mathbb{A}^{\times 2}$ for $\mathbb{A}^\times / F^{\times} \mathbb{A}^{\times 2}$ with the property that $y_\mathfrak{p} = 1$ for all $y \in Y$ and $\mathfrak{p} \in S$.  Choose a finite set $X \subseteq F^\times$ of representatives for the finite set
\begin{equation}\label{eq:elements-such-that-multiplying-by-something-in-Y-gives-squares-mod-units-outside-S}
  \{\tau \in F^\times / F^{\times 2} :
  \text{
    there exists }
  y \in Y
  \text{ so that }
  y_\mathfrak{p} \tau \in F_\mathfrak{p}^{\times 2}  \mathcal{O}_\mathfrak{p}^\times
  \text{ for all }
  \mathfrak{p} \notin S
  \}.
\end{equation}
For $y \in Y, \tau \in X$, let $\diamondsuit^{\tau y} : \mathcal{S}(B_S) \hookrightarrow \mathcal{S}(B_\mathbb{A})$ denote the map $\diamondsuit^{y \tau} \Phi := \Phi \otimes (\otimes_{\mathfrak{p} \notin S} \phi_\mathfrak{p}^0[\tau y_\mathfrak{p} ])$, where $\phi_{\mathfrak{p}}^0 \in \Omega_{\mathfrak{p}}$ denotes as usual the distinguished element.  For $f \in C_c^\infty(\PB_S^\times)$, one then has $\diamondsuit^{\tau y} \heartsuit^{\tau} f = \heartsuit f[\tau y]$.  Observe that for $\mathfrak{p} \notin S$ and $t \in F_\mathfrak{p}^\times$, one has $\phi_\mathfrak{p}^0[t] = 0$ unless $t \in F_\mathfrak{p}^{\times 2} \mathcal{O}_\mathfrak{p}^\times$.  It follows that for $\tau \in F^\times$ and $y \in Y$, one has $\heartsuit f[\tau y] = 0$ unless $\tau$ belongs to the set \eqref{eq:elements-such-that-multiplying-by-something-in-Y-gives-squares-mod-units-outside-S}, hence that
\begin{equation}\label{eq:penultimate-identity-before-proving-main-general-thm}
  \mathcal{E}(\heartsuit f, \heartsuit f, \cdot, \cdot)
  =
  \frac{c_1^{-1}}{2^2}
  \int_{y \in Y}
  |y|^2
  \sum_{\tau_1,\tau_2 \in X}
  \mathcal{E}_{\tau_1,\tau_2}(\diamondsuit^{\tau_1 y}
  \heartsuit^{\tau_1} f,
  \diamondsuit^{\tau_2 y} \heartsuit^{\tau_2} f,
  \cdot,\cdot) \, d_2^\times y.
\end{equation}
Define $\eps_{\tau_1,\tau_2} : \mathcal{S}(B_S) \otimes \mathcal{S}(B_S) \otimes \mathcal{A}_0^S \otimes \mathcal{A}_0^S \rightarrow \mathbb{C}$ by
\begin{equation}\label{eq:defn-of-the-eps-guys-yay}
  \eps_{\tau_1,\tau_2}(\Phi_1,\Phi_2,\Psi_1,\Psi_2)
  :=
  \frac{c_1^{-1}}{2^2}
  \int_{y \in Y}
  |y|^2
  \mathcal{E}_{\tau_1,\tau_2}(\diamondsuit^{\tau_1 y}
  \Phi_1,
  \diamondsuit^{\tau_2 y} \Phi_2,
  \Psi_1,\Psi_2) \, d_2^\times y.
\end{equation}
We verify the assertions made in Theorem \ref{thm:main-estimate-general-variance}:
\begin{enumerate}
\item The ``relevance'' follows from \eqref{eq:penultimate-identity-before-proving-main-general-thm}, \eqref{eq:defn-of-the-eps-guys-yay} and Proposition \ref{prop:after-extracting-main-term}.
\item Since $\mathfrak{S} \phi_\mathfrak{p}^0 = \phi_\mathfrak{p}^0$, one has $\mathfrak{S} \diamondsuit^{\tau_i y} = \diamondsuit^{\tau_i y} \mathfrak{S}$.  For $g \in \PB^\times_S$ and $s \in \Mp_2(F_S)$, one has $\Ad(g) \diamondsuit^{\tau_i y} = \diamondsuit^{\tau_i y} \Ad(g)$ and $\rho^{\tau_i}(s) \diamondsuit^{\tau_i y} = \diamondsuit^{\tau_i y} \rho^{\tau_i}(s)$.  Thus the ``$\O_1(F)$-invariance,'' ``$\SO(B_S^0)$-invariance'' and ``metaplectic invariance'' follow from \S\ref{sec:equivariance-summary-for-E-tau-tau}.
\item The ``main estimate'' is the content of Proposition \ref{prop:main-error-estimate-global-adelic-general}.
\end{enumerate}

\subsection{Classicalization}
\label{sec-4-4}
We now specialize to the setting of Theorem \ref{sec-1} and record how Theorem \ref{thm:main-estimate-general-variance} specializes to Theorem \ref{thm:var-2}.

\subsubsection{Specialization to a single place\label{sec:general-estimates-specialized-single-place}}
\label{sec-4-4-1}
We specialize the definitions of \S\ref{sec:main-general-estimate-key-defns} to the case that ramification is concentrated at a single place $\mathfrak{q}$ of $F$, finite or infinite.  This is the case required for the proof of Theorem \ref{thm:var-3} as well as its non-archimedean analogue pursued in \cite{nelson-variance-II}.

Assume that $S$ is the set of places $\mathfrak{p}$ for which either
\begin{itemize}
\item $\mathfrak{p}$ is infinite,
\item $\mathfrak{p}$ is a finite place at which $B$ ramifies, or
\item $\mathfrak{p} = \mathfrak{q}$.
\end{itemize}
Assume that for each $\mathfrak{p} \in S - \{\mathfrak{q}\}$, the completion $B_\mathfrak{p}$ is non-split, or equivalently, that $\PB^\times_\mathfrak{p}$ is compact.  There are the following possibilities:
\begin{enumerate}
\item $\mathfrak{q}$ is real, in which case $F$ is totally real and $B$ ramifies at every infinite place other than $\mathfrak{q}$.
\item $\mathfrak{q}$ is complex, in which case $F$ is real and $B$ ramifies at every infinite place other than $\mathfrak{q}$.
\item $\mathfrak{q}$ is finite, in which case $F$ is totally real and $B$ is totally definite.
\end{enumerate}
For each place $\mathfrak{p}$, define the compact open subgroup $J_\mathfrak{p} \leq \PB^\times_\mathfrak{p}$ as in \S\ref{sec-2-4} by taking for $J_\mathfrak{p}$ the image of $R_\mathfrak{p}^\times$ if $\mathfrak{p}$ is finite and taking $J_\mathfrak{p} := \PB^\times_\mathfrak{p}$ if $\mathfrak{p}$ is infinite.  Set
\begin{equation*}
  J := \prod_{\mathfrak{p} \neq \mathfrak{q}} J_\mathfrak{p}.
\end{equation*}
In addition to the notation of \S\ref{sec-4-1}, we now introduce a superscripted $J$, as in $\mathcal{A}^J, \mathcal{A}_0^J, \pi^J$ to denote the $J$-fixed subspace.  We denote by $\mathcal{A}^J_+ \subseteq \mathcal{A}^J$, $\mathcal{A}^J_{0+} \subseteq \mathcal{A}_0^J$ the ``even'' subspaces consisting of $\varphi$ that are $\PB^\times_{\mathfrak{p}}$-invariant for all $\mathfrak{p} \in S - \{\mathfrak{q}\}$.  Thus, for instance, $\mathcal{A}_{0+}^J \subseteq \mathcal{A}_0^J \subseteq \mathcal{A}_0^S \subseteq \mathcal{A}_0 \subseteq \mathcal{A}$.  We denote by $A_0, A^J, A_0^J, A^J_+, A^J_{0+}$ the set of all $\pi \in A$ having nonzero intersection with the space having the corresponding scripted notation.

Set $G := \PB^\times_\mathfrak{q}$, and let $f \in C_c^\infty(G)$.  For $\mathfrak{p} \in S - \{q\}$, set
\begin{equation*}
  e_{J_\mathfrak{p}} := \vol(J_\mathfrak{p})^{-1} 1_{J_\mathfrak{p}} \in C_c^\infty(\PB^\times_\mathfrak{p}).
\end{equation*}
Define $\tilde{f} \in C_c^\infty(\PB^\times_S)$ by the formula
\[
  \tilde{f} (g) := f(g_\mathfrak{q}) \prod_{\mathfrak{p} \in S - \{\mathfrak{q} \}} e_{J_\mathfrak{p}}(g_\mathfrak{p}).
\]
For $\pi \in A_0^J$ and $\Psi \in \mathcal{A}_0^J$, set
\begin{equation*}
  \omega_\pi(f,\Psi) := \sum_{\varphi \in \mathcal{B}(\pi \cap \mathcal{A}^J)} \langle \varphi, \Psi \cdot \pi(f) \varphi \rangle.
\end{equation*}
Then $\omega_\pi(\tilde{f},\Psi) = \omega_\pi(f,\Psi)$ (see \S\ref{sec:omega-pi} for the definition of the left hand side).  Since $\dim(\pi_\mathfrak{p}) = 1$ for all $\mathfrak{p} \in S - \{\mathfrak{q} \}$, one has for $\Psi \in \pi' \in \mathcal{A}_0^J$ that $\omega_\pi(f,\Psi) = 0$ unless $\pi ' \in \mathcal{A}_{0+}^J$.

Let $V_{f}, M_{f} : \mathcal{A}_{0+}^J \otimes \mathcal{A}_{0+}^J \rightarrow \mathbb{C}$ denote the sesquilinear forms obtained by restricting the forms $\mathcal{V}_{\tilde{f}}, \mathcal{M}_{\tilde{f}} : \mathcal{A}_0^S \otimes \mathcal{A}_0^S \rightarrow \mathbb{C}$.  Then
\begin{equation}
  V_{f}(\Psi_1,\Psi_2)
  =
  \sum_{\pi \in A_0^J}
  L^{(S)}(\ad \pi,1)
  \omega_\pi(f,\Psi_1)
  \overline{\omega_\pi(f,\Psi_2)}.
\end{equation}
By the observation that $\mathfrak{S} e_{J_\mathfrak{p} } = e_{J_\mathfrak{p}}$ for $\mathfrak{p} \in S - \{\mathfrak{q}\}$ and the local calculations \eqref{eqn:local-rallis-ipf-integral-nonsplit-finite} and \eqref{eqn:local-rallis-ipf-integral-nonsplit-real}, we see that for $\Psi_1 \in \pi_1 \in A_{0+}^J$ and $\Psi_2 \in \pi_2 \in A_{0+}^J$, we have $M_{f}(\Psi_1,\Psi_2) = 0$ unless $\pi_1 = \pi_2 =: \pi$, in which case
\begin{equation}
  M_{f}(\Psi_1,\Psi_2)
  =
  c_4
  L^{(S)}(\pi,\tfrac{1}{2})
  \int_{g \in G}
  \langle \Ad(g) \mathfrak{S} f, \mathfrak{S} f \rangle_{L^2(G)}
  \langle \pi(g) \Psi_1, \Psi_2 \rangle_{\PB^\times} \,d g 
\end{equation}
where
\begin{equation}\label{eq:defn-of-c4}
  c_4 := 2^t
  \zeta_F^{(S)}(2) \vol([\PB^\times])^{-1}.
\end{equation}
with $t$ the number of finite primes $\mathfrak{p} \in S - \{\mathfrak{q}\}$.

\subsubsection{Strong approximation\label{sec:strong-approx}}
\label{sec-4-4-2}
Retaining the notation of \S\ref{sec:general-estimates-specialized-single-place}, we record here how the quotient $[\PB^\times]/J$ unadelizes under some assumptions.  Recall that $G := \PB^\times_{\mathfrak{q}}$.  Let $\Gamma \leq G$ denote the image of $\PB^\times \cap J$ under the inclusion $\PB^\times \hookrightarrow G$.  Then $\Gamma$ is a discrete cocompact subgroup of $G$, and the natural map $\iota : \Gamma \backslash G \rightarrow [\PB^\times]/J$
is injective.  We record a standard consequence of strong approximation.
\begin{lemma*}\label{lem:consequence-of-strong-approx}
  Suppose that $F$ has odd narrow class number and either that
  \begin{enumerate}
  \item $B_\mathfrak{q}$ is split, or that
  \item $\mathfrak{q}$ is infinite and $B$ has class number one.
  \end{enumerate}
  Then $\iota$ is bijective.
\end{lemma*}

\subsubsection{Proof of Theorem \ref{thm:var-2}}\label{sec:deduct-theor-refthm}
We assume now that $\mathfrak{q}$ is archimedean, so that the above setting recovers that of \S\ref{sec:intro-setup}.  We deduce Theorem \ref{thm:var-2} by specializing (parts of) Theorem \ref{thm:main-estimate-general-variance}, following the unadelization procedure discussed above.  For $f \in C_c^\infty(\PB_\mathfrak{q}^\times)$, we have
  \begin{equation*}
    \mathcal{V}(f) =
    V_f(\Psi_1,\Psi_2)
    =
    V_{\tilde{f}}(\Psi_1,\Psi_2),
  \end{equation*}
  and similarly for $\mathcal{M}(f)$.  Here $\mathcal{V}(f)$ is as defined in \S\ref{sec:orgb2d0a08}, while $\mathcal{V}_f$ and $\mathcal{V}_{\tilde{f}}$ are as in \S\ref{sec-4-4-1}.  We identify $\tau \in F ^\times$ with its image in $\mathbb{R}^\times$ via the given archimedean place $\mathfrak{q}$.  Inspecting the definitions, we have $\heartsuit^{\tau} \tilde{f} = (\heartsuit^{\tau} f) \otimes \Phi^{\tau}$, where $\heartsuit^{\tau} f \in \mathcal{S}(B_\mathfrak{q})$ is defined by $\heartsuit^{\tau} f (g) := W(\tau \nr(g)) f(g)$ and $\Phi^{\tau} = \otimes_{\mathfrak{p} \in S - \{\mathfrak{q} \}} \Phi_\mathfrak{p}^{\tau}$ for some $\Phi_\mathfrak{p}^{\tau} \in \mathcal{S} (B_\mathfrak{p} )$ not depending upon $f$.  The required identity \eqref{eq:var-2-main-identity} then holds with $\mathcal{E}_{\tau_1,\tau_2}(\phi) := \eps_{\tau_1,\tau_2} (\phi \otimes \Phi^{\tau_1}, \phi \otimes \Phi^{\tau_2}, \Psi_1, \Psi_2)$.

  To deduce the claimed error estimate \eqref{eqn:main-estimate}, we specialize the main estimate \eqref{eqn:main-estimate} of Theorem \ref{thm:main-estimate-general-variance} to the case that $s \in \Mp_2(F_S)$ lifts the diagonal element $\diag(y,y^{-1})$ of $\SL_2(F_\mathfrak{q})$.  Then $\rho_0^\tau(s)$ (\S\ref{sec:local-weil-repn}) acts on $\mathcal{S}(B_S)$ via the factor $\mathcal{S}(B_\mathfrak{q})$, where it acts by a constant multiple of the dilation operator $D_y$ defined in \S\ref{sec:orgb2d0a08}.  The relevant estimates for $\Xi(s)$ were recorded in \S\ref{sec:local-Xi}.

\part{Application to microlocal lifts}\label{part:appl-micr-lifts}
Our aim is now to prove Theorem \ref{thm:var-3} by application of Theorem \ref{thm:var-2}.  We retain the general notation of \S\ref{sec:intro-setup} (in particular, $G = \PGL_2(\mathbb{R})$,  $\Gamma < G$  is an arithmetic subgroup, $S$ is the finite set of finite primes of the number field $F$ at which the quaternion algebra $B$ ramifies, and $A_0$ is the set of nontrivial irreducible representations $\pi \subseteq L^2(\Gamma \backslash G)$ of both the group $G$ and the Hecke operators $T_p$) and the asymptotic notation and conventions of \S\ref{sec:sketch-deduct-theor} (in particular, the conventions concerning ``$\h$-dependent elements'' and ``fixed'').

The microlocal lift measures $\mu_\pi$ are defined (\S\ref{sec:constr-mu-pi-ZW}) using differential operators (namely, raising and lowering operators), but our methods apply most naturally to the distributions $\mu(\pi(f), \cdot)$ attached  in \S\ref{sec:orgb2d0a08} to integral operators $\pi(f)$, $f \in C_c^\infty(G)$.  Our first main task is thus to give an alternative construction of the microlocal lift measures using integral operators.  One input here is the microlocal calculus for Lie group representations developed in \cite{nelson-venkatesh-1}, whose specialization to $\PGL_2(\mathbb{R})$ we recall in \S\ref{sec:operator-calculus}.  Having constructed microlocal lift measures in this way, we are in good position to  apply Theorem \ref{thm:var-2}, and must then estimate the ``main'' and ``error'' terms that arise.  We refer back to \S\ref{sec:sketch-deduct-theor} for a more detailed survey of Part \ref{part:appl-micr-lifts}.

\section{Archimedean preliminaries}\label{sec:constr-irred-unit}

\subsection{Lie algebra}\label{sec:lie-algebra}
Let $\mathfrak{g}$ denote the Lie algebra of $G$.  We denote by $\mathfrak{g}_\mathbb{C} \cong \slLie_2(\mathbb{C})$ its complexification and by $\mathfrak{g}_\mathbb{C}^*$ the complex dual.  We will often identity $\mathfrak{g}_\mathbb{C}$ with the space of linear functions on $\mathfrak{g}_\mathbb{C}^*$.  We work with the following basis elements for $\mathfrak{g}_{\mathbb{C}}$:
\[
  X := \frac{1}{2 i}
  \begin{pmatrix}
    1 & i  \\
    i & -1
  \end{pmatrix}, \quad Y := \frac{1}{2 i}
  \begin{pmatrix}
    1  & -i \\
    -i & -1
  \end{pmatrix}, \quad W := \frac{1}{2 i}
  \begin{pmatrix}
    0  & 1 \\
    -1 & 0
  \end{pmatrix}.
\]
These satisfy $[X,Y] = - 2W$, $[W,X]= X$ and $[W,Y]= -Y$.  The map $\theta \mapsto e^{i \theta W}$ defines an isomorphism from $\mathbb{R}/2 \pi \mathbb{Z}$ to $K^1$.  The complex conjugation on $\mathfrak{g}_\mathbb{C}$ is given by $-\overline{X} = Y$ and $- \overline{W} = W$.

The center of the universal enveloping algebra of $\mathfrak{g}_\mathbb{C}$ is the one variable polynomial ring $\mathbb{C}[\Omega]$, where
\[
  \Omega := W^2 - \frac{X Y + Y X}{2} = W(W-1) - X Y = W(W+1) - Y X.
\]
The ring $\Sym(\mathfrak{g}_\mathbb{C})^G$ of $G$-invariant polynomials on $\mathfrak{g}_\mathbb{C}^*$ is generated by the polynomial $\Lambda := W^2 - X Y$.  The Harish--Chandra isomorphism $\mathbb{C}[\Omega] \xrightarrow{\cong } \mathbb{C}[\Lambda]$ is given in this case by $\Omega \mapsto \Lambda - 1/4$.

We identify $\mathfrak{g}_\mathbb{C}^*$ with $\mathfrak{g}_\mathbb{C}$ via the trace pairing $(x,\xi) \mapsto \langle x, \xi \rangle := \trace(x \xi)$.  We identify the real and imaginary duals $\mathfrak{g}^*$ and $i \mathfrak{g}^*$ of $\mathfrak{g}$ with the subspaces of $\mathfrak{g}_\mathbb{C}^*$ taking real and imaginary values on $\mathfrak{g}$, respectively.  We abbreviate $\mathfrak{g}^\wedge := i \mathfrak{g}^*$; it identifies with the Pontryagin dual of $\mathfrak{g}$ via the natural pairing $\mathfrak{g} \times \mathfrak{g}^\wedge \ni (x,\xi) \mapsto e^{\langle x, \xi \rangle} \in \mathbb{C}^{(1)}$.  We occasionally work with the coordinates and basis elements
\[
  \mathfrak{g} \ni x = \begin{pmatrix}
    x_1/2 & x_2    \\
    x_3 & -x_1/2
  \end{pmatrix}
  = \sum_{j=1,2,3} x_j e_j,
\]
\[
  \mathfrak{g}^\wedge \ni \xi = i
  \begin{pmatrix}
    \xi_1 & \xi_3  \\
    \xi_2 & -\xi_1
  \end{pmatrix}
  = \sum_{j=1,2,3} \xi_j e_j^*,
\]
so that the natural pairing is given by
\begin{equation*}
  (x,\xi) \mapsto e^{i \sum x_j \xi_j}.
\end{equation*}
We note that the invariant polynomial $\Lambda$ is given in this optic by $\Lambda(\xi) = \det(\xi/i) = - \xi_1^2 - \xi_2 \xi_3$.

The following elements, defined for $t \in \mathbb{R}_{\neq 0}$, will occur frequently in our analysis:
\begin{equation}\label{eqn:defn-xi-of-t}
  \xi(t) :=
  i \begin{pmatrix}
    t &  \\
    & -t
  \end{pmatrix} \in \mathfrak{g}^\wedge.
\end{equation}
We note that
\begin{equation*}
  X(\xi(t)) = Y(\xi(t)) = t,
\end{equation*}
\begin{equation*}
  W(\xi(t)) = 0, \qquad \Lambda(\xi(t)) = -t^2.
\end{equation*}
We note also that the $G$-stabilizer of $\xi(t)$ is the diagonal subgroup $H$.

We fix norms $|.|$ on all of the above spaces.

\subsection{Coadjoint orbits}\label{sec:coadjoint-orbits}
A \emph{coadjoint orbit} $\mathcal{O}$ is a $G$-orbit on $\mathfrak{g}^\wedge$; in particular, it is a smooth manifold.  The origin $\{0\}$ is a zero-dimensional coadjoint orbit.  The other coadjoint orbits are two-dimensional and of the form
\[
  \mathcal{O}(\lambda) := \{\text{$\xi \in\mathfrak{g}^\wedge - \{0\}$ with $\Lambda(\xi) = \lambda$}\}
\]
for some $\lambda \in \mathbb{R}$.  If $\lambda = 0$, then $\mathcal{O}(\lambda)$ is the regular subset of the nilcone; if $\lambda > 0$, it is a two-sheeted hyperboloid; if $\lambda < 0$, it is a one-sheeted hyperboloid.  For $t \neq 0$, the orbit of $\xi(t)$ is $\mathcal{O}(-t^2)$.

We equip any two-dimensional coadjoint orbit $\mathcal{O}$ with its normalized symplectic measure
\begin{equation*}
C_c(\mathfrak{g}^\wedge) \ni a \mapsto \int_{\mathcal{O}} a := \int_{\xi \in \mathcal{O}} a(\xi) \, d \omega(\xi),
\end{equation*}
defined by the $2$-form $\omega$ on $\mathcal{O}$ described as follows (see, e.g., \cite[\S6]{nelson-venkatesh-1} or \cite{MR1701415} for further details, and the calculations of \S\ref{sec:orge0a88b2} for some explicit formulas).  For each $\xi \in \mathcal{O}$, the tangent space $T_\xi \mathcal{O}$ identifies with the space of vectors $\{x \cdot \xi : x \in \mathfrak{g}\}$, where $x \cdot \xi := \ad^*(x) \xi \in \mathfrak{g}^\wedge$ is defined by differentiating the action of $G$ on $\mathfrak{g}^\wedge$.  The component $\omega_\xi$ of $\omega$ at $\xi$ is then given by
\begin{equation*}
  \omega_{\xi}(x \cdot \xi, y \cdot \xi) := \langle \xi, [x,y] \rangle/2 \pi i.
\end{equation*}
For each $a \in C_c(\mathfrak{g}^\wedge)$, the function $\mathbb{R} \ni \lambda \mapsto \int_{\mathcal{O}(\lambda)} a$ is continuous and compactly-supported (see, e.g., \cite[\S11.2]{nelson-venkatesh-1}).  The rescaling $\h \mathcal{O}$ is also a coadjoint orbit, and we have
\begin{equation*}
  \int_{\xi \in \h \mathcal{O}} a(\xi) \, d \omega (\xi ) = \h \int_{\xi \in \mathcal{O}} a(\h \xi) \, d \omega (\xi ).
\end{equation*}
We record a simple estimate:
\begin{lemma*}
  Let $\tau \in \mathfrak{g}^\wedge$ with $|\tau| \asymp 1$, and let $0 < r \leq 1$.  For any two-dimensional coadjoint orbit $\mathcal{O}$, the symplectic volume of the subset $\{\xi \in \mathcal{O} : |\xi - \tau| < r\}$ of $\mathcal{O}$ is $\O(r^2)$.
\end{lemma*}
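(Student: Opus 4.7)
The plan is to split into a small-radius regime, handled by a direct local parametrization of the smooth quadric $\mathcal{O}$, and a large-radius regime, handled via the uniform finiteness of orbital integrals recalled just above. Two key inputs underpin this. First, in the coordinates of \S\ref{sec:lie-algebra}, $\Lambda(\xi) = -\xi_1^2 - \xi_2 \xi_3$, and so $|\nabla\Lambda(\xi)|^2 = 4\xi_1^2 + \xi_2^2 + \xi_3^2 \asymp |\xi|^2$. Second, comparing two disintegrations of Lebesgue measure on $\mathfrak{g}^\wedge$ along $\Lambda$---the coarea formula $d\xi = dS \otimes d\lambda / |\nabla\Lambda|$ and the orbital decomposition $d\xi \propto d\sigma \otimes d\lambda$ (the latter a consequence of the continuity statement recalled in \S\ref{sec:coadjoint-orbits} together with the scaling identity, up to a universal constant)---identifies $d\sigma$ with a constant multiple of $dS/|\nabla\Lambda|$. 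In particular, wherever $|\xi| \asymp 1$, the measures $d\sigma$ and $dS$ are comparable.

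In the small-radius case $r \leq c_0$ for a fixed $c_0 < |\omega|/2$: one has $|\xi| \asymp 1$ and hence $|\nabla\Lambda| \asymp 1$ on $B(\omega, r)$, so $d\sigma \asymp dS$ there. Fixing $\xi_0 \in \mathcal{O} \cap B(\omega, r)$ (else the bound is vacuous), I would invoke the implicit function theorem, which applies uniformly on a fixed-size neighborhood of $\xi_0$ (since $\Lambda$ is a degree-two polynomial with bounded coefficients), to write $\mathcal{O}$ locally as a graph $\xi_i = F(\xi_j, \xi_k)$ with $|\nabla F| = \O(1)$. The projection of $\mathcal{O} \cap B(\omega, r)$ onto the $(\xi_j, \xi_k)$-plane lies in a disk of radius $\leq 2r$, so the Euclidean surface area of $\mathcal{O} \cap B(\omega, r)$ is $\O(r^2)$, and hence so is its symplectic volume.

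In the remaining regime $r > c_0$, one has $r \asymp 1$, and it suffices to bound the symplectic volume of $\mathcal{O} \cap B(0, C)$ by $\O(1)$ uniformly in $\mathcal{O}$, for $C$ any fixed constant exceeding $|\omega| + 1$. This is immediate upon dominating $1_{B(0, C)}$ by an auxiliary $a \in C_c(\mathfrak{g}^\wedge)$ and applying the statement recalled in \S\ref{sec:coadjoint-orbits} that $\lambda \mapsto \int_{\mathcal{O}(\lambda)} a$ is continuous and compactly supported, hence uniformly bounded. The only step requiring real care is the identification of $d\sigma$ with a constant multiple of $dS/|\nabla\Lambda|$; this is a routine consequence of comparing the two disintegrations above and does not involve any nontrivial computation.
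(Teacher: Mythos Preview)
Your argument is correct and rests on the same core observation as the paper's proof: every nonzero point of $\mathfrak{g}^\wedge$ is regular for $\Lambda$, so near any such point the orbits are smooth hypersurfaces on which the symplectic measure is comparable to two-dimensional Lebesgue measure, and the ball estimate follows; uniformity in $\omega$ with $|\omega|\asymp 1$ then comes from compactness.

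The paper packages this slightly differently. Rather than invoking the coarea formula and identifying $d\sigma$ with a constant multiple of $dS/|\nabla\Lambda|$, it simply chooses local coordinates $(\tau_1,\tau_2,\Lambda)$ near each fixed $\omega_0\neq 0$ in which the orbits are the fibers of the third coordinate and the symplectic measures are smooth multiples of $d\tau_1\,d\tau_2$; the $\O(r^2)$ bound is then immediate, and a single compactness step handles all $\omega$ in the annulus $|\omega|\asymp 1$ at once. Your version is more explicit---you compute $|\nabla\Lambda|^2=4\xi_1^2+\xi_2^2+\xi_3^2$ directly and split off the regime $r\asymp 1$ by appealing to the uniform boundedness of $\lambda\mapsto\int_{\mathcal{O}(\lambda)}a$---whereas the paper leaves the large-$r$ case implicit in the compactness argument. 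Both routes are fine; yours has the mild advantage of making the uniformity in $\mathcal{O}$ for $r\asymp 1$ completely transparent, while the paper's avoids the disintegration comparison you flagged as the one point needing care.
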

\begin{proof}
  Each $\tau \in \mathfrak{g}^\wedge - \{0\}$ is regular, i.e., the differential of the invariant polynomial $\Lambda$ is nonzero at $\tau$.  On a small neighborhood of each such $\tau$, we may thus find local coordinates $(\tau_1,\tau_2,\Lambda)$ with respect to which the coadjoint orbits are the fibers of the projection onto the third coordinate, with the symplectic measures given by smooth multiples of Lebesgue measure in the first two coordinates.  The required estimate follows for $\tau$ in a small fixed neighborhood of each fixed element of $\mathfrak{g}^\wedge - \{0\}$, then in general by continuity and compactness.
\end{proof}

\subsection{Representations}\label{sec:prelims-representations}
Let $\pi$ be an irreducible unitary representation of $G$.  Then $\Omega$ acts on the smooth subspace of $\pi$ by some real scalar $\Omega_{\pi}$.  We set $\lambda_\pi := 1/4 + \Omega_\pi$, and refer to it as the infinitesimal character of $\pi$.  Up to isomorphism, we may classify $\pi$ as follows:
\begin{itemize}
\item The one-dimensional representations (the trivial representation $\mathbb{C}$ and the sign representation $\mathbb{C} (\sgn \circ \det)$), for which $\Omega_\pi = 0$ and $\lambda_\pi = 1/4$.
\item The discrete series representations $\pi(k)$ ($k \in \mathbb{Z}_{\geq 1}$), for which $\Omega_\pi = k(k-1)$ and $\lambda_\pi = (k-1/2)^2$.  (We note that $\pi(k)$ is often denoted $\mathcal{D}_{2k}$.)
\item The unitary principal series representations $\pi(t,\eps)$, with
  \begin{enumerate}[(i)]
  \item $t \in \mathbb{R}$ and $\eps \in \{\pm 1\}$ or
  \item $t \in i(-1/2,1/2) - \{0\}$ and $\eps = 1$ (the ``complementary series''),
  \end{enumerate}
  obtained by normalized parabolic induction of the character $\diag(y,1) \mapsto \sgn(y)^\eps |y|^{i t}$, for which $\Omega_\pi = -1/4 - t^2$ and $\lambda_\pi = - t^2$.
\end{itemize}
The only equivalences are that $\pi(t,\eps) \cong \pi(-t,\eps)$.  The tempered irreducibles are the $\pi(k)$ and $\pi(t,\eps)$ with $t \in \mathbb{R}$.

Suppose that $\pi$ is not one-dimensional.  We may then realize it as follows.  If $\pi = \pi(t,\eps)$, set $Q := \mathbb{Z}$; if $\pi = \pi(k)$, set $Q := \{q \in \mathbb{Z} : |q| \geq k\}$ and $\eps := 1$.  We regard $L^2(Q)$ as a Hilbert space with respect to the counting measure, with basis elements given by the $\delta$-masses $e_q$ at each $q \in Q$.  It contains the dense subspace $C_c(Q)$ consisting of the finitely-supported elements.  We verify readily that the following formulas define an infinitesimally unitary $(\mathfrak{g},K)$-module structure on $C_c(Q)$, corresponding to a representative for the isomorphism class of $\pi$:
\begin{equation*}
  X e_q = ( q(q+1) - \Omega_\pi)^{1/2} e_{q+1},
\end{equation*}
\[
  Y e_{q+1} = (q(q+1) - \Omega_\pi)^{1/2} e_{q},
\]
\[
  W e_q = q e_q, \quad e^{i \theta W} e_q = e^{i \theta q} e_q, \quad \diag(-1,1) e_q = (-1)^{\eps} e_{-q}.
\]

\subsection{Kirillov formula}\label{sec:kirillov-formula}
The character of an irreducible representation $\pi$ of $G$ is a generalized function $\chi_\pi : G \rightarrow \mathbb{C}$ (see, e.g., \cite[\S X]{MR855239}).  Fix a sufficiently small open neighborhood $\mathcal{G}$ of the origin in $\mathfrak{g}$.  The normalized Jacobian of the exponential map is the function $\jac : \mathcal{G} \rightarrow \mathbb{R}_{>0}$ for which
\begin{itemize}
\item $\jac(0) = 1$, and
\item if $d g$ is any Haar measure on $G$, then there is a unique Haar measure $d x$ on $\mathfrak{g}$ so that for $g = \exp(x)$ with $x \in \mathcal{G}$, we have $d g = \jac(x) d x$.  We say in this case that $d g$ and $d x$ are \emph{compatibly normalized}.
\end{itemize}
\begin{lemma*}
  Let $\pi$ be a tempered irreducible unitary representation of $G$.  Set $\mathcal{O}_\pi := \mathcal{O}(\lambda_\pi)$.  For $x \in \mathcal{G}$, we have the identity of generalized functions
  \[
    \chi_\pi(\exp(x)) = \jac(x)^{-1/2} \int_{\xi \in \mathcal{O}_\pi} e^{\langle x, \xi \rangle} \, d \omega (\xi ).
  \]
\end{lemma*}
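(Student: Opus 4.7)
The plan is to prove Kirillov's character formula by the classical reduction to regular semisimple elements followed by direct computation on each Cartan subalgebra.

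Both sides of the identity are $G$-conjugation-invariant generalized functions on $\mathcal{G}$: the LHS because $\chi_\pi$ is a class function and $\jac$ is $\Ad$-invariant, the RHS because $\mathcal{O}_\pi$ is a coadjoint orbit and its symplectic measure is $G$-equivariant. By Harish--Chandra's regularity theorem, $\chi_\pi(\exp x)$ is represented by a locally $L^1$ function that is smooth on the regular set; likewise the orbital Fourier transform $\int_{\mathcal{O}_\pi} e^{\langle\cdot,\xi\rangle}\,d\xi$ is a tempered distribution whose restriction to the regular set is smooth. Two $G$-invariant locally integrable functions that coincide on the dense open regular locus agree as generalized functions, so it suffices to verify the pointwise identity on representatives of the two conjugacy classes of regular Cartan subalgebras of $\mathfrak{g}$: the split $\mathfrak{h}_s = \mathbb{R} H_0$ (with $H_0 = \diag(1/2,-1/2)$) and the compact $\mathfrak{h}_c = \mathbb{R}(iW)$.

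On each Cartan I would compute in turn: (a) the Jacobian $\jac(x)^{1/2}$, which from the spectrum of $\ad x$ on $\mathfrak{g}_\mathbb{C}$ equals $|2\sinh(u/2)/u|$ for $x=uH_0$ and $|2\sin(\theta/2)/\theta|$ for $x=\theta\cdot iW$; (b) the character $\chi_\pi(\exp x)$, which by the Harish--Chandra character formula equals a sum of two exponentials in $u$ or $\theta$ (with exponents $\pm it$ for principal series $\pi(t,\eps)$, $\pm(k-\tfrac12)$ for discrete series $\pi(k)$, and parity/sign factors determined by $\eps$) divided by $|2\sinh(u/2)|$ or $|2\sin(\theta/2)|$; these formulas can be derived directly from the $(\mathfrak{g},K)$-module model of \S\ref{sec:prelims-representations}; and (c) the orbital integral $\int_{\mathcal{O}_\pi} e^{\langle x,\xi\rangle}\,d\xi$, computed by parameterizing $\mathcal{O}(\lambda) = \{\xi_1^2+\xi_2\xi_3 = -\lambda\}$ in a local chart such as $(\xi_1,\xi_2)$ on $\xi_2 \neq 0$, using the identity $\sigma_\xi(\cdot,\cdot) = \langle\xi,[\cdot,\cdot]\rangle/2\pi i$ to express the symplectic measure explicitly, and reducing to a one-variable oscillatory integral. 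Matching (c) against $\jac(x)^{1/2}\cdot$(b) in each of the four combinations (split/compact Cartan $\times$ principal/discrete series) yields the asserted identity.

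The main obstacle is the rigorous evaluation of (c) on the one-sheeted hyperboloid $\mathcal{O}(-t^2)$, where the oscillatory integrand is not absolutely integrable. The cleanest resolution is to pair both sides with a Schwartz test function on $\mathfrak{g}$, converting the identity into an equality of classical Euclidean Fourier transforms and sidestepping pointwise convergence; this regularization matches the one implicit in Harish--Chandra's formula near the singular set. Secondary bookkeeping — tracking the $2\pi$ factors coming from the $2\pi i$ in the symplectic form normalization, and the parity signs inherited from $\eps$ — is needed to align the answer with Harish--Chandra's expression, but is routine.
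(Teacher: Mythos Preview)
Your proposal is a correct outline of the classical Rossmann--Kirillov proof: reduce to the regular set via Harish--Chandra's regularity theorem, then compute both sides on representatives of each Cartan.  The paper, however, does not prove this lemma at all; it simply cites it as a known result (``See for instance \cite[\S6]{nelson-venkatesh-1} and references'') and records the concrete consequence that $\int_{\mathfrak{g}} \phi(x)\pi(\exp x)\,dx$ is trace-class with the expected trace.  So you are supplying considerably more than the paper does.  Your route is the standard one and would work; the only caveat is that the oscillatory-integral step you flag (evaluating the orbital Fourier transform on the one-sheeted hyperboloid) is exactly where the literature proofs invest their effort, and your sketch of ``pair with a Schwartz test function'' is correct in spirit but would need a page or two to execute.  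For the purposes of this paper, a citation suffices.
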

See, e.g., \cite[\S6]{nelson-venkatesh-1} and references.  This says concretely that for each $\phi \in C_c^\infty(\mathcal{G})$ and Haar measure $d x$ on $\mathfrak{g}$, the operator $\int_{x \in \mathfrak{g}} \phi(x) \pi(\exp(x)) \, d x$ on $\pi$ belongs to the trace class and has trace $\int_{\xi \in \mathcal{O}_\pi} (\int_{x \in \mathfrak{g}} \phi(x) \jac(x)^{-1/2} e^{\langle x, \xi \rangle} \, d x) \, d \omega (\xi )$.

\subsection{Construction of $\mu_\pi$}\label{sec:constr-mu-pi-ZW}
Let $\pi \in A_0$ with $\lambda_\pi < 0$.  Then $\pi \cong \pi(t,\eps)$ with $t = \sqrt{-\lambda_\pi} > 0$.  Recall that we have chosen a unit vector $\varphi_\pi \in \pi$ invariant by $K^1$.  The microlocal lift $\mu_\pi$ of $\pi$ may be defined on $K$-finite smooth functions $\Psi : \Gamma \backslash G \rightarrow \mathbb{C}$ as follows (see for instance \cite[\S2--3]{MR1859345}, which gives the same definition up to notational differences).  Set $\varphi_0 := \varphi_\pi$ and $s := 1/2 + i t$.  Define $\varphi_q$ for $q \in \mathbb{Z}$ recursively by the formulas $i X \varphi_q = (s + q) \varphi_{q+1}$ and $i Y \varphi_q = (s - q) \varphi_{q-1}$.  Then
\[
  \mu_\pi(\Psi) := \sum_{q \in \mathbb{Z}} \langle \varphi_0 \Psi, \varphi_q \rangle.
\]

\subsection{Branching coefficients}\label{sec:branch-coeff}
Let $\pi, \sigma \in A_0$.
\begin{lemma}\label{lem:trivial-vanishing}
  If $\sigma$ is not even, then $\langle \varphi_1 \Psi, \varphi_2 \rangle = 0$ for all $\varphi_1, \varphi_2 \in \pi$ and $\Psi \in \sigma$.  In particular, $\mu_\pi(\Psi) = 0$.
\end{lemma}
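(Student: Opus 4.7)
The plan is to split into two cases according to which of the two conditions in the definition of ``even'' fails for $\sigma$.

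Suppose first that the $T_p$-eigenvalue $\epsilon_{\sigma,p}$ of $\sigma$ equals $-1$ for some $p \in S$. In this case I would exploit Hecke symmetry. The operator $T_p$ is realized on $L^2(\mathbf{X})$ by right translation by an element $w_p \in \mathbf{G}(F_p) \setminus J_p$, which preserves the $G$-invariant measure on $\mathbf{X}$. Making the change of variables $g \mapsto g w_p$ in $\langle \varphi_1 \Psi, \varphi_2 \rangle = \int_\mathbf{X} \varphi_1 \Psi \overline{\varphi_2}$, and using that each of $\varphi_1, \varphi_2 \in \pi$ and $\Psi \in \sigma$ is a $T_p$-eigenvector with eigenvalue $\epsilon_{\pi,p}$ or $\epsilon_{\sigma,p}$ respectively, one obtains
\[
\langle \varphi_1 \Psi, \varphi_2 \rangle = \epsilon_{\pi,p}^{2} \, \epsilon_{\sigma,p} \, \langle \varphi_1 \Psi, \varphi_2 \rangle = -\langle \varphi_1 \Psi, \varphi_2 \rangle,
\]
forcing the matrix coefficient to vanish.

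Suppose instead that all $T_p$-eigenvalues for $p \in S$ are $+1$ but the local root number of $\sigma$ at $\mathfrak{q}$ equals $-1$. Here I would appeal to Prasad's multiplicity-one theorem for invariant trilinear forms on $\PGL_2(\mathbb{R})$-representations. The pairing $(v_1, v, v_2) \mapsto \int_\mathbf{X} v_1 v \, \overline{v_2}$ defines an element of $\Hom_{G}(\pi \otimes \sigma \otimes \bar\pi, \mathbb{C})$ at the archimedean place $\mathfrak{q}$, a space which is at most one-dimensional and is nonzero exactly when the archimedean triple-product root number $\epsilon_\mathfrak{q}(\pi \otimes \sigma \otimes \bar\pi, \tfrac{1}{2})$ equals $+1$. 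Via the Langlands-parameter decomposition $\pi \otimes \bar\pi \simeq \mathbf{1} \oplus \Ad \pi$ and the explicit form of the archimedean $L$- and $\epsilon$-factors, a direct calculation identifies this triple-product root number with $\epsilon_\mathfrak{q}(\sigma, \tfrac{1}{2}) = -1$. Hence the Hom-space vanishes and all matrix coefficients are zero.

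The ``in particular'' statement follows at once from the definition of $\mu_\pi$ as a convergent sum of matrix coefficients $\langle \varphi_0 \Psi, \varphi_q \rangle$. The main technical obstacle is the archimedean epsilon-factor identification in the second case: one must coordinate the parities of $\pi$, $\sigma$, and $\Ad \pi$ carefully to confirm that the triple-product root number does indeed reduce to $\epsilon_\mathfrak{q}(\sigma, \tfrac{1}{2})$.
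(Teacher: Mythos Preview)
Your first case is fine and matches the paper's argument. The second case, however, contains a genuine error: the archimedean triple product root number does \emph{not} reduce to $\epsilon_{\mathfrak{q}}(\sigma,\tfrac{1}{2})$.

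To see this, take $\pi$ to be a unitary principal series (the case relevant to $\mu_\pi$), with parameter $\phi_\pi = \chi \oplus \chi^{-1}$. Then $\phi_\pi \otimes \phi_{\bar\pi}^{\vee} = \mathbf{1}^{\oplus 2} \oplus \mu \oplus \mu^{-1}$ with $\mu = \chi^2$, so
\[
  \epsilon_{\mathfrak{q}}(\pi \times \sigma \times \bar\pi, \tfrac{1}{2})
  = \epsilon(\phi_\sigma, \tfrac{1}{2})^2 \cdot
    \epsilon(\mu\phi_\sigma, \tfrac{1}{2})\,\epsilon(\mu^{-1}\phi_\sigma, \tfrac{1}{2}).
\]
The first factor is $+1$. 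For the second, since $\phi_\sigma$ is self-dual one has $(\mu\phi_\sigma)^\vee = \mu^{-1}\phi_\sigma$, and the general identity $\epsilon(V,\tfrac{1}{2})\epsilon(V^\vee,\tfrac{1}{2}) = \det V(-1)$ gives $\mu^2(-1) = +1$. Thus $\epsilon_{\mathfrak{q}}(\pi \times \sigma \times \bar\pi, \tfrac{1}{2}) = +1$ regardless of $\sigma$. Equivalently, in your decomposition $\epsilon(\sigma)\cdot\epsilon(\Ad\pi \times \sigma)$, the second factor equals $\epsilon(\sigma)$, not $+1$, and the product is always $+1$. By Prasad's dichotomy the local $\Hom$-space is therefore one-dimensional, not zero, and your argument breaks down.

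The vanishing in this case is genuinely global: it comes from $L(\sigma,\tfrac{1}{2}) = 0$. The paper first observes that, once all $T_p$ ($p \in S$) act trivially on $\sigma$, the global root number of $\sigma$ coincides with the local one at $\mathfrak{q}$, hence equals $-1$, forcing $L(\sigma,\tfrac{1}{2}) = 0$. Since $L(\pi \otimes \bar\pi \otimes \sigma, \tfrac{1}{2}) = L(\ad\pi \otimes \sigma, \tfrac{1}{2})\,L(\sigma,\tfrac{1}{2})$, the branching coefficient $\mathcal{L}(\pi,\sigma)$ vanishes, and Ichino's triple product formula \eqref{eq:branching-coeff-defn} then gives $\langle \varphi_1 \Psi, \varphi_2 \rangle = 0$. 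There is no purely local route here.
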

We give the proof below after some otherwise relevant preliminaries.

Assume temporarily that for each $p \in S$, the involutory Hecke operator $T_{p}$ acts trivially (i.e., with eigenvalue $+1$ rather than $-1$) on $\sigma$.  The triple product formula \cite{MR2449948} then implies that for eigenfunctions $\varphi_1, \varphi_2 \in \pi$ and $\Psi \in \sigma$,
\begin{equation}\label{eq:branching-coeff-defn}
  |\langle \varphi_1 \Psi, \varphi_2  \rangle|^2
  =
  \mathcal{L}(\pi,\sigma)
  \int_{g \in G}
  \langle g \varphi_1, \varphi_1 \rangle
  \overline{\langle g \varphi_2, \varphi_2 \rangle}
  \langle g \Psi, \Psi  \rangle \, d g,
\end{equation}
where $\mathcal{L}(\pi,\sigma)$ is nonnegative real given explicitly in terms of special values of $L$-functions; in particular,
\begin{equation}\label{eq:defn-cal-L}
  \mathcal{L}(\pi,\sigma)
  \asymp
  \frac{L(\pi \otimes \overline{\pi } \otimes \sigma,
    \tfrac{1}{2})}{
    L(\ad \pi,1)^2 L(\ad \sigma,1)},
\end{equation}
where $L(\dotsb)$ denotes the finite part of an $L$-function.  The implied constant in \eqref{eq:defn-cal-L} may be made explicit, and depends at most upon $F$, $B$.

\begin{remark}
  We record in more detail how to obtain the above specialization of the triple product formula.  We may assume that each automorphic form $\varphi_j$ and $\Psi$ is nonzero.  The cited reference then gives \eqref{eq:branching-coeff-defn} with $\mathcal{L}(\pi, \sigma)$ given by the right hand side of \eqref{eq:defn-cal-L} times a product $c_0 \left( \prod_{v \in S_\infty - \mathfrak{q}} c_v \right)\left( \prod_{p \in S} c_p \right)$, where
  \begin{itemize}
  \item $c_0 > 0$ depends only upon measure normalizations,
  \item $S_\infty - \mathfrak{q}$ is the set of archimedean completions of $F$, excluding $\mathfrak{q}$,
  \item for $v \in S_\infty - \mathfrak{q}$ or for $v = p \in S$,
    \begin{equation*}
      c_v =
      \int_{g \in \mathbf{G}(F_v)}
      \frac{\langle g \varphi_1, \varphi_1 \rangle
        \overline{\langle g \varphi_2, \varphi_2 \rangle}
        \langle g \Psi, \Psi  \rangle}{ \langle \varphi_1, \varphi_1 \rangle \langle \varphi_2, \varphi_2 \rangle \langle \Psi, \Psi \rangle}
      \, d g.
    \end{equation*}
    (More precisely, recall from \S\ref{sec:intro-setup} that the $\varphi_j$ and $\Psi$ identify with functions on $\mathbf{G}(F) \backslash \mathbf{G}(\mathbb{A}) / J$; we may thus form their right translates by $g$ on $\mathbf{G}(F) \backslash \mathbf{G}(\mathbb{A})$ and then the indicated inner products.)
  \end{itemize}
  We need to check that this product is $\asymp 1$.  It suffices to show the same for each $c_v$.  We claim more precisely that if the Haar measure on $\mathbf{G}(F_v)$ is normalized to be a probability measure, then $c_v = 1$.  When $v$ is archimedean, this follows from our assumption that our automorphic forms are invariant by $J_\infty$, hence by $\mathbf{G}(F_v)$.  When $v = p \in S$, it is enough to verify that the integrand is identically $1$.  To that end, recall that our automorphic forms are assumed invariant by $J_p$.  That group has index $2$ inside $\mathbf{G}(F_p)$, with the nontrivial coset represented by any group element $g$ that defines the Hecke operator $T_p$.  For that element, the normalized matrix coefficients $\langle g \varphi_j, \varphi_j \rangle / \langle \varphi_j, \varphi_j \rangle$ are both equal to the sign $\pm 1$ of $T_p$ acting on $\pi$.  By our assumption that $T_p$ acts trivially on $\Psi$, the integrand is thus $(\pm 1)^2 \cdot 1 = 1$, as required.
\end{remark}

\begin{proof}[Proof of Lemma \ref{lem:trivial-vanishing}]
  Since the distributions $\mu_\pi$ are invariant by the involutory Hecke operators $T_p$ ($p \in S$), the conclusion is clear if some such operator acts nontrivially on $\sigma$, so suppose otherwise that each such operator acts trivially.  The global root number of $\sigma$ is then the same as the local root number at the distinguished real place $\mathfrak{q}$, which, by hypothesis, is $-1$.  Therefore $L(\sigma, \tfrac{1}{2} ) = 0$.  Since $L(\pi \otimes \overline{\pi } \otimes \sigma, \tfrac{1}{2} ) = L(\ad \pi \otimes \sigma, \tfrac{1}{2} ) L(\sigma, \tfrac{1}{2} )$, we have also $\mathcal{L}(\pi,\sigma) = 0$.  The conclusion follows now from \eqref{eq:branching-coeff-defn}.
\end{proof}

\begin{lemma}\label{lem:WW}
  Let $\sigma \in A_0$ be fixed and even.  Let $\pi$ be an $\h$-dependent element of $A_0$ with $\lambda_\pi < 0$ and $|\h^2 \lambda_\pi| \asymp 1$.
  \begin{enumerate}[(i)]
  \item\label{enumerate:cq21hkqsnp} Let $\Psi \in \sigma$ be a fixed eigenfunction.  Then
    \begin{equation}\label{eq:WW-upper-bound}
      |\mu_\pi(\Psi)|^2
      \ll \h \mathcal{L}(\pi,\sigma).
    \end{equation}
  \item \label{item:lower-bound-WW} There is a fixed eigenfunction $\Psi \in \sigma$ so that
    \begin{equation}\label{eq:WW-lower-bound}
      |\mu_\pi(\Psi)|^2 \gg \h \mathcal{L}(\pi,\sigma).
    \end{equation}
  \item We have
    \begin{equation}\label{eq:convexity-bound}
      \h \mathcal{L}(\pi,\sigma)
      \ll 1.
    \end{equation}
  \end{enumerate}
\end{lemma}
\begin{proof}
  The final estimate \eqref{eq:convexity-bound} follows from \eqref{eq:WW-lower-bound} and the trivial bound $\mu_\pi(\Psi) \ll 1$, so our main task is to verify the first two estimates \eqref{eq:WW-upper-bound} and \eqref{eq:WW-lower-bound}.  Recalling  our convention that ``eigenfunctions'' are $K$-finite, we may assume that $\Psi$ is a unit $K^1$-eigenvector of some fixed weight $q \in \mathbb{Z}$.  Then $\mu_\pi(\Psi) = \langle \varphi_0 \Psi, \varphi_q \rangle$, so our task is to show that the integral
  \begin{equation*}
    I(q) := \int_{g \in G} \langle g \varphi_0, \varphi_0 \rangle \overline{\langle g \varphi_q, \varphi_q \rangle} \langle g \Psi, \Psi \rangle \, d g
  \end{equation*}
  satisfies $I(q) \ll \h$ for each $q$ and $I(q) \gg \h$ for some $q$.

  For $\sigma$, there are two cases:
  \begin{enumerate}[(a)]
  \item\label{enumerate:cq21hkk9br} $\sigma$ is a principal series representation $\pi(t,\eps)$.  Our assumption that $\sigma$ is even then implies that $\eps = 1$.
  \item\label{enumerate:cq21hklbfu} $\sigma$ is a discrete series representation $\pi(k)$.
  \end{enumerate}

  Let $\mathcal{L}_\infty$ denote the ratio of archimedean $L$-factors like on the right hand side of \eqref{eq:defn-cal-L}.  In case \eqref{enumerate:cq21hkk9br}, by comparing the formulas of Watson \cite{watson-2008} and Ichino \cite{MR2449948}, we obtain $I(0) \asymp \mathcal{L}_\infty$.  In case \eqref{enumerate:cq21hklbfu}, Woodbury \cite[Appendix, Thm 3]{2013arXiv1303.6972S} computes $I(k)$ explicitly; dropping the factors in his formula that depend upon the fixed quantities $k$ and $\sigma$ (``$t_3$'' in his notation) gives $I(k) \asymp \mathcal{L}_\infty$.  In either case, a standard application of Stirling's formula as in \cite[\S4.2.1]{watson-2008} gives $\mathcal{L}_\infty \asymp \h$.  In particular, we obtain the required lower bound for $I(q)$ for some $q$.

  It remains to establish the upper bound $I(q) \ll \h$ for each fixed $q$.  This estimate is established case-by-case in the recent paper \cite{MR4879363}.  Alternatively, see \cite{PDN-QUE-implies-subconvexity} for a short uniform proof.
\end{proof}
\begin{remark}
  The ``lower bound'' part of the above proof relies on explicit formulas.  Since estimates would suffice for our purpose, one could ask for a softer treatment.  To that end, we sketch an alternative proof.  Using \eqref{eq:branching-coeff-defn}, we may write $|\mu_\pi(\Psi)|^2 = \mathcal{L}(\pi,\sigma) |\mu_\pi^{\loc}(\Psi)|^2$, say.  One can show by arguments as in \S\ref{sec:orge0a88b2} and \cite[\S6.3]{nelson-padic-que} that the leading order asymptotics as $\h \rightarrow 0$ of $|\mu_\pi^{\loc}(\Psi)|^2$ are given by a constant multiple of $\h \int_{s \in H} \langle s \Psi, \Psi \rangle \, d s$.  As in \cite[\S3.3.1]{michel-2009}, we may write $\int_{s \in H} \langle s \Psi, \Psi \rangle \, d s \asymp |\ell(\Psi)|^2$, where $\ell$ is described in the Kirillov model $\Psi \mapsto W_{\Psi}$ of $\sigma$ (with respect to some fixed nontrivial character) by the absolutely convergent integral $\ell(\Psi) = \int_{y \in \mathbb{R}^\times} W_\Psi(y) \frac{d y}{|y|}$.  Thus $|\mu_\pi^{\loc}(\Psi)|^2 \ll \h$; moreover, if $\ell(\Psi) \neq 0$, then we can replace ``$\ll$'' with ``$\asymp$''.
\end{remark}

\subsection{Operator calculus}\label{sec:operator-calculus}
In this subsection, we recall some properties of the operator calculus developed in \cite{nelson-venkatesh-1} (and refined in \cite{2020arXiv201202187N, PDNstandard}, but we do not require the refinements).  We denote by $\pi$ an $\h$-dependent unitary representation of $G$ and by $\pi^{\infty}$ its subspace of smooth vectors.

\subsubsection{The basic operator assignment}\label{sec:35ac3e5760}
We fix once and for all a cutoff $\chi \in C_c^\infty(\mathfrak{g})$ with the following properties:
\begin{itemize}
\item The support of $\chi$ is sufficiently small.  The precise meaning of ``sufficiently'' is not important for our purposes, but it suffices to require that $\supp(\chi) \subseteq \mathcal{G}_2 \subseteq \mathcal{G}_1 \subseteq \mathcal{G}$, with $\mathcal{G}$ as in \S\ref{sec:kirillov-formula}, where each $\mathcal{G}_j$ is an even bounded open neighborhood of the origin in $\mathfrak{g}$ such that the exponential map restricted to $\mathcal{G}_j$ defines an isomorphism onto its image and the closure of $\exp(\mathcal{G}_2) \exp(\mathcal{G}_2)$ is contained in $\exp(\mathcal{G}_1)$ (cf.\ \cite[\S2.1--\S2.5]{nelson-venkatesh-1}).
\item $\chi$ is $[0,1]$-valued, $\chi(-x) = \chi(x)$, and $\chi = 1$ in a neighborhood of the origin.
\end{itemize}
For any $\h$-dependent Schwartz function $a \in \mathcal{S}(\mathfrak{g}^\wedge)$, we may define the following objects (see \cite[\S2]{nelson-venkatesh-1} for details):
\begin{itemize}
\item $a^\vee : \mathfrak{g} \rightarrow \mathbb{C}$, the inverse Fourier transform of $a$.
\item $a_{\h} : \mathfrak{g}^\wedge \rightarrow \mathbb{C}$ the $\h$-dependent function given by rescaling: $a_{\h}(\xi) := a(\h \xi)$.
\item $a_{\h}^\vee : \mathfrak{g} \rightarrow \mathbb{C}$, the inverse Fourier transform of the rescaling, thus $a_{\h}^\vee(x) = \h^{-3} a^\vee(x/\h)$.
\item $\chi a_{\h}^\vee \in C_c^\infty(\mathfrak{g})$, the cutoff of $a_{\h}^\vee$.
\item The compactly-supported smooth distribution $\chi(x) a_{\h}^\vee (x) \, d x$ on $\mathfrak{g}$, which is supported near the origin.
\item The pushforward under the exponential map $x \mapsto g = \exp(x)$ of this distribution, which may be written $\widetilde{\Opp}_{\h}(a)(g) \, d g$ for some $\widetilde{\Opp}_{\h}(a) \in C_c^\infty(G)$ supported near the identity; explicitly, $\widetilde{\Opp}_{\h}(a)(\exp(x)) = \jac^{-1}(x) \chi(x) a_{\h}^\vee(x)$.
\item An $\h$-dependent integral operator $\Opp_{\h}(a:\pi)$ on $\pi^\infty$, abbreviated $\Opp_{\h}(a)$ when $\pi$ is clear by context, given by
  \[
    \Opp_{\h}(a:\pi) := \pi( \widetilde{\Opp}_{\h}(a) ) = \int_{x \in \mathfrak{g}} \chi(x) a_{\h}^\vee(x) \pi(\exp(x)) \, d x.
  \]
\end{itemize}

\subsubsection{Adjoints}\label{sec:35ac3e5763}
The operator $\Opp_{\h}(a)$ extends to a bounded operator on $\pi$ with adjoint $\Opp_{\h}(\overline{a})$.  In particular, if $a$ is real-valued, then $\Opp_{\h}(a)$ is self-adjoint and $\Opp_{\h}(a)^2$ is positive-definite.

\subsubsection{Symbol classes}\label{sec:35ac3e5764}
For $\xi$ belonging to any normed space (e.g., $\mathfrak{g}^\wedge$), we set
\begin{equation*}
  \langle \xi \rangle := (1 + |\xi|^2)^{1/2}.
\end{equation*}

For fixed $0 \leq \delta < 1/2$ and $m \in \mathbb{Z}$, we write $S^m_\delta$ (denoted ``$S^m[\h^\delta]$'' in \cite[\S4]{nelson-venkatesh-1}) for the space of $\h$-dependent functions $a : \mathfrak{g}^\wedge \rightarrow \mathbb{C}$ such that for each fixed multi-index $\alpha \in \mathbb{Z}_{\geq 0}^{\dim(\mathfrak{g})}$, the corresponding partial derivative $\partial^{\alpha} a$ enjoys for each $\xi \in \mathfrak{g}^\wedge$ the upper bound
\[
  \partial^\alpha a(\xi) \ll \h^{-\delta |\alpha|} \langle \xi \rangle^{m-|\alpha|}.
\]
(The implied constant is thus allowed to depend upon $\alpha$, but not upon $\h$ or $\xi$.)  We extend the definition to $m = \infty$ or $m = -\infty$ by taking unions or intersections.
For instance, an $\h$-independent Schwartz function defines an element of $S^{-\infty}_\delta$, while a polynomial of fixed degree $m \in \mathbb{Z}_{\geq 0}$ and coefficients $\O(1)$ defines an element of $S^m_\delta$.  Elements of $S^{-\infty}_\delta$ are in particular $\h$-dependent Schwartz functions on $\mathfrak{g}^\wedge$, so the operators $\Opp_{\h}(a) := \Opp_{\h}(a:\pi)$ may be defined as above.

\subsubsection{Smoothing operators}\label{sec:35ac3e5766}
For Lie algebra elements $x_1,\dotsc,x_m$, we write $x_1 \dotsb x_m$ for their product in the universal enveloping algebra and abbreviate $\pi(x_1 \dotsb x_m) := d \pi(x_1) \dotsb d \pi(x_m)$.  We denote by $\Psi^{-\infty} := \Psi^{-\infty}(\pi)$ the space of $\h$-dependent operators $T$ on $\pi^\infty$ with the property that for any fixed collection $x_1,\dotsc,x_m,y_1,\dotsc,y_n \in \mathfrak{g}$, the operator norm of $\pi(x_1 \dotsb x_m) T \pi(y_1 \dotsb y_n)$ is $\O(1)$.  This is easily seen to be equivalent to the definition of \cite[\S3]{nelson-venkatesh-1}.  It is verified in \cite[\S12.3]{nelson-venkatesh-1} (see part (iii) of Theorem 9) that if $\pi$ is irreducible, then
\begin{equation}\label{eq:smoothing-ops-have-controlled-trace-norm}
  T \in \Psi^{-\infty}
  \implies
  \text{ the trace norm of $T$ is $\O(1)$}.
\end{equation}

Given an $\h$-dependent scalar $c$ and vector space $V$ consisting of $\h$-dependent quantities, we denote by $c V$ the vector space of $\h$-dependent quantities of the form $c v$, with $v \in V$.  We write $\h^\infty V$ for the intersection of $\h^\eta V$ taken over all fixed $\eta \in \mathbb{R}$.  In particular, we may define $\h^\infty \Psi^{-\infty}$; we will regard it as the space of ``negligible'' operators on $\pi^\infty$.

\subsubsection{Composition}\label{sec:35ac3e576a}
For $\phi_1, \phi_2 \in C_c^\infty(\mathfrak{g})$ supported near the origin, let $\phi_1 \star \phi_2 \in C_c^\infty(\mathfrak{g})$ denote the function for which the distribution $(\phi_1 \star \phi_2)(x) \, d x$ on $\mathfrak{g}$ is the pullback of the convolution on $G$ of the images under pushforward of the distributions $\phi_1(x) \, d x$ and $\phi_2(x) \, d x$ on $\mathfrak{g}$.  For $a, b \in S^{-\infty}_{\delta}$, it is verified in \cite[\S2.5, \S4.6]{nelson-venkatesh-1} that the (rescaled) \emph{star product} $a \star_{\h} b$, characterized by the identity
\begin{equation*}
  (a \star_{\h} b)_{\h} = (\chi a_{\h}^\vee \star \chi b_{\h}^\vee)^\wedge,
\end{equation*}
defines an element of $S^{-\infty}_\delta$ that enjoys the composition formula
\begin{equation}\label{eq:composition-formula-for-opp}
  \Opp_{\h}(a)
  \Opp_{\h}(b)
  \equiv 
  \Opp_{\h}(a \star_{\h} b)
  \pmod{\h^\infty \Psi^{-\infty}}.
\end{equation}
The failure of \eqref{eq:composition-formula-for-opp} to be an equality is an artefact of the cutoff $\chi$.

\subsubsection{Equivariance}\label{sec:35ac3e576c}
It follows from \cite[\S5.5]{nelson-venkatesh-1} that for $g \in G$ belonging to a fixed compact subset,
\begin{equation}\label{eq:opp-equivariance}
  \Opp_{\h}(g \cdot a)
  \equiv
  \pi(g)
  \Opp_{\h}(a)
  \pi(g)^{-1}
  \pmod{\h^\infty \Psi^{-\infty}},
\end{equation}
where $g \cdot a(\xi) := a(g^{-1} \cdot \xi)$.  The error comes from the failure of the cutoff $\chi$ to be exactly $G$-invariant.  It will be convenient to assume that $\chi$ is exactly $K$-invariant (by averaging a given cutoff, for instance).  We then have
\begin{equation}\label{eq:exact-equivariance-K}
  \Opp_{\h}(g \cdot a)
  =
  \pi(g)
  \Opp_{\h}(a)
  \pi(g)^{-1}
  \text{ for all } g \in K.
\end{equation}
More precisely, $\widetilde{\Opp}_{\h}(g \cdot a)$ is the conjugate by $g$ of $\widetilde{\Opp}_{\h}(a)$.

\subsubsection{Star product extension and asymptotics}\label{sec:35ac3e576d}
It is shown in \cite[\S4.6]{nelson-venkatesh-1} that the star product extends to a compatible family of maps $\star_{\h} : S^m_\delta \times S^n_\delta \rightarrow S^{m+n}_\delta$ enjoying the asymptotic expansion: for fixed $J \in \mathbb{Z}_{\geq 0}$,
\begin{equation}\label{eq:asymp-expn-star-prod}
  a \star_{\h} b
  \equiv \sum_{0 \leq j < J}
  \h^j a \star^j b
  \pmod{\h^{(1-2 \delta ) J} S_{\delta}^{m+n-J}},
\end{equation}
with $\star^j$ a fixed polynomial-coefficient differential operator, of order $j$ in each variable, homogeneous of degree $j$, satisfying the mapping property $\star^j : S^{m}_\delta \times S^{n}_\delta \rightarrow \h^{- 2 \delta j} S^{m+n-j}_\delta$ and given in the simplest case by $a \star^0 b = a b$.

\subsubsection{Extended operator assignment}\label{sec:35ac3e5770}
It is shown in \cite[\S5.6]{nelson-venkatesh-1} that $\Opp_{\h}$ extends to a compatible family of maps
\[
  \Opp_{\h} : S^m_\delta \rightarrow \{\text{$\h$-dependent operators on $\pi^\infty$}\}
\]
for which the composition and equivariance properties \eqref{eq:composition-formula-for-opp}, \eqref{eq:opp-equivariance} and thus \eqref{eq:exact-equivariance-K} remain valid.

\subsubsection{Polynomial symbols}\label{sec:35ac3e5771}
It is verified in \cite[\S5.2]{nelson-venkatesh-1} that if $p \in S^m_\delta$ is an $\h$-dependent polynomial function (corresponding to some $\h$-dependent element of $\Sym(\mathfrak{g}_\mathbb{C})$), then
\begin{equation}\label{eq:Opp-polynomial-symb}
  \Opp_{\h}(p)
  = \pi(\sym(p_{\h})),
\end{equation}
where $\sym$ denotes the symmetrization map from $\Sym(\mathfrak{g}_\mathbb{C})$ to the the universal enveloping algebra of $\mathfrak{g}_\mathbb{C}$ and (as above) $p_{\h}(\xi) = p(\h \xi)$.

\subsubsection{Trace estimates}\label{sec:opp-trace-estimates}
It is shown in \cite[\S12.3]{nelson-venkatesh-1} that if $\pi$ is irreducible and tempered (so that the coadjoint orbit $\mathcal{O}_{\pi}$ as well as its rescaling $\h \mathcal{O}_\pi$ may be defined), then for $a \in S^{-2}_\delta$, the operator $\h \Opp_{\h}(a)$ is trace-class, with trace asymptotics described for each fixed $J \in \mathbb{Z}_{\geq 0}$ by
\begin{equation}\label{eq:kirillov-expanded}
  \trace(\h \Opp_{\h}(a))
  = 
  \sum_{0 \leq j < J}
  \h^j \int_{\h \mathcal{O}_\pi }
  \mathcal{D}_j a
  + \O(\h^{(1-\delta) J}),
\end{equation}
where $\mathcal{D}_j$ is a fixed constant coefficient differential operator of pure degree $j$, with $\mathcal{D}_0 a = a$.  In particular,
\begin{equation}\label{eq:kirillov-expanded-2}
  \trace(\h \Opp_{\h}(a))
  \ll 1.
\end{equation}

\subsubsection{Clean-up}\label{sec:clean-up}
It follows from \cite[\S10.3]{nelson-venkatesh-1} that if $\pi$ is irreducible (so that its infinitesimal character $\lambda_\pi \in \mathbb{R}$ may be defined) and $a \in S^{\infty}_\delta$ has the property that the image under the invariant polynomial $\Lambda : \mathfrak{g}^\wedge \rightarrow \mathbb{R}$ (see \S\ref{sec:lie-algebra}) of the support of $a$ is separated by at least $\h^{1/2-\eps}$ from $\h^2 \lambda_\pi$ for some fixed $\eps > 0$, then
\begin{equation*}
  \Opp_{\h}(a) \in
  \langle \lambda_{\pi} \rangle^{-\infty}
  \h^\infty \Psi^{-\infty},
\end{equation*}
where as usual $\langle \lambda_\pi \rangle := (1 + |\lambda_\pi|^2)^{1/2}$.  In particular, the trace norm of $\Opp_{\h}(a)$ is $\O(\langle \lambda_{\pi} \rangle^{-\infty} \h^\infty)$.

(We note a potential point of notational confusion: the rescaled infinitesimal character that we denote here by $\h^2 \lambda_\pi \in \mathbb{R}$ is written ``$\h \lambda_\pi \in [\mathfrak{g}^\wedge] \cong \mathbb{R}$'' in \cite{nelson-venkatesh-1}; see \cite[\S9]{nelson-venkatesh-1} for details.)

\section{Microlocal lifts}\label{sec:35ac3e579c}

\subsection{Characterizing microlocal lifts via their symmetry}\label{sec:states-approx-microlocal}
Theorem \ref{thm:var-2} applies to the distributions $\Psi \mapsto \mu(\pi(f),\Psi)$ attached to integral operators $\pi(f)$ with $f \in C_c^\infty(G)$, but the construction of the microlocal lift $\mu_\pi$ recorded in \S\ref{sec:constr-mu-pi-ZW} is in terms of differential operators.  We thus encounter the problem of constructing $\mu_\pi$, or at least some asymptotically equivalent distributions, using integral operators.

We begin with some motivational remarks.  Recall the asymptotic notation and terminology set in \S\ref{sec:orgb2d0a08}.  Let $\pi$ be an $\h$-dependent element of $A_0$ with $\lambda_\pi < 0$ and $|\h^2 \lambda_\pi| \asymp 1$.  Set
\begin{equation*}
  v := c \sum_{q \in \mathbb{Z} : |q| \leq \h^{-1/2}} \varphi_q \in \pi,
\end{equation*}
where $c > 0$ is chosen so that $v$ is a unit vector, and $T := v \otimes \overline{v} \in \pi \otimes \overline{\pi}$.  It follows from calculations of Wolpert \cite[\S5]{MR1838659} (see \cite[\S3]{MR1859345} for a concise account) that for fixed eigenfunctions $\Psi$,
\begin{equation}\label{eq:mu-T-Psi-vs-mu-pi-Psi-simple}
  \mu(T,\Psi)
  =
  \langle v \Psi, v \rangle
  =
  \mu_\pi(\Psi)
  + \O(\h^{1/2}).
\end{equation}
Set $t := \sqrt{- \h^2 \Omega_\pi } = \sqrt{- \h^2 \lambda_\pi } + \O(\h) \asymp 1$.  We verify readily that
\begin{equation*}
  \pi(\h X) v = t v + \O(\h^{1/2}),
\end{equation*}
\begin{equation*}
  \pi(\h Y) v = t v + \O(\h^{1/2})
\end{equation*}
and
\begin{equation*}
  \pi(\h W) v = \O(\h^{1/2});
\end{equation*}
equivalently, for fixed $Z \in \mathfrak{g}$, we have
\begin{equation*}
  \pi(\h Z) v = Z(\xi(t)) v + \O(\h^{1/2}),
\end{equation*}
with $\xi(t) \in \mathfrak{g}^\wedge$ as in \eqref{eqn:defn-xi-of-t}; in other words, $v$ is an approximate eigenvector under the first-order differential operators on $\pi^\infty$ defined by Lie algebra elements, with eigenvalue described by $\xi(t)$.  We will verify below that some variants of these observations concerning $v$, phrased in terms of $T$, give sufficient conditions for (more precise forms of) the estimate \eqref{eq:mu-T-Psi-vs-mu-pi-Psi-simple} to hold.  Turning to details:
\begin{definition*}
  Let $\pi$ be an $\h$-dependent irreducible unitary representation of $G$.  Let $T$ be an $\h$-dependent positive-definite trace class operator on $\pi$ such that $\trace(T) \ll 1$.  Let $\tau$ be an $\h$-dependent element of $\mathfrak{g}^\wedge$ with $|\h \tau| \ll 1$.  Let $0 < \delta \leq 1/2$ be fixed.  We say that $T$ is \emph{$\delta$-localized at $\tau$} if for each fixed $n \in \mathbb{Z}_{\geq 0}$ and each $\h$-dependent polynomial function $p : \mathfrak{g}_\mathbb{C}^* \rightarrow \mathbb{C}$ of degree $\O(1)$ and coefficients $\O(1)$ that vanishes to order at least $n$ at $\h \tau$, we have
  \begin{equation}
    \trace(\Opp_{\h}(p) T) \ll \h^{n \delta},
  \end{equation}
  where $\Opp_{\h}(p) := \Opp_{\h}(p:\pi)$ is as given by \eqref{eq:Opp-polynomial-symb}.
\end{definition*}
One can verify that the $T$ considered above is $1/2$-localized at $\xi(t)$; we will not need this fact, so we omit the proof.

We may construct integral operators satisfying the above definition:
\begin{lemma}[Integral operators attached to localized symbols are localized]
  Let $0 < \delta < 1/2$ be fixed.  Let $\tau \in \mathfrak{g}^\wedge$ with $|\h \tau| \asymp 1$.  Let $a \in \h^{-\delta} S^{-\infty}_{\delta}$ be real-valued.  Let $\pi$ be an $\h$-dependent tempered irreducible unitary representation of $G$.  Set $T := \h \Opp_{\h}(a:\pi)^2$.
  
  Suppose that every element of $\supp(a) \cap \h \mathcal{O}_\pi$ is of the form $\h \tau + \O(\h^\delta)$.  Then $T$ is $\delta$-localized at $\tau$, and
  \begin{equation}\label{eq:trace-estimate-in-localized-case}
    \trace(T)
    = \int_{\h \mathcal{O}_\pi} a^2 + \O(\h^{1-\delta})
    = \O(1).
  \end{equation}
\end{lemma}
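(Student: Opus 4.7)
The plan is to combine the composition formula, the Kirillov trace formula, and the asymptotic expansion of the star product from \S\ref{sec:operator-calculus}, thereby reducing both conclusions to integral estimates over the coadjoint orbit $\h \mathcal{O}_\pi$ that follow from the support hypothesis on $a$ together with the symplectic area bound recorded in the lemma of \S\ref{sec:coadjoint-orbits}.

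Concretely, given a polynomial $p$ of degree $O(1)$ with coefficients $O(1)$ vanishing to order $n$ at $\h \omega$, I would first use \eqref{eq:Opp-polynomial-symb} and two applications of the composition formula \eqref{eq:composition-formula-for-opp} to write
\[
\Opp_{\h}(p)\, T \;=\; \h\, \Opp_{\h}(p)\, \Opp_{\h}(a)^2 \;\equiv\; \h\, \Opp_{\h}(p \star_{\h} a \star_{\h} a) \pmod{\h^\infty \Psi^{-\infty}},
\]
so that on taking traces and invoking \eqref{eq:smoothing-ops-have-controlled-trace-norm} the task reduces to bounding $\trace(\h\, \Opp_{\h}(b))$ with $b := p \star_{\h} a \star_{\h} a$. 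Applying the Kirillov trace formula \eqref{eq:kirillov-expanded} with $J$ large, together with the star-product expansion \eqref{eq:asymp-expn-star-prod} applied to $b$, writes this trace as the leading integral $\int_{\h \mathcal{O}_\pi} p\, a^2$ plus a finite collection of correction integrals (involving derivatives of $p$ and of the two copies of $a$, each with an extra $\h^j$ prefactor) plus a negligible remainder. On the intersection $\supp(a) \cap \h \mathcal{O}_\pi$ the support hypothesis forces $|\xi - \h \omega| = O(\h^\delta)$, so Taylor's theorem yields $|\partial^\alpha p(\xi)| \ll \h^{\delta \max(n - |\alpha|, 0)}$, while $a \in \h^{-\delta} S^{-\infty}_\delta$ gives $|\partial^\alpha a(\xi)| \ll \h^{-\delta(1 + |\alpha|)}$. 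Combined with the symplectic area bound $O(\h^{2\delta})$ for a ball of radius $\h^\delta$ on the orbit, the leading integral is $O(\h^{n\delta})$; the mapping property $\star^j : S^m_\delta \times S^n_\delta \rightarrow \h^{-2 \delta j} S^{m+n-j}_\delta$, together with the analogous cost of applying $\mathcal{D}_j$, ensures that each order-$j$ correction is smaller than the leading term by a factor $\h^{j(1 - 2\delta)}$. Since $\delta < 1/2$, summing over $j$ gives $\trace(\Opp_{\h}(p)\, T) \ll \h^{n \delta}$, which is the $\delta$-localization at $\omega$. Specializing to $p \equiv 1$ and retaining only the leading integral yields \eqref{eq:trace-estimate-in-localized-case}, with the uniform $O(1)$ estimate following immediately from $|a|^2 \ll \h^{-2\delta}$ and the area bound $O(\h^{2\delta})$.

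The main obstacle is the bookkeeping required to verify that, within each term of the combined expansion of $p \star_{\h} a \star_{\h} a$ and of the Kirillov trace, derivatives falling on $p$ consume its vanishing budget at a rate that remains favorable against the $\h^{-\delta}$ growth per derivative of $a$ permitted by the symbol class; the key numerical point is that the hypothesis $\delta < 1/2$ guarantees that the explicit $\h^j$ prefactors in both expansions strictly dominate the worst-case $\h^{-2\delta j}$ losses, so that all corrections are genuinely smaller than the leading term.
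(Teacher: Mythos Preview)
Your proposal is correct and arrives at the same conclusion via essentially the same machinery (composition formula, star-product expansion, Kirillov trace), but the paper handles the polynomial $p$ differently. Rather than tracking the pointwise Taylor bounds $|\partial^\alpha p(\xi)| \ll \h^{\delta(n-|\alpha|)_+}$ on $\supp(a)\cap \h\mathcal{O}_\pi$ through every term of the expansion, the paper introduces a smooth cutoff $\phi \in S^{-\infty}_\delta$ equal to $1$ on a ball of radius $\asymp \h^\delta$ containing $\supp(a)\cap\h\mathcal{O}_\pi$ and supported in the ball of twice that radius, then splits $p = \phi p + (1-\phi)p$. The gain is that $\phi p \in \h^{n\delta} S^{-\infty}_\delta$ holds \emph{globally} as a symbol-class statement, so the mapping properties of $\star^j$ apply cleanly and give the $\h^{n\delta}$ bound without any further bookkeeping; the piece $(1-\phi)p$ contributes nothing because $\supp(1-\phi)\cap\supp(a)\cap\h\mathcal{O}_\pi=\emptyset$, so every term $((1-\phi)p)\star^{j_1}(a\star^{j_2}a)$ vanishes identically on $\h\mathcal{O}_\pi$. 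Your direct approach avoids the cutoff at the price of the derivative bookkeeping you flag; note that the sentence invoking ``the mapping property $\star^j : S^m_\delta \times S^n_\delta \rightarrow \h^{-2\delta j} S^{m+n-j}_\delta$'' is slightly imprecise, since the mapping property alone (applied to $p\in S^m_\delta$) does not see the vanishing of $p$ at $\h\omega$ --- you really do need to open up $\star^{j_1}$ as a bidifferential operator and feed in the Taylor bound on $\partial^\alpha p$, as you indicate elsewhere.
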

\begin{proof}
  We have $a^2(\xi) \ll \h^{-2 \delta}$.  By the lemma of \S\ref{sec:coadjoint-orbits} and the hypotheses concerning $|\h \tau|$ and the support of $a$, the set $\h \mathcal{O}_\pi \cap \supp(a)$ has symplectic volume $\O(\h^{2 \delta})$.  The required trace estimate \eqref{eq:trace-estimate-in-localized-case} thus follows from \S\ref{sec:opp-trace-estimates}.  In particular, the operator $T$ is positive-definite with $\trace(T) \ll 1$.

  To verify the localization property, fix $n \in \mathbb{Z}_{\geq 0}$ and let $p$, as above, be an $\h$-dependent polynomial of degree $\O(1)$ and coefficients $\O(1)$ that vanishes to order $\geq n$ at $\h \tau$.  We must check then that $\trace(\Opp_{\h}(p) T) \ll \h^{n \delta}$.
  
  We pause to observe that for each $q \in S^{\infty}_\delta$ and each fixed $J \in \mathbb{Z}_{\geq 0}$,
  \begin{equation}\label{eq:trace-opp-q-T}
    \trace(\Opp_{\h}(q) T )
    =
    \sum_{0 \leq j_1, j_2 < J}
    \h^{j_1 + j_2}
    \int_{\h \mathcal{O}_\pi}
    q \star^{j_1} (a \star^{j_2} a)
    + \O(\h^{(1 - 2 \delta) J}).
  \end{equation}
  This estimate follows from the composition formula \eqref{eq:composition-formula-for-opp}, the star product asymptotics \eqref{eq:asymp-expn-star-prod} and the trace estimate \eqref{eq:kirillov-expanded}, using \eqref{eq:smoothing-ops-have-controlled-trace-norm} and \eqref{eq:kirillov-expanded-2} to clean up the remainders.  Since $\h \mathcal{O}_\pi \cap \supp(a)$ has symplectic volume $\O(\h^{2 \delta})$, we have also for fixed $j_1, j_2 \geq 0$ that
  \begin{equation*}
    \int_{\h \mathcal{O}_\pi}
    q \star^{j_1} (a \star^{j_2} a)
    \ll
    \h^{2 \delta}
    \|q \star^{j_1} (a \star^{j_2} a)\|_{L^\infty(\h \mathcal{O}_\pi)}.
  \end{equation*}

  Returning to the proof of the lemma, choose a ball $B_1$ with origin $\h \tau$ and radius $\asymp \h^\delta$ so that $\supp(a) \cap \h \mathcal{O}_\pi \subseteq B_1$.  Let $B_2$ denote the ball with the same origin as $B_1$ but twice the radius.  Choose $\phi \in S^{-\infty}_{\delta}$ taking the value $1$ on $B_1$ and the value $0$ on the complement of $B_2$.  We may then decompose $p = \phi p + (1 - \phi) p$.  We apply the above estimates with $q =\phi p$ and $q = (1 - \phi) p$:
  \begin{itemize}
  \item Our assumptions on $p$ imply that $\phi p \in \h^{n \delta} S^{-\infty}_\delta$.  By \eqref{eq:trace-opp-q-T} and the mapping properties of $\star^j$, the symbol $\h^{j_1 + j_2} \phi p \star^{j_1} (a \star^{j_2} a)$ belongs to $\h^{-2 \delta + n \delta} S^{-\infty}_\delta$ and thus has $L^\infty$-norm $\O(\h^{- 2 \delta + n \delta})$.  It follows that $\trace(\Opp_{\h}(\phi p) T) = \O(\h^{n \delta} + \h^{J'})$.
  \item By construction, $\h \mathcal{O}_\pi \cap \supp(1-\phi) \cap \supp(a) = \emptyset$, so $(1 - \phi) p \star^{j_1} (a \star^{j_2} a)$ vanishes identically on $\h \mathcal{O}_\pi$, and thus $\trace(\Opp_{\h}((1-\phi) p T) = \O(\h^{J'})$.
  \end{itemize}
  We conclude by combining these estimates and taking $J$ large enough.
\end{proof}

We verify next the promised relationship between the above definition and $\mu_\pi$.
\begin{proposition}[Some localized operators define microlocal lifts]\label{thm:characterize-mu-pi}
  Fix a mean-zero even eigenfunction $\Psi \in \sigma \in A_0$.  Let $\pi$ be an $\h$-dependent element of $A_0$ such that $\lambda_\pi < 0$ and $|\h^2 \lambda_\pi| \asymp 1$.  Abbreviate $\mathcal{L} := \mathcal{L}(\pi,\sigma)$.  Let $T$ be an $\h$-dependent positive-definite trace class operator on $\pi$ with $\trace(T) \ll 1$.  Set $\tau := \xi(\sqrt{-\Omega_\pi}) \in \mathfrak{g}^\wedge$, so that $|\h \tau| \asymp 1$.  Fix $0 < \delta \leq 1/2$.

  Suppose that $T$ is $\delta$-localized at $\tau$.  Then
  \begin{equation}\label{eq:mu-T-Psi-approximates-etc}
    \mu(T,\Psi)
    =
    \trace(T)
    \mu_\pi(\Psi)
    + \O (\h^\delta \sqrt{\h \mathcal{L} } + \h^\infty ).
  \end{equation}
\end{proposition}
The proof is given in \S\ref{sec:calc-with-rais}.

\begin{remark}
  The estimate \eqref{eq:mu-T-Psi-approximates-etc} implies in particular that $\mu(T,\Psi) = \trace(T) \mu_\pi(\Psi) + \O(\h^\delta)$, but this weaker estimate is inadequate for our applications, in which we exploit crucially that $\mathcal{L}$ is ``bounded on average'' (see \S\ref{sec:defn-f}).
\end{remark}

\begin{remark}
  Although Proposition \ref{thm:characterize-mu-pi} is formulated in terms of $L$-values, it does not fundamentally exploit the arithmeticity of $\Gamma \backslash G$.  What matters are the properties of $\mathcal{L}$ enunciated in \S\ref{sec:branch-coeff}, which make sense for general finite volume quotients (see \cite[\S1.4]{nelson-venkatesh-1}, \cite{MR2726097}).
\end{remark}

\begin{remark}
  Proposition \ref{thm:characterize-mu-pi} may be used to give a proof of the asymptotic $H$-invariance of the measures $\mu_\pi$, roughly in the spirit of \cite[\S26.5]{nelson-venkatesh-1}; the relevant observations are that
  \begin{itemize}
  \item if $T$ is $\delta$-localized at $\tau$ and $g \in G$ is fixed, then $\pi(g) T \pi(g)^{-1}$ is $\delta$-localized at $g \cdot \tau$, and
  \item $H$ centralizes the elements $\xi(t)$.
  \end{itemize}
\end{remark}

\subsection{Calculations with raising and lowering operators}\label{sec:calc-with-rais}
Here we record the proof of Proposition \ref{thm:characterize-mu-pi} of \S\ref{sec:states-approx-microlocal}.  The proof is a bit tedious, but not difficult, and unrelated to the main novelties of this work.  It is basically a quantification of the arguments used to prove \eqref{eq:mu-T-Psi-vs-mu-pi-Psi-simple}.  (Indeed, it is instructive to note that \eqref{eq:mu-T-Psi-approximates-etc} recovers \eqref{eq:mu-T-Psi-vs-mu-pi-Psi-simple}.)  For these reasons, the reader might wish to skim or skip this section on a first reading.

\subsubsection{Setup and notation}

We recall that our task is to verify, under certain assumptions, the estimate \eqref{eq:mu-T-Psi-approximates-etc}.
We have $\pi \cong \pi(t,\eps)$ with $t > 0$.  We realize $\pi(t,\eps)$ as $L^2(\mathbb{Z})$ as in \S\ref{sec:constr-irred-unit}.  There is then a unique equivariant (isometric) isomorphism $j_\pi : L^2(\mathbb{Z}) \rightarrow \pi$ that maps the basis element $e_0$ to $\varphi_\pi$.  Thus $\varphi_q$, as in the construction of $\mu_\pi$, is equal to $b(q) j_\pi(e_q)$, where $b(q)$ is defined recursively by
\[
  b(q+1) = b(q) \frac{i}{s+q} \sqrt{q(q+1) - \Omega_\pi}.
\]
Since $t \in \mathbb{R}$, we have $|s+q|^2 = q(q+1) - \Omega_\pi$, and so $|b(q)| = 1$ for all $q$.  Moreover, since $t > 0$, we have for fixed $q$ that $b(q) = 1 + \O(1/t)$.  Thus the vectors $\varphi_q$ are asymptotically quite close to the $j_\pi(e_q)$.

By a limiting argument, it will suffice to consider the case that $T$ is a finite rank operator $T = \sum_i j_\pi(v_i) \otimes \overline{j_\pi(v_i)}$ attached to some finite orthogonal subset $\{v_i\}$ of $L^2(\mathbb{Z})$.  For $q,\ell \in \mathbb{Z}$, we set $T(q,\ell) := \sum_i v_i(q) \overline{v_i(q+\ell)}$, so that
\begin{equation*}
  T = \sum_{q,\ell} T(q,\ell) j_\pi(e_q) \otimes \overline{j_\pi(e_{q+\ell})},
\end{equation*}
and
\begin{equation*}
  \Psi(q,\ell) := \langle j_\pi(e_{q}) \Psi, j_\pi(e_{q+\ell}) \rangle,
\end{equation*}
so that
\[
  \mu_\pi(\Psi) = \sum_{\ell} b(\ell) \Psi(0,\ell)
\]
and
\[
  \mu(T,\Psi) = \sum_{q,\ell} T(q,\ell) \Psi(q,\ell)
\]
and
\[
  \trace(T) = \sum_{i,q} |v_i(q)|^2 = \sum_q T(q,0).
\]
By Cauchy--Schwarz and the assumed trace estimate for $T$,
\begin{equation*}
  \sum_q |T(q,\ell)|
  \leq
  \sum_q \sum_i |v_i(q) \overline{v_i(q + \ell)}|
  \leq \sum_{q,i} |v_i(q)|^2
  = \trace(T)
  \ll 1.
\end{equation*}

\subsubsection{Consequences of localization}

Set $r := \sqrt{- \Omega_\pi}$, so that $\tau = \ell(r)$.  We temporarily abbreviate $X,Y,W := \pi(X), \pi(Y), \pi(W)$.  We will use the following consequences of our assumption that $T$ is $\delta$-localized at $\tau$:
\begin{equation}\label{eqn:W-cond-for-T}
  \trace((\h W)^n T) \ll \h^{n \delta}
  \text{ for each fixed $n \in \mathbb{Z}_{\geq 0}$},
\end{equation}
\begin{equation}\label{eqn:X-cond-for-T}
  \trace((\h X - \h r) T) \ll \h^\delta.
\end{equation}
Indeed, we have $X(\h \tau) = Y(\h \tau) = \h r$ and $W(\h \tau) = 0$, so the polynomial $p = W^n$ vanishes to order $n$ at $\h \tau$ and satisfies $\sym(p_{\h}) = (\h W)^n$, while the polynomial $p = X - \h r$ vanishes to order $1$ at $\h \tau$ and satisfies $\sym(p_{\h}) = \h X - \h r$.

By \eqref{eqn:W-cond-for-T} with $n = 2$, we have
\[
  \sum_{i,q} |\h q|^2 |v_i(q)|^2 = \trace((\h W)^2 T) \ll \h^{2 \delta}.
\]
Using Cauchy--Schwarz as above, it follows that for fixed $\ell$,
\begin{equation}\label{eqn:T-W-decay-bulk}
  \sum_q
  |T(q,\ell)| \cdot |h q|
  \ll
  \h^{\delta}.
\end{equation}
We now fix $0 < \delta ' < \delta$, and argue using \eqref{eqn:W-cond-for-T} for arbitrary fixed $n$ that
\begin{equation}\label{eqn:T-W-decay}
  \sum_{q :  |\h q| \geq \h^{\delta '}}
  |T(q,\ell)| \ll \h^\infty.
\end{equation}

We now investigate the consequences of \eqref{eqn:X-cond-for-T}.  We have
\[
  \h X T = \sum_{q,\ell} T(q,\ell) \h \sqrt{r^2 + q(q+1)} e_{q + 1} \otimes \overline{e_{q + \ell}},
\]
thus
\begin{equation}\label{eqn:formula-trace-h-X-T}
  \trace(\h X T) = \sum_{q} \h \sqrt{r^2 + q(q+1)} T(q,1).
\end{equation}
Our assumptions on $\pi$ imply that $\h r \asymp 1$.  We estimate the latter sum in the range $|\h q | \geq \h^{\delta '}$ using \eqref{eqn:T-W-decay} and in the range $|\h q | < \h ^{\delta '}$ using the Taylor expansion
\[
  \h \sqrt{r^2 + q(q+1)} = \h r + \O(|\h q|).
\]
The contribution to \eqref{eqn:formula-trace-h-X-T} of the remainder in this expansion is treated using \eqref{eqn:T-W-decay-bulk}.  We extend the sum to all $q$ using \eqref{eqn:T-W-decay} once again.  We obtain in this way that
\begin{equation*}
  \trace(\h X T) = \h {r} \sum_{q} T(q,1)
  + \O(\h^\delta).
\end{equation*}
Since $T(q,1) = \sum_i v_i(q) \overline{v_i(q+1)}$, the estimate \eqref{eqn:X-cond-for-T} thus translates to
\begin{equation}\label{eqn:X-cond-for-T-2}
  \sum_{q,i}
  |v_i(q)|^2 = \sum_{q,i} v_i(q) \overline{v_i(q+1)}
  + \O(\h^\delta).
\end{equation}
By iterating this estimate, one may deduce more generally that for each fixed $\ell$,
\begin{equation}\label{eqn:T-X-cons-0}
  \sum_{q,i}
  |v_i(q)|^2 = \sum_{q,i} v_i(q) \overline{v_i(q+\ell)}
  + \O(\h^\delta),
\end{equation}
which translates to
\begin{equation}\label{eqn:T-X-cons}
  \sum_q T(q,\ell) = \trace(T) + \O(\h^\delta).
\end{equation}
We record details concerning this deduction at the end of \S\ref{sec:calc-with-rais}.

\subsubsection{Statement of crucial estimates}

Recall that $\Psi(q,\ell) = 0$ unless $|\ell| \leq C$ for some fixed $C$.  We have the trivial bound
\begin{equation}\label{eqn:trivial-for-Psi-pi}
  |\Psi(q,\ell)| \leq \|\Psi \|_{L^\infty(\Gamma \backslash G)}
  \ll 1
\end{equation}
for all $q, \ell$.  Suppose now that $|\h q| \leq \h^{\delta '}$.  We claim then that
\begin{equation}\label{eqn:Psi-pi-triv}
  \Psi(q,\ell) \ll \sqrt{\h \mathcal{L} } + \h^{\infty}
\end{equation}
and
\begin{equation}\label{eqn:Psi-pi-lipschitz}
  \Psi(q,\ell) =
  \Psi(0,\ell)
  + \O(|\h q| \sqrt{\h \mathcal{L}}  + \h^{\infty}).
\end{equation}

\subsubsection{Proof of \eqref{eqn:Psi-pi-triv} and \eqref{eqn:Psi-pi-lipschitz}}

We will prove these when $q \geq 0$; an analogous argument applies to negative $q$.

For $j \in \mathbb{Z}_{\geq 0}$ and $q,\ell \in \mathbb{Z}$, let $\Psi^j(q,\ell)$ be defined like $\Psi(q,\ell)$, but with $\Psi$ replaced with $X^j \Psi$.  We will work in what follows with fixed values of $j$, so that $\|X^j \Psi \| \ll 1$.

By \eqref{eq:WW-upper-bound}, we have
\begin{equation}\label{eqn:woodbury-stirling}
  \Psi^j(0,\ell)
  \ll \sqrt{\h \mathcal{L} }.
\end{equation}
We also have the trivial bound
\begin{equation}\label{eqn:trivial-Psi-j}
  \Psi^j(q,\ell)
  \ll 1,
\end{equation}
as in \eqref{eqn:trivial-for-Psi-pi}.

We now argue recursively using the following instance of ``partial integration'': the integral over $\Gamma \backslash G$ of $X (j_\pi(e_q) \overline{j_\pi(e_{q+\ell})} X^j \Psi)$ vanishes.  Expanding this out, we obtain with $f(q) := \h \sqrt{r^2 + q(q+1)}$ that
\[
  f(q+\ell) \Psi^j(q,\ell) = f(q) \Psi^j(q+1,\ell) + \h \Psi^{j+1}(q,\ell+1).
\]
For $q$ in the indicated range and $\ell \ll 1$, we have $f(q) \asymp 1$ and $f(q+\ell) = f(q) + \O(\h)$.  Hence for such $q$ and $\ell$,
\begin{equation}\label{eqn:Psi-pi-j-main-recursive-bound}
  \Psi^j(q+1,\ell)
  -
  \Psi^j(q,\ell)
  \ll
  \h (
  \left\lvert \Psi^j(q,\ell) \right\rvert
  +
  \left\lvert \Psi^j(q+1,\ell) \right\rvert
  +
  \left\lvert \Psi^{j+1}(q,\ell+1) \right\rvert
  ).
\end{equation}

Fix $J \in \mathbb{Z}_{\geq 0}$ and then $C \in \mathbb{R}_{\geq 1}$ sufficiently large, and set
\[
  \beta_j(q) := C (1 + C \h)^{-q} \sup_{\ell} |\Psi^j(q,\ell)|.
\]
We consider $q \geq 0$ with $|\h q| \leq \h^{\delta '}$.  Having chosen $C$ large enough, the estimate \eqref{eqn:Psi-pi-j-main-recursive-bound} implies
\begin{equation}\label{eqn:recursion-for-beta-j}
  \beta_j(q + 1) \leq \beta_j(q)
  + C \h \beta_{j+1}(q)
  \quad (0 \leq j < J).
\end{equation}
Similarly, by \eqref{eqn:trivial-Psi-j},
\begin{equation*}
  \beta_J(q) \leq 1.
\end{equation*}
Thus the sequence of $(J+1)$-dimensional row vectors
\[\beta(q) := (\beta_0(q), \beta_1(q), \dotsc, \beta_{J-1}(q), 1)\]
satisfy
\[\beta(q) \leq \beta(0) M^q,
  \quad M := \begin{pmatrix}
    1    &        &        &      &   \\
    C \h & 1      &        &          \\
    & \dotsb & \dotsb &      &   \\
    &        & \dotsb & 1    &   \\
    & & & C \h & 1
  \end{pmatrix}.
\]
We also have, by \eqref{eqn:woodbury-stirling} and the estimate $(1 + C \h)^q = 1 + \O(\h^{\delta '})$, the initial bound
\begin{equation*}
  \beta_j(0) \ll \sqrt{\h \mathcal{L}}
  \quad
  (0 \leq j < J).
\end{equation*}
These lead to
\begin{equation}
  \beta_j(q) \ll \sqrt{\h \mathcal{L} }
  + \h^{(J-j) \delta}.
\end{equation}
Taking $j = 0$ and recalling that $J$ was arbitrary, we obtain \eqref{eqn:Psi-pi-triv}, and also its analogue for $\Psi^1$; inserting the latter into the $j=0$ case of \eqref{eqn:Psi-pi-j-main-recursive-bound} then gives \eqref{eqn:Psi-pi-lipschitz}.

\subsubsection{Conclusion}

We now combine the above estimates to conclude.  Expanding the definitions, we have
\[
  \mu(T,\Psi) = \trace(T) \mu_{\pi}(\Psi) + S_1 + S_2 + S_3
\]
where
\begin{equation}
  S_1 :=
  \sum_{\ell}
  \left(\sum_q T(q,\ell) - \trace(T)\right) \Psi(0,\ell),
\end{equation}
\begin{equation}
  S_2 :=
  \sum_{q,\ell}
  T(q,\ell) (\Psi(q,\ell) - \Psi(0,\ell)),
\end{equation}
\begin{equation}
  S_3 :=
  \trace(T)
  \sum_{\ell} (1 - b(\ell)) \Psi(0,\ell).
\end{equation}
Using \eqref{eqn:Psi-pi-triv} and \eqref{eqn:T-X-cons}, we see that $S_1 \ll \h^\delta \sqrt{\h \mathcal{L} } + \h^\infty$.  To bound $S_2$, we estimate the contribution from $|\h q| \geq \h^{\delta '}$ via \eqref{eqn:T-W-decay} and \eqref{eqn:Psi-pi-triv}.  We then estimate the remaining contribution via \eqref{eqn:Psi-pi-lipschitz}.  We obtain
\begin{equation}
  S_2 \ll
  \sqrt{\h \mathcal{L} }
  \sum_{\ell : |\ell| \leq C}
  \sum_{q}
  |T(q,\ell)|
  \cdot
  |\h q|
  +
  \h^{\infty}
  \ll
  \h^{\delta} \sqrt{\h \mathcal{L} }
  + \h^\infty.
\end{equation}
For $S_3$, we use that $\Psi(0,\ell) \ll \sqrt{h \mathcal{L}}$ and that $\Psi(0,\ell) \neq 0$ only if $|\ell| = \O(1)$, in which case $b(\ell) = 1 + \O(\h)$; thus $S_3 \ll \h \sqrt{\h \mathcal{L}}$.  This completes the proof of \eqref{eq:mu-T-Psi-approximates-etc}.

\subsubsection{Proof of \eqref{eqn:T-X-cons-0}}

We finally record the promised details concerning the deduction of \eqref{eqn:T-X-cons-0}.  Let $(V, \langle , \rangle)$ be a complex inner product space, with unitary group $\U(V)$.
\begin{lemma*}
  For $a, b \in \U(V)$ and $v \in V$, we have
  \begin{equation*}
    \lvert \langle a b v - v, v \rangle \rvert^{1/2} \leq \lvert \langle a v - v, v \rangle \rvert^{1/2} + \lvert \langle b v - v, v \rangle \rvert ^{1/2}.
  \end{equation*}
\end{lemma*}
\begin{proof}
  Introduce the abbreviations
  \begin{equation*}
    \eps_a := \lvert \langle a v - v, v \rangle \rvert^{1/2},
    \quad 
    \eps_b := \lvert \langle b v - v, v \rangle \rvert^{1/2}.
  \end{equation*}
  We observe first, by expanding the square and invoking the assumed unitarity, that
  \begin{equation}\label{eqn:expand-square-avminusv}
    \lVert a v - v \rVert^2 = 2 \Re \langle v - a v, v \rangle \leq 2 \eps_a^2,
  \end{equation}
  and similarly for $b$.  Consider now the inner product
  \begin{equation*}
    I := \langle a (b v - v), a v - v \rangle.
  \end{equation*}
  On the one hand, by Cauchy--Schwarz and \eqref{eqn:expand-square-avminusv}, we have $|I| \leq 2 \eps_a \eps_b$.
  On the other hand, by expanding out and invoking the unitarity of $a$ and $b$, we see that
  \begin{equation*}
    \langle a b v - v, v \rangle =
    \langle a v - v , v \rangle
    +
    \langle b v - v, v \rangle
    - I,
  \end{equation*}
  and thus
  \begin{equation*}
    |\langle a b v - v, v \rangle| \leq \eps_a^2 + \eps_b^2 + 2 \eps_a \eps_b = (\eps_a + \eps_b)^2.
  \end{equation*}
  The required inequality follows.
\end{proof}
\begin{corollary*}
  For $a \in \U(V)$, $v \in V$ and $n \in \mathbb{Z}$, we have
  \begin{equation}\label{eq:lvertlangle-an-v}
    \lvert\langle a^n v - v, v \rangle \rvert \leq n^2  \lvert \langle a v - v, v \rangle \rvert.
  \end{equation}
\end{corollary*}
\begin{proof}
  It suffices to consider the case $n \geq 0$; otherwise, replace $(n,a)$ with $(-n, a^{-1})$ and use that
  \begin{equation*}
    \langle a v - v, v \rangle = \overline{\langle a^{-1} v - v, v \rangle}.
  \end{equation*}
  The required inequality is clear when $n=0,1$.  For $n \geq 2$, we induct; the induction step is given by the lemma with $b := a^{n-1}$.
\end{proof}
\begin{remark*}
  By taking $V$ to be $\mathbb{C}^2$ with the standard inner product and $a$ to be rotation by some small angle, we may see that \eqref{eq:lvertlangle-an-v} is sharp, i.e., the factor $n^2$ cannot be replaced by anything smaller.
\end{remark*}

We now explain how the corollary implies \eqref{eqn:T-X-cons-0}.  Let $V$ be the inner product space consisting of collections of complex numbers $v = \{v_i(q)\}_{i,q}$, where $i$ ranges over some index set $I$ and $q$ ranges over $\mathbb{Z}$, with the inner product given by
\begin{equation*}
\langle u, v \rangle = \sum_{i,q} u_i(q) \overline{v_i(q)}.
\end{equation*}
 Let $a$ denote the unitary operator on $V$ given by
 \begin{equation*}
(a v)_i(q) = v_i(q+1).
\end{equation*}
Our hypothesis \eqref{eqn:X-cond-for-T-2} then reads $\langle v, v \rangle = \langle v, a v \rangle + \O(\h^\delta)$, while the desired conclusion \eqref{eqn:T-X-cons-0} reads $\langle v, a^{\ell} v \rangle = \langle v, v \rangle + \O(\h^\delta)$ for fixed $\ell$.  Since $\ell^2 = \O(1)$, the required implication follows immediately from \eqref{eq:lvertlangle-an-v}.

\subsection{Constructing microlocal lifts via integral operators}
\label{sec:org8bcf1bd}
Recall the coordinates $\xi = i \begin{pmatrix}
  \xi_1 & \xi_3 \\
  \xi_2 & - \xi_1
\end{pmatrix}$ on $\mathfrak{g}^\wedge$.  We henceforth fix some $0 < \delta < 1/2$.  We denote by $\tilde{\mathcal{K}}$ the set of all real-valued $a \in \h^{-\delta} S^{-\infty}_\delta$ with the following properties:
\begin{itemize}
\item $a (\xi) = 0$ unless $\xi_1 > 0$ and $\xi_1 \asymp 1$ and $\xi_2,\xi_3 \ll \h^\delta$; equivalently, $a$ is supported in a $\O(\h^\delta)$ neighborhood of some fixed compact subset of $\{\xi(t) : t > 0\}$.  In particular, $a$ vanishes identically on $\mathcal{O}(\lambda)$ unless $\lambda < 0$ and $|\lambda| \asymp 1$.
\item We have
  \begin{equation}\label{eq:Weyl-negates-symbol}
    a(-(w \cdot \xi)) = a(\xi)
  \end{equation}
  for all $\xi \in \mathfrak{g}^\wedge$, where $w = \begin{pmatrix} 0 & 1 \\ -1 & 0 \end{pmatrix} \in G$.  Equivalently, $a(\xi)$ is invariant under swapping the coordinates $\xi_2$ and $\xi_3$.
\end{itemize}

To each $a \in \tilde{\mathcal{K}}$, we attach an $\h$-dependent element $k$ of $C_c^\infty(\mathbb{R}_{<0})$ by the formula
\[
  k(\lambda) := \int_{\mathcal{O}(\lambda)} a^2.
\]
We note that the support condition for $k$ enunciated in \S\ref{sec:sketch-deduct-theor} follows from that for $a$.  We denote by $\mathcal{K}$ the set of $\h$-dependent functions $k$ arising in this way.  We verify readily that $\mathcal{K}$ has the ``control'' and ``richness'' properties enunciated in \S\ref{sec:orgb2d0a08}.

We henceforth work with one such $k$ together with a corresponding symbol $a \in \tilde{\mathcal{K}}$.  We fix $C \geq 10$ sufficiently large that $a (\xi) = 0$ unless $3/C < \xi_1 < C/3$.  Then $k(\lambda) = 0$ unless $2/C < \sqrt{-\lambda} < C/2$.

We say that an $\h$-dependent irreducible unitary representation $\pi$ of $G$ is \emph{good} if $\lambda_\pi < 0$ and $1/C \leq \sqrt{-\h^2 \lambda_\pi} \leq C$, and otherwise that $\pi$ is \emph{bad}.  Informally, the bad $\pi$ are those whose rescaled infinitesimal characters are sufficiently separated from the support of $a$ that they play little role in our analysis.

\begin{lemma*}
  Let $\pi$ be an $\h$-dependent irreducible unitary representation of $G$.  Set
  \[
    T := \h \Opp_{\h}(a)^2,
  \]
  where $\Opp_{\h}(a) := \Opp_{\h}(a:\pi)$.
  \begin{enumerate}[(i)]
  \item\label{itm:props-of-T-pi} $T$ defines a positive-definite trace-class operator on $\pi$.  If $\pi$ is good, then
    \begin{equation}\label{eq:trace-estimate-good-case}
      \trace(T)
      =
      k(\h^2  \lambda_\pi)
      + \O(\h^{1-\delta}).
    \end{equation}
    If $\pi$ is bad, then
    \begin{equation}\label{eq:bad-trace-pi-estimate}
      \trace(T)
      =
      \O(\h^\infty \langle \lambda_\pi  \rangle^{-\infty}).
    \end{equation}
  \item\label{itm:estimate-mu-T-pi} Fix a mean-zero even eigenfunction $\Psi \in \sigma \in A_0$, and suppose that $\pi \in A_0$.  If $\pi$ is good, then
    \begin{equation}\label{eq:compare-with-microlocal}
      \mu(T,\Psi)
      =
      k(\h^2 \lambda_\pi)
      \mu_\pi(\Psi)
      + \O(\h^{\delta}
      \sqrt{\h \mathcal{L}}
      + \h^\infty),
    \end{equation}
    where $\mathcal{L} := \mathcal{L}(\pi,\sigma)$.  If $\pi$ is bad, then
    \begin{equation}\label{eq:bad-pi-estimate-for-T-pi-Psi}
      \mu(T,\Psi)
      =
      \O(\h^\infty \langle \lambda_\pi  \rangle^{-\infty}).
    \end{equation}
  \end{enumerate}
\end{lemma*}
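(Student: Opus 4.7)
The plan is to derive both parts by combining the two preceding lemmas of \S\ref{sec:states-approx-microlocal} (that integral operators attached to suitably concentrated symbols are $\delta$-localized at the corresponding $\omega$, and that $\delta$-localized operators reproduce the microlocal lift up to an arithmetically-controlled error) in the good case, and to invoke the clean-up estimate of \S\ref{sec:clean-up} in the bad case. In both cases $T = \h \Opp_{\h}(a)^2$ is manifestly positive-definite and self-adjoint, so only the trace and the microlocal comparison require work.

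\textbf{Bad case.} A point $\xi \in \supp(a)$ satisfies $\xi_1 \asymp 1$ with $\xi_1 \in [1/3C, C/3]$ and $\xi_2,\xi_3 = \O(\h^{\delta})$, so $\Lambda(\xi) = -\xi_1^2 - \xi_2\xi_3$ lies in a fixed compact subset of $\mathbb{R}_{<0}$, separated by a fixed positive distance from $\h^2\lambda_\pi$ once $\pi$ is bad. The clean-up result of \S\ref{sec:clean-up} then places $\Opp_{\h}(a)$, hence also $T$, in $\langle \lambda_\pi \rangle^{-\infty}\h^{\infty}\Psi^{-\infty}$. Combined with \eqref{eq:smoothing-ops-have-controlled-trace-norm} this gives \eqref{eq:bad-trace-pi-estimate}, and then the inequality $|\mu(T,\Psi)|\leq\|T\|_{1}\|\Psi\|_{L^{\infty}}$ recalled in \S\ref{sec:orgb2d0a08} gives \eqref{eq:bad-pi-estimate-for-T-pi-Psi}.

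\textbf{Good case.} Set $\omega := \xi(\sqrt{-\Omega_{\pi}})$, so that $|\h\omega|\asymp 1$, $(\h\omega)_{2}=(\h\omega)_{3}=0$, and $(\h\omega)_{1}=\sqrt{-\h^{2}\Omega_{\pi}} = \sqrt{-\h^{2}\lambda_{\pi}} + \O(\h^{2})$. The key step is to verify the hypothesis of the first auxiliary lemma: for $\xi \in \supp(a)\cap\h\mathcal{O}_{\pi}$, the relation $-\xi_{1}^{2}-\xi_{2}\xi_{3}=\h^{2}\lambda_{\pi}$ together with $\xi_{2},\xi_{3}=\O(\h^{\delta})$ and the sign constraint $\xi_{1}>0$ forces $\xi_{1}=\sqrt{-\h^{2}\lambda_{\pi}}+\O(\h^{2\delta})=(\h\omega)_{1}+\O(\h^{\delta})$, while the other two coordinates agree with those of $\h\omega$ up to $\O(\h^{\delta})$ trivially. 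The lemma then yields simultaneously the $\delta$-localization of $T$ at $\omega$ and the trace expansion $\trace(T)=\int_{\h\mathcal{O}_{\pi}}a^{2}+\O(\h^{1-\delta})$; since $\h\mathcal{O}_{\pi}=\mathcal{O}(\h^{2}\lambda_{\pi})$ by the rescaling identity of \S\ref{sec:coadjoint-orbits}, this leading term is precisely $k(\h^{2}\lambda_{\pi})$, which is \eqref{eq:trace-estimate-good-case}.

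\textbf{Microlocal comparison and absorbing the trace error.} Given the $\delta$-localization of $T$ at $\omega$, Lemma \ref{thm:characterize-mu-pi} gives $\mu(T,\Psi)=\trace(T)\mu_{\pi}(\Psi)+\O(\h^{\delta}\sqrt{\h\mathcal{L}}+\h^{\infty})$. Substituting the trace estimate introduces an extra contribution $\O(\h^{1-\delta})|\mu_{\pi}(\Psi)|$, which by the arithmetic upper bound \eqref{eq:WW-upper-bound} is $\O(\h^{1-\delta}\sqrt{\h\mathcal{L}})$; since $\delta<1/2$, it is absorbed into $\O(\h^{\delta}\sqrt{\h\mathcal{L}})$, yielding \eqref{eq:compare-with-microlocal}. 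The only real obstacle is arithmetic bookkeeping: it is essential to carry the factor $\sqrt{\h\mathcal{L}}$ through the error rather than crudely bounding $|\mu_{\pi}(\Psi)|=\O(1)$, because this is exactly what Lemma \ref{thm:characterize-mu-pi} delivers and what the $L$-value averaging in \S\ref{sec:defn-f} will later consume; everything else reduces to verifying the support condition above and then quoting the two preceding lemmas.
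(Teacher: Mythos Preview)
Your proof is correct and follows essentially the same route as the paper's: invoke the clean-up estimate of \S\ref{sec:clean-up} together with $|\mu(T,\Psi)|\leq\|T\|_1\|\Psi\|_{L^\infty}$ for bad $\pi$, and for good $\pi$ verify the support hypothesis of the first lemma of \S\ref{sec:states-approx-microlocal} (noting that good $\pi$ are tempered), then feed the resulting $\delta$-localization into Lemma~\ref{thm:characterize-mu-pi}. Your explicit step absorbing the discrepancy $\O(\h^{1-\delta})|\mu_\pi(\Psi)|$ via \eqref{eq:WW-upper-bound} is a detail the paper leaves implicit under ``combine the lemmas,'' and your coordinate verification of $\supp(a)\cap\h\mathcal{O}_\pi\subseteq\h\omega+\O(\h^\delta)$ spells out what the paper asserts in one line.
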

\begin{proof}
  If $\pi$ is bad, then the estimate \eqref{eq:bad-trace-pi-estimate} follows from \S\ref{sec:clean-up}, while \eqref{eq:bad-pi-estimate-for-T-pi-Psi} follows from \eqref{eq:bad-trace-pi-estimate} and the inequality $|\mu(T,\Psi)| \leq \trace(T) \|\Psi\|_{L^\infty}$.  Suppose that $\pi$ is good.  Then $\pi$ is tempered, $\lambda_\pi < 0$ and $|\h^2 \lambda_\pi| \asymp 1$.  Moreover, every element of $\supp(a) \cap \h \mathcal{O}_\pi$ is of the form $\h \tau + \O(\h^\delta)$ with $\tau = \xi(t)$, $t = \sqrt{- \Omega_\pi}$.  The hypotheses of \eqref{eq:trace-estimate-in-localized-case} are thus satisfied, while the conclusion gives \eqref{eq:bad-trace-pi-estimate}.  To deduce \eqref{eq:compare-with-microlocal}, we combine the lemmas of \S\ref{sec:states-approx-microlocal}.
\end{proof}

\section{Proofs of main results}\label{sec:35ac3e57a1}
Following the sketch of \S\ref{sec:sketch-deduct-theor}, we must establish the three key estimates \eqref{eqn:req-V}, \eqref{eqn:req-I} and \eqref{eqn:req-E}.  These are recalled and verified in the following subsections, followed by the modifications needed for the holomorphic case.

\subsection{Quantum variance
  sums via integral operators}\label{sec:defn-f}
Recall from \S\ref{sec:org8bcf1bd} that we have chosen $k \in \mathcal{K}$ and a corresponding symbol $a \in \tilde{\mathcal{K}}$.  Then
\[
  f := \h^{3/2} \widetilde{\Opp}_{\h}(a) \ast \widetilde{\Opp}_{\h}(a)
\]
(here $\ast$ denotes convolution in $C_c^\infty(G)$ with respect to our chosen Haar measure) is an $\h$-dependent positive-definite element of $C_c^\infty(G)$, supported in a fixed small neighborhood of the identity element.  For each unitary representation $\pi$ of $G$, we have $\pi(f) = \h^{1/2} T_\pi$ with $T_\pi := \h \Opp_{\h}(a:\pi)^2$ as in the lemma of \S\ref{sec:org8bcf1bd}.  Recall from \S\ref{sec:orgb2d0a08} that
\begin{equation}\label{eq:recap-defn-cal-V}
  \mathcal{V}(f)
  =
  \sum_{\pi \in A_0}
  \iota_{\pi}
  \mu(\pi(f),\Psi_1)
  \overline{\mu(\pi(f),\Psi_2)}
  =
  \h \sum_{\pi \in A_0} \iota_\pi \mu(T_\pi,
  \Psi_1) \overline{ \mu(T_\pi, \Psi_2) }.
\end{equation}
The purpose of this section is to verify the claimed estimate \eqref{eqn:req-V} relating $\mathcal{V}(f)$ to the quantum variance of microlocal lifts, which we copy here for convenience:
\begin{equation}
  \mathcal{V}(f)
  = \h \sum_{\pi \in A_0}
  \iota_{\pi} k(\h^2 \lambda_\pi)^2
  \mu_\pi(\Psi_1)
  \overline{\mu_\pi(\Psi_2)}
  + \O(\h^\delta).
  \tag{\ref{eqn:req-V}}
\end{equation}
We begin with an \emph{a priori} bound:
\begin{lemma*}
  Fix $C \geq 1$, and fix an even $\sigma \in A_0$.  Then
  \begin{equation}\label{eq:SZ-upper-bound-2}
    \h^2
    \sum_{
      \substack{
        \pi \in A_0 : \\
        C^{-1} \leq - \h^2 \lambda_\pi \leq C
      }
    } \iota_{\pi}
    \mathcal{L}(\pi,\sigma)
    \ll 1.
  \end{equation}
\end{lemma*}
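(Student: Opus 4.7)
The strategy is to convert the $L$-value sum into a quantum-variance sum amenable to Theorem~\ref{thm:var-2}. By Lemma~\ref{lem:WW}(i) and (ii), there is a fixed smooth eigenfunction $\Psi_* \in \sigma$ with $|\mu_\pi(\Psi_*)|^2 \asymp \h\,\mathcal{L}(\pi,\sigma)$ uniformly for $\pi$ in the window $-\h^2\lambda_\pi \in [C^{-1},C]$. The desired estimate thereby reduces to
\[
  \h \sum_{\pi:\,-\h^2\lambda_\pi \in [C^{-1},C]} \iota_\pi\,|\mu_\pi(\Psi_*)|^2 \ll 1.
\]

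Pick a symbol $a \in \tilde{\mathcal{K}}$ whose associated $k = k_a$ satisfies $k \geq 1$ on $[-C,-C^{-1}]$, and set $T_\pi := \h\,\Opp_\h(a:\pi)^2$ and $f := \h^{3/2}\,\widetilde{\Opp}_\h(a) \ast \widetilde{\Opp}_\h(a) \in C_c^\infty(G)$, so that $\pi(f) = \h^{1/2}T_\pi$. For $\pi$ in our window, the identity \eqref{eq:compare-with-microlocal} gives $\mu(T_\pi,\Psi_*) = k(\h^2\lambda_\pi)\mu_\pi(\Psi_*) + O(\h^\delta\sqrt{\h\mathcal{L}} + \h^\infty)$; the calibration $|\mu_\pi(\Psi_*)| \asymp \sqrt{\h\mathcal{L}}$ shows that the error is a factor of $\h^\delta$ smaller than the main term (up to a term of order $\h^\infty$, which contributes negligibly after summation), whence $|\mu_\pi(\Psi_*)|^2 \ll |\mu(T_\pi,\Psi_*)|^2$. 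By positivity and \eqref{eq:recap-defn-cal-V}, it then suffices to show that $\mathcal{V}(f) \ll 1$, and we apply Theorem~\ref{thm:var-2} with $\Psi_1 = \Psi_2 = \Psi_*$ to decompose
\[
  \mathcal{V}(f) = \mathcal{M}(f) + \sum_{\tau_1,\tau_2 \in X}\mathcal{E}_{\tau_1,\tau_2}(\heartsuit^{\tau_1}f,\heartsuit^{\tau_2}f).
\]

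The error terms are bounded via \eqref{eqn:main-estimate}: a routine unwinding of definitions produces a factorization $\heartsuit^{\tau_i}f = D_{\h^{-1}}\phi_i$ with $\phi_i \in \mathcal{S}(M)$ of $\mathcal{C}$-seminorm bounded uniformly in $\h$, giving $|\mathcal{E}_{\tau_1,\tau_2}(\heartsuit^{\tau_1}f,\heartsuit^{\tau_2}f)| \ll \h\log(\h^{-1})$. For the main term $\mathcal{M}(f) = c_B L^{(S)}(\sigma,\tfrac12)\mathcal{I}(f)$, the symmetrization $\mathfrak{S}f$ shares the concentration properties of $f$ as a smooth function localized on a scale-$\h$ neighborhood of the identity, so the self-correlation $g \mapsto \langle g\cdot\mathfrak{S}f,\mathfrak{S}f\rangle_G$ has its mass concentrated on an $O(1)$ neighborhood of the stabilizer $H \leq G$ of the symbol support, with uniformly bounded total mass there; combined with the integrability of the tempered matrix coefficient $\langle g\Psi_*,\Psi_*\rangle$ along $H$, this gives the crude upper bound $\mathcal{I}(f) \ll 1$. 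The main obstacle is exactly this upper bound on $\mathcal{I}(f)$: although the precise asymptotic expansion of $\mathcal{I}(f)$ forms the content of \eqref{eqn:req-I} (to be established in \S\ref{sec:orge0a88b2}), only a crude upper bound is needed here, and it can be extracted directly from the coadjoint-orbit representation of $g \mapsto \langle g\cdot\mathfrak{S}f,\mathfrak{S}f\rangle_G$ without invoking \eqref{eqn:req-I}, thereby avoiding circularity.
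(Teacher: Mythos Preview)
Your proof is correct and follows essentially the same route as the paper's: use Lemma~\ref{lem:WW}(ii) to bound $\h\mathcal{L}(\pi,\sigma)$ below by $|\mu_\pi(\Psi)|^2$, pass via \eqref{eq:compare-with-microlocal} to $|\mu(T_\pi,\Psi)|^2$, majorize the sum by $\mathcal{V}(f)$, and bound the latter through Theorem~\ref{thm:var-2}. The only difference is organizational: the paper simply invokes the later-established estimates \eqref{eqn:req-I} and \eqref{eqn:req-E} (noting that their proofs are independent of this lemma), whereas you sketch self-contained upper bounds for $\mathcal{I}(f)$ and the error terms; one small caveat is that your claim of \emph{uniformly} bounded $\mathcal{C}$-seminorm for $\phi_i$ holds only if you take $a$ to be $\h$-independent (which suffices here), since a generic $a \in \tilde{\mathcal{K}}$ would give $\mathcal{C}(\phi_i) = O(\h^{-N\delta})$ as in \S\ref{sec:org543f416}.
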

\begin{proof}
  This can be deduced using an approximate functional equation and the Kuznetsov--Bruggeman--Miatello formula as in the work of Luo--Sarnak--Zhao (who in fact obtain and require asymptotic formulas with strong error terms rather than merely upper bounds \eqref{eq:SZ-upper-bound-2} of the expected order of magnitude).  For completeness, we record a self-contained proof of \eqref{eq:SZ-upper-bound-2}.
  We assume $k \in \mathcal{K}$ chosen so that $k(\lambda) \geq 1$ whenever $C^{-1} \leq -\lambda \leq C$.  Let $\pi$ be as in \eqref{eq:SZ-upper-bound-2}, so that $|k(\h^2 \lambda_\pi)|^2 \geq 1$.  Set $T_\pi := \h \Opp_{\h}(a:\pi)^2$.  Fix an eigenfunction $\Psi \in \sigma$ for which  \eqref{eq:WW-lower-bound} holds, so that $|k(\h^2 \lambda_\pi)|^2 |\mu_\pi(\Psi)|^2 \gg \h \mathcal{L}(\pi,\sigma)$.
  It follows then by \eqref{eq:compare-with-microlocal} that $|\mu(T_\pi,\Psi)|^2 \gg \h \mathcal{L}(\pi,\sigma)$.  Thus the left hand side of \eqref{eq:SZ-upper-bound-2} is bounded by a fixed multiple of $\h \sum_{\pi \in A_0 } \iota_{\pi} |\mu(T_\pi,\Psi)|^2$, which is just $\mathcal{V}(f)$ specialized to $\Psi_1 = \Psi_2 = \Psi$.  The identity \eqref{eq:var-2-main-identity} and the estimates \eqref{eqn:req-I} and \eqref{eqn:req-E} give an asymptotic formula for $\mathcal{V}(f)$ which implies in particular that $\mathcal{V}(f) \ll 1$.  This completes the proof.  (We note that the proofs of the estimates \eqref{eqn:req-I} and \eqref{eqn:req-E}, given below, do not depend upon the lemma that we are proving, so our argument is non-circular.)
\end{proof}

We now verify \eqref{eqn:req-V}.  The contribution from bad $\pi$ to \eqref{eq:recap-defn-cal-V} is adequately estimated using \eqref{eq:bad-pi-estimate-for-T-pi-Psi} and the very weak Weyl law $\h^{100} \sum_{\pi \in A_0} \iota_\pi \langle \lambda_\pi \rangle^{-10} \ll \h^{10}$, say.  If $\pi$ is good, then we see by \eqref{eq:WW-upper-bound} and \eqref{eq:compare-with-microlocal} that
\begin{equation}\label{eqn:approx-for-mu-mu}
  \begin{split}
    \mu(T_\pi,\Psi_1) \overline{\mu(T_\pi,\Psi_2)} &= k(\h^2 \lambda_\pi)^2 \mu_\pi(\Psi_1) \overline{\mu_\pi(\Psi_2)}
    \\
    &\quad + \O(\h^{1+\delta} \sqrt{\mathcal{L}_1 \mathcal{L}_2} + \h^{\infty})
  \end{split}
\end{equation}
where $\mathcal{L}_j := \mathcal{L}(\pi,\sigma_j)$.  To discard the error, we apply Cauchy--Schwarz followed by the above lemma, which gives for $j=1,2$ that
\[
  \h \sum_{\text{good }\pi \in A_0} \iota_{\pi} (\h^{1+\delta} \mathcal{L}_j + \h^\infty) \ll \h^\delta.
\]
The proof of \eqref{eqn:req-V} is now complete.  In the following sections we will verify \eqref{eqn:req-I} and \eqref{eqn:req-E}, thereby completing the proof of Theorem \ref{thm:var-3}.

We note for future reference that for $x \in \mathcal{G}$ (see \S\ref{sec:kirillov-formula}),
\begin{equation}\label{eq:cnsez7nean}
  f(\exp(x)) = \h^{3/2} \jac^{-1/2}(x) b^\vee_{\h}(x),
\end{equation}
where $b \in \h^{-\delta} S^{-\infty}_{\delta}$ is characterized by the identity $b_{\h}^{\vee} = \jac^{-1/2} (a \star_{\h} a)_{\h}^\vee$.  By \cite[\S7.8]{nelson-venkatesh-1}, it admits an asymptotic expansion $b \sim \sum_{j \geq 0} b_j$ with $b_0 = a^2$ and $b_j \in \h^{-2 \delta + (1 - 2 \delta) j} S^{-\infty}_{\delta}$; by this we mean that $b - \sum_{0 \leq j < J} b_j \in \h^{- 2 \delta + (1 - 2 \delta )J} S^{-\infty}_{\delta}$ for each fixed $J \in \mathbb{Z}_{\geq 0}$.

\subsection{Main term estimates}
\label{sec:orge0a88b2}
Before beginning the proof of \eqref{eqn:req-I}, we establish a relevant integral formula.  Recall that we have fixed a Haar measure $d g$ on $G$.  Let $d x$ denote the compatibly normalized Haar measure on $\mathfrak{g}$ (\S\ref{sec:kirillov-formula}), and let $d \xi$ denote the corresponding dual measure on $\mathfrak{g}^\wedge$, so that for instance $\phi(0) = \int_{x \in \mathfrak{g}} (\int_{\xi \in \mathfrak{g}^\vee} \phi(\xi) e^{\langle x, \xi \rangle}\, d \xi ) \, dx$ for $\phi \in C_c^\infty(\mathfrak{g}^\wedge)$.  Let $\Phi \in C_c(G)$ and $\phi_1,\phi_2 \in C_c(\mathfrak{g}^\wedge)$.  The integral
\begin{equation*}
  I := \int_{g \in G} \Phi(g) \int_{\xi \in \mathfrak{g}^\wedge} \phi_1(g \cdot \xi) \phi_2(\xi) \, d \xi \, d g
\end{equation*}
is then independent of these choices of measure.  Our immediate aim is to express $I$ in terms of the normalized symplectic measures on coadjoint orbits.  We do this under the assumption that the $\phi_j$ are supported on the ``negative cone'' $\{\xi : \Lambda(\xi) < 0\} = \cup_{t > 0} \mathcal{O}(-t^2)$, which is the case relevant for our applications.

To state our result requires some notation.  Let $t > 0$.  Recall that $\xi(t)$ (see \eqref{eqn:defn-xi-of-t}) has $G$-orbit $\mathcal{O}(-t^2)$ and stabilizer $H$ (the diagonal subgroup).  For $\xi,\eta \in \mathcal{O}(-t^2)$, set
\[G_{\xi \leftarrow \eta} := \{g \in G : g \cdot \eta = \xi \}.\] For any choice of elements $x,y \in G$ with $x \cdot \xi(t) = \xi$ and $y \cdot \xi(t) = \eta$, we obtain a bijection $H \cong G_{\xi \leftarrow \eta}$ given by $s \mapsto x s y^{-1}$.  We equip $G_{\xi \leftarrow \eta}$ with the transport of the Haar measure on $H$, as normalized in \S\ref{sec:intro-setup}.  The measure so-defined on $G_{\xi \leftarrow \eta}$ is independent of the choice of $x,y$.
\begin{lemma*}
  Let $I$ be as defined above, with $\phi_1,\phi_2$ supported on $\{\xi : \Lambda(\xi) < 0\}$.  Then
  \begin{equation}
    I = \int_{t > 0}
    \int_{\xi,\eta \in \mathcal{O}(-t^2)}
    \phi_1(\xi) \phi_2(\eta)
    \left(\int_{G_{\xi \leftarrow \eta}} \Phi\right)
    \, d \omega (\xi ) \, d \omega (\eta )
    \, \frac{d t}{2 \pi }.
  \end{equation}
\end{lemma*}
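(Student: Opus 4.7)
The plan is: apply Fubini to swap the order of integration, rewrite the resulting inner $g$-integral as an orbit integral via Weil's quotient formula, and then disintegrate $d\eta$ along the coadjoint orbits $\mathcal{O}(-t^2)$ comprising the negative cone. Concretely, after renaming, $I = \int_{\eta \in \mathfrak{g}^\wedge} \phi_2(\eta) F(\eta) \, d\eta$ with $F(\eta) := \int_G \Phi(g) \phi_1(g \cdot \eta) \, dg$. For $\eta \in \mathcal{O}(-t^2)$, the stabilizer $G_\eta$ is a $G$-conjugate of $H$ and carries a Haar measure transported from that on $H$. Weil's integration formula applied to $G/G_\eta$, together with the $G_\eta$-invariance of $\eta$, gives
\[
F(\eta) = \int_{G/G_\eta} \phi_1(\bar g \cdot \eta) \left( \int_{G_\eta} \Phi(g s) \, ds \right) d \bar g;
\]
the bracketed integral is $\int_{G_{\xi \leftarrow \eta}} \Phi$, for $\xi := \bar g \cdot \eta$, by the definition of the transported measure on $G_{\xi \leftarrow \eta}$.

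There are now two Jacobians to compute: the factor $\alpha(t)$ in the disintegration $d \eta = \alpha(t) \, d \sigma \, d t$ on the negative cone, and the factor $\beta(t)$ relating the pushforward of $d \bar g$ to the $G$-invariant symplectic measure $d \sigma$ on $\mathcal{O}(-t^2)$ (these are proportional by uniqueness of $G$-invariant measures on a homogeneous space). Both are evaluated at the basepoint $\xi(t)$ using the basis of \S\ref{sec:lie-algebra}: a direct calculation gives $e_2 \cdot \xi(t) = -2 t e_3^*$ and $e_3 \cdot \xi(t) = 2 t e_2^*$, from which $\sigma_{\xi(t)}(e_2 \cdot \xi(t), e_3 \cdot \xi(t)) = t/\pi$ and therefore $d \sigma = \tfrac{1}{4 \pi t} d \xi_2 \, d \xi_3$ near $\xi(t)$. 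Combining with $d t = d \xi_1$ along the normal direction to the orbit at $\xi(t)$ gives $\alpha(t) = 4 \pi t c_2$, where $c_2 \, d \xi_1 \, d \xi_2 \, d \xi_3 = d \xi$. The linearization $(x_2, x_3) \mapsto (\xi_2, \xi_3) = (2 t x_3, -2 t x_2)$ of $\bar g \mapsto \bar g \cdot \xi(t)$, together with the Weil decomposition $d g = d \bar g \, d s$ (where $d s$ matches $d x_1$ on the chosen $H$-coordinate $x_1$), then yields $\beta(t) = \pi c_1 / t$, with $d x = c_1 \, d x_1 \, d x_2 \, d x_3$.

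Putting everything together,
\[
I = \int_0^\infty \alpha(t) \beta(t) \, d t \int_{\xi, \eta \in \mathcal{O}(-t^2)} \phi_1(\xi) \phi_2(\eta) \Bigl( \int_{G_{\xi \leftarrow \eta}} \Phi \Bigr) \, d \sigma(\xi) \, d \sigma(\eta),
\]
with $\alpha(t) \beta(t) = 4 \pi^2 c_1 c_2$. The Fourier duality of $d x$ and $d \xi$, implicit in the normalization $\phi(0) = \int_x \int_\xi \phi(\xi) e^{\langle x, \xi \rangle} \, d \xi \, d x$ of the setup, forces $c_1 c_2 = (2 \pi)^{-3}$, so $\alpha(t) \beta(t) = 1/(2 \pi)$, as claimed. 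The main obstacle is the careful bookkeeping of normalization constants---the $2 \pi i$ in the Kirillov--Kostant symplectic form, the factor $i$ from $\mathfrak{g}^\wedge = i \mathfrak{g}^*$, and the $(2 \pi)^{-3}$ from Fourier duality---but they conspire to produce the clean factor $1/(2 \pi)$.
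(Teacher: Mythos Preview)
Your proof is correct and follows essentially the same approach as the paper: both compute the symplectic form $\omega_{\xi(t)} = \tfrac{1}{4\pi t}\, d\xi_2 \wedge d\xi_3$ at the basepoint, disintegrate $d\xi$ along the orbits $\mathcal{O}(-t^2)$, and relate the quotient measure on $G/H$ to the symplectic measure via the Jacobian of the orbit map. The only difference is cosmetic---the paper fixes the explicit normalization $c_1 = 1/\pi$ (hence $c_2 = 1/(8\pi^2)$) from the outset, whereas you keep $c_1, c_2$ symbolic and invoke only the product $c_1 c_2 = (2\pi)^{-3}$ at the end, which makes the measure-independence of the result slightly more transparent.
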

\begin{proof}
  Although both sides of the identity are independent of all choices of Haar measure, it is convenient to make explicit choices for the proof.  Recall the coordinates and notation of \S\ref{sec:lie-algebra}.  We assume that $d x = \frac{1}{ \pi } d x_1 \, d x_2 \, d x_3$.  This normalizes a Haar measure $d g$ on $G$, as well as the dual measure $d \xi = \frac{1}{8 \pi^2} d \xi_1 \, d \xi_2 \, d \xi_3$ on $\mathfrak{g}^\wedge$.  We equip $G/H$ with the quotient measure $d g$.  We note in passing that $H$ meets both connected components of $G$, so the quotient $G/H$ could be replaced by the quotient $G^1/H^1$ of connected groups in what follows.

  Let $t > 0$.  We first explicate the value $\omega_{\xi(t)}$ of the canonical $2$-form $\omega$ on $\mathcal{O}(-t^2)$ (\S\ref{sec:coadjoint-orbits}) at the point $\xi(t)$.  Under the differentiated orbit map $\mathfrak{g} \rightarrow T_{\xi(t)}(\mathcal{O}(-t^2))$, we have $e_2 \mapsto - 2 t e_3^*$ and $e_3 \mapsto 2 t e_2^*$.  Thus $\omega_{\xi(t)}(- 2 t e_3^* \wedge 2 t e_2^*) = \langle \xi(t), [e_2, e_3] \rangle/2 \pi i = 2 t / 2 \pi$, and so $\omega_{\xi(t)} = \frac{1}{4 \pi t } d \xi_2 \wedge d \xi_3$.

  Let $\beta = \frac{1}{8 \pi^2} d \xi_1 \wedge d \xi_2 \wedge d \xi_3$ denote the differential form on $\mathfrak{g}^\wedge$ corresponding to $d \xi$.  Then $\beta_{\xi(t)} = \frac{t}{2 \pi } d \xi_1 \wedge \omega_{\xi(t)}$.  This implies the integral formula
  \begin{equation*}
    \int_{\xi \in \mathfrak{g}^\wedge} \phi(\xi) \, d \xi
    = \int_{t > 0}
    \left(\int_{\mathcal{O}(-t^2)} \phi \right)
    \, \frac{t \, d t}{ 2 \pi }
  \end{equation*}
  for $\phi$ supported on $\cup_{t>0} \mathcal{O}(-t^2)$.

  On the other hand, the Haar measure on $H$ corresponds to the Haar measure on $\mathfrak{h} = \Lie(H)$ given by $d x_1$.  The induced quotient measure on $G/H$ corresponds to the differential form on $\mathfrak{g}/\mathfrak{h}$ given at the origin by $\frac{1}{\pi } d x_2 \wedge d x_3$.  Under the orbit map isomorphism $\mathfrak{g}/\mathfrak{h} \cong T_{\xi(t)} \mathcal{O}(-t^2)$, we have $\frac{1}{ \pi } d x_2 \wedge d x_3 \mapsto \pm \frac{1}{4 \pi t^2} d \xi_2 \wedge d \xi_3 = \pm \frac{1}{t} \omega_{\xi(t)}$.  This implies the integral formula
  \begin{equation*}
    \int_{\mathcal{O}(-t^2)} \phi 
    = t
    \int_{g \in G/H} \phi(g \cdot \xi(t)) \, d g.
  \end{equation*}

  Combining the formulas established thus far, we obtain
  \begin{align*}
    I
    &=
      \int_{g \in G}
      \Phi(g)
      \int_{t > 0}
      \int_{\xi \in \mathcal{O}(-t^2)}
      \phi_1(g \cdot \xi) \phi_2(\xi)
      \, d \omega (\xi )
      \, \frac{t \, d t}{2 \pi } \, d g
    \\
    &=
      \int_{g \in G}
      \Phi(g)
      \int_{t > 0}
      \int_{y \in G/H}
      \phi_1(g  \cdot \xi(t)) \phi_2(y \cdot \xi(t))
      \, d y
      \, \frac{t^2 \, d t}{2 \pi } \, d g
    \\
    &=
      \int_{t > 0}
      \int_{x,y \in G/H}
      \phi_1(x \cdot \xi(t)) \phi_2(y \cdot \xi(t))
      \int_{s \in H}
      \Phi(x s y^{-1}) \, d s \, d x \, d y
      \, \frac{t^2 \, d t}{2 \pi },
  \end{align*}
  which simplifies to the required formula.
\end{proof}

We now verify \eqref{eqn:req-I}, which we copy here for convenience:
\begin{equation}
  \mathcal{I}(f)
  = \int_{s \in H} \langle s \Psi_1^w, \Psi_2^w
  \rangle \,d s 
  \int_{t > 0}
  k(-t^2)^2 \, \frac{d t}{2 \pi}
  + \O(\h^\delta).
  \tag{\ref{eqn:req-I}}
\end{equation}
Using \eqref{eq:exact-equivariance-K} and \eqref{eq:Weyl-negates-symbol}, we compute that $\Ad(w) f(g) = f(g^{-1})$, hence $\mathfrak{S} f = \frac{f + \Ad(w) f}{2}$, and so
\begin{equation}\label{eq:I-f-after-simpl}
  \mathcal{I}(f)
  =
  \int_{g \in G}
  \langle \Ad(g) f, f \rangle_{G}
  \Phi(g) \,d g ,
  \quad 
  \Phi(g) := \langle g \Psi_1^w, \Psi_2^w \rangle.
\end{equation}
We compatibly normalize Haar measures on $G$, $\mathfrak{g}$ and $\mathfrak{g}^\wedge$ as above.  By change of variables, \eqref{eq:cnsez7nean} and Parseval, we then have
\begin{align*}
  \langle \Ad(g) f, f \rangle_G
  &=
    \int_{x \in \mathfrak{g}}
    \jac(x)
    f(g^{-1} \exp(x) g ) \overline{f(\exp(x))}
    \, d x
  \\
  &=
    \h^3
    \langle g \cdot b_{\h}^\vee, b_{\h}^\vee
    \rangle_{\mathfrak{g}}
  \\
  &= \langle g \cdot b, b \rangle_{\mathfrak{g}^\wedge}.
\end{align*}

As explained in \S\ref{sec:local-convergence-lemmas}, all of these integrals converge absolutely.  For instance, we have $\Phi(g) \ll \|\Ad(g)\|^{-\eta}$ for some fixed $\eta > 0$ by standard bounds for matrix coefficients (\S\ref{sec-2-1-4}), while $\langle g \cdot b, b \rangle_{\mathfrak{g}^\wedge} \ll \|\Ad(g)\|^{-1}$ by direct calculation.  Since $\int_{g \in G} \|\Ad(g)\|^{-1-\eta} \,d g  < \infty$, the required convergence follows.  The same argument gives also that for any $c_1, c_2 \in S^{-\infty}_{\delta}$,
\begin{equation}\label{eqn:a-priori-g-c1-c2}
  \int_{g \in G} \Phi(g)
  \langle g \cdot c_1, c_2 \rangle_{\mathfrak{g}^\wedge}  \,d g 
  \ll 1.
\end{equation}

Using \eqref{eqn:a-priori-g-c1-c2} as an \emph{a priori} estimate and inserting the asymptotic expansion $b \sim \sum_{j \geq 0} b_j$ noted previously, we deduce that for each fixed $N \geq 0$, there is a fixed $J \geq 0$ so that
\begin{equation*}
  \mathcal{I}(f) = \sum_{0 \leq j_1,j_2 < J} I_{j_1,j_2} + \O(\h^N), \quad I_{j_1,j_2} := \int_{g \in G} \Phi(g) \langle g \cdot b_{j_1}, b_{j_2} \rangle_{\mathfrak{g}^\wedge} \,d g .
\end{equation*}
The above lemma gives, with $\Phi'(\xi,\eta) := \int_{G_{\xi \leftarrow \eta}} \Phi$, that
\begin{equation}\label{eqn:see-coadjoint-orbits-appear-yay}
  I_{j_1,j_2}
  =
  \int_{t > 0}
  \int_{\xi, \eta \in \mathcal{O}(-t^2)}
  \Phi'(\xi,\eta)
  \overline{b_{j_2}}(\xi) 
  b_{j_1}(\eta)
  \, d \omega (\xi ) \, d \omega (\eta )
  \, \frac{d t}{2 \pi}.
\end{equation}
Using the orbit map at $\xi(t) \in \mathcal{O}(-t^2)$, we can view $\Phi'$ as a function on $(G/H)^2$.  Using the absolute convergence of the integral defining $\Phi'$ and the smoothness of the vectors $\Psi_1, \Psi_2$, we see that the function $\Phi'$ is smooth (see \cite[\S18]{nelson-venkatesh-1} for related arguments).  In particular, $\Phi'$ is Lipschitz near the origin, where it takes the value $\Phi'(\xi(t),\xi(t)) = \int_H \Phi$.  On the other hand, the factor $\overline{b_{j_2}}(\xi) b_{j_1}(\eta)$ vanishes unless $\xi,\eta = \xi(t) + \O(\h^\delta)$; in that case, it is bounded in magnitude by $\O(\h^{-4\delta + (1 - 2 \delta)(j_1 + j_2)})$, and we have $\Phi'(\xi,\eta) = \int_H \Phi + \O(\h^\delta)$.  The volume of the set of such pairs $(\xi,\eta)$ is $\O(\h^{4 \delta})$.  It follows that
\[
  I_{j_1,j_2} \ll \h^{(1 - 2 \delta) (j_1 + j_2)}.
\]
In particular, since $\delta$ is sufficiently small, we have $I_{j_1,j_2} \ll \h^\delta$ if $(j_1,j_2) \neq (0,0)$.  On the other hand, since $b_0 = a^2$ and $\int_{\mathcal{O}(-t^2)} a^2 = k(-t^2)$, we have
\begin{equation*}
  I_{0,0} = \left(\int_H \Phi\right) \int_{t > 0} k(-t^2)^2 \, \frac{d t}{2 \pi} + \O(\h^{\delta}).
\end{equation*}
This completes the proof of the required estimate for $\mathcal{I}(f)$.

\begin{remark*}
  An alternative proof may be obtained by first decomposing $\langle \Ad(g) f, f \rangle$ over the spectrum of $L^2(G)$ as the integral of the Hilbert--Schmidt inner products $\langle \pi(g) \pi(f) \pi(g)^{-1}, \pi(f) \rangle$.  This decomposition is reflected above, in \eqref{eqn:see-coadjoint-orbits-appear-yay}, at the level of coadjoint orbits.
\end{remark*}

\subsection{Error estimates}
\label{sec:org543f416}
We now prepare for the verification of \eqref{eqn:req-E}, which requires some Lie-theoretic preliminaries\label{sec:error-lie-prelim}.  We begin with some trivial remarks concerning the complex plane which we hope convey a useful reference picture for the estimates to follow.  There are two common choices of coordinates: the rectangular coordinates, described by real and imaginary part, and the polar coordinates, described by radius and angle.  The former is adapted to addition, the latter to multiplication.  If we restrict the radius to a fixed compact subset of the positive reals and the angle to a sufficiently small neighborhood of the origin, then the real part is likewise restricted to a fixed compact subset of the positive reals; moreover, the imaginary part and the angle are bounded from above and below by constant multiples of one another.  These restrictions define a region in the complex plane.  Given a scalar-valued function $\phi$ on that region and a small scaling parameter $\h > 0$, there are two natural ways to rescale $\phi$ so that its support concentrates along the positive reals: in rectangular coordinates (by scaling the imaginary part) or in polar coordinates (by scaling the angle).  The two classes of rescaled functions obtained in this way resemble one another.  We aim now to record some analogues and elaborations of these observations with the complex numbers replaced by the $2 \times 2$ matrix algebra $M$.  Such considerations are natural because we are ultimately studying a problem in multiplicative harmonic analysis (the variance sums $\mathcal{V}(f)$ attached to test functions $f$ on the group $G = \PGL_2(\mathbb{R})$) using additive harmonic analysis (via theta functions attached to Schwartz functions $\heartsuit^{\tau} f$ on the matrix algebra $M = M_2(\mathbb{R})$).

We denote by $\mathfrak{g} = \slLie_2(\mathbb{R})$ the Lie algebra of $G$.  We may identify $\mathfrak{g}$ with the subspace of traceless elements in $M$; then $M = \mathbb{R} \oplus \mathfrak{g}$, where $\mathbb{R}$ is the subspace of scalar matrices.

Let $\mathcal{R} \subseteq \mathbb{R}^\times_+$ and $\mathcal{G} \subseteq \mathfrak{g}$ be precompact open subsets.  We assume that $0 \in \mathcal{G}$ and that $\mathcal{G}$ is star-shaped: $\h \mathcal{G} \subseteq \mathcal{G}$ for $\h \in [0,1]$.  We assume that $\mathcal{G}$ is taken small enough in terms of $\mathcal{R}$; in particular, we assume that the map
\[
  \mathcal{R} \times \mathcal{G} \ni (r,x) \mapsto r^{1/2} \exp(x) \in \GL_2^+(\mathbb{R})
\]
is an analytic isomorphism onto its image, which we denote by $\mathcal{M}$.  We write
\[
  (\rho,\theta) : \mathcal{M} \rightarrow \mathcal{R} \times \mathcal{G}
\]
for the inverse isomorphism.  Thus for $v \in \mathcal{M}$, we have $\rho(v) = \sqrt{\det(v)}$, while $\theta(v)$ is the logarithm of the image of $v$ in $G$.  We informally regard $\rho(v)$ and $\theta(v)$ as the respective radial and angular parts of $v$.

Every element of $M_2(\mathbb{R})$ may be written uniquely in the form $t + u$, where $t \in \mathbb{R}$ and $u \in \mathfrak{g}$.  Write $\gamma(t,u) = (\rho(t,u), \theta(t,u)) =: (r,x)$, say.  Then
\[
  r = \sqrt{t^2 - u^2}, \quad x = \frac{1}{2} \log \frac{t + u }{t - u}.
\]
We informally regard $(t,u)$ and $(r,x)$ as the respective rectangular and polar coordinates on $\mathcal{M}$.

Since $\mathcal{G}$ is small, we have $t \neq 0$.  Thus $r,t$ are both constrained to lie in compact subsets of $\mathbb{R}^\times$, and so
\begin{equation}\label{eq:r-t-same-size}
  |r| \asymp 1 \asymp |t|.
\end{equation}

Moreover,
\begin{equation}\label{eq:x-u-same-size}
  |x| \asymp |u|.
\end{equation}
Indeed, we may expand the analytic map
\begin{equation}\label{eq:gamma-2-of-t-u}
  \mathbb{R} \oplus \mathfrak{g}
  \ni (t,u) \mapsto \theta(t+u) \in \mathcal{G}
\end{equation}
as a Taylor series $\sum_{n \geq 1} c_n (u/t)^n$, and similarly for the inverse map.  By \eqref{eq:r-t-same-size}, it follows that $x$ and $u$ tend to zero simultaneously and at the same rate.  Since the magnitudes of both are bounded from above, we deduce \eqref{eq:x-u-same-size}.

We now fix a cutoff $q \in C_c^\infty(\mathcal{M})$ and define a family of maps of Schwartz spaces
\begin{equation*}
  \mathcal{S}(\mathfrak{g}) \rightarrow \mathcal{S}(M),
  \quad 
  \phi \mapsto \Phi_{\h},
\end{equation*}
indexed by $\h \in (0,1)$, by the formula
\begin{equation*}
  \Phi_{\h}(t+ u) := q(t + \h u) \phi(\h^{-1} \theta(t + \h u)).
\end{equation*}
Informally, $\Phi_{\h}$ is obtained from $\Phi_{1}$ by rescaling in two stages: first shrinking the angular support in polar coordinates, then stretching the imaginary support in rectangular coordinates.  The reference picture discussed above hopefully renders the following observation unsurprising:
\begin{lemma*}
  The family of maps $\phi \mapsto \Phi_{\h}$ is equicontinuous for the Schwartz topology.
\end{lemma*}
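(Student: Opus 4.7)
The plan is to unpack the Schwartz topology: equicontinuity of $\phi \mapsto \Phi_\h$ means that for each multi-index $\alpha$ and each $N \in \mathbb{Z}_{\geq 0}$, there exist a Schwartz seminorm $\|\cdot\|_{S'}$ on $\mathcal{S}(\mathfrak{g})$ and a constant $C$, both independent of $\h \in (0,1)$ and of $\phi$, so that $\sup_{v \in M}(1+|v|)^N |\partial^\alpha \Phi_\h(v)| \leq C\|\phi\|_{S'}$. A first reduction: the support of $\Phi_\h$ lies in $\{(t,u) : t + \h u \in \supp(q)\}$, on which $|t| \ll 1$ and $|\h u| \ll 1$, so $1+|t|+|u| \asymp 1+|u|$ uniformly in $\h$. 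It therefore suffices to bound $(1+|u|)^N|\partial^\alpha\Phi_\h|$ on this support.

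The heart of the matter is a uniform analysis of the smooth map $F_\h(t,u) := \h^{-1}\theta(t+\h u)$. Because $\theta$ vanishes identically on the subspace of positive scalar matrices, we have $\theta(t,0)=0$ for all relevant $t$, and hence $\partial_t^a \theta(t,w) = O(w)$ near $w=0$, uniformly in $t$. Consequently, for each multi-index $\beta=(a,b)$ with $a$ the order of $t$-derivatives and $b$ the order of $u$-derivatives, one computes
\[
\partial_t^a \partial_u^b F_\h(t,u) = \h^{b-1}\,(\partial_t^a \partial_w^b \theta)(t + \h u),
\]
which is $O(1+|u|)$ on $\supp(q(t+\h u))$: for $b\geq 1$ each $u$-derivative contributes a factor $\h$ that cancels (up to positive powers of $\h$) the prefactor $\h^{-1}$, giving a bound $O(\h^{b-1})=O(1)$, while for $b=0$ the vanishing of $\theta$ in the scalar direction gives $\h^{-1}\partial_t^a \theta(t+\h u) = \h^{-1}\cdot O(\h u) = O(|u|)$. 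The same reasoning at leading order recovers $F_\h(t,u) = u/t + O(\h^2|u|^3)$, which combined with \eqref{eq:x-u-same-size} yields $|F_\h(t,u)| \asymp |u|$ on support.

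With these bounds in hand, I expand $\partial^\alpha\Phi_\h$ by the Leibniz and Faà di Bruno rules applied to the product $q(t+\h u)\cdot (\phi\circ F_\h)$. Derivatives of the cutoff $q(t+\h u)$ are $O(1)$ on support (every $u$-derivative brings a harmless $\h$). Each remaining term is a product of $\nabla^j\phi(F_\h(t,u))$ with a polynomial in the partials of $F_\h$, whose combined degree is at most $|\alpha|$, hence is bounded by a polynomial in $u$ of degree $\leq |\alpha|$ times $|\nabla^j\phi(F_\h)|$. Invoking $|F_\h|\asymp|u|$ together with the Schwartz decay $|\nabla^j\phi(y)|\leq \|\phi\|_{S'}(1+|y|)^{-N'}$ gives
\[
(1+|u|)^N|\partial^\alpha \Phi_\h(t,u)| \ll_{\alpha,N} \|\phi\|_{S'}\,(1+|u|)^{N+|\alpha|-N'},
\]
which is bounded by $\|\phi\|_{S'}$ upon choosing $N' \geq N+|\alpha|$, completing the required uniform seminorm estimate.

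The main obstacle is the uniform derivative bound on $F_\h$. It works because the two-stage rescaling (angular shrink by $\h$ in polar coordinates, rectangular stretch by $\h$ in the imaginary direction) is engineered so that the $\h^{-1}$ in front of $\theta$ is always compensated, either by $\partial_u(\h u) = \h$ or by the first-order vanishing of $\theta$ at scalar matrices. This is precisely the formal content of the rectangular-versus-polar analogy preceding the lemma; once that cancellation has been recognized, the remainder of the argument is a routine Schwartz-norm estimate.
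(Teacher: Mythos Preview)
Your proof is correct and follows the same approach as the paper: control the derivatives of $F_\h(t,u)=\h^{-1}\theta(t+\h u)$ uniformly on the support of the cutoff, use $|F_\h|\asymp|u|$, and transfer the Schwartz decay of $\phi$ via the chain rule. Your argument is in fact more carefully laid out than the paper's sketch---particularly in distinguishing the pure $t$-derivative case via the vanishing of $\theta$ on scalar matrices, where the paper simply asserts that all derivatives of $F_\h$ are ``bounded'' and leaves the reader to unpack the Taylor series.
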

\begin{proof}
  The derivatives of the map $(t,u) \mapsto q(t+\h u)$ are uniformly bounded.

  Moreover, the derivatives of the map $(t,u) \mapsto \h^{-1} \theta(t + \h u)$ are bounded, uniformly for $t+\h u \in \supp(q)$.  Indeed, we may write each such derivative as a convergent Taylor series; applying the triangle inequality gives a finite bound for the magnitude of this series, and the observation following \eqref{eq:gamma-2-of-t-u} implies that this bound improves as $\h$ decreases.

  By the chain rule, we deduce that the derivatives of $\Phi_{\h}$ at $t + u$ are dominated by derivatives of $\phi$ at those elements $\h^{-1} \theta(t +\h u)$ for which $t + \h u \in \supp(q)$.  For such elements, the estimates \eqref{eq:r-t-same-size} and \eqref{eq:x-u-same-size} give $|t| \asymp 1$ and $|\h^{-1} \theta(t + \h u)| \asymp |u|$.  Thus the rapid decay of the derivatives of $\Phi_{\h}$ follows from that of $\phi$.
\end{proof}

We now apply these considerations to establish \eqref{eqn:req-E}, which we copy here for convenience:
\begin{equation}
  \mathcal{E}_{\tau_1,\tau_2}(
  \heartsuit^{\tau_1} f,
  \heartsuit^{\tau_2} f)
  \ll \h^{1-\delta'}.
  \tag{\ref{eqn:req-E}}
\end{equation}
Let $\tau \in \{\tau_1, \tau_2\}$.  We assume $\mathcal{R}$ taken large enough to contain the support of $r \mapsto W(\tau r)$, and take $\mathcal{G}$ small enough.  We may assume that $\Opp$ was defined with respect to a cutoff supported in $\mathcal{G}$.  Set
\[
  q^\tau(v) := \frac{W(\tau \rho(v)^2) }{ |\tau \rho(v)^2| } \jac^{-1/2} \chi'(\theta(v))
\]
and $\phi := b^\vee$, with $b$ as in \S\ref{sec:defn-f}.  We see then by unwinding the definitions and applying \eqref{eq:cnsez7nean} that for $v \in \mathcal{M}$,
\[
  \heartsuit^{\tau} f(v) = q^\tau(v) \h^{-3/2} \phi(\h^{-1} \theta(v)).
\]
Let $\Phi_{\h}^\tau$ be defined as in \S\ref{sec:error-lie-prelim} using $q^\tau$ and $\phi$.  Then $\heartsuit^{\tau}f(t + \h u) = \h^{-3/2} \Phi_{\h}^\tau(t + u)$.  We may express this identity in terms of the normalized dilation operators $D_y$ (see \S\ref{sec:orgb2d0a08}) as
\[
  \heartsuit^{\tau} f = D_{1/\h} \Phi_{\h}^\tau,
\]
so by Theorem \ref{thm:var-2},
\[
  \mathcal{E}_{\tau_1,\tau_2}(\heartsuit^{\tau_1} f, \heartsuit^{\tau_2} f) = \mathcal{E}_{\tau_1,\tau_2} (D_{1/\h} \Phi_{\h}^{\tau_1}, D_{1/\h} \Phi_{\h}^{\tau_2}) \ll \h^{1} \log (\h^{-1}) \mathcal{C}(\Phi_{\h}^{\tau_1}) \mathcal{C}(\Phi_{\h}^{\tau_2}),
\]
for some fixed continuous seminorm $\mathcal{C}$.  We appeal now to the ($\h$-uniform) continuity of $\phi \mapsto \Phi_{\h}^{\tau}$ noted in \S\ref{sec:error-lie-prelim}, together with the continuity of the map $a \mapsto b$ composed with the Fourier transform $b \mapsto b^\vee = \phi$, to write $\mathcal{C}(\Phi_{\h}^{\tau_1}) \mathcal{C}(\Phi_{\h}^{\tau_2}) = \O(\mathcal{C}(a)^2)$ for some fixed continuous seminorm $\mathcal{C}$ on $\mathcal{S}(\mathfrak{g}^\wedge)$.  The definition of $S^{-\infty}_{\delta}$ implies that $\mathcal{C}(a) \ll \h^{- N \delta}$ for some fixed $N \in \mathbb{Z}_{\geq 0}$, so we may conclude by taking $\delta' := (2 N + 1) \delta$.

\begin{remark*}
  The proof of \eqref{eqn:req-E} recorded above is a bit different from that of the corresponding estimate in \cite{nelson-variance-II}, whose analogue here would be to exploit the smoothness of $\Psi$ and the diagonal $G$-invariance of $(f,\Psi) \mapsto \mu(\pi(f),\Psi)$ (i.e., the ``$\SO(B_S^0)$-invariance'' noted in Theorem \ref{thm:main-estimate-general-variance}) to ``fatten up'' the symbol $a$ under the adjoint action.  The argument given here produces weaker estimates, but is a bit shorter and simpler.
\end{remark*}

The proof of Theorem \ref{thm:var-3} is now complete.

\subsection{The holomorphic analogue}\label{sec:holomorphic-analogue-1}
Central to our analysis of microlocal lifts of Maass forms was the definition \eqref{eqn:defn-xi-of-t} of $\xi(t) \in \mathcal{O}(-t^2)$.  We may reformulate our definition of $H$ and choice of $w$ as follows:
\begin{itemize}
\item $H$ is the centralizer of $\xi(t)$ (for $t \neq 0$).
\item $w$ is an element of $N(H) - H$.  Then $\xi(t) = - w \cdot \xi(t)$.
\end{itemize}

For the proof of the holomorphic analogue (Theorem \ref{thm:var-quat-annals-submission:limit-begin-lim_h}), we take instead
\begin{equation*}
  \xi'(t) = i \begin{pmatrix}
    0 & -t \\
    t & 0
  \end{pmatrix} \in \mathcal{O}(t^2).
\end{equation*}
Thus $X(\xi'(t)) = Y(\xi'(t)) = 0$, while $W(\xi'(t)) = t$.  We choose $H$ and $w$ analogously:
\begin{equation*}
  H = K^1,
  \quad
  w = \begin{pmatrix}
    0 & 1 \\
    1 & 0
  \end{pmatrix}
  \in N(H) - H = K - K^1.
\end{equation*}
The relevant orbits are now the $\mathcal{O}(t^2)$ rather than the $\mathcal{O}(-t^2)$.  The relevant representations are now the $\pi \in A_0$ with $\lambda_\pi  = (k-1/2)^2 > 0$ for some natural number $k$.  Recall from \S\ref{sec:holomorphic-analogue} that for such $\pi$, we define $\mu_\pi$ to be the $L^2$-mass of a normalized lowest weight vector $\varphi_\pi$.  One can motivate the definition of $\xi '(t)$, as in \S\ref{sec:states-approx-microlocal}, in terms of the approximate symmetries of $\varphi_\pi$.  As partial motivation, we note that $W \varphi_\pi = W(\xi '(k)) \varphi_\pi$.

The analogue of Lemma \ref{lem:WW} of \S\ref{sec:branch-coeff} remains valid, with the same proof.

The analogue of Proposition \ref{thm:characterize-mu-pi}  of \S\ref{sec:states-approx-microlocal}, obtained by
\begin{itemize}
\item replacing the assumption $\lambda_\pi < 0$ with $\lambda_\pi > 0$, and
\item replacing $\tau = \xi(\sqrt{- \Omega_\pi })$
  with
  $\tau = \xi'(\sqrt{\Omega_\pi })$,
\end{itemize}
remains valid, by similar arguments.

In \S\ref{sec:org8bcf1bd}, we now define $\tilde{K}$ by requiring that $a$ be supported in a $\O(\h^\delta)$ neighborhood of some fixed compact subset of $\{\xi '(t) : t > 0\}$ and satisfy $a(-(w \cdot \xi)) = a(\xi)$, with the ``new'' value of $w$.  We define $\mathcal{K} \subseteq C_c^\infty(\mathbb{R}_{>0})$ analogously.  The analogue of the lemma of \S\ref{sec:org8bcf1bd} then holds, with the same proof.

The discussion of \S\ref{sec:defn-f} applies upon replacing each occurrence of $-\lambda_\pi$ with $\lambda_\pi$.

Once we have chosen the measure on $H$ carefully, the lemma of \S\ref{sec:orge0a88b2} will hold upon replacing $\Lambda(\xi) < 0$ with $\Lambda(\xi) > 0$ in the support condition and $\mathcal{O}(-t^2)$ with $\mathcal{O}(t^2)$ in the integral, with $G_{\xi \leftarrow \eta}$ defined analogously using the ``new'' definition of $H$.  The proof boiled down to a calculation with differential forms.  This can be carried out after complexifying and conjugating $\xi '(t)$ to a diagonal matrix, at which point the same calculation applies.  For the normalizations to be consistent, we need the Haar measure on $H$ to correspond to the $1$-form on $\mathfrak{h}$ that, after complexifying and conjugating $H$ to the diagonal subgroup, is given in the coordinates of \S\ref{sec:lie-algebra} by $d x_1$.  We verify readily that the appropriate Haar on $H$ is thus $\int _{H} f := \int _{\theta \in \mathbb{R} / 2 \pi \mathbb{Z} } f(e^{i \theta W}) \, d \theta$.  In particular, $\vol(H) = 2 \pi$.

The error estimates (\S\ref{sec:org543f416}) go through without modification.

We obtain in this way a result exactly analogous to Theorem \ref{thm:var-3}, but with the condition $0 < -\h^2 \lambda_\pi < 1$ replaced by $0 < \h^2 \lambda_\pi < 1$.  We simplify by evaluating $\int_{s \in H} \langle s \Psi_1^w, \Psi_2^w \rangle \,d s = 2 \pi \langle \Psi_1, \Psi_2 \rangle$, using here that each $\Psi_i$ is invariant under $K$ (and thus also under $H$ and $w$).  The proof of Theorem \ref{thm:var-quat-annals-submission:limit-begin-lim_h} is then complete.

\section{Concluding remarks}\label{sec:35ac3e57a7}
\subsection{Removing the arithmetic weights}\label{sec:remov-arithm-weights-1}
\label{sec:remov-arithm-weights}
Here we fulfill the promise made in Remark \ref{rmk:removing-weights} of \S\ref{sec:main-result} by explaining (a bit informally) how the modification factor \eqref{eq:c-sigma-defn} obtained by Sarnak--Zhao arises from the perspective of our method.  Recall that $\iota_{\pi} = L^{(S)}(\ad \pi,1)$.  The idea is to write the desired unweighted variance sums
\[
  \lim_{\h \rightarrow 0} \h \sum_{0 < - \h^2 \lambda_\pi < 1} \mu_\pi(\Psi_1) \overline{\mu_\pi(\Psi_2)}
\]
as the double limit
\begin{equation}\label{eq:double-limit}
  \lim_{\h \rightarrow 0}
  \lim_{S'}
  V(S'),
  \quad
  V(S')
  :=
  \h \sum_{0 < - \h^2 \lambda_\pi < 1}
  L^{(S')}(\ad \pi,1)
  \mu_\pi(\Psi_1)
  \overline{\mu_\pi(\Psi_2)},
\end{equation}
where $\lim_{S'}$ denotes the limit taken over increasing finite subsets $S' \supseteq S$ of the set of finite primes of $F$, ordered by inclusion.  We then try to swap the limits.  The subtlety in making this precise is that the Euler product of $L(\ad \pi,1)$ fails to converge absolutely, but because $1$ is at the edge of the critical strip, the failure is mild, so we at least expect the naive swapping of limits to produce the correct answer.  Theorem \ref{thm:main-estimate-general-variance} applies to any $S' \supseteq S$; combining it with the estimates of Part \ref{part:appl-micr-lifts} shows that as far as main terms are concerned,
\[
  V(S') \approx V(S) \prod_{\p \in S' - S} c_\sigma(p),
\]
where
\begin{equation*}
  c_\sigma(p) :=
  \frac{1}{\zeta_p(2) L_p(\sigma,\tfrac{1}{2} )}
  \int_{g \in \PGL_2(F_p)}
  \langle \Ad(g) f, f \rangle_{\PGL_2(F_p)} \Phi(g),
\end{equation*}
where
\begin{itemize}
\item $\zeta_p(s) = (1 - |p|^{-s})^{-1}$ denotes the local zeta function for $F_p$,
\item $L_p(\sigma,s) = (1 - \lambda_\sigma(p) p^{-s} + p^{-2s})^{-1}$ denotes the local factor for $L(\sigma,s)$ at $p$,
\item we fix an arbitrary Haar measure on $\PGL_2(F_p)$ (the quantity $c_\sigma(p)$ will not ultimately depend upon this choice),
\item $f$ is the normalized characteristic function $\vol(J_p)^{-1} 1_{J_p}$ of a maximal compact subgroup $J_p$ of $\PGL_2(F_p)$,
\item $\Ad(g) f(x) := f(g^{-1} x g)$ as usual, and
\item $\Phi$ is the normalized bi-$J_p$-invariant matrix coefficient of the unramified representation of $\PGL_2(F_p)$ corresponding to the action of $T_p$ on $\sigma$, so that for instance $\Phi(1) = 1$ and $\Phi(\diag(\varpi,1)) = \frac{\lambda_\sigma(\p)}{|p|^{1/2} + |p|^{-1/2}}$, with $\varpi \in F_p$ a uniformizer.
\end{itemize}
The modification factor \eqref{eq:c-sigma-defn} is then explained by the following local calculation:
\begin{lemma*}
  $c_\sigma(p) = \frac{1}{\zeta_p(2)} (1 - \frac{\lambda_\sigma(p)}{|p|^{3/2} + |p|^{1/2}})$.
\end{lemma*}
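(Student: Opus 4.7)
Write-up of proof plan:

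The plan is to compute the integral in the definition of $c_\sigma(p)$ by reducing it, via the Cartan decomposition, to a sum over powers of a uniformizer, and then identifying that sum with a rational function of the Satake parameter.

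First I would unpack the inner product. Since $f = \vol(J_p)^{-1} 1_{J_p}$, one has
\[
F(g) := \langle g \cdot f, f \rangle_{\PGL_2(F_p)}
= \vol(J_p)^{-2}\,\vol\!\bigl(J_p \cap g J_p g^{-1}\bigr).
\]
Because $f$ is invariant under $J_p$-conjugation ($k^{-1} J_p k = J_p$), the function $F$ is bi-$J_p$-invariant; this matches the bi-$J_p$-invariance of $\Phi$, so the integral reduces to a sum over the Cartan decomposition $\PGL_2(F_p) = \bigsqcup_{n \geq 0} J_p a_n J_p$ with $a_n = \diag(\varpi^n,1)$:
\[
\int_{G} F(g)\Phi(g)\,dg \;=\; \sum_{n \geq 0} \vol(J_p a_n J_p)\, F(a_n)\, \Phi(a_n).
\]

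Next I would explicate both factors at $a_n$. A direct matrix-level inspection shows that $J_p \cap a_n J_p a_n^{-1}$ is the congruence subgroup $K_0(\varpi^n) \subseteq J_p$, of index $q^{n-1}(q+1)$ for $n \geq 1$ (and $1$ for $n=0$), where $q = |p|$. Similarly $\vol(J_p a_n J_p) = \vol(J_p)\cdot [J_p : J_p \cap a_n^{-1} J_p a_n]$, which equals $\vol(J_p) \cdot q^{n-1}(q+1)$ by the Weyl-element symmetry. These two factors cancel against the corresponding factor in $F(a_n) = \vol(J_p)^{-1} / (q^{n-1}(q+1))$, leaving the clean identity
\[
\int_G F(g)\Phi(g)\,dg \;=\; \sum_{n \geq 0} \Phi(a_n).
\]

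The remaining task is the Macdonald-formula computation. Writing $\alpha$ for the Satake parameter of $\sigma$ at $p$ (so $\lambda_\sigma(p) = \alpha + \alpha^{-1}$), the normalized spherical function satisfies
\[
\Phi(a_n) \;=\; \frac{q^{-n/2}}{1+q^{-1}}\Bigl[c(\alpha)\alpha^n + c(\alpha^{-1})\alpha^{-n}\Bigr], \qquad c(\alpha) := \frac{1 - q^{-1}\alpha^{-2}}{1-\alpha^{-2}}.
\]
Summing the two geometric series in $q^{-1/2}\alpha^{\pm 1}$ and putting over the common denominator $L_p(\sigma,\tfrac{1}{2})^{-1} = (1 - q^{-1/2}\alpha)(1 - q^{-1/2}\alpha^{-1}) = 1 - q^{-1/2}\lambda_\sigma(p) + q^{-1}$, a short algebraic simplification (the key cancellation being $\alpha^{-1}c(\alpha) + \alpha c(\alpha^{-1}) = q^{-1}\lambda_\sigma(p)$, which I would verify by putting over the common denominator $2 - \alpha^2 - \alpha^{-2}$) yields
\[
\sum_{n \geq 0} \Phi(a_n) \;=\; L_p\!\bigl(\sigma,\tfrac{1}{2}\bigr)\left(1 - \frac{\lambda_\sigma(p)}{q^{3/2} + q^{1/2}}\right).
\]
Dividing by $\zeta_p(2) L_p(\sigma,\tfrac{1}{2})$ gives the claimed formula. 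The step I expect to require the most care is the algebraic identity $\alpha^{-1}c(\alpha) + \alpha c(\alpha^{-1}) = q^{-1}\lambda_\sigma(p)$, which is the crucial miracle that makes the Euler-product-like expression collapse into a single local factor.
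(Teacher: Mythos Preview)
Your proposal is correct and follows exactly the approach the paper indicates: direct calculation with the Macdonald formula and the Cartan decomposition. The paper omits all details (``we leave it to the interested reader''), so your write-up in fact supplies what the paper does not; the key simplification $\vol(J_p a_n J_p)\cdot F(a_n)=1$ and the identity $\alpha^{-1}c(\alpha)+\alpha c(\alpha^{-1})=q^{-1}\lambda_\sigma(p)$ are both correct as stated.
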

\begin{proof}
  This follows by direct calculation with the Macdonald formula \cite[Thm 4.6.6]{MR1431508} and the Cartan decomposition; we leave it to the interested reader.
\end{proof}

\subsection{Heuristics}\label{sec:heuristics}
In this section, we record a heuristic derivation of the limiting variance \eqref{eqn:limiting-variance-formula} obtained in our main result (or more precisely, its unweighted variant discussed in \S\ref{sec:remov-arithm-weights-1}).  This serves both to check of the correctness
of our results and to offer some perspective on the deviation in behavior of variance sums between arithmetic and non-arithmetic settings.

\subsubsection{Overview}\label{sec:35ac3e57aa}
We revoke our general assumptions by taking for $\Gamma$ any discrete cocompact subgroup of $G$ (possibly non-arithmetic).  The definitions of \S\ref{sec:intro-setup} adapt fairly painlessly to this setting, possibly after making some choices in the event of eigenvalue multiplicities.  We fix $\Psi \in C^\infty(\Gamma \backslash G)$ of mean zero (not necessarily an eigenfunction) and suppose given some unit vectors $v_\pi \in \pi$ for each $\pi$ in some varying family $\mathcal{F} \subset A_0$ (e.g., we might take for $v_\pi$ the vectors defined at the beginning of \S\ref{eq:mu-T-Psi-vs-mu-pi-Psi-simple}, so that the microlocal lifts are given asymptotically by $\Psi \mapsto \langle v_\pi \Psi, v_\pi \rangle$).  Our aim is to understand the asymptotics of the variance sums $\sum_{\pi \in \mathcal{F}} |\langle v_\pi \Psi, v_\pi \rangle|^2$.

Translated into representation-theoretic language, the basic idea underlying the semiclassical predictions (see \cite{MR848319}, \cite[\S15.6]{2009arXiv0911.4312Z}, \cite[\S4.1.3]{MR3204186}) in the generic non-arithmetic setting is to postulate that
\begin{equation}\label{eq:key-QV-heur}
  |\langle v_\pi \Psi, v_\pi \rangle|^2 \approx
  |\langle v_\pi \Psi, v_{\pi'} \rangle|^2
\end{equation}
whenever $\pi'$ and $\pi$ are ``close'' (denoted $\pi ' \approx \pi$) as quantified by their isomorphism classes under the group $G$.  Then
\begin{equation}\label{eq:key-QV-heur-2}
  \sum_{\pi \in \mathcal{F}} |\langle v_\pi \Psi, v_\pi
  \rangle|^2
  \approx 
  \sum_{\pi \in \mathcal{F}} \mathbb{E}_{\pi ' \approx \pi } |\langle v_\pi \Psi,
  v_{\pi'} \rangle|^2.
\end{equation}
The right hand side of \eqref{eq:key-QV-heur-2} may often be studied rigorously via semiclassical analysis, leading to predictions concerning the left hand side.

This heuristic often requires some modification.  One way that \eqref{eq:key-QV-heur} can fail is when the representations $\pi \in \mathcal{F}$ are self-dual, i.e., equal to their complex conjugates $\overline{\pi}$ (the representation-theoretic incarnation of ``time-reversal symmetry''); in that case,
\[\langle v_\pi \Psi, v_\pi \rangle = \langle \overline{v_\pi }
  \Psi, \overline{v_\pi } \rangle \text{ with } \overline{v_\pi } \in \pi.
\]
Suppose for concreteness that $\overline{v_\pi} = w v_\pi$ for some involutory element $w \in G$.  It follows then that the distributions $\Psi \mapsto \langle v_\pi \Psi, v_\pi \rangle$ are $w$-invariant.  On the other hand, there is no obvious reason to suspect that the more general distributions $\Psi \mapsto \langle v_\pi \Psi, v_{\pi'} \rangle$ are $w$-invariant when $\pi ' \neq \pi$, so \eqref{eq:key-QV-heur} can fail, most obviously when $w \Psi = - \Psi$.  The simplest way to repair this failure is to restrict from the outset to observables $\Psi$ for which $w \Psi = \Psi$.

Another way that \eqref{eq:key-QV-heur} can fail is if the space $\Gamma \backslash G$ admits a nontrivial correspondence $T$.  We may assume then that $\pi$ and $\pi '$ are $T$-eigenspaces with eigenvalues $\lambda$ and $\lambda '$.  These eigenvalues may bias the asymptotics of $\langle v_\pi \Psi, v_{\pi '} \rangle$.  The bias is most striking when $T$ is an involution and $T \Psi = \Psi$, in which case parity considerations imply that $\langle v_\pi \Psi, v_{\pi '} \rangle = 0$ unless $\lambda = \lambda '$.  Thus \eqref{eq:key-QV-heur} fails.  We can repair it by strengthening the closeness condition $\pi ' \approx \pi$ to require also that $\lambda ' \approx \lambda$.  The right hand side \eqref{eq:key-QV-heur-2} can now be estimated using semiclassical analysis on ``$G \times T$,'' leading to a modification of the expected variance asymptotics.  For instance, in the case of an involution, the modification is given by doubling; the factor $2^{\# S}$ in Theorem \ref{thm:var-3} may be explained in this way in terms of the involutory Hecke operators $T_p$ ($p \in S$).

Such modified heuristics extend easily to finite commuting families of correspondence, but their further extension to arithmetic settings as in Theorem \ref{thm:var-3}, with infinitely many commuting correspondences $T_p$, requires some care.  A naive approach is to run the heuristics first taking into account only those $T_p$ for $p$ belonging to some large finite set $P$, and then to take the limit as $P$ increases.  We implement this naive approach in detail below.  We will encounter main terms involving finite Euler products $\prod_{p \in P} L_p(\sigma,\tfrac{1}{2} )$.  Modulo the subtle business of identifying these with their formal limit, we will see that the resulting predictions are consistent with our rigorous results and also with the triple product formula and $L$-function analysis.    Peter Sarnak pointed out to us that this consistency is analogous to the heuristic derivation of the Birch--Swinnerton-Dyer conjecture via the Hardy--Littlewood method.

\subsubsection{General predictions}\label{sec:general-heur}
Turning to details, choose a Haar measure on $G$ and denote by $G^\wedge$ the tempered dual, equipped with Plancherel measure.  Equip $\Gamma \backslash G$ with the quotient Haar.  Suppose given a nice subset $\tilde{\mathcal{F}}$ of $G^\wedge$ and a nice function $f : G \rightarrow \mathbb{C}$ such that
\begin{itemize}
\item for $\pi \in \tilde{\mathcal{F}}$, the operator $\pi(f)$ is the orthogonal projection onto the line $\mathbb{C} v_\pi$ spanned by some unit vector $v_\pi \in \pi$, and
\item for $\pi \notin \tilde{\mathcal{F}}$, we have $\pi(f) = 0$.
\end{itemize}
(In practice, such assumptions are satisfied exactly only for $p$-adic groups $G$; for real groups, one should instead smoothly weight the family $\tilde{\mathcal{F}}$ and work with families of vectors in each $\pi \in \tilde{\mathcal{F}}$, as illustrated in the bulk of this paper.  We omit such technicalities from this heuristic discussion to keep the exposition clean.)  We then have the spectral decomposition $f(g) = \int_{\pi \in \tilde{\mathcal{F}}} \langle v_\pi, g v_\pi \rangle$ and the formula $\int_{\pi \in \tilde{\mathcal{F}}} |\langle g v_\pi, v_\pi \rangle|^2 = \langle \Ad(g) f, f \rangle$, with $\Ad(g) f(x) = f(g^{-1} x g)$ as before and the latter inner product taken in $L^2(G)$.  We take for $\mathcal{F} \subset A_0$ the set of all $\pi$ whose isomorphism class belongs to $\tilde{\mathcal{F}}$.  The pretrace formula reads $\sum_{\pi \in \mathcal{F}} v_\pi(x) \overline{v_\pi(y)} = \sum_{\gamma \in \Gamma } f(x^{-1} \gamma y) = \sum_{\gamma \in \Gamma} \int_{\pi \in \tilde{\mathcal{F}}} \langle x v_\pi, \gamma y v_\pi \rangle$.  Dividing this by the Weyl law $\# \mathcal{F} \approx \vol(\Gamma \backslash G) \vol(\tilde{\mathcal{F}})$ gives
\begin{equation}\label{eq:average-v-pi-x-y-heur-1}
  \mathbb{E}_{\pi \in \mathcal{F}}
  v_\pi(x) \overline{v_\pi(y)}
  \approx 
  \frac{1}{\vol(\Gamma \backslash G)}
  \sum_{\gamma \in \Gamma}
  \mathbb{E}_{\pi \in \tilde{\mathcal{F}}}
  \langle x v_\pi, \gamma y v_\pi \rangle,
\end{equation}
where $\mathbb{E}$ denotes the average (taken with respect to the counting measure on $\mathcal{F}$ and the Plancherel measure on $\tilde{\mathcal{F}}$).

Suppose temporarily that $\tilde{\mathcal{F}}$ is sufficiently concentrated near some given $\pi \in A_0$ that $\langle g v_{\pi '}, v_{\pi '} \rangle \approx \langle g v_{\pi}, v_\pi \rangle$ for all $\pi ' \in \tilde{\mathcal{F}}$.  Then \eqref{eq:average-v-pi-x-y-heur-1} simplifies to
\begin{equation}\label{eq:average-v-pi-x-y-heur-2}
  \mathbb{E}_{\pi' \in \mathcal{F}}
  v_{\pi'}(x) \overline{v_{\pi'}(y)}
  \approx 
  \frac{1}{\vol(\Gamma \backslash G)}
  \sum_{\gamma \in \Gamma}
  \langle x v_\pi, \gamma y v_\pi \rangle.
\end{equation}
Assume that quantum ergodicity holds in the strong form
\begin{equation}\label{eq:QE-hyp}
  \langle g (v_\pi \Psi), v_\pi \Psi  \rangle
  \approx
  \frac{1}{\vol(\Gamma \backslash G)}
  \langle g v_\pi, v_\pi  \rangle
  \langle g \Psi, \Psi  \rangle,
\end{equation}
at least on average over $\pi$.  From \eqref{eq:average-v-pi-x-y-heur-2}, \eqref{eq:QE-hyp} and ``unfolding,'' we obtain
\begin{equation}\label{eq:average-v-pi-psi-v-pi-prime}
  \mathbb{E}_{\pi' \in \mathcal{F}}
  \left\lvert \langle v_\pi \Psi, v_{\pi '} \rangle
  \right\rvert^2
  \approx
  \frac{1}{\vol(\Gamma \backslash G)^2}
  \int_{g \in G}
  |\langle g v_\pi, v_\pi  \rangle|^2
  \langle g \Psi, \Psi  \rangle.
\end{equation}

We now relax our assumption that $\tilde{\mathcal{F}}$ be concentrated and consider fairly general families.  By the Weyl law, we expect
\begin{equation}\label{eqn:weyl-law-ip-cons}
  \sum_{\pi \in \mathcal{F}}
  |\langle g v_\pi, v_\pi  \rangle|^2
  \approx
  \vol(\Gamma \backslash G)
  \int_{\pi \in \tilde{\mathcal{F}}}
  |\langle g v_\pi, v_\pi  \rangle|^2
  =
  \vol(\Gamma \backslash G)
  \langle \Ad(g) f, f \rangle.
\end{equation}
Suppose that the heuristic \eqref{eq:key-QV-heur} holds.  We may then apply \eqref{eq:average-v-pi-psi-v-pi-prime} to the family $\{\pi ' : \pi ' \approx \pi \}$, substitute the result into \eqref{eq:key-QV-heur-2}, and appeal to \eqref{eqn:weyl-law-ip-cons}, giving the prediction
\begin{equation}\label{eqn:general-QV-prediction}
  \sum_{\pi \in \mathcal{F}}
  \left\lvert \langle v_\pi \Psi, v_{\pi} \rangle
  \right\rvert^2
  \approx
  \frac{1}{\vol(\Gamma \backslash G)}
  \int_{g \in G}
  \langle \Ad(g) f, f \rangle
  \langle g \Psi, \Psi  \rangle
\end{equation}
subject to the modifications indicated above in the case of the ``time-reversal symmetry'' $\pi = \overline{\pi}$ or the presence of nontrivial correspondences on $\Gamma \backslash G$.

We note that this argument applies to fairly general quotients $\Gamma \backslash G$.  This generality will be exploited below.

In the following, we specialize \eqref{eqn:general-QV-prediction} in three ways.

\subsubsection{Generic non-arithmetic lattices}\label{sec:35ac3e57ae}
First, we take $G := \PSL_2(\mathbb{R})$, $\Gamma \leq G$ a ``generic'' (i.e., trivial commensurator) non-arithmetic cocompact lattice, $v_\pi$ as in \S\ref{eq:mu-T-Psi-vs-mu-pi-Psi-simple}, and $\tilde{\mathcal{F}} = \{\pi : 0 < - \h^2 \lambda_\pi < 1\}$ (the relevant definitions apply equally well to $\PSL_2(\mathbb{R})$ as to $\PGL_2(\mathbb{R})$).  We may take $f$ essentially (i.e., up to the constant factor $\h^{1/2}$) as in \S\ref{sec:defn-f}, with $k$ approximating the characteristic function of the interval $(-1,0)$.  We assume that $\pi = \overline{\pi}$ and that $\Psi$ is invariant by $w = \begin{pmatrix} 0 & 1 \\ -1 & 0 \end{pmatrix} \in G$.  By the analogue of \eqref{eqn:req-I} for $\PSL_2(\mathbb{R})$, the prediction \eqref{eqn:general-QV-prediction} simplifies to
\begin{equation*}
  \h
  \sum_{
    \substack{
      0 < -\h^2 \lambda_\pi < 1
    }
  }
  | \mu_\pi(\Psi) |^2
  \approx 
  \frac{1}{2 \pi \vol(\Gamma \backslash G)}
  \int_{u \in \mathbb{R}}
  \langle \begin{pmatrix}
    e^{u/2} &  \\
    & e^{-u/2}
  \end{pmatrix} \Psi, \Psi \rangle \, d u,
\end{equation*}
which may be seen to agree with the prediction of \cite{MR848319}.

\subsubsection{Arithmetic lattices}\label{sec:35ac3e57af}
Second, we address the setting of Theorem \ref{thm:var-3}.  We focus for simplicity on the diagonal case $\Psi \in \sigma \in A_0$, and assume that $\Psi = \Psi^{\sym}$ as in \eqref{eq:weyl-inv}.  Our task is accomplished most directly by applying \eqref{sec:general-heur} to the adelic quotient $\mathbf{G}(F) \backslash \mathbf{G}(\mathbb{A})$ (notation as in \S\ref{sec:intro-setup}), which has the effect of incorporating the nontrivial correspondences on the real quotient.  We take for $f$ a tensor product $\otimes_p f_p$ over the places $p$ of $F$, where the local factor $f_\mathfrak{q}$ at the distinguished real place $\mathfrak{q}$ is as in the previous paragraph and the remaining $f_p$ are the normalized characteristic functions of compact open subgroups $J_p$ as in \S\ref{sec:intro-setup}.  The (absolutely divergent) integral on the right hand side of \eqref{eqn:general-QV-prediction} factors (formally) as a product $\prod_p I_p$ of local integrals over all places $p$ of $F$; at places other than $\mathfrak{q}$, the component of $\langle g \Psi, \Psi \rangle$ is the normalized bi-$J_p$-invariant matrix coefficient of $\sigma$ at $p$, as in \S\ref{sec:remov-arithm-weights-1}, while at $\mathfrak{q}$ we take the usual matrix coefficient.  The local integrals $I_p$ have been computed.  The local integral $I_\mathfrak{q}$ is given by \eqref{eq:I-f-after-simpl}.  For archimedean places $p$ other than the distinguished real place $\mathfrak{q}$, we have $I_p = 1$ by \eqref{eqn:local-rallis-ipf-integral-nonsplit-real}.  For finite primes $p \in S$, we have $I_p = 2$ by \eqref{eqn:local-rallis-ipf-integral-nonsplit-finite}.  For finite primes $p \notin S$, we have $I_p = L_p(\sigma,\tfrac{1}{2} ) (1 - \frac{\lambda_\sigma(p)}{|p|^{3/2} + |p|^{1/2}})$ by \S\ref{sec:remov-arithm-weights-1}.  Modulo identifying $\prod_{p \notin S} L_p(\sigma,\tfrac{1}{2} )$ with $L^{(S)}(\sigma,\tfrac{1}{2} )$, we derive from \eqref{eqn:general-QV-prediction} the prediction
\begin{equation*}
  \h
  \sum_{
    \substack{
      0 < -\h^2 \lambda_\pi < 1
    }
  }
  | \mu_\pi(\Psi) |^2
  \approx
  \frac{  c_\sigma ' }{2 \pi \vol(\Gamma \backslash G)}
  \int_{u \in \mathbb{R}}
  \langle \begin{pmatrix}
    e^{u/2} &  \\
    & e^{-u/2}
  \end{pmatrix} \Psi, \Psi \rangle \, d u,
\end{equation*}
\begin{equation*}
  c_\sigma '
  :=
  2^{\# S + 1}
  L^{(S)}(\sigma,\tfrac{1}{2} )
  \prod_{p \notin S}
  (1 - \frac{\lambda_\sigma(p)}{|p|^{3/2} +
    |p|^{1/2}}).
\end{equation*}
We verify readily that this prediction agrees with the unweighted variant of Theorem \ref{thm:var-3} discussed in \S\ref{sec:remov-arithm-weights-1}.

\subsubsection{Comparison with $L$-function heuristics}\label{sec:35ac3e57b1}

Thirdly, we verify that the predictions of \S\ref{sec:general-heur}, and hence likewise our main results, are (unsurprisingly) consistent with the triple product formula and standard heuristics for averages of families of $L$-functions.  We include this discussion not only as a further check of our calculations, but also because we feel that it offers an interesting semiclassical perspective on the triple product formula itself.

We continue to take $\Gamma \backslash G = \mathbf{G}(F) \backslash \mathbf{G}(\mathbb{A})$.  Equip $\mathbf{G}(\mathbb{A})$ with Tamagawa measure, so that $\vol(\Gamma \backslash G) = 2$, and factor the measure on $\mathbf{G}(\mathbb{A})$ over the places $v$ of $F$ in such a way that for $p \notin S$, the local measure at $G_p = \mathbf{G}(F_p)$ assigns volume one to a maximal compact subgroup.  The main result of \cite{MR2449948} then says that for $v_\pi$ and $\Psi$ unramified outside $S$,
\[
  \lvert \langle v_\pi \Psi, v_\pi \rangle \rvert^2 = \frac{1}{8} \ell^{(S)}(\pi,\sigma) w(\pi),
\]
where
\[
  \ell^{(S)}(\pi,\sigma) := \zeta_F^{(S)}(2)^2 \frac{L^{(S)}(\pi \otimes \overline{\pi } \otimes \sigma, \tfrac{1}{2})}{ L^{(S)}(\ad \pi,1)^2 L^{(S)}(\ad \sigma,1)}
\]
and $w(\pi) := \prod_{v \in S} \int_{g \in \prod_{v \in S} G_v} \left\lvert \langle g v_\pi, v_\pi \rangle \right\rvert^2 \langle g \Psi, \Psi \rangle$.  We assume (for simplicity, and without loss of generality) that our family $\tilde{\mathcal{F}}$ has been chosen sufficiently concentrated that the weight $\pi \mapsto w(\pi)$ is essentially constant over its support.  By comparing with \eqref{eq:key-QV-heur} and \eqref{eq:average-v-pi-psi-v-pi-prime}, we see that our predictions translate to
\begin{equation*}
  \mathbb{E}_{\pi \in \mathcal{F}}
  \ell^{(S)}(\pi,\sigma)
  \approx
  \frac{8}{\vol(\Gamma \backslash G)^2}
  \prod_{p \notin S} I_p,
\end{equation*}
where $I_p$ is as above.  (As before, the product diverges and is to be understood formally; in particular, it hides the factor $L^{(S)}(\sigma,\tfrac{1}{2} )$.)  We may spectrally expand $I_p$ as the integral over unramified $\pi_p \in G_p^\wedge$ of the integral $\int_{g \in G_p} \Xi_{\pi_p}(g)^2 \Xi_{\sigma_p}(g)$ of normalized unramified matrix coefficients.  Ichino--Ikeda \cite[Theorem 1.2]{MR2585578} have shown that the latter integral evaluates to the local Euler factor $\ell_p(\pi_p,\sigma_p)$ for $\ell^{(S)}(\pi,\sigma)$, so that in fact
\begin{equation}\label{eq:I-p-rewritten-spectral-integral}
  I_p =  \int_{\text{unramified }\pi_p \in G_p^\wedge}
  \ell_p(\pi_p,\sigma_p).
\end{equation}
We may factor $L(\pi \otimes \overline{\pi } \otimes \sigma, \tfrac{1}{2}) = L(\ad \pi \otimes \sigma, \tfrac{1}{2}) L(\sigma, \tfrac{1}{2})$.  The family $\pi \mapsto L(\ad \pi \otimes \sigma,\tfrac{1}{2} )$ is self-dual with positive root numbers (assuming $\sigma$ even) and orthogonal symmetry type, so random matrix heuristics (see, e.g., \cite[\S1.2]{nelson-venkatesh-1}) predict that
\[
  \mathbb{E}_{\pi \in \mathcal{F}} \ell^{(S)}(\pi,\sigma) \approx 2 \prod_{p \notin S} I_p,
\]
with $I_p$ as given by \eqref{eq:I-p-rewritten-spectral-integral}.  Since $8/\vol(\Gamma \backslash G)^2 = 2$, those heuristics are consistent with ours.

\subsection*{Acknowledgements}
We would like to thank Peter Sarnak for encouragement and Raphael Steiner for many helpful comments and corrections on an earlier draft.  We gratefully acknowledge the support of NSF grant OISE-1064866 and SNF grant SNF-137488 during the work leading to this paper, and of a research grant (VIL54509) from VILLUM FONDEN during the paper's revision.

\newcommand{\etalchar}[1]{$^{#1}$}
\def\cprime{$'$} \def\cprime{$'$} \def\cprime{$'$} \def\cprime{$'$}


\end{document}